\newcommand*{\mailto}[1]{\href{mailto:#1}{\nolinkurl{#1}}}
\newcommand{\arxiv}[1]{\href{http://arxiv.org/abs/#1}{arXiv:\,#1}}
\newtheorem{theorem}{Theorem}[section]
\newtheorem{lemma}[theorem]{Lemma}
\newtheorem{proposition}[theorem]{Proposition}
\newtheorem{corollary}[theorem]{Corollary}
\newtheorem{remark}[theorem]{Remark}
\newtheorem{hypothesis}[theorem]{Hypothesis}
\newcommand{\R}{{\mathbb R}}
\newcommand{\N}{{\mathbb N}}
\newcommand{\Z}{{\mathbb Z}}
\newcommand{\C}{{\mathbb C}}
\newcommand{\be}{\begin{equation}}
\newcommand{\ee}{\end{equation}}
\newcommand{\spr}[2]{\langle #1 , #2 \rangle}
\newcommand{\E}{\mathrm{e}}
\newcommand{\I}{\mathrm{i}}
\newcommand{\tr}{\mathrm{tr}}
\newcommand{\im}{\mathrm{Im}}
\newcommand{\re}{\mathrm{Re}}
\newcommand{\dom}[1]{\mathrm{dom}(#1)}
\newcommand{\mul}[1]{\mathrm{mul}(#1)}
\DeclareMathOperator{\ran}{ran}
\DeclareMathOperator{\rang}{rank}
\DeclareMathOperator{\reg}{r}
\newcommand{\LRel}{\text{LR}}
\newcommand{\sigdis}{\sig_d}
\newcommand{\sigess}{\sig_e}
\DeclareMathOperator{\linspan}{span}
\DeclareMathOperator{\supp}{supp}
\newcommand{\floor}[1]{\lfloor#1 \rfloor}
\newcommand{\qd}{{[1]}}
\newcommand{\Tpre}{T_0}
\newcommand{\Tmin}{T_{\mathrm{min}}}
\newcommand{\Tmax}{T_{\mathrm{max}}}
\newcommand{\Tloc}{T_{\mathrm{loc}}}
\newcommand{\Llocp}{L^1_{\mathrm{loc}}((a,b);|\varsigma|)}
\newcommand{\Llocr}{L^1_{\mathrm{loc}}((a,b);\varrho)}
\newcommand{\AClocp}{AC_{\mathrm{loc}}((a,b);\varsigma)}
\newcommand{\AClocr}{AC_{\mathrm{loc}}((a,b);\varrho)}
\newcommand{\BCa}{BC_{a}}
\newcommand{\BCb}{BC_{b}}
\newcommand{\Lr}{L^2((a,b);\varrho)}
\newcommand{\Lrmu}{L^2(\R;\mu)}
\newcommand{\Deftau}{\D_\tau}
\newcommand{\indik}{\mathbbm{1}}
\newcommand{\D}{\mathfrak{D}}
\newcommand{\M}{\mathrm{M}}
\newcommand{\eps}{\varepsilon}
\newcommand{\vphi}{\varphi}
\newcommand{\sig}{\sigma}
\newcommand{\lam}{\lambda}
\numberwithin{equation}{section}
\begin{document}

\title{Sturm--Liouville operators with measure-valued coefficients}

\author[J.\ Eckhardt]{Jonathan Eckhardt}
\address{Faculty of Mathematics\\ University of Vienna\\
Oskar-Morgenstern-Platz 1\\ 1090 Wien\\ Austria}
\email{\mailto{jonathan.eckhardt@univie.ac.at}}

\author[G.\ Teschl]{Gerald Teschl}
\address{Faculty of Mathematics\\ University of Vienna\\
Oskar-Morgenstern-Platz 1\\ 1090 Wien\\ Austria\\ and International Erwin Schr\"odinger
Institute for Mathematical Physics\\ Boltzmanngasse 9\\ 1090 Wien\\ Austria}
\email{\mailto{Gerald.Teschl@univie.ac.at}}
\urladdr{\url{http://www.mat.univie.ac.at/~gerald/}}

\thanks{J. d'Analyse Math. {\bf 120}, 151--224 (2013)}
\thanks{{\it Research supported by the Austrian Science Fund (FWF) under Grant No.\ Y330}}

\keywords{Sturm--Liouville operators, measure coefficients, spectral theory, strongly singular potentials}
\subjclass[2000]{Primary 34B20, 34L05; Secondary 34B24, 47A10}

\begin{abstract}
We give a comprehensive treatment of Sturm--Liouville operators whose coefficients are measures
including a full discussion of self-adjoint extensions and boundary conditions, resolvents, 
and Weyl--Titchmarsh--Kodaira theory. We avoid previous technical restrictions and, at the same time,
extend all results to a larger class of operators. Our operators include classical Sturm--Liouville operators,
Sturm--Liouville operators with (local and non-local) $\delta$ and $\delta'$ interactions or transmission conditions as well as
eigenparameter dependent boundary conditions,
Krein string operators, Lax operators arising in the treatment of the Camassa--Holm equation, Jacobi operators, and
Sturm--Liouville operators on time scales as special cases.
\end{abstract}

\maketitle

\section{Introduction}
\label{sec:int}

Sturm--Liouville problems
\be
 -\frac{d}{dx} \left(p(x) \frac{dy}{dx}(x)\right) +q(x) y(x) = zr(x) y(x)
\ee
have a long tradition (see e.g.\ the textbooks \cite{tschroe}, \cite{we80}, \cite{zet} and the references therein)
and so have their generalizations to measure-valued coefficients. In fact, extensions to the case
\be
\frac{d}{d\varrho(x)} \left( - \frac{dy}{d\varsigma(x)}(x) + \int^x y(t) d\chi(t) \right) = z y(x)
\ee
date back at least to Feller \cite{feller} and were also advocated in the fundamental monograph
by Atkinson \cite{at}. Here the derivatives on the left-hand side have to be understood as Radon--Nikod\'{y}m derivatives.
We refer to the book by Mingarelli \cite{ming} for a more detailed historical discussion. In fact, in those references, 
the measure $\varsigma$ was always assumed to be absolutely continuous, $d\varsigma(x) = p(x)^{-1} dx$,
such that $y$ will at least be continuous. We will not make this restriction here since it would exclude (e.g.) the case
of $\delta'$ interactions which constitute a popular physical model.

However, while the generalization to measure-valued coefficients has been very successful on the level of differential equations (see e.g.\
\cite{at}, \cite{ming}, \cite{volkmer2005} and the references therein), much less is known about the associated operators in an
appropriate Hilbert space. First attempts were made by Feller and later complemented by Kac \cite{kac} (cf.\ also Langer \cite{lan}
and Bennewitz~\cite{bennewitz1989}).
Again, a survey of these results and further information can be found in the book of Mingarelli \cite{ming}.

The case where only the potential is allowed to be a measure is fairly well treated since it allows to include the case
of point interactions which is an important model in physics (see e.g.\ the monographs \cite{aghh}, \cite{ak}
as well as the recent results in \cite{bare} and the references therein).
More recently, Savchuk and Shkalikov \cite{ss1}--\cite{ss5}, Goriunov and Mikhailets \cite{gm1}, \cite{gm2} as well as
Mikhailets and Molyboga \cite{mm1}, \cite{mm2} were even able to cover the case where the potential is the derivative
of an arbitrary $L^2$ function. However, note that while this covers the case of $\delta$ interactions, it does
not cover the case of $\delta'$ interactions which are included in the present approach. 
 Moreover, since we allow all three coefficients to be measures, our approach even includes Schr\"odinger operators with non-local $\delta'$ interactions on arbitrary sets of Lebesgue measure zero as studied recently by Albeverio and Nizhnik \cite{albniz} and Brasche and Nizhnik \cite{braniz}.
 This connection will be discussed in detail and exploited in a forthcoming paper \cite{DeltaPrime}. 
 Finally, the case where the weight coefficient is a measure is known as Krein string and
has also attracted considerable interest recently \cite{vs1}--\cite{vs3} (see also the monograph \cite{dymck}). 

However, while the theory developed by Kac and extended by Mingarelli is quite general, it still does exclude
several cases of interest. More precisely, the basic assumptions in Chapter~3 of Mingarelli \cite{ming} require that
the corresponding measures have no weight at a finite boundary point. Unfortunately, this assumption excludes for
example classical cases like Jacobi operators on a half-line. The reason for this assumption is the fact that otherwise
the corresponding maximal operator will be multi-valued and one has to work within the framework of multi-valued
operators. This problem is already visible in the case of half-line Jacobi operators where the underlying Hilbert space has
to be artificially expanded in order to be able to formulate appropriate boundary conditions \cite{tjac}. In our case there is
no natural way of extending the Hilbert space and the intrinsic approach via multi-valued operators is more natural.
Nevertheless, this multi-valuedness is not too severe and corresponds to an at most two dimensional space which
can be removed to obtain a single-valued operator, again, a fact well-known from Jacobi operators with finite endpoints.
Finally, this general approach will also allow us to include a large variety of boundary conditions, including the case
of eigenparameter dependent boundary conditions.

Moreover, the fact that our differential equation is defined on a larger set than the support of the measure $\varrho$
(which determines the underlying Hilbert space) is also motivated by requirements from the applications we have in
mind. The most drastic example in this respect is the Sturm--Liouville problem
\be
 \frac{d}{d\varrho(x)} \left( - \frac{dy}{dx}(x) + \frac{1}{4} \int^x y(t) dt \right) = z y(x)
\ee
on $\R$ which arises in the Lax pair of the dispersionless Camassa--Holm equation \cite{ch}, \cite{chh}. 
In case of the well-known one-peakon solution, $\varrho$ is a single Dirac measure and the underlying Hilbert space is one-dimensional. However,
the corresponding differential equation has to be investigated on all of $\R$, where the Camassa--Holm equation
is defined. An appropriate spectral theory for this operator in the case where $\varrho$ is an arbitrary measure seems to be missing and is one of the main motivations
for the present paper.

Furthermore, there is of course another reason why Sturm--Liouville equations with measure-valued coefficients
are of  interest, namely, the unification of the continuous with the discrete case. While such a unification already
 was one of the main motivations in Atkinson \cite{at} and Mingarelli \cite{ming}, it has recently attracted
 enormous attention via the introduction of the calculus on time scales \cite{bope}.
In fact, given a time scale $\mathbb{T}\subseteq\R$, the so-called associated Hilger (or delta) derivative is
nothing but the Radon--Nikod\'{y}m derivative with respect to the measure $\varrho$, which corresponds to the distribution function $R(x) = \inf \{ y\in \mathbb{T} \,|\, y > x\}$.
We refer to \cite{ekte} for further details and to a follow-up publication \cite{etslts}, where we will provide further details
on this connection.

Finally, our approach also includes a number of generalizations for Sturm--Liouville problems which have been attracting significant interest in the past. 
 In particular, our approach covers boundary conditions depending polynomially on the eigenparameter as well as internal discontinuities (also known as transmission conditions) as introduced by Hald
\cite{hal} in his study of the inverse problem for the torsional modes of the earth (cf.\ \cite{sjt}).

As one of our central results we will develop singular Weyl--Titchmarsh--Kodaira theory for these operators extending
the recent work of Kostenko, Sakhnovich, and Teschl  \cite{kst} (see also Kodaira \cite{kod}, Kac \cite{kac}, and
Gesztesy and Zinchenko \cite{gz}). In particular, we will
cover singular settings where the Weyl--Titchmarsh--Kodaira function is no longer a Herglotz--Nevanlinna function (or a
generalized Nevanlinna function). Again this general approach is motivated by applications to the dispersionless
Camassa--Holm equation, where the associated spectral measure can exhibit arbitrary growth. 

\section{Notation}

Let $(a,b)$ be an arbitrary interval and $\mu$ be a locally finite complex Borel measure on $(a,b)$.
By $AC_{\mathrm{loc}}((a,b);\mu)$ we denote the set of left-continuous functions, which are locally absolutely continuous with respect to $\mu$. 
These are precisely the functions $f$ which can be written in the form
\begin{align*}
 f(x) = f(c) + \int_c^x h(s) d\mu(s), \quad x\in(a,b),
\end{align*}
where $h\in L^1_{\mathrm{loc}}((a,b); \mu)$ and the integral has to be read as
\be
 \int_c^x h(s) d\mu(s) = \begin{cases}
                           \int_{[c,x)} h(s)d\mu(s),             & \text{if }x>c, \\
                           0, & \text{if }x=c, \\
                           - \int_{[x,c)} h(s)d\mu(s),             & \text{if }x<c.
                         \end{cases}
\ee
The function $h$ is the Radon--Nikod\'{y}m derivative of $f$ with respect to $\mu$. It is uniquely defined in $L^1_{\mathrm{loc}}((a,b); \mu)$ and we write
\begin{align*}
 \frac{d f}{d\mu} = h.
\end{align*}
Every function $f$ which is locally absolutely continuous with respect to $\mu$ is locally of bounded variation and hence also the right-hand limits 
\begin{align*}
 f(x+)= \lim_{\eps\downarrow 0} f(x+\eps), \quad x\in(a,b)
\end{align*}
of $f$ exist everywhere. 
Also note that some function $f\in AC_{\mathrm{loc}}((a,b);\mu)$ can only be discontinuous in some point, if $\mu$ has mass in this point. 

%Given a measure $\mu$ we will use the same letter for its left-continuous distribution function
%\be
%\mu(x) = \mu(c) + \int_c^x d\mu, \quad x\in(a,b).
%\ee
In this respect we also recall the integration by parts formula (\cite[Theorem~21.67]{hesr}) for two locally finite complex Borel measures $\mu$, $\nu$ on $(a,b)$ 
\begin{align}\label{eqnLDpartint}
\int_{\alpha}^\beta F(x)d\nu(x) = \left. FG\right|_\alpha^\beta - \int_{\alpha}^\beta G(x+) d\mu(x), \quad \alpha,\,\beta\in(a,b),
\end{align}
where $F$, $G$ are left-continuous distribution functions of $\mu$, $\nu$ respectively.

\section{Sturm--Liouville equations with measure-valued coefficients}

Let $(a,b)$ be an arbitrary interval and $\varrho$, $\varsigma$ and $\chi$ be locally finite complex Borel measures 
 on $(a,b)$. We want to define a linear differential expression $\tau$ which is informally given by
\begin{align*}
\tau f =  \frac{d}{d\varrho} \left(-\frac{df}{d\varsigma} + \int fd\chi \right).
\end{align*}
 In this section, we will successively add assumptions on our measure coefficients as soon as they are needed.
 All of them are included in Hypothesis~\ref{genhyp} below, which will then be in force throughout the rest of this paper. 
 However, up to now the only additional assumptions we impose on our measures is that $\varsigma$ is supported on the whole interval,
 i.e., $\supp(\varsigma)=(a,b)$.

The maximal domain $\Deftau$ of functions such that $\tau f$ makes sense consists of all functions 
$f\in \AClocp$ for which the function
\begin{align}\label{eqn:tauqdptheta}
 -\frac{df}{d\varsigma}(x) + \int_c^x fd\chi, \quad x\in(a,b)
\end{align}
is locally absolutely continuous with respect to $\varrho$, i.e., there is some representative of this function 
lying in $\AClocr$. As a consequence of the assumption $\supp(\varsigma)=(a,b)$, this representative is unique. 
We then set $\tau f\in\Llocr$ to be the Radon--Nikod\'{y}m derivative of this function with respect to $\varrho$.
One easily sees that this definition is independent of $c\in(a,b)$ since the corresponding 
functions~\eqref{eqn:tauqdptheta} as well as their unique representatives only differ by an additive constant.
As usual, we denote the Radon--Nikod\'{y}m derivative with respect to $\varsigma$ of some function $f\in\Deftau$ by
\begin{align*}
 f^\qd = \frac{df}{d\varsigma} \in \Llocp.
\end{align*}
The function $f^\qd$ is called the first quasi-derivative of $f$. 

We note that the definition of $\tau$ is consistent with classical theory. 
Indeed, let $\varrho$, $\varsigma$ and $\chi$ be locally absolutely continuous with respect to Lebesgue measure, 
and denote by $r$, $p^{-1}$ and $q$ the respective densities i.e.,
\begin{align*}
 \varrho(B) = \int_B r(x)dx, \quad \varsigma(B) = \int_B \frac{1}{p(x)}dx \quad\text{and}\quad \chi(B) = \int_B q(x)dx
\end{align*}
for each Borel set $B$.
Then some function $f$ lies in $\Deftau$ if and only if $f$ as well as its quasi-derivative $f^\qd = pf'$ are 
 locally absolutely continuous (with respect to Lebesgue measure).
In this case
\begin{align*}
 \tau f(x) = \frac{1}{r(x)} \left( - \frac{d}{dx} \left(p(x)\frac{df}{dx}(x)\right) + q(x)f(x) \right), \quad x\in(a,b)
\end{align*}
is the usual Sturm--Liouville differential expression. Also note that if we add a single point mass $\delta_c$ located at $c$ to $\varsigma$
\[
\varrho(B) = \int_B r(x) dx, \quad \varsigma(B) = \int_B \frac{1}{p(x)} dx + \alpha\delta_c(B), \quad\text{and}\quad \chi(B) = \int_B q(x)dx,
\]
then we obtain the following jump condition
\[
f(c+)-f(c)= \alpha f^\qd(c)
\]
 at $c$, and hence this corresponds to a $\delta'$ interaction of strength $\alpha$. Similarly, adding a point mass $\alpha \delta_c$ to $\chi$ we obtain
a $\delta$ interaction associated with the jump condition 
\[
f^\qd(c+)-f^\qd(c)= \alpha f(c).
\]
Considering a piecewise continuous weight function $r(x)=p(x)$ with a jump $r(c+)= \beta^{-1} r(c)$ and adding a point mass $r(c)\frac{\alpha}{\beta} \delta_c$ to $\chi$ we obtain a transmission condition
\[
f(c+)= f(c), \qquad f'(c+)- \beta f'(c)= \alpha f(c).
\]
The more general case where $f$ is also allowed to jump can be reduce to this one by virtue of a Liouville-type transformation (cf.\ Remark~2.4 in \cite{sjt}).
Moreover, our approach also incorporates quite general boundary conditions including cases where the boundary conditions depend polynomially on
the eigenvalue parameter (cf.\ Remark~\ref{rem:bcev}).

As another special case, choosing the measures
\begin{align*}
 \varrho(B) = \sum_{n\in\Z} \delta_n(B), \quad \varsigma(B) = \int_B \frac{1}{p_{\floor{x}}} dx \quad\text{and}\quad \chi(B) = \sum_{n\in\Z} q_n \delta_n(B),
\end{align*}
where $p_n\not=0$, $q_n\in\R$ and $\delta_n$ is the Dirac measure in $n\in\Z$, we obtain the usual Jacobi difference expression.
In fact, $\tau f(n)$ at some point $n\in\Z$ is equal to the jump of the function
\begin{align*}
 -p_{\floor{x}} f'(x) + \sum_{ 0 \leq n < x} q_n f(n), \quad x\in\R
\end{align*}
in that point and hence
\begin{align*}
 \tau f(n) = p_{n-1}(f(n)-f(n-1)) - p_n(f(n+1)-f(n)) +q_nf(n).
\end{align*}

Now from the theory of linear measure differential equations (see Appendix~\ref{appMODE} for the required results) we get an existence and uniqueness theorem for differential equations associated with $\tau$.

\begin{theorem}\label{thm:exisuniq}
 Fix some arbitrary function $g\in \Llocr$. Then there is a unique solution  $f\in\Deftau$ of the initial value problem
\begin{align}
 (\tau - z) f = g \quad\text{with}\quad f(c)=d_1 \quad\text{and}\quad f^\qd(c)=d_2
\end{align}
for each $z\in\C$, $c\in(a,b)$ and $d_1$, $d_2\in\C$ if and only if 
\begin{align}\label{eqntauEEcond}
 \varrho(\lbrace x\rbrace) \varsigma(\lbrace x\rbrace) = 0 \quad\text{and}\quad \chi(\lbrace x\rbrace) \varsigma(\lbrace x\rbrace) \not=1
\end{align} 
for all $x\in(a,b)$.
If in addition $g$, $d_1$, $d_2$ and $z$ are real, then the solution is real.
\end{theorem}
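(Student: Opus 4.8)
The plan is to rewrite the scalar second-order problem as a first-order Volterra--Stieltjes system for the pair $Y=(u,u^\qd)^\top$ and to reduce the whole statement to a question about the transfer matrices attached to the atoms of the coefficient measures. First I would record the integral form of the problem. The requirement $u\in\AClocp$ with quasi-derivative $u^\qd$ is, by definition, the same as $u(x)=d_1+\int_c^x u^\qd\,d\varsigma$. On the other hand, $u\in\Deftau$ together with $(\tau-z)u=g$ says exactly that $-u^\qd+\int_c^\cdot u\,d\chi$ possesses a representative in $\AClocr$ (unique because $\supp(\varsigma)=(a,b)$) whose Radon--Nikodym derivative with respect to $\varrho$ equals $g+zu$; rearranging this identity turns $(u,u^\qd)$ into a solution of
\begin{align*}
 u(x) &= d_1 + \int_c^x u^\qd\, d\varsigma, \\
 u^\qd(x) &= d_2 + \int_c^x u\, d(\chi-z\varrho) - \int_c^x g\, d\varrho .
\end{align*}
Both right-hand sides are left-continuous and locally of bounded variation, so $u^\qd$ automatically carries the left-continuous representative dictated by our conventions. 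The only delicate point here is to verify that reading every Stieltjes integral with the left-continuous versions of $u$ and $u^\qd$ faithfully reproduces the definition of $\Deftau$ and of the quasi-derivative at the atoms; granting this, $u$ solves the initial value problem precisely when $Y$ solves the system above.

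Writing the system as $Y(x)=F(x)+\int_{[c,x)} d\Omega\,Y$ with $d\Omega=\bigl(\begin{smallmatrix} 0 & d\varsigma \\ d(\chi-z\varrho) & 0 \end{smallmatrix}\bigr)$ and $F$ collecting the data and $g$, the behaviour of a solution across an atom $x_0$ is governed by the transfer matrix $M(x_0)=\id+\Delta\Omega(x_0)$, that is $Y(x_0+)=M(x_0)Y(x_0)$ up to an inhomogeneous term coming from $g$, where
\begin{align*}
 M(x_0) = \begin{pmatrix} 1 & \varsigma(\{x_0\}) \\ (\chi-z\varrho)(\{x_0\}) & 1 \end{pmatrix},
 \qquad
 \det M(x_0) = 1 - \varsigma(\{x_0\})\chi(\{x_0\}) + z\,\varsigma(\{x_0\})\varrho(\{x_0\}).
\end{align*}
Viewed as a function of $z$, this determinant is nowhere vanishing on $\C$ if and only if the coefficient of $z$ vanishes and the constant term does not, which is exactly condition \eqref{eqntauEEcond} at $x_0$.

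For sufficiency I would build the fundamental matrix $U(\cdot,c)$ solving $U(x,c)=\id+\int_c^x d\Omega\,U(\cdot,c)$ by a Picard/Neumann iteration on compact subintervals: the continuous part of $\Omega$ is controlled by a Gronwall estimate, while local finiteness of the measures gives $\sum_{x_0}\|\Delta\Omega(x_0)\|<\infty$, so the accompanying product of the $M(x_0)$ converges. Under \eqref{eqntauEEcond} every $M(x_0)$ is invertible for every $z$, hence $U(x,c)$ is invertible for all $x,c$, and the solution of the inhomogeneous problem is obtained for arbitrary data by variation of constants, both to the right and to the left of $c$; uniqueness follows from the same estimates. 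For necessity, suppose \eqref{eqntauEEcond} fails at some $x_0$; then $\det M(x_0)$ vanishes for a single $z_0$, so that $M(x_0;z_0)$ has rank at most one. Choosing $z=z_0$ and any $c>x_0$, the transition $U(c,x)$ for $x<x_0$ contains $M(x_0;z_0)$ as a factor and is therefore singular, so picking data at $c$ outside its range leaves the initial value problem without a solution, contradicting unique solvability. The step I expect to cost the most work is the equivalence established in the first paragraph together with the convergence of the transfer-matrix product at possibly accumulating atoms; once these are in place the linear-algebra dichotomy at a single atom is routine. Finally, when the measures are real the system has real coefficients, so for real $z$, $g$, $d_1$, $d_2$ the conjugate $\bar u$ solves the same problem and uniqueness forces $u=\bar u$.
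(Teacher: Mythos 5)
Your proposal is correct and follows essentially the same route as the paper: the paper likewise rewrites the problem as the first-order Volterra--Stieltjes system for $(u,u^\qd)$ with $\omega=|\varsigma|+|\chi|+|\varrho|$ and then observes that unique solvability is equivalent to the regularity of the atom matrices $\bigl(\begin{smallmatrix} 1 & \varsigma(\{x\}) \\ \chi(\{x\})-z\varrho(\{x\}) & 1\end{smallmatrix}\bigr)$ for all $z$, which is precisely \eqref{eqntauEEcond}. The only difference is that the paper delegates the existence/uniqueness theory for such systems (Picard iteration, Gronwall, invertibility of the jump matrices to the left of the initial point) to Theorem~\ref{thm:appMIEEE} in the appendix, whereas you sketch that machinery inline.
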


\begin{proof}
 Some function $f\in\Deftau$ is a solution of $(\tau-z)f=g$ 
   with $f(c)=d_1$ and $f^\qd(c)=d_2$ if and only if for each $x\in(a,b)$ 
\begin{align*}
f(x) & = d_1 + \int_c^x f^\qd d\varsigma,             \\
 f^\qd(x) & = d_2 + \int_c^x f d\chi - \int_c^x \left(zf + g\right) d\varrho.
\end{align*}
Now set $\omega=|\varsigma|+|\chi|+|\varrho|$ and let $m_{12}$, $m_{21}$ and $f_2$ be the Radon--Nikod\'{y}m 
 derivatives of $\varsigma$, $\chi-z\varrho$ and $g\varrho$ with respect to $\omega$.
Then these equations can for each $x\in(a,b)$ be written as
\begin{align*}
 \begin{pmatrix} f(x) \\ f^\qd(x) \end{pmatrix} = \begin{pmatrix} d_1 \\ d_2 \end{pmatrix}
   + \int_c^x \begin{pmatrix}  0 & m_{12} \\ m_{21} & 0 \end{pmatrix} \begin{pmatrix} f \\ f^\qd \end{pmatrix}d\omega
   + \int_c^x \begin{pmatrix} 0 \\ f_2 \end{pmatrix}d\omega.
\end{align*}
Hence the claim follows from Theorem~\ref{thm:appMIEEE}, since~\eqref{eqntauEEcond} holds for all $x\in(a,b)$ if
 and only if
 \begin{align*}
 I + \omega(\lbrace x\rbrace) \begin{pmatrix}
      0 & m_{12}(x) \\ m_{21}(x) & 0
     \end{pmatrix} =
  \begin{pmatrix}
   1 & \varsigma(\lbrace x\rbrace) \\ \chi(\lbrace x\rbrace) - z\varrho(\lbrace x\rbrace) & 1
  \end{pmatrix}
 \end{align*}
 is invertible for all $z\in\C$ and $x\in(a,b)$.
\end{proof}

Note that if $g\in\Llocr$ and~\eqref{eqntauEEcond} holds for each
 $x\in(a,b)$, then there is also a unique solution of the initial value problem
\begin{align*}
(\tau-z)f=g \quad\text{with}\quad f(c+)=d_1 \quad\text{and}\quad f^\qd(c+)=d_2
\end{align*}
for every $z\in\C$, $c\in(a,b)$, $d_1$, $d_2\in\C$ by Corollary~\ref{corappIniValProRC}.

Because of Theorem~\ref{thm:exisuniq}, in the following we will always assume that the measure $\varsigma$ 
has no point masses in common with $\varrho$ or $\chi$, i.e., 
\begin{align}\label{eqntauEEcondPMincommon}
  \varsigma(\lbrace x\rbrace) \varrho(\lbrace x\rbrace) = \varsigma(\lbrace x\rbrace) \chi(\lbrace x\rbrace) = 0
\end{align}
for all $x\in(a,b)$. This assumption is stronger than the one needed in Theorem~\ref{thm:exisuniq} but we will
need it for the Lagrange identity below.

For $f$, $g\in\Deftau$ we define the Wronski determinant
\begin{align}
 W(f,g)(x) = f(x)g^\qd(x) - f^\qd(x)g(x), \quad x\in (a,b).
\end{align}
This function is locally absolutely continuous with respect to $\varrho$ with 
\begin{align*}
 \frac{d\, W(f,g)}{d\varrho} = g\, \tau f - f\, \tau g. 
\end{align*}
Indeed, this is a simple consequence of the following Lagrange identity.

\begin{proposition}\label{propLagrange}
 For each $f$, $g\in\Deftau$ and $\alpha$, $\beta\in(a,b)$ we have 
\begin{align}\label{eqn:lagrange}
 \int_\alpha^\beta \left(g(x) \tau f(x)  - f(x) \tau g(x)\right) d\varrho(x) = W(f,g)(\beta) - W(f,g)(\alpha).
\end{align}
\end{proposition}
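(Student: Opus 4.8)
The plan is to reduce the Lagrange identity to the two structural properties that characterize membership in $\Deftau$, together with the integration-by-parts formula recorded in Section~2. For $f\in\Deftau$ the definition gives, on the one hand, $f\in\AClocp$ with $df = f^\qd\, d\varsigma$ and, on the other hand, that the function in~\eqref{eqn:tauqdptheta} lies in $\AClocr$ with Radon--Nikodym derivative $\tau f$. Unwinding the latter shows that $f^\qd$ is locally of bounded variation (in fact locally absolutely continuous with respect to $|\chi|+|\varrho|$) and satisfies, as an identity of measures,
\begin{align*}
 df^\qd = f\, d\chi - (\tau f)\, d\varrho.
\end{align*}
Thus $f$, $g$, $f^\qd$, $g^\qd$ are all left-continuous and of locally bounded variation, and their jumps are located only at the point masses of $\varsigma$ (for $f,g$), respectively of $\chi$ and $\varrho$ (for $f^\qd,g^\qd$).

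Next I would apply the integration-by-parts formula from Section~2, in the symmetric product form
\begin{align*}
 U(\beta)V(\beta) - U(\alpha)V(\alpha) = \int_{[\alpha,\beta)} U\, dV + \int_{[\alpha,\beta)} V(x+)\, dU,
\end{align*}
once to the pair $U=f$, $V=g^\qd$ and once to $U=g$, $V=f^\qd$. Substituting $dg^\qd = g\,d\chi-(\tau g)\,d\varrho$, $df^\qd = f\,d\chi-(\tau f)\,d\varrho$, $df=f^\qd\, d\varsigma$ and $dg=g^\qd\, d\varsigma$, and then forming the difference $W(f,g)(\beta)-W(f,g)(\alpha)$ from $fg^\qd - f^\qd g$ evaluated at the endpoints, the two $\chi$-integrals cancel against each other outright (both equal $\int_{[\alpha,\beta)} fg\, d\chi$), while the two $\varrho$-integrals combine precisely into $\int_{[\alpha,\beta)}(g\,\tau f - f\,\tau g)\,d\varrho$, which is the right-hand side of~\eqref{eqn:lagrange}.

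What remains, and this is the only delicate point, is to dispose of the two leftover $\varsigma$-integrals coming from the $V(x+)\,dU$ terms, namely
\begin{align*}
 \int_{[\alpha,\beta)} \bigl(g^\qd(x+)\, f^\qd(x) - f^\qd(x+)\, g^\qd(x)\bigr)\, d\varsigma(x).
\end{align*}
Here is exactly where the standing hypothesis~\eqref{eqntauEEcondPMincommon}, that $\varsigma$ shares no point masses with $\chi$ or $\varrho$, enters. Since the jumps of $f^\qd$ and $g^\qd$ occur only at point masses of $\chi$ and $\varrho$, these functions are continuous $\varsigma$-almost everywhere, so $f^\qd(x+)=f^\qd(x)$ and $g^\qd(x+)=g^\qd(x)$ for $\varsigma$-a.e.\ $x$. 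The integrand therefore vanishes $\varsigma$-almost everywhere, the integral is zero, and~\eqref{eqn:lagrange} follows. I expect the careful bookkeeping of the one-sided limits in the integration-by-parts formula, and the verification that the common-point-mass assumption is precisely what makes these $\varsigma$-terms drop, to be the main obstacle; everything else is pure cancellation.
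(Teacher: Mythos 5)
Your proof is correct, and it reaches \eqref{eqn:lagrange} by a slightly different decomposition than the paper's. The paper integrates by parts once against the function $f_1(x)=-f^\qd(x)+\int_\alpha^x f\,d\chi$, i.e.\ the natural $\varrho$-antiderivative of $\tau f$, which produces the iterated integral $\int_\alpha^\beta\int_\alpha^t f\,d\chi\,g^\qd(t)\,d\varsigma(t)$ and forces a second, nested integration by parts before the final cancellation; the hypothesis \eqref{eqntauEEcondPMincommon} is invoked there to drop the right-hand limit $f_1(t+)$ inside the $\varsigma$-integral. You instead extract from the definition of $\Deftau$ the measure identity $df^\qd=f\,d\chi-(\tau f)\,d\varrho$ and apply the product rule symmetrically to $fg^\qd$ and $gf^\qd$, so that the two $\chi$-integrals cancel outright and no iterated integral ever appears; the same hypothesis \eqref{eqntauEEcondPMincommon} enters in exactly one place, to show that the leftover term $\int_{[\alpha,\beta)}\bigl(g^\qd(x+)f^\qd(x)-f^\qd(x+)g^\qd(x)\bigr)\,d\varsigma$ vanishes because the (countably many) discontinuity points of $f^\qd,g^\qd$ lie among the atoms of $\chi$ and $\varrho$ and hence form a $\varsigma$-null set. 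Both arguments rest on the same two ingredients (the Lebesgue--Stieltjes integration-by-parts formula of Section~2 and the no-common-atoms assumption); yours isolates the role of that assumption more transparently and trades the paper's double integral for the explicit identification of $df^\qd$ as a measure, which is a clean and slightly more economical bookkeeping of the same computation.
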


\begin{proof}
By definition $g$ is a distribution function of the measure $g^\qd \varsigma$. Furthermore, the function
\begin{align*}
 f_1(x) = -f^\qd(x) + \int_\alpha^x fd\chi, \quad x\in(a,b)
\end{align*}
is a distribution function of $\tau f \varrho$. Hence one gets from integration by parts
\begin{align*}
 \int_\alpha^\beta g(t) \tau f(t)  d\varrho(t) & = \left[ f_1(t) g(t) \right]_{t=\alpha}^\beta - \int_\alpha^\beta f_1(t+)g^\qd(t)d\varsigma(t).
\end{align*}
We can drop the right-hand limit in the integral since the discontinuities of $f_1$ are a null set with 
 respect to $\varsigma$ by~\eqref{eqntauEEcondPMincommon}. Hence the integral becomes
\begin{align*}
 \int_\alpha^\beta f_1(t) g^\qd(t) d\varsigma(t) & = \int_\alpha^\beta \int_\alpha^t fd\chi\, g^\qd(t)d\varsigma(t) - \int_\alpha^\beta f^\qd(t) g^\qd(t) d\varsigma(t) \\
        & = g(\beta) \int_\alpha^\beta fd\chi  - \int_\alpha^\beta g f d\chi - \int_\alpha^\beta f^\qd(t) g^\qd(t) d\varsigma(t),
\end{align*}
where we performed another integration by parts (and used again~\eqref{eqntauEEcondPMincommon}).
Now verifying the identity is an easy calculation.
\end{proof}

As a consequence of the Lagrange identity, the Wronskian $W(u_1,u_2)$ of two 
solutions $u_1$, $u_2\in\Deftau$ of $(\tau-z)u=0$ is constant.
Furthermore, we have
\begin{align*}
 W(u_1,u_2)  \not=0 \quad\Leftrightarrow\quad u_1, \: u_2\text{ linearly independent}.
\end{align*}
Indeed, the Wronskian of two linearly dependent solutions vanishes obviously. 
Conversely, $W(u_1,u_2) = 0$ means that the vectors 
\begin{align*}
\begin{pmatrix} u_1(x) \\ u_1^\qd(x) \end{pmatrix} \quad\text{and}\quad \begin{pmatrix} u_2(x) \\ u_2^\qd(x) \end{pmatrix}
\end{align*}
are linearly dependent for each $x\in(a,b)$.
But because of uniqueness of solutions this implies the linear dependence of $u_1$ and $u_2$.

For every $z\in\C$ we call two linearly independent solutions of $(\tau-z)u=0$ a fundamental system of $(\tau-z)u=0$.
From Theorem~\ref{thm:exisuniq} and the properties of the Wronskian, one sees that fundamental systems always exist.

\begin{proposition}\label{prop:repsol}
 Let $z\in\C$ and $u_1$, $u_2$ be a fundamental system of the equation $(\tau-z)u =0$. 
 Furthermore, let $c\in(a,b)$, $d_1, d_2\in\C$, $g\in \Llocr$.
 Then there exist $c_1$, $c_2\in\C$ such that the solution $f$ of 
\begin{align*}
 (\tau-z)f = g \quad\text{with}\quad f(c)=d_1 \quad\text{and}\quad f^\qd(c)=d_2
\end{align*}
is given by
\begin{align*}
 f(x) & = c_1 u_1(x) + c_2 u_2(x) + \int_c^x \frac{u_1(x) u_2(t) - u_1(t) u_2(x)}{W(u_1,u_2)} g(t) d\varrho(t), \\
 f^\qd(x) & = c_1 u_1^\qd(x) + c_2 u_2^\qd(x) + \int_c^x \frac{u_1^\qd(x) u_2(t) - u_1(t) u_2^\qd(x)}{W(u_1,u_2)} g(t) d\varrho(t),
\end{align*}
 for each $x\in(a,b)$. 
If $u_1$, $u_2$ is the fundamental system with
\begin{align*}
 u_1(c)=u_2^\qd(c) = 1 \quad\text{and}\quad u_1^\qd(c)=u_2(c)=0, 
\end{align*}
then $c_1=d_1$ and $c_2=d_2$.
\end{proposition}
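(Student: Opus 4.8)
The plan is to take the solution $f\in\Deftau$ furnished by Theorem~\ref{thm:exisuniq} as given and to recover its explicit form by computing its Wronskians against the fundamental system via the Lagrange identity. Since $u_1,u_2$ is a fundamental system, $W(u_1,u_2)$ is a nonzero constant, and consequently at every $x\in(a,b)$ the matrix with rows $(u_1(x),u_1^\qd(x))$ and $(u_2(x),u_2^\qd(x))$ is invertible; this is precisely what will allow me to solve for $f(x)$ and $f^\qd(x)$ pointwise.

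First I would apply Proposition~\ref{propLagrange} to the pair $u_j$ and $f$ for $j=1,2$. Since $\tau u_j = z\,u_j$ and $\tau f = z\,f + g$, the integrand simplifies to $f\,\tau u_j - u_j\,\tau f = -u_j\,g$, and hence
\begin{align*}
 W(u_1,f)(x) = W(u_1,f)(c) - \int_c^x u_1\, g\, d\varrho, \qquad
 W(u_2,f)(x) = W(u_2,f)(c) - \int_c^x u_2\, g\, d\varrho,
\end{align*}
for all $x\in(a,b)$, where the convention $\int_c^x = -\int_x^c$ extends the identity (stated for $\alpha<\beta$) to the case $x<c$.

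Next I would read these two relations as the linear system $u_j(x)\,f^\qd(x) - u_j^\qd(x)\,f(x) = W(u_j,f)(x)$, $j=1,2$, in the unknowns $f(x)$ and $f^\qd(x)$. Its coefficient determinant equals $u_1 u_2^\qd - u_1^\qd u_2 = W(u_1,u_2)\neq0$, so Cramer's rule produces $f(x)$ and $f^\qd(x)$ in closed form. Substituting the above expressions for $W(u_j,f)(x)$ and separating off the parts independent of the inhomogeneity into
\begin{align*}
 c_1 = -\frac{W(u_2,f)(c)}{W(u_1,u_2)}, \qquad c_2 = \frac{W(u_1,f)(c)}{W(u_1,u_2)},
\end{align*}
reproduces exactly the asserted formulas for $f$ and $f^\qd$. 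The initial data then pin down the constants: from $W(u_j,f)(c) = u_j(c)\,d_2 - u_j^\qd(c)\,d_1$ and the normalization $u_1(c)=u_2^\qd(c)=1$, $u_1^\qd(c)=u_2(c)=0$ one gets $W(u_1,u_2)=1$, $c_1=d_1$, and $c_2=d_2$.

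I expect this route to be essentially bookkeeping once Proposition~\ref{propLagrange} is in hand: the delicate Stieltjes integration by parts, with its one-sided limits at the atoms of $\varrho$ and the cancellations governed by~\eqref{eqntauEEcondPMincommon}, has already been absorbed into the Lagrange identity. The alternative of substituting the claimed formulas directly into the integral equations from the proof of Theorem~\ref{thm:exisuniq} would force me to redo precisely that analysis by hand, so the Wronskian route is preferable. The only genuine point to watch is that Cramer's rule applies at every $x\in(a,b)$, which is guaranteed because $W(u_1,u_2)$ is a nonzero constant throughout.
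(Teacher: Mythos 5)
Your proof is correct, but it takes a genuinely different route from the paper's. The paper argues \emph{forwards}: it defines $h(x)=u_1(x)\int_c^x u_2 g\,d\varrho-u_2(x)\int_c^x u_1 g\,d\varrho$ and verifies by two explicit Stieltjes integrations by parts (using~\eqref{eqntauEEcondPMincommon} to discard the one-sided limits) that $h^\qd$ has the stated form and that $(\tau-z)h=W(u_1,u_2)\,g$, and only then adjusts the constants via $c_1=W(f,u_2)(c)/W(u_1,u_2)$, $c_2=W(u_1,f)(c)/W(u_1,u_2)$ — the same constants you obtain. You argue \emph{backwards}: you take the solution supplied by Theorem~\ref{thm:exisuniq}, observe that the Lagrange identity (Proposition~\ref{propLagrange}) reduces $W(u_j,f)(x)-W(u_j,f)(c)$ to $-\int_c^x u_j g\,d\varrho$ because the $z$-terms cancel, and then invert the $2\times2$ system $u_j f^\qd-u_j^\qd f=W(u_j,f)$ by Cramer's rule, the determinant being the nonvanishing constant $W(u_1,u_2)$. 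Your route outsources every delicate integration by parts to the already-proven Lagrange identity, so no new measure-theoretic bookkeeping is needed; the cost is that you must invoke existence up front, which is harmless since the statement already speaks of ``the solution.'' The paper's route is constructive and shows directly that the explicit formula defines a solution for arbitrary $c_1,c_2$; your version recovers that reading too, since prescribing initial data $d_1=c_1u_1(c)+c_2u_2(c)$, $d_2=c_1u_1^\qd(c)+c_2u_2^\qd(c)$ reproduces exactly the constants $c_1,c_2$. Your sign conventions and the final identification $c_1=d_1$, $c_2=d_2$ under the normalized fundamental system all check out.
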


\begin{proof}
 We set
\begin{align*}
 h(x) = u_1(x)\int_c^x u_2 g\, d\varrho - u_2(x) \int_c^x u_1 g\, d\varrho, \quad x\in(a,b).
\end{align*}
Integration by parts shows that 
\begin{align*}
 \int_\alpha^\beta u_1^\qd(x) & \int_c^x u_2 g\, d\varrho - u_2^\qd(x)\int_c^x u_1 g\, d\varrho\, d\varsigma(x) = \\
       & \qquad\qquad= \left[ u_1(x)\int_c^x u_2 g\, d\varrho - u_2(x) \int_c^x u_1 g\, d\varrho \right]_{x=\alpha}^\beta
\end{align*}
for all $\alpha$, $\beta\in(a,b)$ with $\alpha<\beta$, hence
\begin{align*}
 h^\qd(x) = u_1^\qd(x) \int_c^x u_2 g\, d\varrho - u_2^\qd(x)\int_c^x u_1 g\, d\varrho, \quad x\in(a,b).
\end{align*}
Using again integration by parts we get
\begin{align*}
 \int_\alpha^\beta u_1(x) \int_c^x & u_2 g\, d\varrho\, d\chi(x) - z \int_\alpha^\beta u_1(x) \int_c^x u_2 g\, d\varrho\, d\varrho(x) = \\
      & = \left[ \int_c^x u_2 g\, d\varrho \left( \int_c^x u_1\, d\chi - z\int_c^x u_1\, d\varrho\right) \right]_{x=\alpha}^\beta \\
      & \qquad\qquad  - \int_\alpha^\beta \left( \int_c^x u_1\, d\chi - z \int_c^x u_1\, d\varrho\right) u_2(x)g(x) d\varrho(x) \\
      & = \left[ \int_c^x u_2 g\, d\varrho \left(u_1^\qd(x) - u_1^\qd(c)\right) \right]_{x=\alpha}^\beta \\
      & \qquad\qquad  - \int_\alpha^\beta \left(u_1^\qd(x)-u_1^\qd(c)\right) u_2(x)g(x) d\varrho(x) \\
      & = u_1^\qd(\beta)\int_c^\beta u_2 g\, d\varrho - u_1^\qd(\alpha) \int_c^\alpha u_2 g\, d\varrho - \int_\alpha^\beta u_2 u_1^\qd g\, d\varrho
\end{align*}
for all $\alpha, \beta\in(a,b)$ with $\alpha<\beta$. Now an easy calculation shows that
\begin{align*}
 \int_\alpha^\beta h\, d\chi - \int_\alpha^\beta z h + W(u_1,u_2) g\, d\varrho = h^\qd(\beta) - h^\qd(\alpha).
\end{align*}
Hence $h$ is a solution of $(\tau - z) h=W(u_1,u_2) g$ and therefore the function $f$ given in the claim 
is a solution of $(\tau - z)f = g$. Now if we choose 
\begin{align*}
c_1= \frac{W(f,u_2)(c)}{W(u_1,u_2)(c)} \quad\text{and}\quad c_2=\frac{W(u_1,f)(c)}{W(u_1,u_2)(c)},
\end{align*}
 then $f$ satisfies the initial conditions at $c$.
\end{proof}

Another important identity for the Wronskian is the following Pl\"{u}cker identity.

\begin{proposition}
 For each functions $f_1$, $f_2$, $f_3$, $f_4\in\Deftau$ we have
 \begin{align*}
  0 & = W(f_1,f_2)W(f_3,f_4) + W(f_1,f_3)W(f_4,f_2) + W(f_1,f_4)W(f_2,f_3).
 \end{align*}
\end{proposition}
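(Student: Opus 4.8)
The plan is to treat the Pl\"ucker identity as a pointwise, purely algebraic statement, so that none of the measure-theoretic apparatus developed above is needed. Fix $x\in(a,b)$ and associate to each $f_i\in\Deftau$ the vector $\mathbf{f}_i(x)=(f_i(x),f_i^\qd(x))^{\mathrm T}\in\C^2$. Then the Wronskian is nothing but the $2\times2$ determinant $W(f_i,f_j)(x)=\det\bigl(\mathbf{f}_i(x),\mathbf{f}_j(x)\bigr)$ having $\mathbf{f}_i(x)$ and $\mathbf{f}_j(x)$ as its columns. The asserted relation is then exactly the classical Grassmann--Pl\"ucker identity for four vectors in the two-dimensional space $\C^2$; since it holds at every $x$, it holds as an identity of functions on $(a,b)$.

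To establish the vector identity I would abbreviate $a_i=f_i(x)$ and $b_i=f_i^\qd(x)$, so that $W(f_i,f_j)(x)=a_ib_j-a_jb_i$. One route is brute force: substituting into the right-hand side, expanding the three products, and collecting the resulting degree-four monomials in $a_1,\dots,b_4$ shows that each monomial occurs twice with opposite sign, so the total vanishes. This works but is somewhat tedious.

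I prefer an expansion-free argument exploiting that, with $f_2,f_3,f_4$ held fixed, the right-hand side is a \emph{linear} function of $\mathbf{f}_1(x)\in\C^2$: each of the three summands contains exactly one Wronskian involving $f_1$, and that factor is linear in $\mathbf{f}_1(x)$. It therefore suffices to check the identity for $\mathbf{f}_1(x)$ ranging over a basis of $\C^2$. Setting $\mathbf{f}_1(x)=\mathbf{f}_2(x)$ kills the first term, and the antisymmetry $W(f,g)=-W(g,f)$ makes the remaining two cancel; the choice $\mathbf{f}_1(x)=\mathbf{f}_3(x)$ is handled identically. If $\mathbf{f}_2(x)$ and $\mathbf{f}_3(x)$ are linearly independent these two substitutions exhaust a basis and the proof is complete; in the remaining case $\mathbf{f}_3(x)=c\,\mathbf{f}_2(x)$, whereupon $W(f_2,f_3)(x)=0$ and substituting and again using antisymmetry collapses the right-hand side to zero directly.

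There is essentially no analytic obstacle: once the Wronskian is read as a determinant, the statement is elementary finite-dimensional linear algebra valid pointwise, and the only real care needed is the sign bookkeeping coming from the antisymmetry of $W$.
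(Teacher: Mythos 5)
Your argument is correct, but it takes a genuinely different route from the paper. The paper's proof is a one-line observation: the right-hand side equals
\begin{align*}
\frac{1}{2} \left| \begin{matrix} f_1 & f_2 & f_3 & f_4 \\
                             f_1^\qd & f_2^\qd & f_3^\qd & f_4^\qd \\
                             f_1 & f_2 & f_3 & f_4 \\
                             f_1^\qd & f_2^\qd & f_3^\qd & f_4^\qd \end{matrix} \right|,
\end{align*}
a $4\times4$ determinant with repeated rows, hence zero; the reader is left to verify that the Laplace expansion along the first two rows reproduces the three products of $2\times2$ Wronskians with the right signs. You instead prove the Grassmann--Pl\"ucker identity directly in $\C^2$ by exploiting linearity in $\mathbf{f}_1(x)$ and checking the two substitutions $\mathbf{f}_1=\mathbf{f}_2$ and $\mathbf{f}_1=\mathbf{f}_3$, with a separate treatment of the degenerate case. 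What the paper's approach buys is brevity and a memorable closed form; what yours buys is an expansion-free verification in which every cancellation is exhibited, at the cost of a case distinction. Both correctly reduce the statement to pointwise linear algebra, which is all that is needed since $f\in\Deftau$ only enters through the values $f(x)$ and $f^\qd(x)$. One pedantic remark: in your degenerate case you write $\mathbf{f}_3(x)=c\,\mathbf{f}_2(x)$, which does not cover the possibility $\mathbf{f}_2(x)=0$ with $\mathbf{f}_3(x)\neq 0$; but then every one of the three summands contains a Wronskian against $f_2$ and so vanishes outright, so the gap is cosmetic.
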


\begin{proof}
 The right-hand side is equal to the determinant
\begin{align*}
\frac{1}{2} \left| \begin{matrix} f_1 & f_2 & f_3 & f_4 \\
                             f_1^\qd & f_2^\qd & f_3^\qd & f_4^\qd \\
                             f_1 & f_2 & f_3 & f_4 \\
                             f_1^\qd & f_2^\qd & f_3^\qd & f_4^\qd \end{matrix} \right|.
\end{align*}
\end{proof}

We say $\tau$ is regular at $a$, if $|\varrho|((a,c])$, $|\varsigma|((a,c])$ and $|\chi|((a,c])$ are finite for one (and hence for all) $c\in(a,b)$.
Similarly one defines regularity for the right endpoint $b$.
Finally, we say $\tau$ is regular if $\tau$ is regular at both endpoints, i.e., if $|\varrho|$, $|\varsigma|$ and $|\chi|$ are finite.

\begin{theorem}\label{thm:EEreg}
 Let $\tau$ be regular at $a$, $z\in\C$ and $g\in L^1((a,c);\varrho)$ for each $c\in(a,b)$. 
 Then for every solution $f$ of $(\tau-z)f=g$ the limits
\begin{align*}
 f(a) := \lim_{x\rightarrow a} f(x) \quad\text{and}\quad f^\qd(a):=\lim_{x\rightarrow a} f^\qd(x)
\end{align*}
exist and are finite.
For each $d_1$, $d_2\in\C$ there is a unique solution of
\begin{align*}
 (\tau-z)f=g \quad\text{with}\quad f(a)=d_1 \quad\text{and}\quad f^\qd(a)=d_2.
\end{align*}
Furthermore, if $g$, $d_1$, $d_2$ and $z$ are real, then the solution is real.
Similar results hold for the right endpoint $b$.
\end{theorem}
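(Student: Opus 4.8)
The plan is to reduce everything to the integral-equation form established in the proof of Theorem~\ref{thm:exisuniq}: for any fixed interior point $c\in(a,b)$, a function $f\in\Deftau$ solves $(\tau-z)f=g$ if and only if
\begin{align*}
 f(x) = f(c) + \int_c^x f^\qd\,d\varsigma, \qquad f^\qd(x) = f^\qd(c) + \int_c^x f\,d\chi - \int_c^x (zf+g)\,d\varrho, \quad x\in(a,b).
\end{align*}
First I would show that $f$ and $f^\qd$ stay bounded and possess finite limits as $x\downarrow a$, and then settle the initial value problem at the endpoint by representing an arbitrary solution through a fundamental system.

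For the boundedness, set $\omega=|\varsigma|+|\chi|+|z|\,|\varrho|$, which is a \emph{finite} measure on $(a,c]$ by the regularity assumption, and put $F=|f|+|f^\qd|$. Taking absolute values in the two identities above and adding them gives, for $x\in(a,c]$,
\begin{align*}
 F(x) \le C + \int_{[x,c)} F\,d\omega, \qquad C = |f(c)| + |f^\qd(c)| + \int_{(a,c)} |g|\,d|\varrho|,
\end{align*}
where $C<\infty$ since $g\in L^1((a,c);\varrho)$. A Gronwall estimate for Stieltjes integrals then bounds $F$ by $C\,\E^{\omega((a,c])}$, uniformly in $x\in(a,c]$. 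Once $F\le M$ near $a$, the differences
\begin{align*}
 f(x)-f(x') = \int_{x'}^x f^\qd\,d\varsigma, \qquad f^\qd(x)-f^\qd(x') = \int_{x'}^x f\,d\chi - \int_{x'}^x (zf+g)\,d\varrho
\end{align*}
are controlled by $M\,|\varsigma|([x,x'))$, $M\,|\chi|([x,x'))$, $|z|M\,|\varrho|([x,x'))$ and $\int_{[x,x')}|g|\,d|\varrho|$, all of which tend to $0$ as $x,x'\downarrow a$ by finiteness of the measures near $a$ together with the absolute continuity of the integral of $g$. Hence $f$ and $f^\qd$ are Cauchy as $x\downarrow a$, so the limits $f(a)$ and $f^\qd(a)$ exist and are finite.

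With the limits at hand, the initial value problem is almost immediate. Fix a real fundamental system $u_1$, $u_2$ of $(\tau-z)u=0$ and a real particular solution $f_p$ of $(\tau-z)f=g$ (both furnished by Theorem~\ref{thm:exisuniq} and Proposition~\ref{prop:repsol}); by the first part all of $u_1(a)$, $u_1^\qd(a)$, $u_2(a)$, $u_2^\qd(a)$, $f_p(a)$, $f_p^\qd(a)$ exist. Every solution of $(\tau-z)f=g$ has the form $f=c_1u_1+c_2u_2+f_p$, so prescribing $f(a)=d_1$ and $f^\qd(a)=d_2$ amounts to solving a linear system in $(c_1,c_2)$ whose determinant is $u_1(a)u_2^\qd(a)-u_1^\qd(a)u_2(a)=W(u_1,u_2)(a)$. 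Since the Wronskian of a fundamental system is a nonzero constant, this determinant is nonzero, yielding both existence and uniqueness of $(c_1,c_2)$, hence of the solution. When $g$, $d_1$, $d_2$ and $z$ are real the chosen $u_1$, $u_2$, $f_p$ are real and the resulting real linear system gives real $c_1$, $c_2$, so $f$ is real. The argument at the right endpoint $b$ is entirely symmetric.

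The main obstacle is the Gronwall step: because $\omega$ may carry atoms, the differential form of Gronwall is unavailable and one must use the Stieltjes version, in which the atomic contributions enter through the product $\prod_t(1+\omega(\{t\}))$; finiteness of $\omega((a,c])$ guarantees this product converges and keeps the bound finite. Everything else is routine once boundedness is secured, as all remaining estimates reduce to the smallness of finite measures on the shrinking intervals $(a,x)$.
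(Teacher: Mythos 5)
Your overall architecture coincides with the paper's: the second half (represent an arbitrary solution as $c_1u_1+c_2u_2+f_p$, observe that the limits of a fundamental system at $a$ exist by the first half and that $W(u_1,u_2)(a)\neq 0$, then invert the resulting $2\times2$ system, with realness following from a real choice of $u_1$, $u_2$, $f_p$) is exactly the paper's argument, and the first half is the content of Theorem~\ref{thm:appregep}. The one step that does not survive scrutiny as written is the backward Gronwall estimate. With the paper's integral convention, for $x<c$ the inequality you derive is
\begin{align*}
 F(x) \;\leq\; C + \int_{[x,c)} F\,d\omega ,
\end{align*}
and the integral on the right \emph{includes the atom at $x$ itself}. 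Isolating it gives $F(x)\leq C' + \omega(\lbrace x\rbrace)F(x)$, which constrains $F(x)$ only when $\omega(\lbrace x\rbrace)<1$; if some atom in $(a,c]$ has mass $\geq 1$ the inequality places no bound on $F$ there at all, so the asserted conclusion $F\leq C\,\E^{\omega((a,c])}$ is simply false for the inequality in this form. Your remark that the atoms contribute a factor $\prod_t(1+\omega(\lbrace t\rbrace))$ is the \emph{forward}-direction correction; in the backward direction the factor is $\prod_t(1-\omega(\lbrace t\rbrace))^{-1}$, which is finite only if all atoms have mass bounded away from $1$. This is precisely why the paper's left-hand Gronwall variant (the remark after Lemma~\ref{lemappGronwall}) is stated with the integral starting at $x+$, i.e.\ with the atom at $x$ excluded.

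The gap is easily repaired, and regularity at $a$ is what makes the repair possible: since $\omega$ is finite near $a$ you may shrink $c$ so that $\int_a^c d\omega\leq\tfrac12$ (in particular every atom in $(a,c]$ has mass $\leq\tfrac12$), after which either the corrected backward Gronwall applies or, as in the paper's proof of Theorem~\ref{thm:appregep}, one dispenses with Gronwall altogether: if $F$ were unbounded near $a$, pick $x_n\downarrow a$ with $F(x_n)\geq F(x)$ for $x\in[x_n,c]$ and conclude $F(x_n)\leq C+\tfrac12 F(x_n)$, a contradiction. Once boundedness is secured, your Cauchy argument for the existence of the limits, and everything that follows, is correct and identical to the paper's treatment.
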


\begin{proof} 
The first part of the theorem is an immediate consequence of Theorem~\ref{thm:appregep}.
 From Proposition~\ref{prop:repsol} we infer that all solutions of $(\tau-z)f=g$ are given by 
 \begin{align*}
  f(x) = c_1 u_1(x) + c_2 u_2(x) + f_0(x), \quad x\in(a,b),
 \end{align*}
 where $c_1$, $c_2\in\C$, 
 $u_1$, $u_2$ are a fundamental system of $(\tau-z)u=0$ and $f_0$ is some solution of $(\tau-z)f=g$.
 Now since 
 \begin{align*}
 W(u_1,u_2)(a)=u_1(a)u_2^\qd(a) - u_1^\qd(a)u_2(a)\not=0,
 \end{align*}
 there is exactly one choice for the coefficients $c_1, c_2\in\C$ such that
 the solution $f$ satisfies the initial values at $a$. If $g$, $d_1$, $d_2$ and $z$ are real then $u_1$, $u_2$, and $f_0$ can be chosen
 real and hence also $c_1$ and $c_2$ are real.
\end{proof}

Under the assumptions of Theorem~\ref{thm:EEreg} one sees that Proposition~\ref{prop:repsol} remains 
 valid even in the case when $c=a$ (respectively $c=b$) with essentially the same proof.

We now turn to analytic dependence of solutions on the spectral parameter $z\in\C$.
 These results will be needed in Section~\ref{secweyltitchm}.

\begin{theorem}\label{thmMSLEanaly}
  Let $g\in \Llocr$, $c\in(a,b)$, $d_1, d_2\in\C$ and for each $z\in\C$ let $f_z$ be the unique solution of
  \begin{align*}
   (\tau - z) f = g \quad\text{with}\quad f(c)=d_1 \quad\text{and}\quad f^\qd(c)=d_2.
  \end{align*}
  Then  $f_z(x)$ and $f_z^\qd(x)$ are entire
  functions of order at most $\nicefrac{1}{2}$ in $z$  for every point $x\in(a,b)$.
  Moreover, for each $\alpha$, $\beta\in(a,b)$ with $\alpha<\beta$ we have
  \begin{align*}
   |f_z(x)| + |f_z^\qd(x) | \leq C \E^{B\sqrt{|z|}}, \quad x\in[\alpha,\beta],~z\in\C
  \end{align*}
  for some constants $C$, $B\in\R$.
\end{theorem}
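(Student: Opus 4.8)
The plan is to realize $f_z$ and $f_z^\qd$ through the Volterra integral system already exhibited in the proof of Theorem~\ref{thm:exisuniq} and to solve it by successive approximation, keeping careful track of the way the spectral parameter enters. Recall that $Y=(f_z,f_z^\qd)^T$ is characterized by
\begin{align*}
 f_z(x) &= d_1 + \int_c^x f_z^\qd \, d\varsigma, \\
 f_z^\qd(x) &= d_2 - \int_c^x g\, d\varrho + \int_c^x f_z \, d(\chi - z\varrho), \quad x\in(a,b).
\end{align*}
The decisive structural observation is that $z$ enters only in the second equation, linearly, and only through integration against $\varrho$. I would write this as $Y = Y_0 + K_z Y$, where $K_z$ is the off-diagonal Volterra operator $(K_z Y)(x) = \left(\int_c^x u^\qd\, d\varsigma,\ \int_c^x u\, d(\chi - z\varrho)\right)$ acting on $Y=(u,u^\qd)$, and solve it by the Neumann series $Y = \sum_{n\geq 0} K_z^n Y_0$. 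Each iterate $K_z^n Y_0(x)$ is a polynomial in $z$ (of degree about $n/2$), hence entire; the entire content of the theorem is that this series converges locally uniformly in $z$ at the correct rate.

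The key estimate is the following. Writing $A_n(x),B_n(x)$ for bounds on the two components of $K_z^n Y_0$, the definition of $K_z$ gives the coupled recursion $A_{n+1}(x)\le \int_c^x B_n\, d|\varsigma|$ and $B_{n+1}(x)\le \int_c^x A_n\, d(|\chi|+|z|\,|\varrho|)$, so that every \emph{pair} of iteration steps consumes exactly one $\varsigma$-integration and one $(|\chi|+|z|\,|\varrho|)$-integration. Setting $S(x)=|\varsigma|([c,x))$ and $P_z(x)=|\chi|([c,x))+|z|\,|\varrho|([c,x))$, I would propagate, by induction on $k$, the ansatz
\begin{align*}
 A_{2k}(x),\ B_{2k}(x)\ \le\ N\,\frac{(S(x)P_z(x))^k}{(k!)^2},
\end{align*}
with matching bounds $N\,S^{k}P_z^{k+1}/(k!\,(k+1)!)$ and $N\,S^{k+1}P_z^{k}/((k+1)!\,k!)$ for the odd iterates; here $N$ bounds $Y_0$ on the interval and absorbs the contribution of $g$. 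The inductive step rests on the elementary inequality $\int_c^x F^p\, dF \le F(x)^{p+1}/(p+1)$ valid for a left-continuous nondecreasing distribution function $F$ — which holds even across atoms, since on a jump from $F^-$ to $F^+$ the integrand takes the left value and $(F^-)^p(F^+-F^-)\le (( F^+)^{p+1}-(F^-)^{p+1})/(p+1)$ by convexity — together with freezing the monotone companion factor at the endpoint $x$.

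Summing the Neumann series, the even terms yield $\sum_k (SP_z)^k/(k!)^2 = I_0\bigl(2\sqrt{SP_z}\bigr)$ and the odd terms yield $I_1$-type sums, all dominated by $C\,\E^{2\sqrt{SP_z}}$. For $z$ in a compact set $P_z$ is bounded, so the series converges uniformly; being a locally uniform limit of polynomials, $f_z(x)$ and $f_z^\qd(x)$ are therefore entire in $z$, and interchanging summation with the integrals shows the limit solves the system, so by Theorem~\ref{thm:exisuniq} it is the desired solution. On a compact interval $[\alpha,\beta]$ one replaces $S,P_z$ by their endpoint values; since $\sqrt{SP_z}\sim \sqrt{|\varsigma|([\alpha,\beta])\,|\varrho|([\alpha,\beta])}\,\sqrt{|z|}$ as $|z|\to\infty$, up to a polynomial prefactor from the $I_1$-terms that is absorbed into the exponential, this produces $|f_z(x)|+|f_z^\qd(x)|\le C\,\E^{B\sqrt{|z|}}$ with $B$ arbitrarily close to $2\sqrt{|\varsigma|([\alpha,\beta])\,|\varrho|([\alpha,\beta])}$, and in particular order at most $\tfrac12$. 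The main obstacle is precisely obtaining the exponent $\sqrt{|z|}$ rather than $|z|$: a crude Gronwall bound using $\|K_z\|\sim|z|$ would only give $\E^{B|z|}$, i.e.\ order $1$. The gain of the square root relies entirely on the off-diagonal structure of $K_z$, which attaches $z$ to only every second integration, so that the $n$-th iterate carries $z$ to the power $\approx n/2$ while consuming $n$ integrations; making this bookkeeping rigorous — the $(k!)^2$ in the denominator and the across-atom form of $\int F^p\,dF$ — is the technical heart, after which the entireness and the matching lower bound forcing the order to equal $\tfrac12$ are routine.
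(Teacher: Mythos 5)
Your treatment of the analyticity statement coincides with the paper's, which simply applies Theorem~\ref{thmappLMDEAnaly} to the equivalent first-order system from the proof of Theorem~\ref{thm:exisuniq}: each Neumann iterate is a polynomial in $z$ and the series converges locally uniformly. For the growth bound $C\E^{B\sqrt{|z|}}$, however, you take a genuinely different route. The paper introduces the weighted quantity $v_z=|z|\,|f_z|^2+|f_z^\qd|^2$, shows by integration by parts and the elementary inequality $2|f_zf_z^\qd|\le v_z/\sqrt{|z|}$ that $v_z(x)\le v_z(c)+\sqrt{|z|}\int_c^x v_z\,d\omega$, and concludes with the Gronwall Lemma~\ref{lemappGronwall} (and its reflected version~\eqref{eqnappLMDEGronwallLeft} to the left of $c$). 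You instead stay inside the Neumann series and exploit the off-diagonal structure of $K_z$ term by term, arriving at the Bessel-type bound $\sum_k(SP_z)^k/(k!)^2\le\E^{2\sqrt{SP_z}}$ via the same key inequality~\eqref{eqnappLMDEGronwallH} that underlies the appendix. Both arguments are sound in substance; yours yields an essentially sharp type constant $B\approx 2\sqrt{|\varsigma|([\alpha,\beta])\,|\varrho|([\alpha,\beta])}$ and makes the mechanism behind the order $\nicefrac{1}{2}$ completely transparent, while the paper's is shorter and avoids the combinatorial bookkeeping altogether.

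One caveat. Your across-atom justification of $\int_c^xF^p\,dF\le F(x)^{p+1}/(p+1)$ relies on the integrand taking the \emph{left} value at a jump, which matches the paper's convention for the integral only when $x>c$. For $x<c$ one has $\int_c^x=-\int_{[x,c)}$: the atom at the lower endpoint is included, the integrand there carries the larger value of the distribution function, and the convexity inequality reverses. This is precisely why the appendix treats the two sides of $c$ asymmetrically and splits off the finitely many large atoms before iterating (the jump across each being controlled by $(I+\omega(\lbrace x\rbrace)M(x))^{-1}$, which here has determinant $1$). Your estimate therefore needs this extra step for $x\in[\alpha,c)$; with it the argument closes. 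Finally, note that, like the paper, you only establish order at most $\nicefrac{1}{2}$; the matching lower bound asserted to be routine is not carried out in either argument.
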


\begin{proof}
 The analyticity part follows by applying Theorem~\ref{thmappLMDEAnaly} to the equivalent system from the
 proof of Theorem~\ref{thm:exisuniq}. For the remaining part note that because of Proposition~\ref{prop:repsol} it suffices to consider the case when $g$ vanishes identically. If we set for each $z\in\C$ with $|z|\geq 1$
 \begin{align*} 
  v_z(x) = |z| |f_z(x)|^2 + |f_z^\qd(x)|^2, \quad x\in(a,b),
 \end{align*}
 an integration by parts shows that for each $x\in(a,b)$
 \begin{align*}
  v_z(x) = v_z(c) & + 
     |z| \int_c^x \left( f_z f_z^{\qd\ast} +  f_z^\qd f_z^\ast \right) d\varsigma \\
         & + \int_c^x \left( f_z f_z^{\qd\ast} +  f_z^\qd f_z^\ast \right) d\chi - 
         \int_c^x \left(z f_z f_z^{\qd\ast} + z^\ast f_z^\qd f_z^\ast\right) d\varrho.
 \end{align*}
 Because of the elementary estimate
 \begin{align*}
  2| f_z(x) f_z^\qd(x)| \leq \frac{ |z| |f_z(x)|^2 + |f_z^\qd(x)|^2}{\sqrt{|z|}} = \frac{v_z(x)}{\sqrt{|z|}}, \quad x\in(a,b),
 \end{align*}
 we get an upper bound for $v_z$
 \begin{align*}
  v_z(x) & \leq v_z(c) + \int_c^x v_z(t) \sqrt{|z|} d\omega(t), \quad x\in[c,b), 
 \end{align*}
 where $\omega=|\varsigma|+ |\chi| + |\varrho|$, as in the proof of Theorem~\ref{thm:exisuniq}.
 Now an application of Lemma~\ref{lemappGronwall} yields
 \begin{align*}
  v_z(x) \leq v_z(c) \E^{\int_c^x \sqrt{|z|} d\omega}, \quad x\in[c,b).
 \end{align*}
 To the left-hand side of $c$ we have
 \begin{align*}
  v_z(x+) & \leq v_z(c) + \int_{x+}^c v_z(t) \sqrt{|z|} d\omega(t), \quad x\in(a,c)
 \end{align*}
 and hence again by the Gronwall Lemma~\ref{lemappGronwall}
 \begin{align*}
  v_z(x+) \leq v_z(c) \E^{\int_{x+}^c \sqrt{|z|} d\omega}, \quad x\in(a,c),
 \end{align*}
 which is the required bound.
\end{proof}

Under the assumptions of Theorem~\ref{thmMSLEanaly} also the right-hand limits of $f_z$ and their quasi-derivatives are entire functions in $z$ of order at most $\nicefrac{1}{2}$ with corresponding bounds.
Moreover, the same analytic properties are true for the solutions $f_z$ of the initial value problem
\begin{align*}
 (\tau-z)f=g \quad\text{with}\quad f_z(c+)=d_1 \quad\text{and}\quad f_z^\qd(c+)=d_2.
\end{align*}
Indeed, this fact follows for example from the remark after the proof of Theorem~\ref{thmappLMDEAnaly} in Appendix~\ref{appMODE}.

Furthermore, if, in addition to the assumptions of Theorem~\ref{thmMSLEanaly},
 $\tau$ is regular at $a$ and $g$ is integrable near $a$, then the functions 
 \begin{align*}
 z\mapsto f_z(a) \quad\text{and}\quad z\mapsto f_z^\qd(a)
 \end{align*}
 are entire of order at most $\nicefrac{1}{2}$ as well and the bound in Theorem~\ref{thmMSLEanaly} holds for all $x\in[a,\beta]$. 
 Indeed, this follows since the entire functions 
 \begin{align*}
 z\mapsto f_z(x) \quad\text{and}\quad z\mapsto f_z^\qd(x)
 \end{align*}
  are locally bounded, uniformly in $x\in(a,c)$. Moreover, in this case the assertions of Theorem~\ref{thmMSLEanaly} are valid even if we take $c=a$ and/or $\alpha=a$, as the construction of the solution in the proof of Theorem~\ref{thm:EEreg} shows. 
  The required bound is proven as in the general case (hereby note that $\omega$ is finite near $a$ since $\tau$ is regular there).

We gather the assumptions made on our measure coefficients so far and add some new, all of which will be in force in the rest of this paper (except for Lemma~\ref{lemcondforhyp}).
Therefore, we say that some interval $(\alpha,\beta)$ is a gap of $\supp(\varrho)$ if it is contained in the complement of $\supp(\varrho)$ but the endpoints $\alpha$ and $\beta$ lie in $\supp(\varrho)$.

\begin{hypothesis}   The following assumptions on our measure coefficients will be in force throughout the rest of this paper:
 \begin{enumerate}\label{genhyp}
 \item\label{genhypfirst} The measure $\varrho$ is positive. 
 \item The measure $\chi$ is real-valued.
 \item The measure $\varsigma$ is real-valued and supported on the whole interval; 
      \begin{align*}\supp(\varsigma)=(a,b).\end{align*}
 \item\label{genhypatoms} The measure $\varsigma$ has no point masses in common with $\varrho$ or $\chi$, i.e.,
         \begin{align*}
              \varsigma(\lbrace x\rbrace) \chi(\lbrace x\rbrace) = \varsigma(\lbrace x\rbrace) \varrho(\lbrace x\rbrace)=0.
         \end{align*}
   \item \label{genhypgap} For each gap $(\alpha,\beta)$ of $\supp(\varrho)$ and every function $f\in\Deftau$ 
           with the outer limits $f(\alpha-)=f(\beta+)=0$ we have $f(x)=0$, $x\in(\alpha,\beta)$.
   \item \label{genhyponepoint} The measure $\varrho$ is supported on more than one point.
\end{enumerate}
\end{hypothesis}

As a consequence of the real-valuedness of the measures, $\tau$ is a real differential expression, 
i.e., $f\in\Deftau$ if and only if $f^\ast\in\Deftau$ and $\tau f^\ast = (\tau f)^\ast$ in this case.
Moreover, $\varrho$ has to be positive in order to obtain a definite inner product later.
Furthermore, condition~\eqref{genhypgap} in Hypothesis~\ref{genhyp} is crucial for Proposition~\ref{prop:identDeftauTloc}
and Proposition~\ref{propTlocmulval} to hold. For example, if $(a,b)=\R$, $\varrho$ is supported on $\pi\Z$ and we choose
$\varsigma$ and $-\chi$ to be equal to the Lebesgue measure, then the function $f(x)=\sin(x)$ belongs to $\Deftau$ with $\tau f=0$, in contradistinction to Proposition~\ref{prop:identDeftauTloc}. 
Upon cutting this function off outside of the interval $(0,\pi)$, one also sees that Proposition~\ref{propTlocmulval} ceases to hold in this case. 
However, condition~\eqref{genhypgap} is satisfied by a large class of measures as the next lemma shows.

\begin{lemma}\label{lemcondforhyp}
 If for each gap $(\alpha,\beta)$ of $\supp(\varrho)$ 
  the measures $\varsigma|_{(\alpha,\beta)}$ and $\chi|_{(\alpha,\beta)}$ are of one and the same sign, then~\eqref{genhypgap} in Hypothesis~\eqref{genhyp} holds.
\end{lemma}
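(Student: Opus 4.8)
The plan is to exploit that $\varrho$ carries no mass inside a gap, so that on $(\alpha,\beta)$ the pair $(f,f^\qd)$ solves the homogeneous equation and a Wronskian/energy estimate — whose overall sign is pinned down by the hypothesis — forces $f$ to vanish.

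First I would record the structure of $f$ on the gap. By the definition of $\Deftau$ the function $-f^\qd+\int_c^\cdot f\,d\chi$ is a distribution function of $(\tau f)\,d\varrho$; since $\varrho$ vanishes on $(\alpha,\beta)$, this means $df^\qd=f\,d\chi$ there, while $df=f^\qd\,d\varsigma$ holds by the very definition of the quasi-derivative. Thus on the gap $f$ solves the homogeneous system (with $z=0$), and since $f$ is left-continuous the outer condition gives $f(\alpha)=f(\alpha-)=0$.

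The heart of the argument is an energy identity obtained exactly as in the proof of Proposition~\ref{propLagrange}: integrating by parts yields, for $\alpha<s<t<\beta$,
\[
\int_{[s,t)} f^2\,d\chi + \int_{[s,t)} (f^\qd)^2\,d\varsigma = f(t)f^\qd(t) - f(s)f^\qd(s),
\]
where the right-hand limit produced by the integration by parts is dropped because the discontinuities of $f^\qd$ form a $\varsigma$-null set by~\eqref{genhypatoms}. Since $\varsigma|_{(\alpha,\beta)}$ and $\chi|_{(\alpha,\beta)}$ carry one and the same sign, the left-hand side has a fixed sign, so $x\mapsto f(x)f^\qd(x)$ is monotone on $(\alpha,\beta)$ and the two bulk integrals individually inherit that sign. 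I would then let $s\downarrow\alpha$ and $t\uparrow\beta$ and evaluate the boundary term through the jump relations. Using $f(\alpha+)=f(\alpha)+f^\qd(\alpha)\varsigma(\{\alpha\})$ and $f^\qd(\alpha+)=f^\qd(\alpha)-(\tau f)(\alpha)\varrho(\{\alpha\})$ together with $f(\alpha)=0$ and the orthogonality $\varsigma(\{\alpha\})\varrho(\{\alpha\})=0$ from~\eqref{genhypatoms}, the lower boundary value collapses to $f^\qd(\alpha)^2\varsigma(\{\alpha\})$, and the analogous computation at $\beta$ (now using $f(\beta+)=0$) gives $-f^\qd(\beta)^2\varsigma(\{\beta\})$. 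Hence the whole boundary term is $-f^\qd(\beta)^2\varsigma(\{\beta\})-f^\qd(\alpha)^2\varsigma(\{\alpha\})$, which carries the sign opposite to the bulk integrals, so both sides must vanish. From $\int_{(\alpha,\beta)}(f^\qd)^2\,d\varsigma=0$ and $\supp(\varsigma)=(a,b)$ I conclude $f^\qd=0$ $\varsigma$-a.e. on the gap, whence $f$ is constant there, and the vanishing outer limit forces $f\equiv0$ on $(\alpha,\beta)$, which is precisely~\eqref{genhypgap}.

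The step I expect to be the main obstacle is the boundary-term analysis. Away from point masses one immediately has $f(\alpha+)f^\qd(\alpha+)=0$ and the display reduces to the clean inequality $0\le\int(f^\qd)^2d\varsigma+\int f^2d\chi=0$; but when $\varsigma$ charges a gap endpoint, $f$ genuinely jumps into the gap, and one must verify that the resulting jump contributions combine with the sign hypothesis — and with~\eqref{genhypatoms}, which kills the $\varrho$–$\varsigma$ cross terms — so as not to spoil the sign of the energy balance. Controlling these endpoint contributions is the delicate part of the argument.
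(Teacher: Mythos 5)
Your argument is correct and is essentially the paper's own proof: the same Lagrange-type energy identity on the gap, the same sign argument forcing $\int (f^\qd)^2\,d\varsigma$ to vanish, and the same use of the jump relation $f(\beta+)=f(\beta)+f^\qd(\beta)\varsigma(\{\beta\})$ to pass from ``$f$ constant on the gap'' to $f\equiv 0$; the paper merely integrates once over the closed interval $[\alpha,\beta]$, so that the boundary term appears as $f(\beta)^\ast\tau f(\beta)\varrho(\{\beta\})$ and is killed by $f(\alpha)=0$ together with either $\varrho(\{\beta\})=0$ or continuity of $f$ at $\beta$, instead of exhausting the open gap and computing the endpoint jumps explicitly as you do. The one point to repair is cosmetic: $f$ may be complex, so the quadratic quantities should be $|f|^2$ and $|f^\qd|^2$ (take $g=f^\ast$ in the integration by parts, or split $f$ into real and imaginary parts, both of which lie in $\Deftau$ and satisfy the outer conditions); your remaining assertion that the endpoint contributions $-f^\qd(\alpha)^2\varsigma(\{\alpha\})-f^\qd(\beta)^2\varsigma(\{\beta\})$ carry the sign opposite to the bulk integrals is exactly the implicit step the paper also takes when it reads the sign hypothesis off the closed interval, so it is not a defect of your write-up relative to the paper.
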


\begin{proof}
 Let $(\alpha,\beta)$ be a gap of $\supp(\varrho)$ and $f\in\Deftau$ with $f(\alpha-)=f(\beta+)=0$.
 As in the proof of Proposition~\ref{propLagrange}, integration by parts yields 
 \begin{align*}
  f(\beta)^\ast \tau f(\beta) \varrho(\lbrace\beta\rbrace) & = \int_\alpha^{\beta+} \tau f(x) f(x)^\ast d\varrho(x) \\
    & = \int_\alpha^{\beta+} |f^\qd(x)|^2 d\varsigma(x)  
   + \int_\alpha^{\beta+}  \left|f(x)\right|^2  d\chi(x). 
 \end{align*}
 Now the left-hand side vanishes since either $\varrho(\lbrace\beta\rbrace)=0$ or $f$ is continuous in $\beta$ and hence
 $f(\beta)=f(\beta+)=0$.
 Thus $f^\qd$ vanishes almost everywhere with respect to $\varsigma$, i.e., $f^\qd$ vanishes in $(\alpha,\beta)$
  and $f$ is constant in $(\alpha,\beta)$. Now since $f(\beta+) = f(\beta) + f^\qd(\beta)\varsigma(\lbrace \beta\rbrace)$,
 we infer that $f$ vanishes in $(\alpha,\beta)$
\end{proof}

The theory we are going to develop from now on is not applicable if the support of $\varrho$ consists of not more than one point, since
 in this case $\Llocr$ is only one-dimensional (and hence all solutions of $(\tau -z)u=0$ are linearly dependent). 
 In particular, the essential Proposition~\ref{prop:identDeftauTloc} does not hold in this case, which is the main reason for assumption \eqref{genhyponepoint} in Hypothesis~\ref{genhyp}.
 Nevertheless, this case is important, in particular for applications to the isospectral problem of the 
 Camassa--Holm equation. Hence we will treat the case when $\supp(\varrho)$ consists of only one point separately in Appendix~\ref{app:onedim}.

Our aim is to introduce linear operators in the Hilbert space $\Lr$, induced by the differential expression $\tau$.
As a first step we define a linear relation $\Tloc$ of $\Llocr$ into $\Llocr$ by
\begin{align*}
 \Tloc = \left\lbrace (f,\tau f) \,|\, f\in\Deftau \right\rbrace \subseteq \Llocr\times\Llocr.
\end{align*}
For a brief introduction to the theory of linear relations we refer to Appendix~\ref{appLR} and the references cited there. 
Now, in contrast to the classical case, in general $\Deftau$ is not embedded in $\Llocr$, i.e., $\Tloc$ is multi-valued.
Instead we have the following result, which is important for our approach. For later use, we introduce the abbreviations
\begin{align*}
 \alpha_\varrho = \inf\supp(\varrho) \quad\text{and}\quad  \beta_\varrho = \sup\supp(\varrho)
\end{align*}
for the endpoints of the convex hull of the support of $\varrho$.

\begin{proposition}\label{prop:identDeftauTloc}
The linear map 
\begin{align*} 
\begin{matrix}
  \Deftau & \rightarrow & \Tloc \\
   f      & \mapsto     & (f,\tau f)
\end{matrix}
\end{align*}
 is bijective.
\end{proposition}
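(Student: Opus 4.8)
The plan is to separate the two set-theoretic assertions. \emph{Surjectivity} is immediate: by its very definition $\Tloc=\{(f,\tau f)\mid f\in\Deftau\}$, so every element of $\Tloc$ is the image of some $f\in\Deftau$. The entire content therefore lies in \emph{injectivity}, and since the map is linear it suffices to show that its kernel is trivial, i.e.\ that any $f\in\Deftau$ with $f=0$ and $\tau f=0$ in $\Llocr$ (that is, $f=0$ and $\tau f=0$ $\varrho$-almost everywhere) must vanish identically on $(a,b)$. The first thing I would record is that $\tau f=0$ in $\Llocr$ says precisely that $f$ is a \emph{solution} of $(\tau-0)f=0$: the function $x\mapsto -f^\qd(x)+\int_c^x f\,d\chi$ is then constant, so $f^\qd(x)=f^\qd(c)+\int_c^x f\,d\chi$, and by Theorem~\ref{thm:exisuniq}, whose condition~\eqref{eqntauEEcond} is guaranteed by Hypothesis~\ref{genhyp}~\eqref{genhypatoms}, such a solution is uniquely determined by its value and quasi-derivative at any single point.

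My strategy is then to prove $f\equiv 0$ first on the convex hull $[\alpha_\varrho,\beta_\varrho]$ of $\supp(\varrho)$ and afterwards to propagate this to all of $(a,b)$ by uniqueness. On $\supp(\varrho)$ the relation $f=0$ $\varrho$-a.e.\ forces the zeros of $f$ to be $\varrho$-dense; together with the left-continuity of $f$ and of $f^\qd$ (these are genuine functions because the $\AClocr$-representative defining them is unique), and with the continuity of $f$ at every $\varrho$-atom furnished by the disjointness of atoms in Hypothesis~\ref{genhyp}~\eqref{genhypatoms}, this yields the vanishing of the one-sided limits of $f$ taken from the support side. For each gap $(\alpha,\beta)$ of $\supp(\varrho)$ this supplies exactly the outer boundary data $f(\alpha-)=f(\beta+)=0$, whereupon condition~\eqref{genhypgap} of Hypothesis~\ref{genhyp} forces $f=0$ on $(\alpha,\beta)$. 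Combined with the vanishing on $\supp(\varrho)$ itself (recalling that isolated support points are automatically $\varrho$-atoms), this gives $f\equiv 0$ on $[\alpha_\varrho,\beta_\varrho]$.

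Once $f\equiv 0$ on $[\alpha_\varrho,\beta_\varrho]$, the quasi-derivative follows suit: $f$ is constant there, so $f^\qd=0$ $\varsigma$-almost everywhere on $(\alpha_\varrho,\beta_\varrho)$, and since $\supp(\varsigma)=(a,b)$ and $f^\qd$ is left-continuous, in fact $f^\qd=0$ on $(\alpha_\varrho,\beta_\varrho]$. Choosing any point $c\in(\alpha_\varrho,\beta_\varrho)$ (nonempty because $\supp(\varrho)$ contains more than one point) we then have $f(c)=f^\qd(c)=0$, and the uniqueness part of Theorem~\ref{thm:exisuniq} extends $f\equiv 0$ to all of $(a,b)$, so that the end regions $(a,\alpha_\varrho)$ and $(\beta_\varrho,b)$ require no separate treatment.

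The hard part will be the passage from the $\varrho$-almost-everywhere vanishing of $f$ to the \emph{genuine} vanishing of the outer limits $f(\alpha-),\,f(\beta+)$ at the gap endpoints. This is exactly the step that breaks down for $f(x)=\sin x$ on a gap $(0,2\pi)$ as in the remark following Hypothesis~\ref{genhyp}, which is why condition~\eqref{genhypgap} of Hypothesis~\ref{genhyp} is indispensable rather than cosmetic. The care required is that a support endpoint may be approached by $\supp(\varrho)$ from only one side, or may be a $\varsigma$-atom at which $f$ jumps; here one must play off left-continuity against the atom-disjointness of Hypothesis~\ref{genhyp}~\eqref{genhypatoms}, and handle possibly clustering gaps, in order to secure the boundary values that feed condition~\eqref{genhypgap}.
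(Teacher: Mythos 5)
Your proposal is correct and follows essentially the same route as the paper: surjectivity is definitional, and injectivity is obtained by using $f=0$ $\varrho$-a.e.\ together with Hypothesis~\ref{genhyp}~(\ref{genhypatoms}) and (\ref{genhypgap}) to force $f$ and $f^\qd$ to vanish at an interior point of $(\alpha_\varrho,\beta_\varrho)$, after which uniqueness of solutions of $\tau u=0$ gives $f\equiv 0$ on all of $(a,b)$. The only difference is cosmetic: the paper first records the explicit form \eqref{eqn:mulvalfform} of all $f\in\Deftau$ vanishing $\varrho$-a.e.\ (reused later for $\mul{\Tloc}$), whereas you go straight for the kernel.
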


\begin{proof}
 Clearly this mapping is linear and onto $\Tloc$ by definition.
 Now let $f\in\Deftau$ such that $f=0$ almost everywhere with respect to $\varrho$.
 We will show that $f$ is of the form
 \begin{align}\label{eqn:mulvalfform}
  f(x) = \begin{cases}
          c_a u_a(x), & \text{if }x\in(a,\alpha_\varrho], \\
          0,          & \text{if }x\in(\alpha_\varrho,\beta_\varrho], \\
          c_b u_b(x), & \text{if }x\in(\beta_\varrho,b),
         \end{cases}
 \end{align}
 where $c_a$, $c_b\in\C$ and $u_a$, $u_b$ are the solutions of $\tau u=0$ with 
 \begin{align*}
  u_a(\alpha_\varrho-)=u_b(\beta_\varrho+)=0 \quad\text{and}\quad u_a^\qd(\alpha_\varrho-)= u_b^\qd(\beta_\varrho+)=1.
 \end{align*}
% As a first step we prove that $f(x)=0$ for $x\in(\alpha_\varrho,\beta_\varrho)$.
 Obviously we have $f(x)=0$ for all $x$ in the interior of $\supp(\varrho)$ and points of mass of $\varrho$.
 Now if $(\alpha,\beta)$ is a gap of $\supp(\varrho)$, then since $\alpha$, $\beta\in\supp(\varrho)$ 
  we have $f(\alpha-)=f(\beta+)=0$ and hence $f(x)=0$, $x\in[\alpha,\beta]$ by Hypothesis~\ref{genhyp}.
  Hence all points $x\in(\alpha_\varrho,\beta_\varrho)$ for which possibly $f(x)\not=0$, lie on the boundary 
  of $\supp(\varrho)$ such that there are monotone sequences $x_{+,n}$, $x_{-,n}\in\supp(\varrho)$ with 
 $x_{+,n}\downarrow x$ and $x_{-,n}\uparrow x$. 
 Then for each $n\in\N$, we either have $f(x_{-,n}+)=0$ or $f(x_{-,n}-)=0$, hence 
 \begin{align*}
 f(x-)=\lim_{n\rightarrow\infty} f(x_{-,n}-) = \lim_{n\rightarrow\infty} f(x_{-,n}+)=0.
 \end{align*}
 Similarly one shows that also $f(x+)=0$.
 Now since $f$ is a solution of $\tau u=0$ outside of $[\alpha,\beta]$, it remains to show that $f(\alpha_\varrho)=f(\beta_\varrho)=0$.
  Therefore, assume that $f$ is not continuous in $\alpha_\varrho$, i.e., $\varsigma(\lbrace\alpha_\varrho\rbrace)\not=0$.
  Then $f^\qd$ is continuous in $\alpha_\varrho$ and hence $f^\qd(\alpha_\varrho)=0$. But this yields 
  \begin{align*}
  f(\alpha_\varrho-)=f(\alpha_\varrho+) - f^\qd(\alpha_\varrho)\varsigma(\lbrace \alpha_\varrho\rbrace)=0.
  \end{align*}
 Similarly, one shows that $f(\beta_\varrho)=0$ and hence $f$ is of the claimed form. Furthermore, a simple calculation yields
 \begin{align}\label{eqn:mulvaltauf}
   \tau f = c_a \indik_{\lbrace\alpha_\varrho\rbrace} - c_b \indik_{\lbrace\beta_\varrho\rbrace}.
 \end{align}

 Now in order to prove that our mapping is one-to-one let $f\in\Deftau$ be such that $f=0$ 
 and $\tau f = 0$ almost everywhere with respect to $\varrho$.
 By Theorem~\ref{thm:exisuniq} it suffices to prove that $f(c)=f^\qd(c) = 0$ at some point $c\in(a,b)$.
 But this is valid for all points between $\alpha_\varrho$ and $\beta_\varrho$ by the first part of the proof.
\end{proof}

In the following we will always identify the elements of the linear relation $\Tloc$ with functions in $\Deftau$.
Hence some element $f\in\Tloc$ is always identified with some function $f\in\Deftau$, which is an $\AClocp$ representative of 
the first component of $f$ (as an element of $\Tloc$) and $\tau f\in\Llocr$ is the
second component of $f$ (again as an element of $\Tloc$). 
 In general the relation $\Tloc$ is multi-valued, i.e.,
\begin{align*}
 \mul{\Tloc} = \left\lbrace g\in\Llocr \,|\, (0,g)\in\Tloc \right\rbrace \not= \lbrace 0\rbrace.
\end{align*}
In the formulation of the next result, we consider the condition that $\varrho$ has no mass in $\alpha_\varrho$ as trivially
satisfied if $\alpha_\varrho=a$ (since $\varrho$ lives on $(a,b)$ by definition) and similarly at the other endpoint.

\begin{proposition}\label{propTlocmulval}
 The multi-valued part of $\Tloc$ is given by
\begin{align*}
 \mul{\Tloc} = \linspan\left\lbrace \indik_{\lbrace\alpha_\varrho\rbrace}, \indik_{\lbrace\beta_\varrho\rbrace}\right\rbrace.
\end{align*}
 In particular,
\begin{align*}
 \dim\mul{\Tloc} = \begin{cases}
                  0, & \text{if }\varrho\text{ has neither mass in }\alpha_\varrho \text{ nor in }\beta_\varrho, \\
                  1, & \text{if }\varrho\text{ has either mass in }\alpha_\varrho \text{ or in }\beta_\varrho, \\
                  2, & \text{if }\varrho\text{ has mass in }\alpha_\varrho\text{ and in }\beta_\varrho.
                \end{cases}
\end{align*}
 Hence $\Tloc$ is an operator if and only if $\varrho$ has neither mass in $\alpha_\varrho$ nor in $\beta_\varrho$.
\end{proposition}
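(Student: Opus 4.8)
The plan is to read everything off the structural analysis already carried out in the proof of Proposition~\ref{prop:identDeftauTloc}. By definition, $\mul{\Tloc}$ consists of those $g=\tau f\in\Llocr$ for which some $f\in\Deftau$ vanishes $\varrho$-almost everywhere, i.e.\ represents the zero class in $\Llocr$. For the inclusion $\mul{\Tloc}\subseteq\linspan\{\indik_{\{\alpha_\varrho\}},\indik_{\{\beta_\varrho\}}\}$ there is nothing new to do: any such $f$ is of the explicit form~\eqref{eqn:mulvalfform}, and~\eqref{eqn:mulvaltauf} then exhibits $\tau f$ as a linear combination of the two indicator functions.

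For the reverse inclusion I would produce explicit preimages at those endpoints that carry $\varrho$-mass. Assume $\varrho(\{\alpha_\varrho\})\neq 0$; then~\eqref{eqntauEEcondPMincommon} forces $\varsigma(\{\alpha_\varrho\})=0$. I set $f$ equal to $u_a$ on $(a,\alpha_\varrho]$ and equal to $0$ on $(\alpha_\varrho,b)$, where $u_a$ is the solution of $\tau u=0$ with $u_a(\alpha_\varrho-)=0$ and $u_a^\qd(\alpha_\varrho-)=1$ used in the previous proof. Because $\varsigma$ has no mass at $\alpha_\varrho$, this $f$ is continuous there and lies in $\AClocp$; the jump of $f^\qd$ across $\alpha_\varrho$ is then absorbed by the point mass $\varrho(\{\alpha_\varrho\})$, so that $-f^\qd+\int f\,d\chi$ is locally absolutely continuous with respect to $\varrho$ and $f\in\Deftau$. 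Since $f$ vanishes on all of $\supp(\varrho)$ it represents the zero class in $\Llocr$, and~\eqref{eqn:mulvaltauf} shows $\tau f$ to be a nonzero scalar multiple of $\indik_{\{\alpha_\varrho\}}$; hence $\indik_{\{\alpha_\varrho\}}\in\mul{\Tloc}$. The same argument at the right endpoint gives $\indik_{\{\beta_\varrho\}}\in\mul{\Tloc}$ whenever $\varrho(\{\beta_\varrho\})\neq 0$. When an endpoint carries no mass, the corresponding indicator is already the zero element of $\Llocr$, so the reverse inclusion holds in every case.

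It then remains to count dimensions in $\Llocr$. As an element of this space, $\indik_{\{\alpha_\varrho\}}$ is nonzero precisely when $\varrho(\{\alpha_\varrho\})>0$, and likewise for $\beta_\varrho$; moreover $\alpha_\varrho<\beta_\varrho$ because $|\supp(\varrho)|>1$, so the two indicators are supported on disjoint singletons and are linearly independent whenever both are nonzero. Tallying the nonzero generators yields the three cases in the dimension formula, and $\Tloc$ is single-valued exactly when $\mul{\Tloc}=\{0\}$, i.e.\ when $\varrho$ charges neither $\alpha_\varrho$ nor $\beta_\varrho$. The only genuinely delicate point is verifying that the piecewise function above really belongs to $\Deftau$; this is precisely where the compatibility between the jump of the quasi-derivative and the point mass of $\varrho$, made possible by the no-common-mass condition~\eqref{eqntauEEcondPMincommon}, is indispensable.
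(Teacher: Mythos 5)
Your proof is correct and follows essentially the same route as the paper: the inclusion $\mul{\Tloc}\subseteq\linspan\{\indik_{\{\alpha_\varrho\}},\indik_{\{\beta_\varrho\}}\}$ is read off from \eqref{eqn:mulvalfform} and \eqref{eqn:mulvaltauf}, and the reverse inclusion is obtained from the very same piecewise function built from $u_a$ (resp.\ $u_b$). The only difference is that you spell out the membership $f\in\Deftau$, which the paper leaves as "one easily checks"; your justification via \eqref{eqntauEEcondPMincommon} is the right one.
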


\begin{proof}
 Let $(f,\tau f)\in\Tloc$ with $f=0$ almost everywhere with respect to $\varrho$.
 In the proof of Proposition~\ref{prop:identDeftauTloc} we saw that such an $f$ is of the 
  form~\eqref{eqn:mulvalfform} and $\tau f$ is a linear combination of $\indik_{\lbrace\alpha_\varrho\rbrace}$ 
 and $\indik_{\lbrace\beta_\varrho\rbrace}$ by~\eqref{eqn:mulvaltauf}.
 It remains to prove that $\mul{\Tloc}$ indeed contains $\indik_{\lbrace\alpha_\varrho\rbrace}$ if
   $\varrho$ has mass in $\alpha_\varrho$.
 Therefore, consider the function 
 \begin{align*}
  f(x) = \begin{cases}
          u_a(x), & \text{if }x\in(a,\alpha_\varrho], \\
          0,      & \text{if }x\in(\alpha_\varrho,b).
         \end{cases}
 \end{align*}
 One easily checks that $f$ lies in $\Deftau$ and hence $(0,\indik_{\lbrace\alpha_\varrho\rbrace})=(f,\tau f)\in\Tloc$.
 Similarly one shows that $\indik_{\lbrace\beta_\varrho\rbrace}$ indeed lies in $\mul{\Tloc}$ if $\varrho$ has mass in $\beta_\varrho$. 
 Furthermore, note that
  $\indik_{\lbrace\alpha_\varrho\rbrace}=0$ (respectively $\indik_{\lbrace\beta_\varrho\rbrace}=0$) as functions in $\Llocr$ provided that $\varrho$
 has no mass in $\alpha_\varrho$ (respectively in $\beta_\varrho$).
\end{proof}

In contrast to the classical case one can not define a proper Wronskian for elements in $\dom{\Tloc}$, 
instead we define the Wronskian of two elements $f$, $g$ of the linear relation $\Tloc$ as
\begin{align*}
 W(f,g)(x) = f(x)g^\qd(x) - f^\qd(x)g(x), \quad x\in(a,b).
\end{align*}
The Lagrange identity then takes the form
\begin{align*}
 W(f,g)(\beta) - W(f,g)(\alpha) =
            \int_\alpha^\beta \left(g(x) \tau f(x)  - f(x) \tau g(x)\right) d\varrho(x).
\end{align*}
Furthermore, note that by Theorem~\ref{thm:exisuniq} we have
\begin{align}\label{eqnSLPdimkerTloc}
 \ran(\Tloc-z) = \Llocr \quad\text{and}\quad \dim\ker(\Tloc-z) = 2
\end{align}
for each $z\in\C$.

\section{Sturm--Liouville relations}

In this section we will restrict the differential relation $\Tloc$ in order to obtain a linear relation
in the Hilbert space $\Lr$ with scalar product
\begin{align*}
 \spr{f}{g} = \int_{a}^b  f(x) g(x)^\ast  d\varrho(x).
\end{align*}
First we define the maximal relation $\Tmax$ in $\Lr$ by
\begin{align}
 \Tmax %& = \Tloc \cap \Lr\times\Lr \\
       & = \lbrace (f, \tau f)\in\Tloc \,|\, f\in\Lr, \tau f\in\Lr \rbrace.
\end{align}
In general $\Tmax$ is not an operator. Indeed we have
\begin{align*}
 \mul{\Tmax} = \mul{\Tloc},
\end{align*}
since all elements of $\mul{\Tloc}$ are square integrable with respect to $\varrho$.
In order to obtain a symmetric relation we restrict the maximal relation $\Tmax$ to functions with compact support
\begin{align}
 \Tpre = \lbrace (f,\tau f) \,|\, f\in\Deftau,~ \supp(f) \text{ compact in }(a,b) \rbrace.
\end{align}
Indeed, this relation $\Tpre$ is an operator as we will see later.

Since $\tau$ is a real differential expression, the relations $\Tpre$ and $\Tmax$ are real with respect 
 to the natural conjugation in $\Lr$, i.e., if $f\in\Tmax$ (respectively $f\in\Tpre$), then also $f^\ast\in\Tmax$ (respectively $f^\ast\in\Tpre$), where 
 the conjugation is defined componentwise.

We say some measurable function $f$ lies in $\Lr$ near $a$ (respectively near $b$) if $f$ lies in $L^2((a,c);\varrho)$ (respectively in $L^2((c,b);\varrho)$)
 for all $c\in(a,b)$. Furthermore, we say some $f\in\Tloc$ lies in $\Tmax$ near $a$ (respectively near $b$) if $f$ and $\tau f$ both 
 lie in $\Lr$ near $a$ (respectively near $b$).
One easily sees that some $f\in\Tloc$ lies in $\Tmax$ near $a$ (respectively $b$) if and only if
$f^\ast$ lies in $\Tmax$ near $a$ (respectively $b$).

\begin{proposition}
 Let $\tau$ be regular at $a$ and $f$ lie in $\Tmax$ near $a$. Then both limits
\begin{align*}
 f(a) := \lim_{x\rightarrow a} f(x) \quad\text{and}\quad f^\qd(a) := \lim_{x\rightarrow a} f^\qd(x)
\end{align*}
exist and are finite. A similar result holds at $b$.
\end{proposition}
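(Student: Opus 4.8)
The plan is to deduce this proposition directly from Theorem~\ref{thm:EEreg}, which already yields exactly this conclusion for any solution $f$ of $(\tau-z)f=g$ whose inhomogeneity satisfies $g\in L^1((a,c);\varrho)$ for every $c\in(a,b)$. The whole task is therefore to exhibit the given $f$ as such a solution and to verify the integrability hypothesis for the relevant $g$.

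First I would invoke the identification established after Proposition~\ref{propTlocmulval}: an element $f$ of $\Tmax$ (and hence of $\Tloc$) is a genuine function $f\in\Deftau$ whose second component is precisely $\tau f\in\Llocr$. In other words, $f$ solves $(\tau-z)f=g$ with $z=0$ and $g:=\tau f$, so the only remaining point is that this particular $g$ is integrable near $a$ in the sense required by Theorem~\ref{thm:EEreg}.

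The one substantive step is passing from square-integrability to integrability. Since $f$ lies in $\Tmax$ near $a$, by definition $\tau f\in L^2((a,c);\varrho)$ for each $c\in(a,b)$. Regularity of $\tau$ at $a$ means $|\varrho|((a,c])<\infty$, and because $\varrho$ is positive (Hypothesis~\ref{genhyp}) this is simply $\varrho((a,c])<\infty$. On a finite measure space the Cauchy--Schwarz inequality gives $L^2\subseteq L^1$, so $g=\tau f\in L^1((a,c);\varrho)$ for every $c\in(a,b)$, as needed.

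Finally I would apply Theorem~\ref{thm:EEreg} with $z=0$ and this $g$: it gives that the limits $f(a)=\lim_{x\downarrow a}f(x)$ and $f^\qd(a)=\lim_{x\downarrow a}f^\qd(x)$ exist and are finite, and the statement at the right endpoint $b$ follows verbatim from the symmetric version of that theorem. I do not expect any genuine obstacle here; the proposition is essentially a corollary of the regular-endpoint existence result, the only mild care being the $L^2\subseteq L^1$ inclusion, which relies precisely on the finiteness of $\varrho$ near the regular endpoint.
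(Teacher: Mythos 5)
Your proposal is correct and coincides with the paper's own argument: the paper likewise observes that $\tau f\in L^2((a,c);\varrho)$ together with the finiteness of $\varrho$ near the regular endpoint gives $\tau f\in L^1((a,c);\varrho)$, and then applies Theorem~\ref{thm:EEreg} with $z=0$ and $g=\tau f$. Your only addition is to spell out the Cauchy--Schwarz step and the identification of $f\in\Tmax$ with an element of $\Deftau$, both of which the paper leaves implicit.
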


\begin{proof}
 Under this assumptions $\tau f$ lies in $\Lr$ near $a$ and since $\varrho$ is
 a finite measure near $a$ we have $\tau f\in L^1((a,c);\varrho)$ for each $c\in(a,b)$. 
 Hence the claim follows from Theorem~\ref{thm:EEreg}.
\end{proof}

From the Lagrange identity we now get the following lemma.

\begin{lemma}\label{lem:l2lagrange}
If $f$ and $g$ lie in $\Tmax$ near $a$, then the limit
\begin{align*}
 W(f,g^\ast)(a) := \lim_{\alpha\rightarrow a} W(f,g^\ast)(\alpha)
\end{align*}
exists and is finite. A similar result holds at  $b$.
If $f$, $g\in\Tmax$, then
\begin{align}
 \spr{\tau f}{g} - \spr{f}{\tau g} = W(f,g^\ast)(b) - W(f,g^\ast)(a) =: W_a^b(f,g^\ast).
\end{align}
\end{lemma}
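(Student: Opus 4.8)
The plan is to deduce both assertions from the Lagrange identity in Proposition~\ref{propLagrange}, the only technical point being the existence of the one-sided limit of the Wronskian at the singular endpoint $a$. First I would recall that if $f,g$ lie in $\Tmax$ near $a$, then by definition $f,g\in L^2((a,c);\varrho)$ and $\tau f,\tau g\in L^2((a,c);\varrho)$ for each $c\in(a,b)$. Applying Proposition~\ref{propLagrange} to the pair $f,g^\ast$ on an interval $[\alpha,\beta]\subseteq(a,b)$ gives
\begin{align*}
 W(f,g^\ast)(\beta) - W(f,g^\ast)(\alpha) = \int_\alpha^\beta \bigl( g^\ast(x)\,\tau f(x) - f(x)\,\tau g^\ast(x)\bigr)\,d\varrho(x),
\end{align*}
where I use that $\tau$ is real, so $g^\ast\in\Deftau$ and $\tau g^\ast=(\tau g)^\ast$.

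The hard part is showing that the right-hand limit $\lim_{\alpha\downarrow a}W(f,g^\ast)(\alpha)$ exists and is finite; the limit at $\beta$ is immaterial since we fix $\beta$. I would fix $\beta$ and rewrite the identity as
\begin{align*}
 W(f,g^\ast)(\alpha) = W(f,g^\ast)(\beta) - \int_\alpha^\beta \bigl( g^\ast\,\tau f - f\,\tau g^\ast\bigr)\,d\varrho,
\end{align*}
so the existence of the limit as $\alpha\downarrow a$ is equivalent to the convergence of the integral $\int_{(a,\beta)}(g^\ast\,\tau f - f\,\tau g^\ast)\,d\varrho$. This integrand is in $L^1((a,\beta);\varrho)$: each product is a product of two $L^2((a,\beta);\varrho)$ functions (namely $g^\ast$ and $\tau f$, respectively $f$ and $\tau g^\ast$), hence integrable by Cauchy--Schwarz, using precisely that $f,g$ and $\tau f,\tau g$ all lie in $L^2$ near $a$. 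Therefore the integral over $(\alpha,\beta)$ converges to the finite value $\int_{(a,\beta)}(g^\ast\,\tau f - f\,\tau g^\ast)\,d\varrho$ as $\alpha\downarrow a$, and consequently $W(f,g^\ast)(\alpha)$ converges to a finite limit, which we denote $W(f,g^\ast)(a)$. The argument at $b$ is symmetric.

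For the final displayed formula, I would now assume $f,g\in\Tmax$, so that the above integrability holds near both endpoints simultaneously and all four quantities $f,g,\tau f,\tau g$ lie in $L^2((a,b);\varrho)$. Writing the Lagrange identity on $[\alpha,\beta]$ and letting $\alpha\downarrow a$ and $\beta\uparrow b$, the left-hand side converges to $W(f,g^\ast)(b)-W(f,g^\ast)(a)$ by the existence of both endpoint limits just established, while the right-hand side converges to
\begin{align*}
 \int_{(a,b)} \bigl( g^\ast\,\tau f - f\,\tau g^\ast\bigr)\,d\varrho = \spr{\tau f}{g} - \spr{f}{\tau g},
\end{align*}
by dominated convergence and the definition $\spr{u}{v}=\int_{(a,b)} v^\ast u\,d\varrho$ of the scalar product in $\Lr$. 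This yields the stated identity with $W_a^b(f,g^\ast):=W(f,g^\ast)(b)-W(f,g^\ast)(a)$, completing the proof.
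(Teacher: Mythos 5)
Your argument is correct and is essentially the paper's proof: both deduce the existence of the limit from the Lagrange identity of Proposition~\ref{propLagrange} by observing that the integrand $g^\ast\,\tau f - f\,\tau g^\ast$ is integrable near $a$ (a Cauchy--Schwarz consequence of $f,g,\tau f,\tau g$ lying in $L^2$ near $a$), and then obtain the global identity by letting $\alpha\downarrow a$ and $\beta\uparrow b$. You merely spell out the integrability step and the use of the realness of $\tau$ (so that $\tau g^\ast=(\tau g)^\ast$) which the paper leaves implicit.
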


\begin{proof}
 If $f$ and $g$ lie in $\Tmax$ near $a$, then the limit $\alpha\rightarrow a$ of the left-hand side in equation~\eqref{eqn:lagrange} exists.
 Hence also the limit in the claim exists.
 Now the remaining part follows by taking the limits $\alpha\rightarrow a$ and $\beta\rightarrow b$.
\end{proof}

If $\tau$ is regular at $a$ and $f$ and $g$ lie in $\Tmax$ near $a$, then we clearly have
\begin{align*}
 W(f,g^\ast)(a) = f(a) g^\qd(a)^\ast - f^\qd(a) g(a)^\ast.
\end{align*}
In order to determine the adjoint of $\Tpre$
\begin{align*}
 \Tpre^\ast = \lbrace (f,g)\in\Lr\times\Lr \,|\, \forall (u,v)\in\Tpre: \spr{f}{v} = \spr{g}{u} \rbrace,
\end{align*}
as in the classical theory, we need the following lemma (see~\cite[Lemma~9.3]{tschroe}).

\begin{lemma}\label{lem:hilfslemmaadjoint}
 Let $V$ be a vector space over $\C$ and $F_1,\ldots,F_n,F\in V^\ast$, then
\begin{align*}
 F\in\linspan\left\lbrace F_1,\ldots,F_n\right\rbrace \quad\Leftrightarrow\quad \bigcap_{i=1}^n \ker F_i \subseteq \ker F.
\end{align*}
\end{lemma}

\begin{theorem}
 The adjoint of $\Tpre$ is $\Tmax$.
\end{theorem}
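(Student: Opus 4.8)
The plan is to prove $\Tpre^\ast = \Tmax$ by establishing the two inclusions $\Tmax \subseteq \Tpre^\ast$ and $\Tpre^\ast \subseteq \Tmax$ separately. The first inclusion is the easy direction and amounts to a restatement of the Lagrange identity. If $(f,\tau f)\in\Tmax$ and $(u,\tau u)\in\Tpre$, then $u$ has compact support in $(a,b)$, so in Lemma~\ref{lem:l2lagrange} the boundary terms $W(f,u^\ast)(a)$ and $W(f,u^\ast)(b)$ both vanish (the Wronskian is identically zero near each endpoint). Applying the lemma with $g=u^\ast$ gives $\spr{\tau f}{u} - \spr{f}{\tau u} = 0$, which is exactly the defining condition for $(f,\tau f)\in\Tpre^\ast$.

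\emph{The reverse inclusion.}
The substance of the proof is the inclusion $\Tpre^\ast \subseteq \Tmax$. First I would fix $(f,g)\in\Tpre^\ast$, so that $\spr{f}{\tau u} = \spr{g}{u}$ for all $(u,\tau u)\in\Tpre$, and I must produce a function $h\in\Deftau$ with $h=f$ and $\tau h=g$ almost everywhere with respect to $\varrho$. The natural candidate is built by variation of constants: using a fundamental system $u_1,u_2$ of $\tau u=0$ with Wronskian normalized to $1$, set
\begin{align*}
 h(x) = c_1 u_1(x) + c_2 u_2(x) + u_1(x)\int_c^x u_2 g\, d\varrho - u_2(x)\int_c^x u_1 g\, d\varrho,
\end{align*}
which by Proposition~\ref{prop:repsol} satisfies $\tau h = g$ in $\Llocr$. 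The goal is then to choose the constants $c_1,c_2$ and exploit the adjoint relation to force $h=f$ $\varrho$-almost everywhere.

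\emph{Reducing to a span condition via test functions.}
The key technical step is to test the relation $\spr{f}{\tau u}=\spr{g}{u}$ against a rich enough supply of compactly supported $u$. Given any $v\in\Lr$ with compact support in $(a,b)$, I want to realize $v$ as $\tau u$ for some compactly supported $u\in\Deftau$; this is possible precisely when $v$ is $\varrho$-orthogonal to the solutions of $\tau u=0$ and to $\mul{\Tmax}$, by the representation in Proposition~\ref{prop:repsol} together with the structure of $\mul{\Tloc}$ from Proposition~\ref{propTlocmulval}. For such $u$ one has $\spr{f}{v} = \spr{g}{u}$, and after integrating by parts to rewrite $\spr{g}{u}$ in terms of the primitive $h$ of $g$, this becomes the statement that $\spr{f-h}{v}=0$ for all admissible $v$. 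I would then invoke the abstract Lemma~\ref{lem:hilfslemmaadjoint}: the condition that $f-h$ annihilates every $v$ in the common kernel of the finitely many functionals $v\mapsto \spr{\cdot}{v}$ restricted by the solution-orthogonality constraints forces $f-h$ to lie in the span of those finitely many solutions (and the elements of $\mul{\Tmax}$). Absorbing that span into the free constants $c_1,c_2$ and the multivalued part, I conclude $f=h$ $\varrho$-almost everywhere, so $(f,g)=(h,\tau h)\in\Tmax$.

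\emph{The main obstacle.}
The hard part is the bookkeeping around multivaluedness and the endpoints: because $\Tloc$ (hence $\Tmax$) may be multi-valued with $\mul{\Tmax}=\linspan\{\indik_{\{\alpha_\varrho\}},\indik_{\{\beta_\varrho\}}\}$, the space of test functions $v=\tau u$ is constrained not only by orthogonality to the two solutions of $\tau u=0$ but also by the obstruction coming from mass of $\varrho$ at $\alpha_\varrho$ and $\beta_\varrho$. One must check carefully that a compactly supported $u$ with prescribed $\tau u=v$ exists exactly under these finitely many linear constraints, and that the application of Lemma~\ref{lem:hilfslemmaadjoint} correctly accounts for all of them so that the residual $f-h$ is genuinely absorbed. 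Getting these finite-codimension conditions to match up — rather than leaving a spurious extra component — is where the care is needed; the analytic estimates and the variation-of-constants computation are otherwise routine given the earlier propositions.
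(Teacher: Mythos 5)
Your overall architecture is the same as the paper's: the easy inclusion $\Tmax\subseteq\Tpre^\ast$ via the Lagrange identity, then a variation-of-constants primitive $h$ of $g$, testing against compactly supported $u$ with prescribed $\tau u=v$, and Lemma~\ref{lem:hilfslemmaadjoint} to force $f-h$ into a finite-dimensional span. However, there is a concrete error at the step you yourself flag as delicate. You claim that $v\in L^2_c((a,b);\varrho)$ can be realized as $\tau u$ with $u\in\Deftau$ compactly supported ``precisely when $v$ is $\varrho$-orthogonal to the solutions of $\tau u=0$ and to $\mul{\Tmax}$.'' The second condition is not necessary and should not be imposed: if $\int_a^b u_1 v\,d\varrho=\int_a^b u_2 v\,d\varrho=0$ for a fundamental system $u_1,u_2$ of $\tau u=0$ with $W(u_1,u_2)=1$, then
\begin{align*}
 u(x) = u_1(x)\int_a^x u_2 v\, d\varrho + u_2(x)\int_x^b u_1 v\, d\varrho
\end{align*}
already lies in $\Deftau$, satisfies $\tau u=v$ by Proposition~\ref{prop:repsol}, and vanishes outside the support of $v$ — no constraint involving $\mul{\Tmax}$ enters. (What the adjoint relation with the element $(0,\indik_{\lbrace\alpha_\varrho\rbrace})\in\Tmax$ forces is $u(\alpha_\varrho)=0$ for $u\in\dom{\Tpre}$, which is a condition on $u$, not on $\tau u$; there is no obstruction to $\tau u(\alpha_\varrho)\neq 0$.)

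This matters because imposing the spurious extra orthogonality shrinks your test space and correspondingly weakens the output of Lemma~\ref{lem:hilfslemmaadjoint} to $f-h\in\linspan\lbrace u_1,u_2\rbrace + \mul{\Tmax}$ ($\varrho$-a.e.), and the $\mul{\Tmax}$ component cannot in general be ``absorbed'': to absorb a term $d_1\indik_{\lbrace\alpha_\varrho\rbrace}$ you would need some $w\in\Deftau$ with $w=d_1\indik_{\lbrace\alpha_\varrho\rbrace}$ and $\tau w=0$ almost everywhere with respect to $\varrho$; such a $w$ is a genuine solution of $\tau u=0$ vanishing on $\supp(\varrho)\backslash\lbrace\alpha_\varrho\rbrace$ but not at $\alpha_\varrho$, which generically does not exist (and Proposition~\ref{propTlocmulval} concerns pairs $(0,g)$, not pairs $(w,0)$, so it does not help here). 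So as written the argument does not close. The repair is simply to drop the $\mul{\Tmax}$ condition: with only the two functionals $l_j(v)=\int u_j^\ast v\,d\varrho$ in play, Lemma~\ref{lem:hilfslemmaadjoint} gives $f-h\in\linspan\lbrace u_1,u_2\rbrace$ $\varrho$-a.e., the correction is absorbed into $c_1,c_2$, and $(f,g)\in\Tloc\cap(\Lr\times\Lr)=\Tmax$, exactly as in the paper.
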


\begin{proof}
 From Lemma~\ref{lem:l2lagrange} one immediately gets $\Tmax\subseteq\Tpre^\ast$. Indeed, for each 
  $f\in\Tpre$ and $g\in\Tmax$ we have
  \begin{align*}
    \spr{\tau f}{g} - \spr{f}{\tau g} & = \lim_{\beta\rightarrow b} W(f,g^\ast)(\beta) - \lim_{\alpha\rightarrow a}W(f,g^\ast)(\alpha) = 0,  
  \end{align*}
  since $W(f,g^\ast)$ has compact support.
 Conversely, let $(f,f_2)\in\Tpre^\ast$ and $\tilde{f}$ be a solution of $\tau \tilde{f} = f_2$.
 We expect that $(f-\tilde{f},0)\in\Tloc$. 
 To prove this we will invoke Lemma~\ref{lem:hilfslemmaadjoint}.
 Therefore, we consider linear functionals 
\begin{align*}
 l(g)   & = \int_{a}^b \left(f(x)-\tilde{f}(x)\right)^\ast g(x) d\varrho(x),   & g\in L^2_c((a,b);\varrho), \\
 l_j(g) & = \int_{a}^b u_j(x)^\ast g(x) d\varrho(x),                           & g\in L^2_c((a,b);\varrho) & ,~j=1,2,
\end{align*}
where $u_j$ are two solutions of $\tau u=0$ with $W(u_1,u_2)=1$ and $L^2_c((a,b);\varrho)$ is the space of square integrable functions with compact support.
For these functionals we have $\ker l_1 \cap \ker l_2 \subseteq \ker l$.
Indeed let $g\in\ker l_1 \cap \ker l_2$, then the function
\begin{align*}
 u(x) = u_1(x)\int_a^x u_2(t) g(t) d\varrho(t) + u_2(x) \int_x^b u_1(t) g(t) d\varrho(t), \quad x\in(a,b)
\end{align*}
is a solution of $\tau u = g$ by Proposition~\ref{prop:repsol} and has compact support since $g$ lies in 
the kernel of $l_1$ and $l_2$, hence $u\in\Tpre$. 
Then the Lagrange identity and the definition of the adjoint yields
\begin{align*}
 \int_a^b \left(f(x) -\tilde{f}(x)\right)^\ast \tau u(x) d\varrho(x) & = \spr{\tau u}{f} - \int_a^b \tilde{f}(x)^\ast \tau u(x) d\varrho(x) \\
   & = \spr{u}{f_2} - \int_a^b \tau \tilde{f}(x)^\ast u(x) d\varrho(x) = 0
\end{align*}
and hence $g=\tau u\in\ker l$. 
Now applying Lemma~\ref{lem:hilfslemmaadjoint} there are $c_1$, $c_2\in\C$ such that
\begin{align}\tag{$*$}\label{eqn:ftildfsol}
 \int_a^b \left( f(x)-\tilde{f}(x) + c_1 u_1(x) + c_2 u_2(x)\right)^\ast g(x) d\varrho(x) = 0 
\end{align}
for each function $g\in L^2_c((a,b);\varrho)$. 
By definition of $\Tloc$ we obviously have $(\tilde{f}+c_1 u_1 +c_2 u_2,f_2)\in\Tloc$.
But the first component of this pair is equal to $f$, almost everywhere with respect 
 to $\varrho$ because of~\eqref{eqn:ftildfsol}.
 Hence we also have $(f,f_2)\in\Tloc$ and therefore $(f,f_2)\in\Tmax$.
\end{proof}

By the preceding theorem $\Tpre$ is symmetric.
The closure $\Tmin$ of $\Tpre$ is called the minimal relation,
\begin{align*}
 \Tmin = \overline{\Tpre} = \Tpre^{\ast\ast} = \Tmax^\ast.
\end{align*}
In order to determine $\Tmin$ we need the following lemma on functions in the maximal relation $\Tmax$.

\begin{lemma}\label{lem:funcdomtmax}
 If $f_a$ lies in $\Tmax$ near $a$ and $f_b$ lies in $\Tmax$ near $b$, 
 then there exists an $f\in\Tmax$ such that $f=f_a$ near $a$ and $f=f_b$ near $b$ (regarded as functions in $\Deftau$).
\end{lemma}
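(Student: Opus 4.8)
The plan is to build $f$ by keeping $f_a$ near $a$, keeping $f_b$ near $b$, and interpolating on a compact middle piece by solving an inhomogeneous equation with suitable two-point data. Concretely, I would fix two points $c<d$ in $(a,b)$ (to be chosen below) and look for $g\in L^2((c,d);\varrho)$ such that the solution $u$ of $\tau u=g$ with $u(c)=f_a(c)$, $u^\qd(c)=f_a^\qd(c)$ additionally satisfies $u(d)=f_b(d)$ and $u^\qd(d)=f_b^\qd(d)$. Granting such a $g$, I would set $f:=f_a$ on $(a,c]$, $f:=u$ on $[c,d]$ and $f:=f_b$ on $[d,b)$. Because the values and the quasi-derivatives of the three pieces agree at the junction points $c$ and $d$, this $f$ lies in $\AClocp$, the expression~\eqref{eqn:tauqdptheta} is locally $\varrho$-absolutely continuous across $c$ and $d$, and hence $f\in\Deftau$ with $\tau f$ equal to $\tau f_a$, $g$ and $\tau f_b$ on the three pieces. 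Since $f_a,\tau f_a$ are square integrable near $a$, $f_b,\tau f_b$ near $b$, and everything is bounded on the compact set $[c,d]$, it follows that $f,\tau f\in\Lr$, i.e.\ $f\in\Tmax$, with $f=f_a$ near $a$ and $f=f_b$ near $b$ as desired.

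The heart of the matter is therefore the solvability of this two-point problem, which I would reduce to a linear-algebra statement via Proposition~\ref{prop:repsol}. Let $u_1,u_2$ be the fundamental system of $\tau u=0$ normalised at $c$ as in that proposition, put $W:=W(u_1,u_2)$, and let $u_0$ be the solution of $\tau u=0$ with $u_0(c)=f_a(c)$, $u_0^\qd(c)=f_a^\qd(c)$. Extending $g$ by zero outside $(c,d)$, the representation in Proposition~\ref{prop:repsol} gives
\begin{align*}
 u(d)-u_0(d) &= \frac{u_1(d)}{W}\int_c^d u_2\,g\,d\varrho-\frac{u_2(d)}{W}\int_c^d u_1\,g\,d\varrho,\\
 u^\qd(d)-u_0^\qd(d) &= \frac{u_1^\qd(d)}{W}\int_c^d u_2\,g\,d\varrho-\frac{u_2^\qd(d)}{W}\int_c^d u_1\,g\,d\varrho.
\end{align*}
The matrix relating the right-hand sides to $\bigl(\int_c^d u_1\,g\,d\varrho,\int_c^d u_2\,g\,d\varrho\bigr)$ has determinant $1/W\neq0$, so it suffices to show that the linear map $g\mapsto\bigl(\int_c^d u_1\,g\,d\varrho,\int_c^d u_2\,g\,d\varrho\bigr)$ is onto $\C^2$; equivalently, that $u_1$ and $u_2$ are linearly independent in $L^2((c,d);\varrho)$.

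Establishing this independence for a suitable choice of $c,d$ is the main obstacle, and this is precisely where Hypothesis~\ref{genhyp} enters. First I would observe that $u_1,u_2$ are linearly independent in $\Lr$ on the whole interval: a non-trivial combination $u=\alpha u_1+\beta u_2$ vanishing $\varrho$-almost everywhere is, by the analysis in the proof of Proposition~\ref{prop:identDeftauTloc}, of the form~\eqref{eqn:mulvalfform} and in particular vanishes identically on the non-degenerate interval $(\alpha_\varrho,\beta_\varrho)$ (non-degenerate because $\supp(\varrho)$ has more than one point); by the uniqueness Theorem~\ref{thm:exisuniq} this forces $u\equiv0$, a contradiction. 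It then remains to pass from independence on $(a,b)$ to independence on a finite subinterval, which I would do by exhaustion: if $u_1,u_2$ were $\varrho$-dependent on every member of an increasing sequence $(c_n,d_n)\uparrow(a,b)$, then normalising the corresponding coefficient vectors and passing to a convergent subsequence would yield a single non-trivial combination vanishing $\varrho$-a.e.\ on all of $(a,b)$, contradicting the global independence just shown.

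Choosing $c,d$ so that independence holds on $(c,d)$ then supplies the required $g$, and the glued function $f$ is the desired element of $\Tmax$. The remaining verifications, namely that the patched $f$ genuinely lies in $\Deftau$ and that $f,\tau f\in\Lr$, are routine once the matching of $f$ and $f^\qd$ at $c$ and $d$ is in place, so I would treat them briefly rather than in full detail.
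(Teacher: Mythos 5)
Your proposal is correct and takes essentially the same route as the paper: glue $f_a$ and $f_b$ across a compact middle interval by solving $\tau u=g$ with prescribed data at both ends, reducing the solvability via Proposition~\ref{prop:repsol} to the linear independence of the functionals $g\mapsto\int u_j g\,d\varrho$, which in turn rests on the injectivity from Proposition~\ref{prop:identDeftauTloc}. The only difference is that you spell out, via an exhaustion/compactness argument, the existence of a finite subinterval on which $u_1,u_2$ remain $\varrho$-independent — a step the paper asserts in one line.
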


\begin{proof}
Let $u_1$, $u_2$ be a fundamental system of $\tau u = 0$ with $W(u_1,u_2)=1$ and let $\alpha$, $\beta\in(a,b)$ with $\alpha<\beta$ such that the functionals
\begin{align*}
 F_j(g) = \int_\alpha^\beta u_j g d\varrho, \quad g\in\Lr,~j=1,2
\end{align*}
are linearly independent. This is possible since otherwise $u_1$ and $u_2$ would be linearly dependent in $\Lr$ and hence also in $\Deftau$ by 
 the identification in Lemma~\ref{prop:identDeftauTloc}.
First we show that there is some $u\in\Deftau$ such that
\begin{align*}
 u(\alpha) = f_a(\alpha), \quad u^\qd(\alpha) = f_a^\qd(\alpha), \quad u(\beta) = f_b(\beta) \quad\text{and}\quad u^\qd(\beta) = f_b^\qd(\beta).
\end{align*}
Indeed, let $g\in\Lr$ and consider the solution $u$ of $\tau u=g$ with the initial conditions
\begin{align*}
 u(\alpha) = f_a(\alpha) \quad\text{and}\quad u^\qd(\alpha)=f_a^\qd(\alpha).
\end{align*}
With Proposition~\ref{prop:repsol} one sees that $u$ has the desired properties if 
\begin{align*}
      \begin{pmatrix} F_2(g) \\ F_1(g) \end{pmatrix} %=  \begin{pmatrix} \int_\alpha^\beta u_2 g d\varrho \\ \int_\alpha^\beta u_1 g d\varrho \end{pmatrix}
     =  \begin{pmatrix} u_1(\beta) & -u_2(\beta) \\ u_1^\qd(\beta) & -u_2^\qd(\beta) \end{pmatrix}^{-1}
       \begin{pmatrix}
        f_b(\beta) - c_1 u_1(\beta) - c_2 u_2(\beta) \\ f_b^\qd(\beta) - c_1u_1^\qd(\beta) - c_2 u_2^\qd(\beta)
       \end{pmatrix},
\end{align*}
where $c_1$, $c_2\in\C$ are the constants appearing in Proposition~\ref{prop:repsol}.
But since the functionals $F_1$, $F_2$ are linearly independent, it is possible to choose a function $g\in\Lr$ such that this equation is valid. 
Now the function $f$ defined by
\begin{align*}
 f(x) = \begin{cases}
          f_a(x),   & \text{if }x\in(a,\alpha], \\
          u(x),     & \text{if }x\in(\alpha,\beta], \\
          f_b(x),   & \text{if }x\in(\beta,b),
        \end{cases}
\end{align*}
has the claimed properties.
\end{proof}

\begin{theorem}\label{thm:Tmin}
 The minimal relation $\Tmin$ is given by
\begin{align}
 \Tmin = \lbrace f\in \Tmax \,|\, \forall g\in\Tmax: W(f,g)(a) = W(f,g)(b) = 0 \rbrace.
\end{align}
 Furthermore, $\Tmin$ is an operator, i.e., $\dim\mul{\Tmin} = 0$.
\end{theorem}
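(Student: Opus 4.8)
The plan is to combine the identity $\Tmin=\Tmax^\ast$ with the Lagrange identity of Lemma~\ref{lem:l2lagrange} and then to \emph{decouple} the boundary contributions at $a$ and $b$ using the gluing construction of Lemma~\ref{lem:funcdomtmax}. First I would record that $\Tpre\subseteq\Tmax$ forces $\Tmax^\ast\subseteq\Tpre^\ast=\Tmax$, so every element of $\Tmin=\Tmax^\ast$ has the form $(f,\tau f)$ with $f\in\Tmax$, and for such $f$ membership in $\Tmin$ is equivalent to $\spr{\tau f}{g}-\spr{f}{\tau g}=0$ for all $g\in\Tmax$. By Lemma~\ref{lem:l2lagrange} this difference equals $W(f,g^\ast)(b)-W(f,g^\ast)(a)$, and since $\Tmax$ is invariant under complex conjugation, $g^\ast$ runs through all of $\Tmax$ as $g$ does. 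Hence $f\in\Tmin$ if and only if $W(f,g)(b)=W(f,g)(a)$ for every $g\in\Tmax$. The inclusion $\supseteq$ of the asserted identity is then immediate, since if both boundary terms vanish for all $g$, their difference certainly does.

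For the reverse inclusion, which is the substantive point, I would fix $f\in\Tmin$ and an arbitrary $g\in\Tmax$, and apply Lemma~\ref{lem:funcdomtmax} with $f_a=g$ and $f_b=0$ to obtain $\hat g\in\Tmax$ that agrees with $g$ near $a$ and vanishes near $b$. Evaluating the identity $W(f,\hat g)(b)=W(f,\hat g)(a)$ and using $W(f,\hat g)(b)=0$ (as $\hat g\equiv 0$ near $b$) together with $W(f,\hat g)(a)=W(f,g)(a)$ (as $\hat g=g$ near $a$) yields $W(f,g)(a)=0$; the symmetric choice, vanishing near $a$ and equal to $g$ near $b$, gives $W(f,g)(b)=0$. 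This separation of the two endpoints, which turns a single linear relation among the boundary terms into two independent vanishing conditions, is the main obstacle, and it is precisely what the endpoint-localized test functions from Lemma~\ref{lem:funcdomtmax} are built to supply.

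Finally, for the operator assertion I would use that $\mul{\Tmin}=\mul{\Tmax^\ast}=(\dom{\Tmax})^\perp$: indeed $(0,h)\in\Tmax^\ast$ means $\spr{h}{g}=\spr{0}{\tau g}=0$ for all $g\in\Tmax$, i.e.\ $h\perp\dom{\Tmax}$. Since $\Tmin\subseteq\Tmax$, also $\mul{\Tmin}\subseteq\mul{\Tmax}=\linspan\{\indik_{\{\alpha_\varrho\}},\indik_{\{\beta_\varrho\}}\}$ by Proposition~\ref{propTlocmulval}. Writing $h=c_a\indik_{\{\alpha_\varrho\}}+c_b\indik_{\{\beta_\varrho\}}$ and testing against a compactly supported $g\in\Deftau\subseteq\dom{\Tmax}$ with $g(\alpha_\varrho)\neq 0$ and $g(\beta_\varrho)=0$, whose existence follows from the construction in the proof of Lemma~\ref{lem:funcdomtmax}, I would read off $0=\spr{h}{g}=c_a\,\varrho(\{\alpha_\varrho\})\,g(\alpha_\varrho)^\ast$, so $c_a\,\varrho(\{\alpha_\varrho\})=0$; the analogous test function localized near $\beta_\varrho$ gives $c_b\,\varrho(\{\beta_\varrho\})=0$. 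Because $|\supp(\varrho)|>1$ we have $\alpha_\varrho\neq\beta_\varrho$, so the two indicators have disjoint support and are orthogonal in $\Lr$; in each term either the coefficient or the indicator vanishes in $\Lr$, whence $h=0$ and $\dim\mul{\Tmin}=0$.
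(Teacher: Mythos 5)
Your proof of the Wronskian characterization of $\Tmin$ is correct and follows the paper's own route: $\Tmin=\Tmax^\ast\subseteq\Tpre^\ast=\Tmax$, the Lagrange identity of Lemma~\ref{lem:l2lagrange}, and the endpoint-localized elements of $\Tmax$ from Lemma~\ref{lem:funcdomtmax} to split the single relation $W(f,g)(b)=W(f,g)(a)$ into two separate vanishing conditions.

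The argument that $\Tmin$ is an operator, however, fails at the test functions. If $\alpha_\varrho>a$, then \emph{every} compactly supported $g\in\Deftau$ satisfies $g(\alpha_\varrho)=0$: on $(a,\alpha_\varrho)$ the measure $\varrho$ has no mass, so there $g$ solves the homogeneous system $dg=g^\qd\,d\varsigma$, $dg^\qd=g\,d\chi$; since $g=g^\qd=0$ near $a$, forward uniqueness (Theorem~\ref{thm:appMIEEE}) forces $g\equiv0$ on $(a,\alpha_\varrho)$, and left-continuity gives $g(\alpha_\varrho)=g(\alpha_\varrho-)=0$. (This is consistent with the paper's remark that all functions in $\Tmin$ vanish outside $(\alpha_\varrho,\beta_\varrho)$.) Thus the function you test with does not exist, and the relation $0=\spr{h}{g}$ carries no information about $c_a$. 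One must instead test with elements of $\dom{\Tmax}$ that are nonzero all the way down to $a$ --- say a solution $u$ of $\tau u=0$ with $u(\alpha_\varrho)=1$, which lies in $\Tmax$ near $a$ precisely because $\varrho((a,\alpha_\varrho))=0$, glued to $0$ near $b$. But then one has to guarantee that the gluing interval $[\alpha,\beta)$ of Lemma~\ref{lem:funcdomtmax} can be placed inside $(\alpha_\varrho,\beta_\varrho]$, and this is impossible when $\supp(\varrho)\cap(\alpha_\varrho,\beta_\varrho)$ contains fewer than two points (e.g.\ $\supp(\varrho)=\lbrace\alpha_\varrho,\beta_\varrho\rbrace$, which the hypotheses permit), since the functionals $F_1$, $F_2$ of that lemma then degenerate. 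In those cases the existence of the required test function ultimately rests on Hypothesis~\ref{genhyp}~(\ref{genhypgap}) and the invertibility of the transfer matrix across the gap $(\alpha_\varrho,\beta_\varrho)$. The paper sidesteps all of this by using the part of the theorem already proved: for $f\in\Tmin$ with $f=0$ almost everywhere with respect to $\varrho$, the Wronskians $W(f,u_i)$ with a fundamental system $u_1$, $u_2$ of $\tau u=0$ are constant on $(a,\alpha_\varrho)$ and vanish at $a$, which forces $f(\alpha_\varrho-)=f^\qd(\alpha_\varrho-)=0$ and hence $f=0$ on $(a,\alpha_\varrho)$; I recommend you adopt that argument rather than repair the density one.
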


\begin{proof}
 If $f\in\Tmin=\Tmax^\ast\subseteq\Tmax$ we have
\begin{align*}
 0 = \spr{\tau f}{g} - \spr{f}{\tau g} = W(f,g^\ast)(b) - W(f,g^\ast)(a)
\end{align*}
 for each $g\in\Tmax$. Given some $g\in\Tmax$, there is a $g_a\in\Tmax$ such that $g_a^\ast=g$ in a 
 vicinity of $a$ and $g_a=0$ in a vicinity of $b$. Therefore,  
 \begin{align*}
 W(f,g)(a) = W(f,g_a^\ast)(a) - W(f,g_a^\ast)(a) = 0.
 \end{align*}
 Similarly one sees that $W(f,g)(b)=0$ for each $g\in\Tmax$.
 Conversely, if $f\in\Tmax$ such that for each  $g\in\Tmax$, $W(f,g)(a)=W(f,g)(b)=0$, then
\begin{align*}
 \spr{\tau f}{g} - \spr{f}{\tau g} = W(f,g^\ast)(b) - W(f,g^\ast)(a) = 0,
\end{align*}
 hence $f\in \Tmax^\ast = \Tmin$.

 In order to show that $\Tmin$ is an operator, let $f\in\Tmin$ with $f = 0$ almost everywhere with respect to $\varrho$.
 If $\alpha_\varrho>a$ and $\varrho(\lbrace\alpha_\varrho\rbrace)\not=0$, then $f$ is of the form~\eqref{eqn:mulvalfform}. 
 From what we already proved we know that 
 \begin{align*}
 W(f,u_1)(a)=W(f,u_2)(a)=0
 \end{align*}
  for each fundamental system $u_1$, $u_2$ of $\tau u=0$.
 But $W(f,u_j)(x)$ is constant on $(a,\alpha_\varrho)$ and hence we infer $f(\alpha_\varrho)=f^\qd(\alpha_\varrho) = 0$.
 From this we see that $f$ vanishes on $(a,\alpha_\varrho)$. Similarly one proves that $f$ also vanishes on $(\beta_\varrho,b)$, hence $f=0$.
\end{proof}

For regular differential expressions we may characterize the minimal operator in terms of the boundary values of functions $f\in\Tmax$.

\begin{corollary}
 If $\tau$ is regular at $a$ and $f\in\Tmax$, then we have
\begin{align*}
 f(a) = f^\qd(a) = 0 \quad\Leftrightarrow\quad \forall g\in\Tmax: W(f,g)(a) = 0.
\end{align*}
 A similar result holds at $b$.
\end{corollary}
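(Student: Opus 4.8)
The plan is to reduce the whole statement to the explicit boundary formula for the Wronskian at a regular endpoint. Since $\tau$ is regular at $a$ and both $f$ and any $g\in\Tmax$ lie in $\Tmax$ near $a$, the limits $f(a)$, $f^\qd(a)$, $g(a)$, $g^\qd(a)$ all exist and are finite, and the definition of the Wronskian gives, upon passing to the limit $x\downarrow a$,
\[
 W(f,g)(a) = f(a)g^\qd(a) - f^\qd(a)g(a),
\]
which is the analogue (without the conjugation, irrelevant here since $g$ runs through all of $\Tmax$) of the formula recorded just before the computation of $\Tpre^\ast$. From this, the direction ``$\Rightarrow$'' is immediate: if $f(a)=f^\qd(a)=0$, then the displayed identity gives $W(f,g)(a)=0$ for every $g\in\Tmax$.

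For the converse I would exhibit two convenient test elements of $\Tmax$ with prescribed boundary data at $a$. The key observation is that only the behaviour near $a$ matters, so I can start from solutions of $\tau u=0$. Since $\tau$ is regular at $a$, Theorem~\ref{thm:EEreg} supplies solutions $g_1$, $g_2$ of $\tau u=0$ with
\[
 g_1(a)=0,\quad g_1^\qd(a)=1 \qquad\text{and}\qquad g_2(a)=1,\quad g_2^\qd(a)=0.
\]
Each of these lies in $\Tmax$ near $a$, because near the regular endpoint $\varrho$ is a finite measure and a solution is bounded there, so it is automatically square integrable near $a$. Applying Lemma~\ref{lem:funcdomtmax}, gluing $g_1$ (resp.\ $g_2$) near $a$ to the zero function near $b$, produces genuine elements of $\Tmax$ that coincide with $g_1$, $g_2$ near $a$ and hence keep the same boundary values $g_j(a)$, $g_j^\qd(a)$ at $a$.

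Substituting these into the boundary formula yields $W(f,g_1)(a)=f(a)$ and $W(f,g_2)(a)=-f^\qd(a)$. Therefore the hypothesis that $W(f,g)(a)=0$ for all $g\in\Tmax$, applied to $g=g_1$ and $g=g_2$, forces $f(a)=0$ and $f^\qd(a)=0$, which is the desired conclusion. The statement at $b$ follows by the same argument with the roles of the endpoints interchanged.

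The only genuine content is the existence of test functions in $\Tmax$ realizing arbitrary boundary data at the regular endpoint, and this is precisely what Lemma~\ref{lem:funcdomtmax} provides; everything else is the evaluation of a $2\times2$ determinant. The one point I would take care to confirm is that the chosen solutions really lie in $\Tmax$ \emph{near} $a$, but this is forced by regularity of $\tau$ at $a$ together with the finiteness of $\varrho$ there, so no new estimate is required.
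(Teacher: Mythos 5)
Your proof is correct and follows essentially the same route as the paper: evaluate the Wronskian at the regular endpoint via $W(f,g)(a)=f(a)g^\qd(a)-f^\qd(a)g(a)$, and for the converse produce elements of $\Tmax$ with prescribed boundary data at $a$ by taking solutions of $\tau u=0$ near $a$ (glued via Lemma~\ref{lem:funcdomtmax}). The paper's proof is just a terser statement of exactly this argument.
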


\begin{proof}
 The claim follows from $W(f,g)(a) = f(a)g^\qd(a) - f^\qd(a)g(a)$ and the fact that one finds $g\in\Tmax$ with prescribed initial values at $a$.
 Indeed, one can take $g$ to coincide with some solution of $\tau u = 0$ near $a$.
\end{proof}

If the measure $\varrho$ has no weight near some endpoint, we get another characterization for functions in $\Tmin$ in terms of their 
 left-hand (respectively right-hand) limit at $\alpha_\varrho$ (respectively at $\beta_\varrho$).

\begin{corollary}
 If $\alpha_\varrho>a$ and $f\in\Tmax$, then we have
\begin{align*}
 f(\alpha_\varrho-) = f^\qd(\alpha_\varrho-) = 0 \quad\Leftrightarrow\quad \forall g\in\Tmax: W(f,g)(a) = 0.
\end{align*}
A similar result holds at $b$.
\end{corollary}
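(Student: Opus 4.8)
The plan is to reduce everything to the identity
\[
W(f,g)(a) = f(\alpha_\varrho-)\,g^\qd(\alpha_\varrho-) - f^\qd(\alpha_\varrho-)\,g(\alpha_\varrho-), \qquad f,g\in\Tmax,
\]
which plays here the role that $W(f,g)(a)=f(a)g^\qd(a)-f^\qd(a)g(a)$ played in the previous (regular) corollary. First I would observe that since $\alpha_\varrho=\inf\supp(\varrho)$, the measure $\varrho$ carries no mass on $(a,\alpha_\varrho)$. Hence for $\alpha,\beta\in(a,\alpha_\varrho)$ the Lagrange identity \eqref{eqn:lagrange} gives $W(f,g)(\beta)-W(f,g)(\alpha)=\int_\alpha^\beta(g\,\tau f-f\,\tau g)\,d\varrho=0$, so that $W(f,g)$ is constant on $(a,\alpha_\varrho)$. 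Next I would note that both $f$ and $f^\qd$ are locally of bounded variation (the former lies in $\AClocp$, and the latter differs from the $\AClocr$-function $-f^\qd+\int f\,d\chi$ only by the locally-$BV$ distribution function of $f\,d\chi$), so that the one-sided limits $f(\alpha_\varrho-)$ and $f^\qd(\alpha_\varrho-)$, and likewise those for $g$, exist and are finite. Passing to the limit $x\uparrow\alpha_\varrho$ in the constant quantity $W(f,g)(x)=W(f,g)(a)$ then yields the displayed identity.

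With this identity in hand the forward implication is immediate: if $f(\alpha_\varrho-)=f^\qd(\alpha_\varrho-)=0$ then $W(f,g)(a)=0$ for every $g\in\Tmax$. For the converse I would produce enough test functions to separate the pair $(f(\alpha_\varrho-),f^\qd(\alpha_\varrho-))$. Fix a fundamental system $u_1,u_2$ of $\tau u=0$; each $u_j$ is a solution on all of $(a,b)$ and lies in $\Tmax$ near $a$ (on $(a,\alpha_\varrho)$ its contribution to $\Lr$ is zero, and near $\alpha_\varrho$ it is bounded against the locally finite $\varrho$). By Lemma~\ref{lem:funcdomtmax} I can then choose $g_j\in\Tmax$ with $g_j=u_j$ near $a$; since $W(f,\cdot)$ is constant on $(a,\alpha_\varrho)$ this gives $W(f,g_j)(a)=W(f,u_j)(a)=f(\alpha_\varrho-)u_j^\qd(\alpha_\varrho-)-f^\qd(\alpha_\varrho-)u_j(\alpha_\varrho-)$. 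Assuming the left-hand sides vanish for $j=1,2$ leaves a homogeneous $2\times2$ system for $(f(\alpha_\varrho-),f^\qd(\alpha_\varrho-))$ whose determinant is precisely the constant Wronskian $W(u_1,u_2)(\alpha_\varrho-)\neq0$; hence $f(\alpha_\varrho-)=f^\qd(\alpha_\varrho-)=0$. The statement at $b$ follows by the symmetric argument, using $\beta_\varrho=\sup\supp(\varrho)<b$ and the right-hand limits at $\beta_\varrho$.

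The only genuinely delicate points, which I would be careful to verify, are the existence and finiteness of the quasi-derivative limit $f^\qd(\alpha_\varrho-)$ (needed for the identity to make sense, and handled by the bounded-variation argument above) and the linear independence of the two data vectors $(u_j(\alpha_\varrho-),u_j^\qd(\alpha_\varrho-))$. The latter I would not establish by prescribing initial data at $\alpha_\varrho$ itself, where $\varsigma$ could have a point mass, but rather read off from the constancy of the Wronskian: independence of $u_1,u_2$ forces $W(u_1,u_2)$ to be a nonzero constant, and its value $u_1(\alpha_\varrho-)u_2^\qd(\alpha_\varrho-)-u_1^\qd(\alpha_\varrho-)u_2(\alpha_\varrho-)$ at $\alpha_\varrho-$ is exactly the determinant above. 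Everything else is bookkeeping parallel to the preceding corollary.
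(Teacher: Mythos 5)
Your proof is correct and follows essentially the same route as the paper: constancy of the Wronskian on $(a,\alpha_\varrho)$ via the Lagrange identity (since $\varrho$ has no mass there), the resulting identity $W(f,g)(a)=f(\alpha_\varrho-)g^\qd(\alpha_\varrho-)-f^\qd(\alpha_\varrho-)g(\alpha_\varrho-)$, and testing against functions in $\Tmax$ that coincide near $a$ with solutions of $\tau u=0$ having prescribed left-hand limits at $\alpha_\varrho$. You merely spell out the details (existence of the one-sided limits and the nonvanishing $2\times2$ determinant given by $W(u_1,u_2)$) that the paper leaves implicit.
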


\begin{proof}
 The Wronskian of two functions $f$, $g$ which lie in $\Tmax$ near $a$ is constant on $(a,\alpha_\varrho)$ by the Lagrange identity. Hence we have
 \begin{align*}
  W(f,g)(a) = \lim_{x\uparrow\alpha_\varrho} f(x)g^\qd(x) - f^\qd(x)g(x). 
 \end{align*}
 Now the claim follows since we may find some $g$ which lies in $\Tmax$ near $a$, with prescribed left-hand limits at $\alpha_\varrho$.
 Indeed, one may take $g$ to be a suitable solution of $\tau u=0$.
\end{proof}

Note that all functions in $\Tmin$ vanish outside of $(\alpha_\varrho,\beta_\varrho)$.
In general the operator $\Tmin$ is, because of
\begin{align*}
 \dom{\Tmin}^\bot = \mul{\Tmin^\ast} = \mul{\Tmax},
\end{align*}
not densely defined.
On the other side, $\dom{\Tmax}$ is always dense in the Hilbert space $\Lr$ since
\begin{align*}
 \dom{\Tmax}^\bot = \mul{\Tmax^\ast} = \mul{\Tmin} = \lbrace 0 \rbrace.
\end{align*}
Next we will show that $\Tmin$ always has self-adjoint extensions.

\begin{theorem}
 The deficiency indices of the minimal relation $\Tmin$ are equal and at most two, i.e.,
\begin{align}
  n(\Tmin) := \dim \ran(\Tmin - \I)^\bot = \dim \ran \left(\Tmin + \I\right)^\bot \leq 2.
\end{align}
\end{theorem}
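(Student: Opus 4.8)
The plan is to establish the two required assertions—equality of the deficiency indices and the bound $n(\Tmin) \le 2$—separately, and to exploit throughout that $\tau$ is a \emph{real} differential expression, so that $\Tmin$ is a real relation with respect to the natural conjugation in $\Lr$. First I would recall from \eqref{eqnSLPdimkerTloc} that $\dim \ker(\Tloc - z) = 2$ for every $z \in \C$; that is, the space of solutions of $(\tau - z)u = 0$ in $\Deftau$ is exactly two-dimensional. Since
\begin{align*}
 \ran(\Tmin - z)^\bot = \ker(\Tmin^\ast - z^\ast) = \ker(\Tmax - z^\ast),
\end{align*}
the deficiency space at $z^\ast$ is precisely the set of $L^2$-solutions, i.e.\ those $u \in \ker(\Tloc - z^\ast)$ that additionally lie in $\Lr$. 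As $\Tmax$ carries no further condition beyond square-integrability of $f$ and $\tau f$, we have $\ker(\Tmax - z^\ast) = \ker(\Tloc - z^\ast) \cap \Lr$, a subspace of the two-dimensional solution space. Hence
\begin{align*}
 n(\Tmin) = \dim\ker(\Tmax - \I) = \dim\ker(\Tmax + \I) \le \dim\ker(\Tloc - \I) = 2,
\end{align*}
which already delivers the bound $n(\Tmin) \le 2$.

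It remains to show the two indices coincide. Here I would use the reality of $\tau$: if $u \in \ker(\Tmax - \I)$, then applying complex conjugation componentwise gives $u^\ast \in \Tmax$ with $\tau u^\ast = (\tau u)^\ast = (\I u)^\ast = -\I u^\ast$, so $u^\ast \in \ker(\Tmax + \I)$. The map $u \mapsto u^\ast$ is a conjugate-linear bijection between the two deficiency spaces (its inverse being the same conjugation), and a conjugate-linear bijection preserves dimension. Consequently
\begin{align*}
 \dim\ker(\Tmax - \I) = \dim\ker(\Tmax + \I),
\end{align*}
which is exactly the asserted equality of the deficiency indices. Combining this with the bound from the first paragraph completes the argument.

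I do not expect any serious obstacle here, since the heavy lifting has been done earlier: the identification of $\ran(\Tmin \mp \I)^\bot$ with deficiency spaces is the standard von Neumann description, valid for relations exactly as for operators once we know $\Tmin = \Tmax^\ast$, which was established above. The only point deserving a word of care is the step $\ker(\Tmax - z) = \ker(\Tloc - z) \cap \Lr$: one must note that membership in $\Tmax$ requires both $f \in \Lr$ \emph{and} $\tau f \in \Lr$, but on the kernel $\tau f = z f$, so the second condition is automatic once $f \in \Lr$. With that observation the finite-dimensionality and the bound follow immediately from \eqref{eqnSLPdimkerTloc}, and the reality of the measures $\varrho$, $\varsigma$, $\chi$ furnishes the conjugation that equates the two indices.
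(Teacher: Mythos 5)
Your proposal is correct and follows essentially the same route as the paper: the bound $n(\Tmin)\leq 2$ comes from the inclusion $\ran(\Tmin\mp\I)^\bot=\ker(\Tmax\pm\I)\subseteq\ker(\Tloc\pm\I)$ together with \eqref{eqnSLPdimkerTloc}, and the equality of the two indices from the fact that the natural conjugation is a conjugate-linear bijection (isometry) between the two deficiency spaces, since $\Tmin$ is real. Your additional remark that $\tau f=zf$ makes square-integrability of $\tau f$ automatic on the kernel is a nice explicit justification of the inclusion the paper states without comment.
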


\begin{proof}
 The fact that the dimensions are less than two, is a consequence of the inclusion
 \begin{align*}
  \ran(\Tmin \pm \I)^\bot = \ker(\Tmax \mp \I) \subseteq \ker(\Tloc \mp \I).
 \end{align*}
 Now since $\Tmin$ is real with respect to the natural conjugation in $\Lr$, we see that the natural conjugation is a conjugate-linear isometry
  from the kernel of $\Tmax+\I$ onto the kernel of $\Tmax-\I$ and hence their dimensions are equal.
\end{proof}

\section{Weyl's alternative}

We say $\tau$ is in the limit-circle (l.c.) case at $a$, if for each $z\in\C$ all solutions of $(\tau-z)u=0$ 
lie in $\Lr$ near $a$. Furthermore, we say $\tau$ is in the limit-point (l.p.) case at $a$, if for each $z\in\C$ 
there is some solution of $(\tau-z)u=0$ which does not lie in $\Lr$ near $a$.
Similarly one defines the l.c.~and l.p.~cases for the endpoint $b$.
It is clear that $\tau$ is only either in the l.c.\ or in the l.p.\ case at some boundary point.
The next lemma shows that $\tau$ indeed is in one of these cases at each endpoint.

\begin{lemma}\label{lemweylaltLC}
 If there is a $z_0\in\C$ such that all solutions of $(\tau-z_0) u = 0$ lie in $\Lr$ near $a$, 
 then $\tau$ is in the l.c.~case at $a$.
 A similar result holds at the endpoint $b$.
\end{lemma}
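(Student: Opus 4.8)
We must show that the limit-circle property at $a$, which a priori requires *all* solutions of $(\tau-z)u=0$ to lie in $L^2$ near $a$ for *every* $z\in\C$, in fact follows from the existence of a *single* $z_0\in\C$ for which all solutions lie in $L^2$ near $a$. This is the classical invariance of Weyl's dichotomy under change of spectral parameter, adapted to the measure-valued setting.

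**Strategy.** The plan is to fix an arbitrary $z\in\C$ and an arbitrary solution $v$ of $(\tau-z)v=0$, and to show $v\in\Lr$ near $a$ using the hypothesis at $z_0$. The key observation is that $(\tau-z)v=0$ can be rewritten as an inhomogeneous equation for the operator $\tau-z_0$, namely $(\tau-z_0)v=(z-z_0)v$. Since by hypothesis we have a fundamental system $u_1,u_2$ of $(\tau-z_0)u=0$ with $W(u_1,u_2)\neq 0$ and both $u_1,u_2\in\Lr$ near $a$, the variation-of-constants formula from Proposition~\ref{prop:repsol} expresses $v$ in terms of $u_1$, $u_2$ and an integral of $(z-z_0)v$ against $u_1,u_2\,d\varrho$.

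**Execution.** First I would pick $c\in(a,b)$ and, by Proposition~\ref{prop:repsol}, write
\begin{align*}
 v(x) = c_1 u_1(x) + c_2 u_2(x) + (z-z_0)\int_c^x \big(u_1(x)u_2(t) - u_2(x)u_1(t)\big) v(t)\,d\varrho(t),
\end{align*}
where I absorb $W(u_1,u_2)$ into the constants and use that $g=(z-z_0)v$. Next, taking $L^2((a,c);\varrho)$-norms on $(a,c)$ and applying the triangle inequality together with Cauchy--Schwarz on the integral term, I would bound $\|v\|_{L^2((x,c);\varrho)}$ in terms of the (finite) norms $\|u_1\|,\|u_2\|$ near $a$ and a tail integral $\int_{(x,c)}\|v\|_{L^2((t,c);\varrho)}\,d|\varrho|(t)$ of $v$ against itself. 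This yields a Gronwall-type integral inequality for the quantity $\phi(x)=\|v\|_{L^2((x,c);\varrho)}^2$ (or a closely related monotone functional). Applying the Gronwall lemma (Lemma~\ref{lemappGronwall}) on a sufficiently small left-neighborhood $(a,c')$ where $|z-z_0|$ times the total mass is controlled gives a finite bound on $\phi$ as $x\downarrow a$, hence $v\in\Lr$ near $a$. Since $v$ was an arbitrary solution and $z$ arbitrary, $\tau$ is in the l.c.\ case at $a$. The argument at $b$ is symmetric.

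**Main obstacle.** The delicate point is setting up the Gronwall estimate cleanly: one must make sure the integral operator $v\mapsto \int_c^x(\cdots)v\,d\varrho$ is genuinely a contraction (or at least iterable) near $a$, which in the classical theory is automatic because $u_1,u_2$ are bounded, but here requires Cauchy--Schwarz to convert the kernel into the finite $L^2$-norms $\|u_1\|_{L^2((a,c);\varrho)}$, $\|u_2\|_{L^2((a,c);\varrho)}$ and then control the resulting quadratic self-dependence of $v$. One must also be slightly careful that the representation from Proposition~\ref{prop:repsol} applies with $g=(z-z_0)v\in\Llocr$, which holds since any solution lies in $\Deftau\subseteq\Llocr$. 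Once the self-referential Gronwall inequality is correctly assembled, the conclusion is immediate and the proof is short.
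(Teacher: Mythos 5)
Your proposal is correct and follows essentially the same route as the paper: rewrite $(\tau-z)v=0$ as $(\tau-z_0)v=(z-z_0)v$, apply the variation-of-constants formula of Proposition~\ref{prop:repsol}, and close the resulting quadratic self-referential estimate via Cauchy--Schwarz on a left neighborhood of $a$ chosen so small that $|z-z_0|\int_a^c(|u_1|+|u_2|)^2\,d\varrho\le\tfrac{1}{2}$. The only cosmetic difference is the final step: the paper absorbs the term $\tfrac{1}{2}\int_s^c|v|^2\,d\varrho$ directly into the left-hand side rather than invoking the Gronwall lemma, but both ways of closing the estimate are valid.
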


\begin{proof}
 Let $z\in\C$ and $u$ be a solution of $(\tau-z)u = 0$. 
 If $u_1$, $u_2$ are a fundamental system of $(\tau-z_0)u=0$ with $W(u_1,u_2)=1$, 
 then $u_1$ and $u_2$ lie in $\Lr$ near $a$ by assumption.
 Therefore, there is some $c\in(a,b)$ such that the function $v=|u_1| + |u_2|$ satisfies
\begin{align*}
 |z-z_0| \int_a^c v^2 d\varrho \leq \frac{1}{2}.
\end{align*}
 Since $u$ is a solution of $(\tau-z_0)u = (z-z_0)u$, we have for each $x\in(a,b)$
\begin{align*}
 u(x) = c_1u_1(x) + c_2u_2(x) + (z-z_0) \int_c^x \left( u_1(x) u_2(t) - u_1(t)u_2(x)\right) u(t)d\varrho(t)
\end{align*}
for some constants $c_1$, $c_2\in\C$ by Proposition~\ref{prop:repsol}. Therefore, we have
\begin{align*}
 |u(x)| \leq C v(x) + |z-z_0| v(x) \int_x^c v(t) |u(t)| d\varrho(t), \quad x\in(a,c),
\end{align*}
where $C= \max(|c_1|,|c_2|)$ and furthermore, using Cauchy--Schwarz
\begin{align*}
 |u(x)|^2 \leq 2 C^2 v(x)^2 + 2 |z-z_0|^2 v(x)^2 \int_x^c v(t)^2 d\varrho(t) \int_x^c |u(t)|^2 d\varrho(t).
\end{align*}
Now an integration yields for each $s\in(a,c)$
\begin{align*}
 \int_s^c |u|^2 d\varrho & \leq 2C^2 \int_a^c v^2 d\varrho + 2|z-z_0|^2 \left(\int_a^c v^2 d\varrho\right)^2 \int_s^c |u|^2 d\varrho \\
               & \leq 2C^2 \int_a^c v^2 d\varrho + \frac{1}{2} \int_s^c |u|^2d\varrho
\end{align*}
 and therefore
\begin{align*}
 \int_s^c |u|^2 d\varrho \leq 4C^2 \int_a^c v^2 d\varrho < \infty.
\end{align*}
Since $s\in(a,c)$ was arbitrary, this yields the claim.
\end{proof}

As an immediate consequence of Lemma~\ref{lemweylaltLC} we obtain: 

\begin{theorem}[Weyl's alternative]
 Each boundary point is either in the l.c.~case or in the l.p.~case. 
\end{theorem}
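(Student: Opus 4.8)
The plan is to reduce the statement to a purely logical dichotomy, letting Lemma~\ref{lemweylaltLC} carry all of the analytic content. I would argue at the endpoint $a$ only; the reasoning at $b$ is identical. As already noted before the lemma, the two cases are mutually exclusive straight from the definitions: if $\tau$ is in the l.c.\ case at $a$, then for \emph{every} $z\in\C$ all solutions of $(\tau-z)u=0$ lie in $\Lr$ near $a$, so no $z$ can admit a solution lying outside $\Lr$ near $a$, and hence $\tau$ cannot simultaneously be in the l.p.\ case. Thus the substance of the theorem is exhaustiveness, i.e.\ that at least one of the two cases always occurs.

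For exhaustiveness I would split on whether there is a single spectral parameter at which square integrability already holds. Consider the assertion
\[
(\star)\qquad \text{there is some } z_0\in\C \text{ such that all solutions of } (\tau-z_0)u=0 \text{ lie in } \Lr \text{ near } a.
\]
If $(\star)$ holds, then Lemma~\ref{lemweylaltLC} immediately promotes this single $z_0$ to every $z\in\C$, so that $\tau$ is in the l.c.\ case at $a$. If $(\star)$ fails, then its negation reads: for every $z\in\C$ not all solutions of $(\tau-z)u=0$ lie in $\Lr$ near $a$, that is, for each $z\in\C$ there exists a solution of $(\tau-z)u=0$ that does not lie in $\Lr$ near $a$. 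This is verbatim the definition of the l.p.\ case at $a$. Since $(\star)$ either holds or fails, exactly one of the two cases occurs, and the same argument applies at $b$.

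There is no genuine obstacle remaining at this level: all of the difficulty has been absorbed into Lemma~\ref{lemweylaltLC}, whose proof is what actually propagates square integrability from a single $z_0$ to all $z$ (via the variation-of-constants representation of Proposition~\ref{prop:repsol}, a Cauchy--Schwarz estimate, and a Gronwall-type bootstrap). Given that lemma, Weyl's alternative is a formal consequence of applying the law of the excluded middle to $(\star)$.
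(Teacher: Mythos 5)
Your proposal is correct and is exactly the argument the paper intends: the theorem is stated without a separate proof precisely because, as you observe, mutual exclusivity is immediate from the definitions and exhaustiveness is the dichotomy on your condition $(\star)$ combined with Lemma~\ref{lemweylaltLC}. Nothing is missing.
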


\begin{proposition}
 If $\tau$ is regular at $a$ or if $\varrho$ has no weight near $a$, then $\tau$ is in the l.c.~case at $a$.
 A similar result holds at the endpoint $b$.
\end{proposition}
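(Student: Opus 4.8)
The plan is to reduce both hypotheses to a single application of Lemma~\ref{lemweylaltLC}, which tells us that it suffices to exhibit \emph{one} spectral parameter $z_0\in\C$ for which all solutions of $(\tau-z_0)u=0$ lie in $\Lr$ near $a$. I will take $z_0=0$ throughout. Recall that a solution $u$ is an element of $\AClocp$ and hence is locally of bounded variation, so it is bounded on every compact subinterval of $(a,b)$; since $\varrho$ is locally finite, such a $u$ is automatically square integrable against $\varrho$ on every compact subinterval. Consequently the condition ``$u$ lies in $L^2((a,c);\varrho)$ for each $c\in(a,b)$'' reduces to controlling the integral of $|u|^2$ near the endpoint $a$ alone, the interior and near-$b$ contributions being finite for free. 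This observation is what makes both cases essentially immediate.

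First suppose $\tau$ is regular at $a$. For any solution $f$ of $\tau f=0$ the inhomogeneity $g=0$ is trivially in $L^1((a,c);\varrho)$ for each $c$, so Theorem~\ref{thm:EEreg} applies and guarantees that the limit $f(a)=\lim_{x\downarrow a}f(x)$ exists and is finite. A finite right limit at $a$ forces $f$ to be bounded on some interval $(a,c]$. Since $\tau$ is regular at $a$ and $\varrho$ is positive by Hypothesis~\ref{genhyp}, we have $\varrho((a,c])=|\varrho|((a,c])<\infty$, whence
\begin{align*}
 \int_{(a,c]}|f|^2\, d\varrho \le \Big(\sup_{(a,c]}|f|\Big)^2\,\varrho((a,c]) < \infty.
\end{align*}
Thus every solution lies in $\Lr$ near $a$, and Lemma~\ref{lemweylaltLC} gives the limit-circle case at $a$.

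Second suppose $\varrho$ has no weight near $a$, i.e.\ $\alpha_\varrho=\inf\supp(\varrho)>a$. Then $\varrho$ assigns no mass to $(a,c_0)$ for any $c_0\in(a,\alpha_\varrho)$, so that $\int_{(a,c_0)}|f|^2\, d\varrho=0$ for \emph{every} measurable $f$; in particular every solution of $\tau u=0$ is square integrable with respect to $\varrho$ near $a$, and again Lemma~\ref{lemweylaltLC} yields the limit-circle case at $a$. The statements at the right endpoint $b$ follow verbatim from the symmetric versions of Theorem~\ref{thm:EEreg} and Lemma~\ref{lemweylaltLC}.

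There is no genuine obstacle here: the entire content of the proposition is carried by the single-$z_0$ criterion of Lemma~\ref{lemweylaltLC} together with the endpoint-limit statement of Theorem~\ref{thm:EEreg}. The only point deserving a moment's care is the reduction described in the first paragraph, namely that membership ``in $L^2((a,c);\varrho)$ for every $c\in(a,b)$'' is really a condition at $a$ only; this rests on the local boundedness of solutions as $\AClocp$ functions and on the local finiteness of $\varrho$.
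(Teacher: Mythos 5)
Your proof is correct and follows essentially the same route as the paper: in the regular case one uses the finite endpoint limits from Theorem~\ref{thm:appregep}/Theorem~\ref{thm:EEreg} together with finiteness of $\varrho$ near $a$, and in the no-weight case local boundedness of solutions makes square integrability near $a$ trivial. The only cosmetic difference is that you fix $z_0=0$ and invoke Lemma~\ref{lemweylaltLC}, whereas the paper runs the same boundedness argument for every $z$ at once and so verifies the limit-circle definition directly; both are equally valid.
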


\begin{proof}
 If $\tau$ is regular at $a$ each solution of $(\tau-z)u=0$ can be continuously extended to $a$.
 Hence $u$ is in $\Lr$ near $a$, since $\varrho$ is a finite measure near $a$.
 If $\varrho$ has no weight near $a$, each solution lies in $\Lr$ near $a$, since every solution is locally bounded.
\end{proof}

The set $\reg(\Tmin)$ of points of regular type of $\Tmin$ consists of all complex numbers $z\in\C$ such that
$(\Tmin-z)^{-1}$ is a bounded operator (not necessarily everywhere defined). Recall that $\dim \ran(\Tmin - z)^\bot$
is constant on every connected component of $\reg(\Tmin)$ (\cite[Theorem~8.1]{we80}) and thus
\begin{align*}
\dim \ran(\Tmin - z)^\bot = \dim \ker(\Tmax  - z^*) = n(\Tmin)
\end{align*}
 for every $z\in \reg(\Tmin)$.

\begin{lemma}\label{lemWeylRegType}
 For each $z\in \reg(\Tmin)$ there is a non-trivial solution of the equation $(\tau-z)u=0$ which lies in $\Lr$ near $a$.
 A similar result holds for the endpoint $b$.
\end{lemma}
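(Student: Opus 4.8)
The plan is to prove the statement for the endpoint $a$; the case of $b$ is symmetric. I want to show that for each $z\in\reg(\Tmin)$ there is a non-trivial solution of $(\tau-z)u=0$ lying in $\Lr$ near $a$. The natural strategy is to argue by contradiction and exploit the defining property of points of regular type, namely that $(\Tmin-z)^{-1}$ is bounded, which is equivalent to the existence of a constant $k>0$ with $\|(\Tmin-z)f\|\geq k\|f\|$ for all $f\in\dom{\Tmin}$.

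First I would suppose, for contradiction, that \emph{no} non-trivial solution of $(\tau-z)u=0$ lies in $\Lr$ near $a$. By Weyl's alternative (the theorem just proved), $\tau$ is then in the limit-point case at $a$. The idea is to use functions in $\Tmin$ supported away from $b$ to manufacture a violation of the coercivity estimate $\|(\Tmin-z)f\|\geq k\|f\|$. Concretely, I would fix a fundamental system $u_1,u_2$ of $(\tau-z)u=0$ with $W(u_1,u_2)=1$ and, using the variation-of-constants representation from Proposition~\ref{prop:repsol}, build for a given $g\in L^2_c((a,b);\varrho)$ a solution of $(\tau-z)u=g$. By Lemma~\ref{lem:funcdomtmax} and the compact-support machinery behind $\Tpre$, such a solution can be cut off to produce an element $f\in\dom{\Tmin}$ with $(\Tmin-z)f=g$ on a suitable subinterval $(a,c)$, while keeping control of the $\Lr$-norms.

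The heart of the argument is then a quantitative estimate near $a$. Since we are assuming both solutions fail to lie in $\Lr$ near $a$, I want to show that one can make $\|f\|$ large relative to $\|(\Tmin-z)f\| = \|g\|$ by pushing the support of $g$ toward $a$; the integrals $\int_a^c |u_i|^2\,d\varrho$ diverging as $c\downarrow a$ is exactly what drives $\|f\|/\|g\|\to\infty$, contradicting the lower bound $k$. This requires choosing $g$ adapted to the growing solutions and estimating the Wronskian-type integrals $\int u_i g\,d\varrho$ appearing in the representation of $f$ and of $f^\qd$, much in the spirit of the computation in Lemma~\ref{lemweylaltLC} but run in reverse.

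The main obstacle I anticipate is the bookkeeping of the multi-valued and boundary structure: because $\Tmin$ need not be densely defined and functions in $\Tmin$ vanish outside $(\alpha_\varrho,\beta_\varrho)$, I must ensure the test functions I construct genuinely lie in $\dom{\Tmin}$ (i.e.\ satisfy the vanishing-Wronskian conditions $W(f,g)(a)=W(f,g)(b)=0$ from Theorem~\ref{thm:Tmin}) rather than merely in $\Tmax$, and that the truncation near $c$ does not spoil the norm estimates. Handling the regularity or non-regularity of $a$ uniformly, and making the divergence of $\int_a^c|u_i|^2\,d\varrho$ yield a clean contradiction with the coercivity constant $k$, is where the care is needed; the rest is the variation-of-constants calculation together with Cauchy--Schwarz.
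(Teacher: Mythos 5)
Your overall shape --- assume no solution of $(\tau-z)u=0$ lies in $\Lr$ near $a$ and contradict the lower bound $\|(\Tmin-z)f\|\geq k\|f\|$ characterizing $z\in\reg(\Tmin)$ --- is a legitimate target, and your observation that the problem localizes to $(a,c)$ (extend elements of the restricted minimal operator by zero, keeping the coercivity constant) is exactly the reduction the paper also makes. But the heart of your argument is missing. The claim that one can choose $g$ supported near $a$, subject to the constraints $\int u_1 g\,d\varrho=\int u_2 g\,d\varrho=0$ needed to make the variation-of-constants solution $f$ compactly supported, so that $\|f\|/\|g\|\to\infty$, is precisely the contrapositive of the lemma itself; asserting that it follows from the divergence of $\int_s^c|u_i|^2\,d\varrho$ ``in the spirit of Lemma~\ref{lemweylaltLC} run in reverse'' is not a proof. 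Lemma~\ref{lemweylaltLC} is a Gronwall perturbation estimate and has no useful reverse here, and the two orthogonality constraints actively fight your attempt to concentrate $g$ near $a$: the coefficients they force on the remaining bumps of $g$ can inflate $\|g\|$ at the same rate that the divergent solutions inflate $\|f\|$, and you give no mechanism to control this. (A small symptom of the hand-waving: $\int_a^c|u_i|^2\,d\varrho$ does not ``diverge as $c\downarrow a$'' --- under your hypothesis it is identically $+\infty$; what you would need is $\int_s^c\to\infty$ as $s\downarrow a$, quantitatively coupled to the admissible $g$.)

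The paper avoids all quantitative estimates. It invokes the standard fact (stated just before the lemma, citing \cite{we80}) that $\dim\ran(\Tmin-z)^\bot=\dim\ker(\Tmax-z^\ast)=n(\Tmin)$ is constant on $\reg(\Tmin)$. After restricting to $(a,c)$, where $\tau$ is regular at the right endpoint $c$, the absence of a solution in $\Lr$ near $a$ would force $\ker(T_{c,\max}-z)=\{0\}$, hence $n(T_c)=0$ and $T_{c,\min}=T_{c,\max}$; but this is impossible because Theorem~\ref{thm:Tmin} shows the minimal relation cannot contain an element with $f(c)=1$, $f^\qd(c)=0$, while the maximal one does. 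If you want to salvage your approach, the honest route is to prove the constancy of the defect number (or at least the implication ``$\ran(\Tmin-z)$ closed with trivial orthogonal complement near $z$ propagates to $\pm\I$''); without that or an explicit construction of the near-null test functions, the proof is incomplete.
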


\begin{proof}
Let $z\in \reg(\Tmin)$ and first assume that $\tau$ is regular at $b$. If there were no solutions of $(\tau-z)u=0$ which lie in $\Lr$
 near $a$, we would have $\ker(\Tmax-z) = \lbrace 0\rbrace$ and hence $n(\Tmin)=0$, i.e., $\Tmin=\Tmax$.
 But since there is an $f\in\Tmax$ with
 \begin{align*}
  f(b) = 1 \quad\text{and}\quad f^\qd(b) = 0,
 \end{align*}
 this is a contradiction to Theorem~\ref{thm:Tmin}.
 
 In the general case we take some $c\in(a,b)$ and consider the minimal operator $T_c$ in $L^2((a,c);\varrho)$ 
 induced by $\tau|_{(a,c)}$. Then $z$ is a point of regular type of $T_c$.
 Indeed, we can extend each $f_c\in\dom{T_c}$ by setting it equal to zero on $(c,b)$ and obtain a function $f\in\dom{\Tmin}$.
 For these functions and some positive constant $C$ we have
 \begin{align*}
  \left\| (T_c - z)f_c \right\|_c = \left\| (\Tmin-z)f \right\| \geq C \left\| f\right\| = C\left\| f_c\right\|_c,
 \end{align*}
 where $\|\cdot\|_c$ is the norm on $L^2((a,c);\varrho)$. Now since the solutions of the equation $(\tau|_{(a,c)}-z)u=0$ are exactly the solutions of
 $(\tau-z)u=0$ restricted to $(a,c)$, the claim follows from what we already proved.
\end{proof}

\begin{corollary}\label{cor:regtypeuniqsol}
 If $z\in \reg(\Tmin)$ and $\tau$ is in the l.p.~case at $a$, then there is a unique
 non-trivial solution of $(\tau-z)u=0$ (up to scalar multiples), which lies in $\Lr$ near $a$. A similar result holds for the endpoint $b$.
\end{corollary}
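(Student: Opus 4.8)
The plan is to separate the statement into its two assertions---existence and uniqueness---and to dispatch each using the machinery already assembled. The existence of \emph{some} non-trivial solution of $(\tau-z)u=0$ lying in $\Lr$ near $a$ is exactly the content of Lemma~\ref{lemWeylRegType}, so that half requires no further work. The substance of the corollary is therefore the uniqueness (up to scalar multiples), and this is where I would focus the argument.

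For uniqueness, I would argue by contradiction using a dimension count. Recall from~\eqref{eqnSLPdimkerTloc} that the solution space of $(\tau-z)u=0$ is exactly two-dimensional. Suppose there were two \emph{linearly independent} solutions $u_1$, $u_2$ of $(\tau-z)u=0$, both lying in $\Lr$ near $a$. Since they are linearly independent, they form a fundamental system, so every solution of $(\tau-z)u=0$ is a linear combination $c_1 u_1 + c_2 u_2$ and hence also lies in $\Lr$ near $a$. Thus for this particular $z$, \emph{all} solutions lie in $\Lr$ near $a$.

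The key step is then to invoke Lemma~\ref{lemweylaltLC}: the existence of a single value $z_0\in\C$ (here, our $z$) for which all solutions of $(\tau-z_0)u=0$ lie in $\Lr$ near $a$ forces $\tau$ to be in the limit-circle case at $a$. This directly contradicts the standing hypothesis that $\tau$ is in the limit-point case at $a$. Hence no two linearly independent $\Lr$-near-$a$ solutions can coexist, and the solution furnished by Lemma~\ref{lemWeylRegType} is unique up to scalar multiples. The argument at the endpoint $b$ is entirely analogous. I do not anticipate a genuine obstacle here: the corollary is essentially a bookkeeping consequence of Weyl's alternative (Lemma~\ref{lemweylaltLC}) together with the two-dimensionality of the solution space, and all the analytic effort has already been expended in establishing those facts.
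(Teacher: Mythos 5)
Your proposal is correct and follows essentially the same route as the paper: the paper's proof is the one-line observation that two linearly independent solutions in $\Lr$ near $a$ would force the l.c.\ case at $a$ (via Lemma~\ref{lemweylaltLC} and the two-dimensionality of the solution space), with existence already supplied by Lemma~\ref{lemWeylRegType}. Your version merely spells out these same steps in more detail.
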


\begin{proof}
 If there were two linearly independent solutions in $\Lr$ near $a$, $\tau$ would be in the l.c.~case at $a$.
\end{proof}

\begin{lemma}\label{lem:lclpwronski}
 $\tau$ is in the l.p.~case at $a$ if and only if
\begin{align*}
 W(f,g)(a) = 0, \quad f,\,g\in\Tmax.
\end{align*}
$\tau$ is in the l.c.~case at $a$ if and only if there is an $f\in\Tmax$ such that
\begin{align*}
 W(f,f^\ast)(a) = 0 \quad\text{and}\quad W(f,g)(a)\not=0 \quad\text{for some }g\in\Tmax.
\end{align*}
Similar results hold at the endpoint $b$.
\end{lemma}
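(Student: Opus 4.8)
The plan is to reduce all four implications of the lemma to two statements about the endpoint $a$: (A) if $\tau$ is in the l.c.\ case at $a$, then there are $f,g\in\Tmax$ with $W(f,g)(a)\neq0$, and one may moreover arrange $W(f,f^\ast)(a)=0$; and (B) if $\tau$ is in the l.p.\ case at $a$, then $W(f,g)(a)=0$ for all $f,g\in\Tmax$. Granting (A) and (B), the lemma follows from Weyl's alternative (each boundary point is either l.c.\ or l.p.). Indeed, the forward implication of the first assertion is exactly (B); its converse is the contrapositive of (A) combined with the fact that ``not l.p.'' means ``l.c.''. The forward implication of the second assertion is (A); its converse is the contrapositive of (B) together with Weyl's alternative. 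Thus the only genuine work lies in (A) and (B), and the ``diagonal'' condition $W(f,f^\ast)(a)=0$ in (A) is needed only to match the statement of the second assertion.

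For (A) I would use the gluing Lemma~\ref{lem:funcdomtmax}. In the l.c.\ case every solution of $\tau u=0$ lies in $\Lr$ near $a$, so I pick a real fundamental system $u_1,u_2$ of $\tau u=0$ (real by the existence-and-uniqueness theorem) normalised by $W(u_1,u_2)=1$; since $\tau u_i=0\in\Lr$ near $a$, both $u_i$ lie in $\Tmax$ near $a$. By Lemma~\ref{lem:funcdomtmax} there exist $f,g\in\Tmax$ coinciding with $u_1,u_2$ near $a$, and because the Wronskian of two solutions is constant, $W(f,g)(a)=W(u_1,u_2)=1\neq0$. Choosing $f$ to agree with the \emph{real} solution $u_1$ gives $W(f,f^\ast)(a)=W(u_1,u_1)(a)=0$, supplying the extra normalisation.

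The heart of the matter is (B). Fixing $z=\I\in\reg(\Tmin)$, Corollary~\ref{cor:regtypeuniqsol} provides the (up to scalars unique) solution $u$ of $(\tau-\I)w=0$ lying in $\Lr$ near $a$, while the l.p.\ hypothesis guarantees a linearly independent $\hat u\notin\Lr$ near $a$; normalise $W(u,\hat u)=1$. Given $f\in\Tmax$, put $h_f=\tau f-\I f\in\Lr$ and expand $f$ near $a$ by variation of parameters (Proposition~\ref{prop:repsol}) in the basis $\{u,\hat u\}$. Setting $P_f=-W(f,u)$ and $Q_f=W(f,\hat u)$, a direct computation using $W(u,\hat u)=1$ gives $f=P_f\hat u+Q_f u$ together with $W(f,g)=Q_fP_g-P_fQ_g$ for all $f,g\in\Tmax$. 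By the Lagrange identity $P_f(x)=P_f(a)-\int_a^x u\,h_f\,d\varrho$, so $P_f(a)=-W(f,u)(a)$ exists; the key step is that this limit \emph{vanishes}. Here the l.p.\ hypothesis enters: since $\hat u\notin\Lr$ near $a$ whereas $f=P_f\hat u+Q_f u\in\Lr$ near $a$, the coefficient of $\hat u$ cannot tend to a nonzero limit, forcing $P_f(a)=0$ and hence $P_f(x)=-\int_a^x u\,h_f\,d\varrho$, which is $o(\|u\|_{(a,x)})$ because $\|h_f\|_{(a,x)}\to0$ (here $\|\cdot\|_{(a,x)}$ is the norm of $L^2((a,x);\varrho)$).

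It then remains to pass this to the Wronskian. By Lemma~\ref{lem:l2lagrange} the limit $W(f,g)(a)=\lim_{x\downarrow a}\bigl(Q_f(x)P_g(x)-P_f(x)Q_g(x)\bigr)$ exists, so I must show each product tends to $0$. This is the main obstacle: although $P_g(x)=o(\|u\|_{(a,x)})\to0$, the coefficient $Q_f(x)=O(\|\hat u\|_{(x,c)})$ may blow up as $x\downarrow a$, so the product is an indeterminate $\infty\cdot0$. The decisive point is that the growth of $\hat u$ out of $\Lr$ is exactly balanced by the decay of $u$ into $\Lr$: the normalisation $W(u,\hat u)=1$ ties the two together so that $\|u\|_{(a,x)}\,\|\hat u\|_{(x,c)}$ stays bounded, whence $Q_fP_g=o(1)$ and $W(f,g)(a)=0$. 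Establishing this balance rigorously is the one analytically substantial estimate; it is precisely the assertion that at an l.p.\ endpoint the Weyl disc degenerates to a point, equivalently that $W(u,u^\ast)(a)=0$ for the distinguished solution $u$, and it is where all the weight of the limit-point assumption is used.
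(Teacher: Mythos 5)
Your reduction of the lemma to the two statements (A) and (B) via Weyl's alternative is logically sound, and your proof of (A) is correct and is essentially the paper's own argument: in the l.c.\ case glue a real fundamental system of $\tau u=0$ (all of whose members lie in $\Lr$ near $a$) into $\Tmax$ via Lemma~\ref{lem:funcdomtmax} and use constancy of the Wronskian. The problem is (B). The paper proves (B) by dimension counting: assuming first that $\tau$ is regular at $b$, the l.p.\ hypothesis together with Lemma~\ref{lemWeylRegType} and Corollary~\ref{cor:regtypeuniqsol} gives $\dim\ker(\Tmax-\I)=1$, so $n(\Tmin)=1$ and $\Tmax$ is a two-dimensional extension of $\Tmin$; exhibiting $v,w\in\Tmax$ vanishing near $a$ and linearly independent modulo $\Tmin$ then shows every $f\in\Tmax$ coincides near $a$ with an element of $\Tmin$, whence $W(f,g)(a)=0$ by Theorem~\ref{thm:Tmin}; the general case follows by restriction to $(a,c)$. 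You instead attempt a direct analytic (Weyl-disk) argument, which is a genuinely different route, but you do not complete it.

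Concretely, two claims in your treatment of (B) are left unproven, and each carries the full weight of the limit-point hypothesis. First, the step $P_f(a)=0$: you argue that a nonzero limit of the coefficient of $\hat u$ would force $f\notin\Lr$ near $a$, but since $f-P_f\hat u=Q_fu$ with $Q_f(x)=O\bigl(\|\hat u\|_{L^2((x,c);\varrho)}\bigr)$ possibly unbounded, the contradiction only follows once you know $Q_fu\in\Lr$ near $a$, i.e.\ $\int_a^c\|\hat u\|^2_{L^2((x,c);\varrho)}|u(x)|^2\,d\varrho(x)<\infty$ --- which is precisely the balance estimate you postpone. Second, the boundedness of $\|u\|_{L^2((a,x);\varrho)}\|\hat u\|_{L^2((x,c);\varrho)}$, which you identify with the degeneration of the Weyl disk and with $W(u,u^\ast)(a)=0$, is simply asserted, with the explicit acknowledgment that it is ``the one analytically substantial estimate''. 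But $W(u,u^\ast)(a)$ is the Wronskian at $a$ of two elements that lie in $\Tmax$ near $a$ (extensions of $u$ and $u^\ast$ exist by Lemma~\ref{lem:funcdomtmax}), so its vanishing is itself a special case of (B): your argument reduces the general statement to a special case of itself and then stops. That special case is true, but establishing it requires either the full Weyl-circle analysis or the operator-theoretic count the paper uses; as written, the proof of the l.p.\ half is incomplete.
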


\begin{proof}
 Let $\tau$ be in the l.c.~case at $a$ and $u_1$, $u_2$ be a real fundamental system of $\tau u=0$ with $W(u_1,u_2)=1$.
 Both, $u_1$ and $u_2$ lie in $\Tmax$ near $a$. Hence there are $f$, $g\in\Tmax$ with $f=u_1$ and $g=u_2$ near $a$
 and $f=g=0$ near $b$. Then we have
 \begin{align*}
   W(f,g)(a) = W(u_1,u_2)(a) = 1
 \end{align*}
 and
 \begin{align*}
  W(f,f^\ast)(a) = W(u_1,u_1^\ast)(a) = 0
 \end{align*}
 since $u_1$ is real.

 Now assume $\tau$ is in the l.p.~case at $a$ and regular at $b$. Then $\Tmax$ is a two-dimensional extension of $\Tmin$, since 
 $\dim\ker(\Tmax-\I)=1$ by Corollary~\ref{cor:regtypeuniqsol}. Let $v$, $w\in\Tmax$ with $v=w=0$ in a vicinity of $a$ and
 \begin{align*}
  v(b) = w^\qd(b) = 1 \quad\text{and}\quad v^\qd(b) = w(b) = 0.
 \end{align*}
 Then
 \begin{align*}
  \Tmax = \Tmin + \linspan\lbrace v,w\rbrace,
 \end{align*}
 since $v$ and $w$ are linearly independent modulo $\Tmin$ and do not lie in $\Tmin$. Then for each $f$, $g\in\Tmax$ there are
 $f_0$, $g_0\in\Tmin$ such that $f=f_0$ and $g=g_0$ in a vicinity of $a$ and therefore
 \begin{align*}
  W(f,g)(a) = W(f_0,g_0)(a) = 0.
 \end{align*}
 Now if $\tau$ is not regular at $b$ we take some $c\in(a,b)$. Then for each $f\in\Tmax$ the function $f|_{(a,c)}$ lies in
 the maximal relation induced by $\tau|_{(a,c)}$ and the claim follows from what we already proved.
\end{proof}

\begin{lemma}\label{lem:lplpnosol}
 Let $\tau$ be in the l.p.~case at both endpoints and $z\in\C\backslash\R$. Then there is no non-trivial 
 solution of $(\tau-z)u=0$ in $\Lr$.
\end{lemma}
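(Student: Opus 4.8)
The plan is to argue by contradiction, playing off the Lagrange identity against the Wronskian characterization of the limit-point case in Lemma~\ref{lem:lclpwronski}. Suppose $u$ is a solution of $(\tau-z)u=0$ that lies in $\Lr$ and is non-trivial. Since $\tau u = zu \in \Lr$, the element $u$ belongs to $\Tmax$. Because $\tau$ is a real differential expression, $u^\ast$ solves $(\tau-z^\ast)u=0$, and $\tau u^\ast = z^\ast u^\ast \in \Lr$, so $u^\ast\in\Tmax$ as well. Securing $u,u^\ast\in\Tmax$ is the only preliminary point, and it is immediate from $u\in\Lr$ together with $\tau u = zu\in\Lr$.

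First I would apply the $\Lr$-form of the Lagrange identity, Lemma~\ref{lem:l2lagrange}, with $f=g=u$, which gives
\[
 \spr{\tau u}{u} - \spr{u}{\tau u} = W(u,u^\ast)(b) - W(u,u^\ast)(a).
\]
For the left-hand side, I use $\tau u = zu$ and the fact that $\spr{\cdot}{\cdot}$ is conjugate-linear in its second slot, so that it equals $(z-z^\ast)\spr{u}{u} = 2\I\,\im(z)\,\|u\|^2$. For the right-hand side, I invoke the hypothesis that $\tau$ is in the l.p.\ case at both endpoints: by Lemma~\ref{lem:lclpwronski} we have $W(f,g)(a)=W(f,g)(b)=0$ for all $f,g\in\Tmax$, and applying this to $f=u$, $g=u^\ast$ makes the right-hand side vanish.

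Combining the two evaluations yields $2\I\,\im(z)\,\|u\|^2 = 0$. Since $z\in\C\backslash\R$, we have $\im(z)\not=0$, forcing $\|u\|=0$, i.e.\ $u=0$ in $\Lr$, which contradicts non-triviality. There is no genuine obstacle here; the computation is routine once $u,u^\ast\in\Tmax$ is noted. The only subtlety worth recording is that the two possible readings of ``non-trivial'' coincide: as $z\not=0$, a solution with $\|u\|=0$ vanishes $\varrho$-almost everywhere, whence $\tau u = zu$ also vanishes $\varrho$-almost everywhere, and then the injectivity established in Proposition~\ref{prop:identDeftauTloc} forces $u$ to be identically zero. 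Thus a solution that is trivial in $\Lr$ is trivial as a function, and the contradiction is complete.
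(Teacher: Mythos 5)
Your proof is correct and follows essentially the same route as the paper: both establish $u,u^\ast\in\Tmax$, apply the Lagrange identity to get $(z-z^\ast)\|u\|^2$ equal to the difference of the boundary Wronskians $W(u,u^\ast)$, and kill those boundary terms via Lemma~\ref{lem:lclpwronski}. The only cosmetic difference is that you invoke the already-limiting form of the Lagrange identity (Lemma~\ref{lem:l2lagrange}) while the paper integrates over $[\alpha,\beta]$ and then lets $\alpha\downarrow a$, $\beta\uparrow b$.
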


\begin{proof}
 If $u\in\Lr$ is a solution of $(\tau-z)u=0$, then $u^\ast$ is a solution of $(\tau-z^\ast)u=0$ 
 and both, $u$ and $u^\ast$ lie in $\Tmax$. Now the Lagrange identity yields for each $\alpha$, $\beta\in(a,b)$ with $\alpha<\beta$ 
 \begin{align*}
  W(u,u^\ast)(\beta) - W(u,u^\ast)(\alpha) = (z-z^\ast)\int_\alpha^\beta uu^\ast d\varrho = 2 \I\, \im(z) \int_\alpha^\beta |u|^2 d\varrho.
 \end{align*}
 As $\alpha\rightarrow a$ and $\beta\rightarrow b$, the left-hand side converges to zero by Lemma~\ref{lem:lclpwronski}
 and the right-hand side converges to $2 \I\, \im(z) \|u\|^2$, hence $\|u\|=0$.
\end{proof}

\begin{theorem}\label{thm:TminDefIndLCLP}
 The deficiency index of the minimal relation is given by
\begin{align*}
 n(\Tmin) = \begin{cases}
              0, & \text{if }\tau\text{ is in the l.c.~case at no boundary point}, \\
              1, & \text{if }\tau\text{ is in the l.c.~case at exactly one boundary point}, \\
              2, & \text{if }\tau\text{ is in the l.c.~case at both boundary points.} 
            \end{cases}
\end{align*}
\end{theorem}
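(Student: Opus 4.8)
The plan is to reduce the computation of $n(\Tmin)$ to counting the linearly independent solutions of $(\tau - z)u = 0$ that lie in $\Lr$ for a single fixed non-real $z$, and then to read off this count directly from the limit-point/limit-circle classification at the two endpoints. The classification furnished by the preceding lemmas is exactly what controls square integrability near each endpoint separately.

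First I would fix some $z\in\C\setminus\R$. Since $\Tmin$ is a closed symmetric relation, such $z$ is a point of regular type, so the remark preceding Lemma~\ref{lemWeylRegType} gives $n(\Tmin) = \dim\ker(\Tmax - z)$. Under the identification of $\Tloc$ with functions in $\Deftau$ from Proposition~\ref{prop:identDeftauTloc}, an element of $\ker(\Tmax - z)$ is precisely a solution $u$ of $(\tau - z)u = 0$ with $u\in\Lr$; the requirement $\tau u = zu\in\Lr$ is then automatic. By~\eqref{eqnSLPdimkerTloc} the solution space of $(\tau - z)u = 0$ is two-dimensional, and (fixing one $c\in(a,b)$ and splitting $(a,b)$ at $c$) a solution lies in $\Lr$ if and only if it lies in $\Lr$ near $a$ and near $b$. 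Hence $n(\Tmin)$ equals the number of linearly independent solutions that are simultaneously square integrable near both endpoints.

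Then I would treat the three cases. If $\tau$ is l.p.\ at both endpoints, Lemma~\ref{lem:lplpnosol} shows that no non-trivial solution lies in $\Lr$, so $n(\Tmin) = 0$. If $\tau$ is l.c.\ at both endpoints, every solution lies in $\Lr$ near $a$ and near $b$, hence in $\Lr$, so the whole two-dimensional solution space belongs to $\ker(\Tmax - z)$ and $n(\Tmin) = 2$. For the mixed case, say l.c.\ at $a$ and l.p.\ at $b$ (the other case being symmetric): since $z\in\reg(\Tmin)$ and $\tau$ is l.p.\ at $b$, Corollary~\ref{cor:regtypeuniqsol} yields a solution, unique up to scalar multiples, lying in $\Lr$ near $b$; because $\tau$ is l.c.\ at $a$, this same solution also lies in $\Lr$ near $a$, hence in $\Lr$. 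Any solution not proportional to it fails to be square integrable near $b$, so $\ker(\Tmax - z)$ is one-dimensional and $n(\Tmin) = 1$.

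The only point needing slight care is the opening reduction, namely that every non-real $z$ is of regular type for $\Tmin$ so that $n(\Tmin) = \dim\ker(\Tmax - z)$; I expect this to be the main (though mild) obstacle. It follows from the standard estimate $\|(\Tmin - z)f\| \geq |\im(z)|\,\|f\|$ for symmetric relations, together with the constancy of $\dim\ker(\Tmax - z)$ on the connected components of $\reg(\Tmin)$ recorded before Lemma~\ref{lemWeylRegType}. Everything after that is straightforward bookkeeping of square integrability near the two endpoints, delivered by Lemma~\ref{lem:lplpnosol} and Corollary~\ref{cor:regtypeuniqsol}.
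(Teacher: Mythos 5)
Your proposal is correct and follows essentially the same route as the paper: reduce $n(\Tmin)$ to $\dim\ker(\Tmax-z)$ for a non-real $z$ (the paper simply fixes $z=\I$), identify this kernel with the square-integrable solutions of $(\tau-z)u=0$, and then settle the three cases via Lemma~\ref{lem:lplpnosol} for the doubly limit-point case and Corollary~\ref{cor:regtypeuniqsol} for the mixed case. The extra care you take over $z$ being of regular type is already recorded in the paper before Lemma~\ref{lemWeylRegType}, so nothing is missing.
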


\begin{proof}
 If $\tau$ is in the l.c.~case at both endpoints, all solutions of $(\tau-\I)u=0$ lie in $\Lr$ and hence in $\Tmax$.
 Therefore, $n(\Tmin) = \dim\ker(\Tmax - \I) = 2$. 
 In the case when $\tau$ is in the l.c.~case at exactly one endpoint, there is (up to scalar multiples) 
 exactly one non-trivial solution of $(\tau-\I)u=0$ in $\Lr$, by Corollary~\ref{cor:regtypeuniqsol}.
 Now if $\tau$ is in the l.p.~case at both endpoints, we have $\ker(\Tmax-\I)=\lbrace 0\rbrace$ by Lemma~\ref{lem:lplpnosol} 
 and hence $n(\Tmin)=0$.
\end{proof}

\section{Self-adjoint relations}

We are interested in the self-adjoint restrictions of $\Tmax$ (or equivalent the self-adjoint extensions of $\Tmin$).
To this end recall that we introduced the convenient short-hand notation
\begin{align*}
W_a^b(f,g^\ast)= W(f,g^\ast)(b) -W(f,g^\ast)(a), \quad f,\, g\in\Tmax.
\end{align*}

\begin{theorem}
 Some relation $S$ is a self-adjoint restriction of $\Tmax$ if and only if
\begin{align}
 S = \lbrace f\in\Tmax \,|\, \forall g\in S: W_a^b(f,g^\ast) = 0 \rbrace.
\end{align}
\end{theorem}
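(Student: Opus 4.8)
The plan is to reduce everything to the single identity
$$S^\ast = \left\lbrace f\in\Tmax \,\middle|\, \forall g\in S: W_a^b(f,g^\ast) = 0 \right\rbrace,$$
valid for every relation $S$ with $\Tmin\subseteq S\subseteq\Tmax$, and then to read off both implications of the theorem from it. To establish this identity, note that $\Tpre\subseteq\Tmin\subseteq S$ gives, upon taking adjoints, $S^\ast\subseteq\Tpre^\ast=\Tmax$, so under the identification of $\Tmax$ with functions in $\Deftau$ every element of $S^\ast$ has the form $f\in\Tmax$. For such an $f$ the defining condition of the adjoint, $\spr{f}{\tau g}=\spr{\tau f}{g}$ for all $g\in S$, is by the Lagrange identity of Lemma~\ref{lem:l2lagrange} precisely $W_a^b(f,g^\ast)=\spr{\tau f}{g}-\spr{f}{\tau g}=0$. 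This yields the displayed identity.

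For the forward direction, suppose $S$ is a self-adjoint restriction of $\Tmax$, i.e.\ $S=S^\ast\subseteq\Tmax$. Taking adjoints in $S\subseteq\Tmax$ gives $\Tmin=\Tmax^\ast\subseteq S^\ast=S$, so that $\Tmin\subseteq S\subseteq\Tmax$ and the identity above applies. Hence
$$S=S^\ast=\left\lbrace f\in\Tmax \,\middle|\, \forall g\in S: W_a^b(f,g^\ast) = 0 \right\rbrace,$$
which is the claimed form.

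For the converse I would first show that the fixed-point hypothesis already forces $\Tmin\subseteq S\subseteq\Tmax$, thereby breaking the apparent circularity. The inclusion $S\subseteq\Tmax$ is immediate from the form of the right-hand side. For $\Tmin\subseteq S$, take $f\in\Tmin$; then for every $g\in S\subseteq\Tmax$ we have $g^\ast\in\Tmax$ since $\Tmax$ is real, and Theorem~\ref{thm:Tmin} gives $W(f,g^\ast)(a)=W(f,g^\ast)(b)=0$, so that $W_a^b(f,g^\ast)=0$ and $f$ lies in the right-hand side, i.e.\ $f\in S$. With $\Tmin\subseteq S\subseteq\Tmax$ established, the identity again yields $S^\ast=S$, so $S$ is self-adjoint.

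The main obstacle is the careful bookkeeping in the relation setting: deriving $S^\ast\subseteq\Tmax$ from $\Tpre\subseteq S$, correctly handling the identification of elements of $\Tmax$ with functions in $\Deftau$, and—in the converse—obtaining $\Tmin\subseteq S$ directly from Theorem~\ref{thm:Tmin} before the adjoint identity can be invoked. Once these points are in place, both implications follow from the identity together with the Lagrange formula of Lemma~\ref{lem:l2lagrange}.
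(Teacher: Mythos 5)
Your proof is correct and follows essentially the same route as the paper's: both rest on the Lagrange identity of Lemma~\ref{lem:l2lagrange}, which identifies the condition $W_a^b(f,g^\ast)=0$ for all $g\in S$ with membership in $S^\ast$ (restricted to $\Tmax$). The one place you go beyond the paper is in the converse, where you explicitly verify $\Tmin\subseteq S$ via Theorem~\ref{thm:Tmin} before invoking $S^\ast\subseteq\Tmin^\ast=\Tmax$ --- a step the paper's proof uses without comment --- so your write-up is, if anything, slightly more complete.
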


\begin{proof}
 We denote the right-hand side by $S_0$. First assume $S$ is a self-adjoint restriction of $\Tmax$. If $f\in S$, then
 \begin{align*}
  0 = \spr{\tau f}{g} - \spr{f}{\tau g} = W_a^b(f,g^\ast)
 \end{align*}
 for each $g\in S$, hence $f\in S_0$.
 Now if $f\in S_0$, then 
 \begin{align*}
  0 = W_a^b(f,g^\ast) = \spr{\tau f}{g} - \spr{f}{\tau g}
 \end{align*}
 for each $g\in S$ and hence $f\in S^\ast=S$.
 
 Conversely, assume that $S=S_0$, then $S$ is symmetric since we have $\spr{\tau f}{g}=\spr{f}{\tau g}$ for each $f$, $g\in S$.
 Now let $f\in S^\ast\subseteq \Tmax$, then
 \begin{align*}
  0 = \spr{\tau f}{g} - \spr{f}{\tau g} = W_a^b(f,g^\ast)
 \end{align*}
 for each $g\in S$ and hence $f\in S_0=S$.
\end{proof}

The aim of this section is to determine all self-adjoint restrictions of $\Tmax$. 
If both endpoints are in the l.p.~case, this is an immediate consequence of Theorem~\ref{thm:TminDefIndLCLP}.

\begin{theorem}
 If $\tau$ is in the l.p.~case at both endpoints then $\Tmin=\Tmax$ is a self-adjoint operator.
\end{theorem}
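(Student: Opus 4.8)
The plan is to show that when $\tau$ is in the limit-point case at both endpoints, the deficiency indices vanish, which forces $\Tmin$ to be self-adjoint, and then to identify $\Tmin$ with $\Tmax$. By Theorem~\ref{thm:TminDefIndLCLP}, the hypothesis that $\tau$ is l.c.~at no boundary point gives immediately that $n(\Tmin) = 0$. Since the deficiency indices are equal and both zero, the symmetric relation $\Tmin$ has no proper self-adjoint extensions and is itself self-adjoint. In particular $\Tmin = \Tmin^\ast = \Tmax$, because $\Tmin^\ast = \Tmax$ by definition of the minimal relation as $\Tmax^\ast$. This already yields the identity $\Tmin = \Tmax$ as self-adjoint relations.

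The remaining point is to verify that this self-adjoint relation is in fact an \emph{operator}, i.e.~that its multi-valued part is trivial. I would invoke Theorem~\ref{thm:Tmin}, which asserts that $\Tmin$ is always an operator, so that $\dim\mul{\Tmin} = 0$ regardless of the deficiency indices. Combined with $\Tmin = \Tmax$, this shows $\Tmax$ is an operator in the l.p./l.p.~case. Alternatively, one can argue directly: any element $g \in \mul{\Tmax} = \mul{\Tloc}$ lies in $\linspan\{\indik_{\lbrace\alpha_\varrho\rbrace}, \indik_{\lbrace\beta_\varrho\rbrace}\}$ by Proposition~\ref{propTlocmulval}, and the l.p.~assumption at both endpoints forces $\varrho$ to have no mass at $\alpha_\varrho$ or $\beta_\varrho$ (a boundary point carrying mass is regular there, hence l.c.), so these indicator functions vanish.

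The structure of the argument is therefore essentially a bookkeeping assembly of results already established: the deficiency-index computation of Theorem~\ref{thm:TminDefIndLCLP}, the abstract fact that a symmetric relation with zero deficiency indices is self-adjoint, the defining relation $\Tmin = \Tmax^\ast$ together with $\Tmax = \Tpre^\ast$, and the operator property from Theorem~\ref{thm:Tmin}. There is no genuine obstacle here; the only subtlety worth stating carefully is the passage from $n(\Tmin) = 0$ to self-adjointness of the \emph{relation} rather than an operator, which is why the appeal to Theorem~\ref{thm:Tmin} for $\dim\mul{\Tmin}=0$ is the one step that deserves explicit mention in the write-up.
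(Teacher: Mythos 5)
Your proposal is correct and follows essentially the same route as the paper, which presents this theorem as an immediate consequence of Theorem~\ref{thm:TminDefIndLCLP} (deficiency indices zero, hence $\Tmin$ is self-adjoint and equals $\Tmin^{\ast}=\Tmax^{\ast\ast}=\Tmax$), with the operator property supplied by Theorem~\ref{thm:Tmin}. Your alternative check that $\mul{\Tmax}=\lbrace 0\rbrace$ is also sound, though the cleaner justification is that the l.p.~case at $a$ forces $\alpha_\varrho=a$ (otherwise $\varrho$ would have no weight near $a$ and $\tau$ would be l.c.~there), so $\indik_{\lbrace\alpha_\varrho\rbrace}$ vanishes in $\Lr$, and likewise at $b$.
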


Next we turn to the case when one endpoint is in the l.c.~case and the other one is in the l.p.~case.
But before we do this, we need some more properties of the Wronskian.

\begin{lemma}\label{lem:WronskPropSR}
 Let $v\in\Tmax$ such that $W(v,v^\ast)(a) = 0$ and suppose there is an $h\in\Tmax$ with
    $W(h,v^\ast)(a)\not=0$. Then for each $f$, $g\in\Tmax$ we have
 \begin{align}\label{eqn:WronskLCconj}
  W(f,v^\ast)(a) = 0 \quad\Leftrightarrow\quad W(f^\ast,v^\ast)(a) = 0
 \end{align}
 and
 \begin{align}\label{eqn:WronskLC2}
  W(f,v^\ast)(a) = W(g,v^\ast)(a) = 0 \quad\Rightarrow\quad W(f,g)(a) = 0.
 \end{align}
 Similar results hold at the endpoint $b$.
\end{lemma}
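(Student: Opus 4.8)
The plan is to reduce everything to the Plücker identity, evaluated at the endpoint $a$. Since $v$, $f$, $g$, $h$ and all of their complex conjugates lie in $\Tmax$, hence in $\Tmax$ near $a$, Lemma~\ref{lem:l2lagrange} guarantees that each of the limits $W(\cdot,\cdot)(a)$ below exists and is finite. The Plücker identity is a pointwise algebraic identity, valid for every $x\in(a,b)$, so letting $x\downarrow a$ it persists with each Wronskian replaced by its limit at $a$; this is the form I will use throughout. I will also exploit that $\tau$ is a real differential expression: because $\varsigma$ is real-valued the quasi-derivative commutes with conjugation, whence $W(f^\ast,g^\ast)(a) = W(f,g)(a)^\ast$ for all $f,g\in\Tmax$. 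In particular $W(f^\ast,v^\ast)(a) = W(f,v)(a)^\ast$, so \eqref{eqn:WronskLCconj} is equivalent to the statement that $W(f,v^\ast)(a)=0$ holds if and only if $W(f,v)(a)=0$.

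I would dispose of \eqref{eqn:WronskLC2} first, as it is the most direct. Feeding the four functions $f$, $g$, $v^\ast$, $h$ into the Plücker identity and evaluating at $a$ gives $0 = W(f,g)(a)\,W(v^\ast,h)(a) + W(f,v^\ast)(a)\,W(h,g)(a) + W(f,h)(a)\,W(g,v^\ast)(a)$. Under the hypotheses $W(f,v^\ast)(a) = W(g,v^\ast)(a) = 0$ the last two summands drop out, leaving $W(f,g)(a)\,W(v^\ast,h)(a) = 0$. Since $W(v^\ast,h)(a) = -W(h,v^\ast)(a)\neq 0$ by assumption, this forces $W(f,g)(a) = 0$, as claimed.

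For \eqref{eqn:WronskLCconj} I would instead apply the Plücker identity to $f$, $v$, $v^\ast$, $h$. Evaluating at $a$ and using $W(v,v^\ast)(a)=0$ to annihilate one term yields the linear relation $0 = W(f,v)(a)\,W(v^\ast,h)(a) + W(f,v^\ast)(a)\,W(h,v)(a)$. As $W(v^\ast,h)(a)\neq 0$, this immediately delivers the implication $W(f,v^\ast)(a)=0 \Rightarrow W(f,v)(a)=0$. For the converse I would run the identical argument with $v$ replaced by $v^\ast$: indeed $W(v^\ast,(v^\ast)^\ast)(a) = W(v^\ast,v)(a) = -W(v,v^\ast)(a) = 0$, and $h^\ast$ furnishes the required witness since $W(h^\ast,v)(a) = W(h,v^\ast)(a)^\ast \neq 0$. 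This yields $W(f,v)(a)=0 \Rightarrow W(f,v^\ast)(a)=0$, and combining the two implications with the conjugation identity from the first paragraph proves \eqref{eqn:WronskLCconj}. The same reasoning applies verbatim at $b$.

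The only genuinely delicate points are bookkeeping ones: choosing the order of the four arguments so that precisely the two intended summands of the Plücker identity vanish, and justifying the passage to the limit $x\downarrow a$. The latter is already supplied by Lemma~\ref{lem:l2lagrange}, so I anticipate no substantial obstacle beyond careful index tracking.
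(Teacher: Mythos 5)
Your proof is correct and follows essentially the same route as the paper: both claims are obtained from the Pl\"ucker identity with the same choices of arguments ($f,v,v^\ast,h$ and $f,g,v^\ast,h$), passed to the limit at $a$. The only cosmetic difference is in the converse direction of \eqref{eqn:WronskLCconj}: the paper first deduces $W(h,v)(a)\neq0$ from a preliminary Pl\"ucker application to $v,v^\ast,h,h^\ast$, whereas you rerun the same argument with $v$, $h$ replaced by $v^\ast$, $h^\ast$ — both are fine.
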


\begin{proof}
 Choosing $f_1=v$, $f_2=v^\ast$, $f_3=h$ and $f_4=h^\ast$ in the Pl\"{u}cker identity, we see that also $W(h,v)(a)\not=0$.
 Now let $f_1=f$, $f_2=v$, $f_3=v^\ast$ and $f_4=h$, then the Pl\"{u}cker identity yields~\eqref{eqn:WronskLCconj},
 whereas $f_1=f$, $f_2=g$, $f_3=v^\ast$ and $f_4=h$ yields~\eqref{eqn:WronskLC2}.
\end{proof}

\begin{theorem}\label{thm:SRLCLPWronsk}
 Suppose $\tau$ is in the l.c.~case at $a$ and in the l.p.~case at $b$. Then some relation $S$ is a 
  self-adjoint restriction of $\Tmax$ if and only if there is a $v\in\Tmax\backslash\Tmin$ 
  with $W(v,v^\ast)(a)=0$ such that
 \begin{align}
  S = \lbrace f\in\Tmax \,|\, W(f,v^\ast)(a)=0 \rbrace.
 \end{align}
 A similar result holds if $\tau$ is in the l.c.~case at $b$ and in the l.p.~case at $a$.
\end{theorem}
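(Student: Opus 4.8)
The plan is to reduce everything to the endpoint $a$ and then to exploit the Plücker-identity consequences collected in Lemma~\ref{lem:WronskPropSR}. First I would record the two structural facts that make the $b$-endpoint invisible: since $\tau$ is in the l.p.\ case at $b$, Lemma~\ref{lem:lclpwronski} gives $W(f,g)(b)=0$ for all $f,g\in\Tmax$, so the boundary form reduces to $W_a^b(f,g^\ast)=-W(f,g^\ast)(a)$. Combined with the general characterization of self-adjoint restrictions proved above (namely that $S$ is self-adjoint iff $S=\{f\in\Tmax \mid \forall g\in S:\ W_a^b(f,g^\ast)=0\}$), this means a restriction $S\subseteq\Tmax$ is self-adjoint if and only if $S=\{f\in\Tmax \mid \forall g\in S:\ W(f,g^\ast)(a)=0\}$; likewise $\Tmin=\{f\in\Tmax\mid \forall g\in\Tmax:\ W(f,g)(a)=0\}$.

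For the ``if'' direction, suppose $v\in\Tmax\backslash\Tmin$ with $W(v,v^\ast)(a)=0$ is given and set $S=\{f\in\Tmax\mid W(f,v^\ast)(a)=0\}$. The first step is to produce an $h\in\Tmax$ with $W(h,v^\ast)(a)\neq0$: since $v\notin\Tmin$ there is $g_0\in\Tmax$ with $W(v,g_0)(a)\neq0$, and conjugating the Wronskian (using that $\Tmax$ is invariant under the natural conjugation) turns this into $W(g_0^\ast,v^\ast)(a)\neq0$, so $h=g_0^\ast$ works. This is exactly the hypothesis of Lemma~\ref{lem:WronskPropSR}. I would then check that $S$ is symmetric: for $f,g\in S$, relation~\eqref{eqn:WronskLCconj} turns $W(g,v^\ast)(a)=0$ into $W(g^\ast,v^\ast)(a)=0$, and then~\eqref{eqn:WronskLC2} applied to $f$ and $g^\ast$ gives $W(f,g^\ast)(a)=0$, i.e.\ $W_a^b(f,g^\ast)=0$. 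Finally, self-adjointness follows from the reduced characterization above: the hypothesis $W(v,v^\ast)(a)=0$ says $v\in S$, so any $f$ with $W(f,g^\ast)(a)=0$ for all $g\in S$ in particular satisfies $W(f,v^\ast)(a)=0$ and hence lies in $S$; together with symmetry this yields $S=\{f\in\Tmax\mid\forall g\in S:\ W(f,g^\ast)(a)=0\}$.

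For the ``only if'' direction, let $S$ be a self-adjoint restriction of $\Tmax$. By Theorem~\ref{thm:TminDefIndLCLP} the deficiency indices are $n(\Tmin)=1$, so $\Tmin$ is not self-adjoint and $\dim(S/\Tmin)=1$; in particular I can choose some $v\in S\backslash\Tmin$. Symmetry of $S$ applied to the pair $(v,v)$ gives $W_a^b(v,v^\ast)=0$, i.e.\ $W(v,v^\ast)(a)=0$, so $v$ is an admissible parameter. Setting $S'=\{f\in\Tmax\mid W(f,v^\ast)(a)=0\}$, the ``if'' direction shows $S'$ is self-adjoint, while symmetry of $S$ (with $v\in S$) gives $W(f,v^\ast)(a)=0$ for every $f\in S$, i.e.\ $S\subseteq S'$. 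Two self-adjoint relations one of which contains the other must coincide (since $A\subseteq B$ implies $B^\ast\subseteq A^\ast$), so $S=S'$, as claimed.

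I expect the main obstacle to be the self-adjointness check in the ``if'' direction, where one must upgrade the single scalar condition $W(f,v^\ast)(a)=0$ to the vanishing of the full boundary form $W(f,g^\ast)(a)$ for all $g\in S$; this is precisely where the conjugation and product relations~\eqref{eqn:WronskLCconj} and~\eqref{eqn:WronskLC2} (themselves consequences of the Plücker identity) are indispensable. The case with $a$ and $b$ interchanged is proved verbatim after swapping the roles of the endpoints.
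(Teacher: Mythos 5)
Your proof is correct, but it takes a genuinely different route from the paper's. The paper invokes the abstract fact (the corollary to Theorem~\ref{AppLRthmSAext}) that, since $n(\Tmin)=1$, the self-adjoint restrictions of $\Tmax$ are exactly the one-dimensional symmetric extensions $\linspan\lbrace\Tmin\cup\lbrace v\rbrace\rbrace$ with $W(v,v^\ast)(a)=0$, and then identifies $\linspan\lbrace\Tmin\cup\lbrace v\rbrace\rbrace$ with $\lbrace f\in\Tmax\,|\,W(f,v^\ast)(a)=0\rbrace$ by a codimension count: the right-hand side is the kernel of a linear functional on $\Tmax$, hence of codimension at most one, and if it were strictly larger than the left-hand side it would be all of $\Tmax$, forcing $v\in\Tmin$ by Theorem~\ref{thm:Tmin}. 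You instead work directly with the fixed-point characterization $S=\lbrace f\in\Tmax\,|\,\forall g\in S:\ W_a^b(f,g^\ast)=0\rbrace$ and use Lemma~\ref{lem:WronskPropSR} to upgrade the single scalar condition $W(f,v^\ast)(a)=0$ to the vanishing of the full boundary form; interestingly, the paper proves that lemma immediately before this theorem but only uses it afterwards, to show the restrictions are real. Your approach costs some extra Wronskian bookkeeping (producing the auxiliary $h$ with $W(h,v^\ast)(a)\not=0$, which you obtain correctly from $v\notin\Tmin$, the vanishing of all Wronskians at $b$, and the reality of $\Tmax$), but it avoids describing $S$ explicitly as $\Tmin$ plus a one-dimensional space and the attendant dimension count; your ``only if'' direction via the maximality of self-adjoint relations ($S\subseteq S'$ with both self-adjoint forces $S=S'$) is also clean and matches the spirit of the paper's unnumbered characterization theorem. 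Both arguments ultimately rest on $n(\Tmin)=1$ and on the l.p.\ condition at $b$ killing the Wronskian there.
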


\begin{proof}
 Because of $n(\Tmin)=1$ the self-adjoint extensions of $\Tmin$ are precisely the one-dimensional, 
  symmetric extensions of $\Tmin$. Hence some relation $S$ is a self-adjoint extension of $\Tmin$ if and only
  if there is some $v\in\Tmax\backslash\Tmin$ with $W(v,v^\ast)(a)=0$ such that
 \begin{align*}
   S = \Tmin \dot{+} \linspan\lbrace v\rbrace.
 \end{align*}
 Hence we have to prove that 
 \begin{align*}
  \Tmin \dot{+} \linspan\lbrace v\rbrace = \lbrace f\in\Tmax \,|\, W(f,v^\ast)(a) = 0 \rbrace.
 \end{align*}
 The subspace on the left-hand side is included in the right one because of 
  Theorem~\ref{thm:Tmin} and $W(v,v^\ast)(a)=0$.
 But if the subspace on the right-hand side was larger, it would be equal to $\Tmax$ and hence
  would imply $v\in\Tmin$.
\end{proof}

Two such self-adjoint restrictions are distinct if and only if the corresponding functions $v$ are linearly
 independent modulo $\Tmin$.
Furthermore, $v$ can always be chosen such that $v$ is equal to some real solution of $(\tau-z)u=0$ 
 with $z\in\R$ in some vicinity of $a$.
By Lemma~\ref{lem:WronskPropSR} one sees that all these self-adjoint restrictions are real with respect to the natural conjugation.

In contrast to the classical theory, not all of this self-adjoint restrictions of $\Tmax$ are operators.
We will determine which of them are multi-valued in the following section.

It remains to consider the case when both endpoints are in the l.c.~case.

\begin{theorem}\label{thm:SRLCLCWronsk}
 Suppose $\tau$ is in the l.c.~case at both endpoints. Then some relation $S$ is a self-adjoint restriction of
  $\Tmax$ if and only if there are some $v$, $w\in\Tmax$, linearly independent modulo $\Tmin$, with
 \begin{align}\label{eqn:WronskLCLCvw}
  W_a^b(v,v^\ast) = W_a^b(w,w^\ast) = W_a^b(v,w^\ast) = 0,
 \end{align}
 such that
 \begin{align}
  S = \lbrace f\in\Tmax \,|\, W_a^b(f,v^\ast) = W_a^b(f,w^\ast)=0 \rbrace.
 \end{align}
\end{theorem}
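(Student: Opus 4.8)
The plan is to use the general theory of self-adjoint extensions of symmetric relations, exactly as in the previous two theorems. Since $\tau$ is in the l.c.\ case at both endpoints, Theorem~\ref{thm:TminDefIndLCLP} gives $n(\Tmin)=2$. Hence the self-adjoint extensions of $\Tmin$ are precisely the two-dimensional symmetric extensions, i.e.\ those of the form $S=\linspan\{\Tmin\cup\{v,w\}\}$ where $v$, $w\in\Tmax$ are linearly independent modulo $\Tmin$ and span a two-dimensional symmetric extension. I would first translate the symmetry condition for such an extension into Wronskian language: for $f$, $g\in\Tmax$ the defect form is $\spr{\tau f}{g}-\spr{f}{\tau g}=W_a^b(f,g^\ast)$ by Lemma~\ref{lem:l2lagrange}. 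Symmetry of $S$ therefore means exactly that $W_a^b(\cdot,\cdot^\ast)$ vanishes on all pairs drawn from $\{v,w\}$, together with the automatic vanishing when one argument lies in $\Tmin$ (again by Theorem~\ref{thm:Tmin}, since $W(f,g)(a)=W(f,g)(b)=0$ for $f\in\Tmin$). This reduces the symmetry requirement precisely to the three conditions~\eqref{eqn:WronskLCLCvw}.

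\textbf{Proving the set description.}
The substance of the theorem is the claim that the resulting self-adjoint extension equals
\begin{align*}
 S = \left\lbrace f\in\Tmax \,|\, W_a^b(f,v^\ast) = W_a^b(f,w^\ast)=0 \right\rbrace.
\end{align*}
Denote the right-hand side by $S_0$. I would prove $S\subseteq S_0$ first: any $f\in\linspan\{\Tmin\cup\{v,w\}\}$ is a combination $f=f_0+c_1v+c_2w$ with $f_0\in\Tmin$, and then $W_a^b(f,v^\ast)$ and $W_a^b(f,w^\ast)$ both vanish by~\eqref{eqn:WronskLCLCvw} and by $W_a^b(f_0,\cdot^\ast)=0$ for $f_0\in\Tmin$ (the latter using bilinearity of the Wronskian and the fact that $v^\ast$, $w^\ast$ again lie in $\Tmax$). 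For the reverse inclusion, I would argue by dimension: $S$ is self-adjoint, hence a maximal symmetric extension, so it cannot be properly contained in any symmetric relation other than $\Tmax$ itself. If $S_0$ were strictly larger than $S$, then $S_0$ would contain some $f\in\Tmax\setminus S$, and one would need to show this forces $S_0=\Tmax$, contradicting that the two Wronskian conditions are nontrivial (they are nontrivial precisely because $v$, $w$ are not in $\Tmin$). This mirrors the contradiction argument used at the end of the proof of Theorem~\ref{thm:SRLCLPWronsk}.

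\textbf{The main obstacle.}
The delicate point, and the one I expect to require the most care, is verifying that the two functionals $f\mapsto W_a^b(f,v^\ast)$ and $f\mapsto W_a^b(f,w^\ast)$ on $\Tmax/\Tmin$ are linearly independent, so that $S_0$ genuinely cuts the codimension down by exactly two rather than one or zero. Since $\Tmax/\Tmin$ is four-dimensional (each endpoint in l.c.\ contributes two defect dimensions), the boundary form $W_a^b(\cdot,\cdot^\ast)$ descends to a nondegenerate symplectic-type form on this quotient; the conditions~\eqref{eqn:WronskLCLCvw} say that $\{v,w\}$ spans a maximal isotropic (Lagrangian) subspace, and Lagrangian subspaces of a nondegenerate four-dimensional form are exactly two-dimensional. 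The linear independence of the two functionals modulo $\Tmin$ is then equivalent to $v$, $w$ being linearly independent modulo $\Tmin$ via the nondegeneracy of this form, which is where the Plücker identity (through the machinery already set up in Lemma~\ref{lem:WronskPropSR}, localized now at both endpoints) does the real work. Once this nondegeneracy is in hand, the codimension count forces $S_0$ to be exactly two-dimensional over $\Tmin$, hence equal to $S$, completing the proof. I would also remark at the end that, as in the one-endpoint l.c.\ case, $v$ and $w$ may be chosen to agree with real solutions of $(\tau-z)u=0$ near the respective endpoints, so that all these restrictions are real with respect to the natural conjugation.
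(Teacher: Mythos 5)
Your proposal is correct and follows essentially the same route as the paper: reduce to two-dimensional symmetric extensions of $\Tmin$, verify the inclusion $\linspan\{\Tmin\cup\{v,w\}\}\subseteq S_0$ from \eqref{eqn:WronskLCLCvw} and Theorem~\ref{thm:Tmin}, and then pin down the reverse inclusion by showing the two functionals $f\mapsto W_a^b(f,v^\ast)$, $f\mapsto W_a^b(f,w^\ast)$ are linearly independent modulo $\Tmin$, so that $S_0$ has codimension exactly two. The only inaccuracy is in your ``main obstacle'' paragraph: the nondegeneracy of the boundary form on $\Tmax/\Tmin$ is not delivered by the Pl\"ucker identity or Lemma~\ref{lem:WronskPropSR}, but by Lemma~\ref{lem:funcdomtmax} (patching a given element near $a$ with an element of $\Tmin$ near $b$ and vice versa, so that $W_a^b(f,g^\ast)=0$ for all $g$ forces $W(f,g)(a)=W(f,g)(b)=0$ separately and hence $f\in\Tmin$ by Theorem~\ref{thm:Tmin}), which is exactly how the paper proves the independence of the two functionals before concluding with Lemma~\ref{lem:hilfslemmaadjoint}.
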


\begin{proof}
 Since $n(\Tmin)=2$, the self-adjoint extensions of $\Tmin$ are precisely the two-dimensional, symmetric
  extensions of $\Tmin$. Hence a relation $S$ is a self-adjoint restriction of $\Tmax$ if and only if there 
  are $v$, $w\in\Tmax$, linearly independent modulo $\Tmin$, with~\eqref{eqn:WronskLCLCvw} such that
 \begin{align*}
   S =  \Tmin \dot{+} \linspan\lbrace v,w\rbrace.
 \end{align*}
 Therefore, we have to prove that
 \begin{align*}
    \Tmin \dot{+} \linspan\lbrace v,w\rbrace = \lbrace f\in\Tmax \,|\, W_a^b(f,v^\ast) = W_a^b(f,w^\ast) = 0 \rbrace = T,
 \end{align*}
 where we denote the subspace on the right-hand side by $T$.
 Indeed the subspace on the left-hand side is contained in $T$ by Theorem~\ref{thm:Tmin} and~\eqref{eqn:WronskLCLCvw}.
 In order to prove that it is also not larger, consider the linear functionals $F_v$, $F_w$ on $\Tmax$ defined by
 \begin{align*}
  F_v(f) = W_a^b(f,v^\ast) \quad\text{and}\quad F_w(f) = W_a^b(f,w^\ast) \quad\text{for }f\in\Tmax.
 \end{align*}
 The intersection of the kernels of these functionals is precisely $T$. Furthermore, these functionals are 
  linearly independent. Indeed, assume $c_1$, $c_2\in\C$ and $c_1 F_v + c_2 F_w=0$, then for all $f\in\Tmax$ 
  we have
 \begin{align*}
  0 = c_1 F_v(f) + c_2 F_w(f) = c_1 W_a^b(f,v^\ast) + c_2 W_a^b(f,w^\ast) = W_a^b(f,c_1v^\ast + c_2w^\ast).
 \end{align*}
 But by Lemma~\ref{lem:funcdomtmax} this yields
 \begin{align*}
  W(f,c_1v^\ast + c_2w^\ast)(a) = W(f,c_1v^\ast + c_2w^\ast)(b) = 0
 \end{align*}
 for all $f\in\Tmax$ and hence $c_1v^\ast + c_2 w^\ast\in\Tmin$. Now since $v$, $w$ are linearly
  independent modulo $\Tmin$, we get that $c_1=c_2=0$. Now from Lemma~\ref{lem:hilfslemmaadjoint} we infer that
 \begin{align*}
  \ker F_v \not\subseteq \ker F_w \quad\text{and}\quad \ker F_w \not\subseteq \ker F_v.
 \end{align*}
 Hence there exist $f_v$, $f_w\in\Tmax$ such that $W_a^b(f_v,v^\ast)=W_a^b(f_w,w^\ast)=0$ but 
  $W_a^b(f_v,w^\ast)\not=0$ and $W_a^b(f_w,v^\ast) \not=0$. Both, $f_v$ and $f_w$ do not lie in $T$ and are linearly
  independent. Hence $T$ is at most a two-dimensional extension of the minimal relation $\Tmin$.
\end{proof}

In the case when $\tau$ is in the l.c.~case at both endpoints, we may divide the self-adjoint restrictions of
 $\Tmax$ into two classes. Indeed, we say some relation is a self-adjoint restriction of $\Tmax$ with
 separated boundary conditions if it is of the form
 \begin{align}
  S = \lbrace f\in\Tmax \,|\, W(f,v^\ast)(a)=W(f,w^\ast)(b)=0 \rbrace,
 \end{align}
 where $v$, $w\in\Tmax$ are such that $W(v,v^\ast)(a)=W(w,w^\ast)(b)=0$ but $W(h,v^\ast)(a)\not=0\not=W(h,w^\ast)(b)$ for some $h\in\Tmax$.
Conversely, each relation of this form is a self-adjoint restriction of $\Tmax$ by Theorem~\ref{thm:SRLCLCWronsk} 
 and Lemma~\ref{lem:funcdomtmax}.
The remaining self-adjoint restrictions are called self-adjoint restrictions of $\Tmax$ with coupled boundary
 conditions.

From Lemma~\ref{lem:WronskPropSR} one sees that all self-adjoint restrictions of $\Tmax$ with separated
 boundary conditions are real with respect to the natural conjugation in $\Lr$.
In the case of coupled boundary conditions this is not the case in general.
Again we will determine the self-adjoint restrictions which are multi-valued in the next section.

\section{Boundary conditions}\label{secBC}

\begin{subequations}
In this section let $w_1$, $w_2\in\Tmax$ with
\begin{align}\label{eqn:ufuncBCa}
 W(w_1,w_2^\ast)(a) = 1 \quad\text{and}\quad W(w_1,w_1^\ast)(a) = W(w_2,w_2^\ast)(a) = 0,
\end{align}
if $\tau$ is in the l.c.~case at $a$ and 
\begin{align}\label{eqn:ufuncBCb}
 W(w_1,w_2^\ast)(b) = 1 \quad\text{and}\quad W(w_1,w_1^\ast)(b) = W(w_2,w_2^\ast)(b) = 0,
\end{align}
if $\tau$ is in the l.c.~case at $b$.
We will describe the self-adjoint restrictions of $\Tmax$ in terms of the linear functionals 
 $\BCa^1$, $\BCa^2$, $\BCb^1$ and $\BCb^2$ on $\Tmax$, defined by
\begin{align*}
 \BCa^1(f) = W(f,w_2^\ast)(a) \quad\text{and}\quad \BCa^2(f) = W(w_1^\ast,f)(a) \quad\text{for }f\in\Tmax,
\end{align*}
if $\tau$ is in the l.c.~case at $a$ and 
\begin{align*}
 \BCb^1(f) = W(f,w_2^\ast)(b) \quad\text{and}\quad \BCb^2(f) = W(w_1^\ast,f)(b) \quad\text{for }f\in\Tmax,
\end{align*}
if $\tau$ is in the l.c.~case at $b$.
\end{subequations}

Note that if $\tau$ is in the l.c.~case at some endpoint, such functions $w_1$, $w_2\in\Tmax$ with~\eqref{eqn:ufuncBCa} (respectively 
 with~\eqref{eqn:ufuncBCb}) always exist. Indeed, one may take them to coincide near this endpoint with some 
 real solutions $u_1$, $u_2$ of $(\tau-z)u=0$ with $W(u_1,u_2)=1$ for some $z\in\R$ and use Lemma~\ref{lem:funcdomtmax}.

In the regular case these functionals may take the form of point evaluations of the function
 and its quasi-derivative at the boundary point.

\begin{proposition}\label{prop:PointEvalBC}
 Suppose $\tau$ is regular at $a$. Then there are $w_1$, $w_2\in\Tmax$ with~\eqref{eqn:ufuncBCa} such that
  the corresponding linear functionals $\BCa^1$ and $\BCa^2$ are given by
 \begin{align*}
  \BCa^1(f) = f(a) \quad \text{and}\quad \BCa^2(f) = f^\qd(a) \quad\text{for }f\in\Tmax.
 \end{align*}
 A similar result holds at the endpoint $b$.
\end{proposition}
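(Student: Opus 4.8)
The plan is to reduce everything to the explicit formula for the Wronskian at a regular endpoint. Since $\tau$ is regular at $a$, the boundary values $f(a)=\lim_{x\downarrow a}f(x)$ and $f^\qd(a)=\lim_{x\downarrow a}f^\qd(x)$ exist for every $f\in\Tmax$ (it lies in $\Tmax$ near $a$), and the relation recorded after Lemma~\ref{lem:l2lagrange} gives $W(f,g^\ast)(a)=f(a)g^\qd(a)^\ast-f^\qd(a)g(a)^\ast$. Thus the two Wronskian functionals $\BCa^1$ and $\BCa^2$ are already affine expressions in $f(a)$ and $f^\qd(a)$, and the whole point is to choose the boundary data of $w_1$, $w_2$ so that these expressions become the bare point evaluations.

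First I would read off the required boundary data. Expanding $\BCa^1(f)=W(f,w_2^\ast)(a)=f(a)w_2^\qd(a)^\ast-f^\qd(a)w_2(a)^\ast$ shows that $\BCa^1(f)=f(a)$ for all $f$ precisely when $w_2(a)=0$ and $w_2^\qd(a)=1$. Likewise, using antisymmetry $W(w_1^\ast,f)(a)=-W(f,w_1^\ast)(a)$, one gets $\BCa^2(f)=w_1(a)^\ast f^\qd(a)-w_1^\qd(a)^\ast f(a)$, so $\BCa^2(f)=f^\qd(a)$ exactly when $w_1(a)=1$ and $w_1^\qd(a)=0$. These four numbers are the target data.

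Second I would produce members of $\Tmax$ realizing them. As $\tau$ is regular at $a$ it is in the l.c.~case there, so by Theorem~\ref{thm:EEreg} there are real solutions $u_1$, $u_2$ of $\tau u=0$ with $u_1(a)=u_2^\qd(a)=1$ and $u_1^\qd(a)=u_2(a)=0$. Applying Lemma~\ref{lem:funcdomtmax} I glue these to functions $w_1$, $w_2\in\Tmax$ that coincide with $u_1$, $u_2$ near $a$ (and vanish near $b$); since the boundary values at $a$ depend only on the behaviour near $a$, the $w_i$ inherit exactly the prescribed data, and they may be taken real near $a$ so that no conjugation issues arise.

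Finally I would check the normalisation~\eqref{eqn:ufuncBCa} and evaluate: substituting the data into the explicit Wronskian gives $W(w_1,w_2^\ast)(a)=1\cdot 1-0\cdot 0=1$ and $W(w_1,w_1^\ast)(a)=W(w_2,w_2^\ast)(a)=0$, and then directly $\BCa^1(f)=f(a)$, $\BCa^2(f)=f^\qd(a)$. There is no real obstacle here; the statement is essentially a bookkeeping verification. The only points demanding mild care are the existence of $\Tmax$-members with prescribed boundary data (handled cleanly by Lemma~\ref{lem:funcdomtmax}) and tracking the complex conjugates together with the antisymmetry in $\BCa^2$, which is why choosing the $u_i$ real via the real-solution clause of Theorem~\ref{thm:EEreg} (take $z=0$) is convenient.
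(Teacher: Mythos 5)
Your proof is correct and follows essentially the same route as the paper: the paper simply declares that $w_1$, $w_2$ should coincide near $a$ with the real solutions $u_1$, $u_2$ of $\tau u=0$ satisfying $u_1(a)=u_2^\qd(a)=1$ and $u_1^\qd(a)=u_2(a)=0$, which is exactly the choice you derive. Your version just makes explicit the bookkeeping (expanding the Wronskian at the regular endpoint, gluing via Lemma~\ref{lem:funcdomtmax}, and checking the normalisation~\eqref{eqn:ufuncBCa}) that the paper leaves to the reader.
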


\begin{proof}
 Take $w_1$, $w_2\in\Tmax$ to coincide near $a$ with the real solutions $u_1$, $u_2$ of $\tau u=0$ with the initial conditions 
\begin{align*}
 u_1(a) = u_2^\qd(a) = 1 \quad\text{and}\quad u_1^\qd(a) = u_2(a) = 0.
\end{align*}
\end{proof}

Moreover, also if $\varrho$ has no weight near some endpoint, we may choose special functionals.

\begin{proposition}\label{prop:BCLefthandlim}
 Suppose that $\varrho$ has no weight near $a$, i.e., $\alpha_\varrho>a$. Then there are $w_1$, $w_2\in\Tmax$ with~\eqref{eqn:ufuncBCa} such that
  the corresponding linear functionals $\BCa^1$ and $\BCa^2$ are given by
 \begin{align*}
  \BCa^1(f) =  f(\alpha_\varrho-) \quad \text{and}\quad 
  \BCa^2(f) =  f^\qd(\alpha_\varrho-) \quad\text{for }f\in\Tmax.
 \end{align*}
 A similar result holds at the endpoint $b$.
\end{proposition}

\begin{proof}
 Take $w_1$, $w_2\in\Tmax$ to coincide near $a$ with the real solutions $u_1$, $u_2$ of $\tau u=0$ with the initial conditions 
\begin{align*}
  u_1(\alpha_\varrho-) = u_2^\qd(\alpha_\varrho-) = 1 
 \quad\text{and}\quad 
  u_1^\qd(\alpha_\varrho-) = u_2(\alpha_\varrho-) = 0.
\end{align*}
Then since the Wronskian is constant on $(a,\alpha_\varrho)$, we get
\begin{align*}
 BC_a^1(f) & = W(f,u_2)(\alpha_\varrho-) = f(\alpha_\varrho-)
\end{align*}
and
\begin{align*}
 BC_a^2(f) & = W(u_1,f)(\alpha_\varrho-) = f^\qd(\alpha_\varrho-)
\end{align*}
for each $f\in\Tmax$.
\end{proof}

Using the Pl\"{u}cker identity one easily obtains the equality
\begin{align*}
 W(f,g)(a) = \BCa^1(f)\BCa^2(g) - \BCa^2(f)\BCa^1(g), \quad f,\,g\in\Tmax
\end{align*}
for the Wronskian. 
Furthermore, for each $v\in\Tmax\backslash\Tmin$ with $W(v,v^\ast)(a)=0$ but $W(h,v^\ast)(a)\not=0$ for some $h\in\Tmax$, one may show that 
 there is a $\varphi_\alpha\in[0,\pi)$ such that for each $f\in\Tmax$ 
\begin{align}\label{eqnBCrelavtophi}
 W(f,v^\ast)(a) = 0 \quad\Leftrightarrow\quad \BCa^1(f) \cos\varphi_\alpha - \BCa^2(f)\sin\varphi_\alpha=0.
\end{align}
Conversely, if some $\varphi_\alpha\in[0,\pi)$ is given, then there is some $v\in\Tmax\backslash\Tmin$ with 
 $W(v,v^\ast)(a) = 0$ but $W(h,v^\ast)(a)\not=0$ for some $h\in\Tmax$ such that
\begin{align}\label{eqnBCrelaphitov}
 W(f,v^\ast)(a) = 0 \quad\Leftrightarrow\quad \BCa^1(f) \cos\varphi_\alpha - \BCa^2(f)\sin\varphi_\alpha=0
\end{align}
for each $f\in\Tmax$. 
Using this, Theorem~\ref{thm:SRLCLPWronsk} immediately yields the following characterization of the
 self-adjoint restrictions of $\Tmax$ in terms of the boundary functionals.

\begin{theorem}\label{thm:bcLCLP}
 Suppose $\tau$ is in the l.c.~case at $a$ and in the l.p.~case at $b$. Then some relation $S$ is a 
  self-adjoint restriction of $\Tmax$ if and only if 
 \begin{align*}
  S = \lbrace f\in\Tmax \,|\, \BCa^1(f) \cos\varphi_\alpha - \BCa^2(f) \sin\varphi_\alpha = 0\rbrace
 \end{align*}
 for some $\varphi_\alpha\in[0,\pi)$. A similar result holds if $\tau$ is in the l.c.~case at $b$ and in the l.p.~case at $a$.
\end{theorem}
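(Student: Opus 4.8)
The plan is to obtain the statement as a direct corollary of Theorem~\ref{thm:SRLCLPWronsk} together with the two equivalences \eqref{eqnBCrelavtophi} and \eqref{eqnBCrelaphitov}, which already translate the Wronskian boundary condition at $a$ into the trigonometric form involving $\BCa^1$ and $\BCa^2$. Since the l.p.\ endpoint $b$ contributes no boundary condition, all of the content is concentrated at $a$, and the proof is essentially a change of bookkeeping from the parameter $v$ of Theorem~\ref{thm:SRLCLPWronsk} to the angle $\varphi_\alpha$.

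For the forward direction I would take an arbitrary self-adjoint restriction $S$ of $\Tmax$. Theorem~\ref{thm:SRLCLPWronsk} produces some $v\in\Tmax\backslash\Tmin$ with $W(v,v^\ast)(a)=0$ for which $S=\{f\in\Tmax\mid W(f,v^\ast)(a)=0\}$. Feeding this $v$ into \eqref{eqnBCrelavtophi} yields an angle $\varphi_\alpha\in[0,\pi)$ such that $W(f,v^\ast)(a)=0$ holds exactly when $\BCa^1(f)\cos\varphi_\alpha-\BCa^2(f)\sin\varphi_\alpha=0$; substituting this equivalence into the description of $S$ gives the asserted form.

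For the converse I would start from a prescribed $\varphi_\alpha\in[0,\pi)$ and invoke \eqref{eqnBCrelaphitov} to manufacture a $v\in\Tmax\backslash\Tmin$ with $W(v,v^\ast)(a)=0$ realizing the same equivalence. Then the relation defined by the trigonometric condition coincides with $\{f\in\Tmax\mid W(f,v^\ast)(a)=0\}$, which is a self-adjoint restriction of $\Tmax$ by Theorem~\ref{thm:SRLCLPWronsk}. The statement for the symmetric situation (l.c.\ at $b$, l.p.\ at $a$) is handled identically using $\BCb^1$, $\BCb^2$ and the endpoint-$b$ versions of the two equivalences.

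Since the heavy lifting has been done earlier, I do not expect a genuine obstacle in the theorem itself; the one point to respect is the correspondence $v\leftrightarrow\varphi_\alpha$ supplied by \eqref{eqnBCrelavtophi} and \eqref{eqnBCrelaphitov}. That correspondence ultimately rests on the bilinear identity $W(f,g)(a)=\BCa^1(f)\BCa^2(g)-\BCa^2(f)\BCa^1(g)$, which turns $W(f,v^\ast)(a)=0$ into a single homogeneous linear condition on the pair $(\BCa^1(f),\BCa^2(f))$ with coefficient vector $(\BCa^2(v^\ast),-\BCa^1(v^\ast))$. Choosing the reference functions $w_1$, $w_2$ to be real near $a$, one has $\BCa^j(v^\ast)=\overline{\BCa^j(v)}$, so the hypothesis $W(v,v^\ast)(a)=0$ forces $\BCa^1(v)\overline{\BCa^2(v)}\in\R$ and hence makes the coefficient vector a complex multiple of the \emph{real} vector $(\cos\varphi_\alpha,-\sin\varphi_\alpha)$, while $v\notin\Tmin$ guarantees it is nonzero. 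Finally, because replacing $\varphi_\alpha$ by $\varphi_\alpha+\pi$ negates the coefficient vector and therefore defines the same relation, restricting $\varphi_\alpha$ to $[0,\pi)$ is precisely the right normalization; this is the only place where one should double-check the range of the parameter.
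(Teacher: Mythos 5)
Your proposal is correct and follows exactly the route the paper intends: the theorem is stated there as an immediate consequence of Theorem~\ref{thm:SRLCLPWronsk} combined with the equivalences \eqref{eqnBCrelavtophi} and \eqref{eqnBCrelaphitov}. Your extra justification of those two equivalences (via the bilinear identity $W(f,g)(a)=\BCa^1(f)\BCa^2(g)-\BCa^2(f)\BCa^1(g)$ and the reality argument for the coefficient vector) correctly fills in the step the paper leaves as ``one may show''.
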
 

Now we will determine which self-adjoint restrictions of $\Tmax$ are operators in this case.
Of course, we only have to consider the case when $\alpha_\varrho>a$ and $\varrho$ has mass in $\alpha_\varrho$.

\begin{corollary}\label{corBCLCLPmulval}
 Suppose $\varrho$ has mass in $\alpha_\varrho$ and
  $\tau$ is in the l.p.~case at $b$. Then some self-adjoint 
  restriction $S$ of $\Tmax$ as in Theorem~\ref{thm:bcLCLP} is an operator if and only if 
 \begin{align}\label{eqnbcLCLPmulval}
  \cos\varphi_\alpha w_2(\alpha_\varrho-) + \sin\varphi_\alpha w_1(\alpha_\varrho-)\not=0.
 \end{align}
 A similar result holds for the endpoint $b$.
\end{corollary}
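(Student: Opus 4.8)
The plan is to reduce the question ``is $S$ an operator?'' to a single explicit boundary evaluation. Since $S\subseteq\Tmax$ we have $\mul{S}\subseteq\mul{\Tmax}=\linspan\lbrace\indik_{\lbrace\alpha_\varrho\rbrace},\indik_{\lbrace\beta_\varrho\rbrace}\rbrace$ by Proposition~\ref{propTlocmulval}. First I would dispose of the contribution from $b$: since $\tau$ is in the l.p.~case at $b$, the earlier observation that if $\varrho$ has no weight near $b$ then $\tau$ is in the l.c.~case shows $\sup\supp(\varrho)=b$; as $b\notin(a,b)$ there is no mass at $\beta_\varrho$, hence $\indik_{\lbrace\beta_\varrho\rbrace}=0$. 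Thus $\mul{\Tmax}=\linspan\lbrace\indik_{\lbrace\alpha_\varrho\rbrace}\rbrace$ with $\indik_{\lbrace\alpha_\varrho\rbrace}\neq0$ (because $\varrho$ has mass in $\alpha_\varrho$), and consequently $S$ is an operator if and only if $(0,\indik_{\lbrace\alpha_\varrho\rbrace})\notin S$.

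Next I would produce the representative function. By the proof of Proposition~\ref{propTlocmulval}, the element $(0,\indik_{\lbrace\alpha_\varrho\rbrace})$ is identified (via Proposition~\ref{prop:identDeftauTloc}) with the function $f_0\in\Deftau$ that equals the solution $u_a$ of $\tau u=0$ normalised by $u_a(\alpha_\varrho-)=0$ and $u_a^\qd(\alpha_\varrho-)=1$ on $(a,\alpha_\varrho]$, and equals $0$ on $(\alpha_\varrho,b)$; this $f_0$ indeed lies in $\Tmax$ since $u_a\in\Lr$ near $a$ by the l.c.~assumption. Under the identification, $(0,\indik_{\lbrace\alpha_\varrho\rbrace})\in\mul{S}$ precisely when $f_0$ satisfies the boundary condition defining $S$ in Theorem~\ref{thm:bcLCLP}, i.e.~$\BCa^1(f_0)\cos\varphi_\alpha-\BCa^2(f_0)\sin\varphi_\alpha=0$.

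It then remains to evaluate the two boundary functionals on $f_0$, and here I would exploit that $\alpha_\varrho=\inf\supp(\varrho)$, so that $\varrho$ carries no mass on $(a,\alpha_\varrho)$ and, by the Lagrange identity, both Wronskians $W(f_0,w_2^\ast)$ and $W(w_1^\ast,f_0)$ are constant on $(a,\alpha_\varrho)$. Evaluating the limit at $a$ as the left limit at $\alpha_\varrho$ and inserting $f_0(\alpha_\varrho-)=0$, $f_0^\qd(\alpha_\varrho-)=1$ yields $\BCa^1(f_0)=W(f_0,w_2^\ast)(\alpha_\varrho-)=-w_2^\ast(\alpha_\varrho-)$ and $\BCa^2(f_0)=W(w_1^\ast,f_0)(\alpha_\varrho-)=w_1^\ast(\alpha_\varrho-)$. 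Hence $f_0$ satisfies the boundary condition exactly when $w_2^\ast(\alpha_\varrho-)\cos\varphi_\alpha+w_1^\ast(\alpha_\varrho-)\sin\varphi_\alpha=0$, so $S$ is an operator precisely when this quantity is nonzero; taking complex conjugates (admissible as $\varphi_\alpha$ is real) gives the stated condition~\eqref{eqnbcLCLPmulval}. The endpoint $b$ is treated symmetrically.

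The part I expect to require the most care is the reduction in the first paragraph, namely making airtight that the l.p.~hypothesis at $b$ really annihilates the $\indik_{\lbrace\beta_\varrho\rbrace}$ direction so that $\mul{\Tmax}$ is genuinely one-dimensional, together with the justification that the Wronskians are constant on the mass-free interval $(a,\alpha_\varrho)$, which is what allows the abstract boundary value $W(\cdot,\cdot)(a)$ to be replaced by the concretely computable left limit at $\alpha_\varrho$.
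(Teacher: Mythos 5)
Your proposal is correct and follows essentially the same route as the paper: both reduce the question to whether the function vanishing $\varrho$-a.e.\ with $u_a(\alpha_\varrho-)=0$, $u_a^\qd(\alpha_\varrho-)=1$ satisfies the boundary condition at $a$, and both evaluate $\BCa^1$, $\BCa^2$ on it via constancy of the Wronskian on $(a,\alpha_\varrho)$. The only cosmetic difference is that you first pin down $\dim\mul{\Tmax}=1$ and phrase everything as a single equivalence, whereas the paper argues the two implications separately; the content is identical.
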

 
\begin{proof}
 Assume~\eqref{eqnbcLCLPmulval} does not hold and for each $c\in\C$ consider the functions
 \begin{align}\label{eqnBCLCLPfuncmulS}
  f_c(x) = \begin{cases}
            c\, u_a(x), & \text{if }x\in(a,\alpha_\varrho], \\
            0, & \text{if }x\in(\alpha_\varrho,b),
           \end{cases}
 \end{align}
 where $u_a$ is a solution of $\tau u=0$ with $u_a(\alpha_\varrho)=0$ and $u_a^\qd(\alpha_\varrho)=1$. 
 These functions lie in $S$ with $\tau f_c\not=0$, hence $S$ is multi-valued.
Conversely, assume~\eqref{eqnbcLCLPmulval} holds and let $f\in S$ such that $f=0$ and $\tau f=0$ almost
 everywhere with respect to $\varrho$. 
Then $f$ is of the form~\eqref{eqnBCLCLPfuncmulS}, but because of the boundary condition 
 \begin{align*}
  c = f^\qd(\alpha_\varrho) = f(\alpha_\varrho) \frac{\cos\varphi_\alpha w_2^\qd(\alpha_\varrho)^\ast + \sin\varphi_\alpha w_1^\qd(\alpha_\varrho)^\ast}
       {\cos\varphi_\alpha w_2(\alpha_\varrho)^\ast + \sin\varphi_\alpha w_1(\alpha_\varrho)^\ast} = 0,
 \end{align*}
 i.e., $f=0$.
\end{proof}

Note that in this case there is precisely one multi-valued, self-adjoint restriction $S$ of $\Tmax$.
In terms of the boundary functionals from Proposition~\ref{prop:BCLefthandlim} it is precisely the one with $\varphi_\alpha=0$.
That means that in this case each function in $S$ vanishes in $\alpha_\varrho$. Now since $\varrho$ has mass in this point one sees that the domain of $S$ is not dense and hence $S$ is not an operator. However, if we exclude the linear span of $\indik_{\alpha_\varrho}$ from $\Lr$ by setting
\begin{align*}
 \D = \overline{\dom{S}} = \Lr \ominus \linspan\lbrace\indik_{\lbrace\alpha_\varrho\rbrace}\rbrace,
\end{align*}
the linear relation $S_\D$ in the Hilbert space $\D$, given by
\begin{align*}
 S_\D = S \cap\left(\D\times\D\right),
\end{align*}
is a self-adjoint operator (see~\eqref{eqnLRSDef} in Appendix~\ref{appLR}). 
Also note that if $\tilde{\tau}$ is obtained from $\tau$ by removing the point mass in $\alpha_\varrho$ from the measure $\varrho$, then $S_\D$ is a self-adjoint restriction of the maximal relation corresponding to $\tilde{\tau}$.% $\tilde{T}_{\mathrm{max}}$.

Next we will give a characterization of the self-adjoint restrictions of $\Tmax$, if $\tau$ is in the l.c.~case at both endpoints.

\begin{theorem}\label{thm:LCLCBC}
 Suppose $\tau$ is in the l.c.~case at both endpoints. Then some relation $S$ is a self-adjoint restriction 
  of $\Tmax$ if and only if there are matrices $B_a$, $B_b\in\C^{2\times 2}$ with
 \begin{align}\label{eqnBCLCLCmatcond}
  \rang(B_a | B_b) = 2 \quad\text{and}\quad B_aJB_a^\ast = B_bJB_b^\ast \quad\text{with}\quad 
     J=\begin{pmatrix} 0 & -1 \\ 1 & 0 \end{pmatrix},
 \end{align}
 such that
 \begin{align}
  S = \left\lbrace f\in\Tmax \left|\, B_a\begin{pmatrix} \BCa^1(f) \\ \BCa^2(f) \end{pmatrix} 
    = B_b \begin{pmatrix} \BCb^1(f) \\ \BCb^2(f) \end{pmatrix}\right. \right\rbrace.
 \end{align}
\end{theorem}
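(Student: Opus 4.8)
The plan is to encode the four boundary functionals as a single linear map onto $\C^4$, to recognise the self-adjoint restrictions as the Lagrangian subspaces of a natural symplectic form, and then to turn the theorem into a short computation in linear algebra. Write $\mathbf f_a=(\BCa^1(f),\BCa^2(f))^{\mathsf T}$ and $\mathbf f_b=(\BCb^1(f),\BCb^2(f))^{\mathsf T}$, and define $\Phi\colon\Tmax\to\C^4$ by $\Phi(f)=(\mathbf f_a,\mathbf f_b)$. My first step is to show that $\Phi$ is onto with $\ker\Phi=\Tmin$, so that it descends to an isomorphism $\Tmax/\Tmin\cong\C^4$. Surjectivity follows from Lemma~\ref{lem:funcdomtmax}, which lets me prescribe the germ of $f$ near $a$ and near $b$ independently: taking $f=c_1w_1+c_2w_2$ near $a$, the normalisation~\eqref{eqn:ufuncBCa} gives $\mathbf f_a=(c_1,c_2)^{\mathsf T}$, and likewise at $b$ by~\eqref{eqn:ufuncBCb}. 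For the kernel I use the Plücker identity $W(f,g)(a)=\BCa^1(f)\BCa^2(g)-\BCa^2(f)\BCa^1(g)$ and its analogue at $b$: they show that $\mathbf f_a=\mathbf f_b=0$ is equivalent to $W(f,g)(a)=W(f,g)(b)=0$ for all $g\in\Tmax$, which by Theorem~\ref{thm:Tmin} means exactly $f\in\Tmin$.

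Next I rewrite the boundary form. Choosing $w_1,w_2$ real near the two endpoints (permissible, as remarked after~\eqref{eqn:ufuncBCb}) gives $\BCa^i(g^*)=\overline{\BCa^i(g)}$, so the Plücker identity yields $W(f,g^*)(a)=\mathbf g_a^* J\mathbf f_a$ and $W(f,g^*)(b)=\mathbf g_b^* J\mathbf f_b$. Hence, with $\mathcal J=\begin{pmatrix}-J&0\\0&J\end{pmatrix}$, which satisfies $\mathcal J^*=-\mathcal J$, one has
\[
 W_a^b(f,g^*)=\mathbf g_b^* J\mathbf f_b-\mathbf g_a^* J\mathbf f_a=\Phi(g)^*\,\mathcal J\,\Phi(f).
\]
By Theorem~\ref{thm:SRLCLCWronsk} together with $n(\Tmin)=2$ (Theorem~\ref{thm:TminDefIndLCLP}), a relation $S$ is a self-adjoint restriction of $\Tmax$ iff $S=\linspan\{\Tmin\cup\{v,w\}\}$ for $v,w$ linearly independent modulo $\Tmin$ satisfying~\eqref{eqn:WronskLCLCvw}. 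Under $\Phi$ this says precisely that $\mathcal C:=\Phi(S)=\linspan\{\Phi(v),\Phi(w)\}$ is two-dimensional (independence modulo $\Tmin=\ker\Phi$ becoming independence of $\Phi(v),\Phi(w)$), and, using the skew-Hermiticity of $\mathcal J$, the three conditions~\eqref{eqn:WronskLCLCvw} force $\Phi(g)^*\mathcal J\Phi(f)$ to vanish on all of $\mathcal C\times\mathcal C$. Thus the self-adjoint restrictions correspond bijectively, via $S\mapsto\Phi(S)$, to the two-dimensional $\mathcal J$-isotropic (equivalently Lagrangian) subspaces of $\C^4$.

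It remains to match these subspaces with the matrix conditions. For $B_a,B_b\in\C^{2\times2}$ the relation $S=\{f\in\Tmax\mid B_a\mathbf f_a=B_b\mathbf f_b\}$ contains $\Tmin=\ker\Phi$, hence $S=\Phi^{-1}(\mathcal C)$ with $\mathcal C=\ker(B_a\mid -B_b)$. Since column sign changes do not affect rank, $\dim\mathcal C=4-\rang(B_a\mid -B_b)=2$ iff $\rang(B_a\mid B_b)=2$. For the isotropy I use that for a full-rank $2\times4$ matrix $M$ the plane $\ker M$ is $\mathcal J$-isotropic iff $M\mathcal J^{-1}M^*=0$: isotropy reads $\mathcal J(\ker M)\subseteq(\ker M)^\perp=\ran M^*$, and since both sides are two-dimensional this is an equality that rearranges to $M\mathcal J^{-1}M^*=0$. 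With $M=(B_a\mid -B_b)$ and $J^{-1}=-J$ a direct computation gives
\[
 (B_a\mid -B_b)\,\mathcal J^{-1}\,(B_a\mid -B_b)^*=B_aJB_a^*-B_bJB_b^*,
\]
so isotropy is exactly $B_aJB_a^*=B_bJB_b^*$. Both implications then follow: given $B_a,B_b$ with~\eqref{eqnBCLCLCmatcond}, the plane $\mathcal C$ is two-dimensional and isotropic, so $S$ is self-adjoint; conversely, any self-adjoint $S$ yields a two-dimensional isotropic $\mathcal C=\Phi(S)$, which is $\ker(B_a\mid -B_b)$ for some full-rank $(B_a\mid -B_b)$, and the two equivalences deliver~\eqref{eqnBCLCLCmatcond} together with $S=\{f\mid B_a\mathbf f_a=B_b\mathbf f_b\}$.

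The main obstacle is the isotropy equivalence $\ker M\text{ isotropic}\Leftrightarrow M\mathcal J^{-1}M^*=0$ and the attendant bookkeeping: one must keep straight the two sign conventions (the block structure $-J\oplus J$ in $\mathcal J$ and the identity $J^{-1}=-J$) as well as the conjugations produced by $W(f,g^*)$, which is exactly where the choice of $w_1,w_2$ real near the endpoints is used. The surjectivity-with-kernel-$\Tmin$ statement for $\Phi$ is the other point requiring care, but it is essentially a repackaging of Lemma~\ref{lem:funcdomtmax}, Theorem~\ref{thm:Tmin}, and the normalisations~\eqref{eqn:ufuncBCa}, \eqref{eqn:ufuncBCb}.
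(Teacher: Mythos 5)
Your proof is correct, but it takes a genuinely different route from the paper's. The paper stays entirely concrete: starting from the $v,w$ of Theorem~\ref{thm:SRLCLCWronsk} it writes down $B_a,B_b$ explicitly (rows built from $\BCa^2(v^\ast), -\BCa^1(v^\ast)$, etc.), and then verifies by direct computation that $B_aJB_a^\ast=B_bJB_b^\ast$ encodes \eqref{eqn:WronskLCLCvw}, that $\rang(B_a|B_b)=2$ encodes linear independence modulo $\Tmin$, and that the matrix equation encodes $W_a^b(f,v^\ast)=W_a^b(f,w^\ast)=0$; the converse is handled by reversing the same computations. You instead pass to the quotient $\Tmax/\Tmin\cong\C^4$ via $\Phi$, express $W_a^b(f,g^\ast)$ as the skew-Hermitian form $\Phi(g)^\ast\mathcal J\Phi(f)$, identify self-adjoint restrictions with two-dimensional $\mathcal J$-isotropic planes, and then characterize such planes as kernels of full-rank $(B_a\mid -B_b)$ via the clean criterion $M\mathcal J^{-1}M^\ast=0$. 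Your version buys a conceptual picture (the Lagrangian Grassmannian parametrizes the extensions) and replaces the paper's several ``a simple computation shows'' steps by two reusable linear-algebra facts; the paper's version buys explicitness, producing matrices $B_a,B_b$ in terms of $v,w$ that are reused later (e.g.\ in the proof of Theorem~\ref{thm:SRLCLCsepcoup}). One small point to note: your identities $\BCa^i(g^\ast)=\overline{\BCa^i(g)}$ and $W(f,g^\ast)(a)=\mathbf g_a^\ast J\mathbf f_a$ use that $w_1,w_2$ are chosen real near the endpoints (equivalently $W(w_1,w_2)(a)=1$ and not merely $W(w_1,w_2^\ast)(a)=1$); this is the same normalization the paper tacitly relies on for its identity $W(f,g)(a)=\BCa^1(f)\BCa^2(g)-\BCa^2(f)\BCa^1(g)$ and explicitly sanctions after \eqref{eqn:ufuncBCb}, so you are if anything more careful than the source here.
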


\begin{proof}
If $S$ is a self-adjoint restriction of $\Tmax$, there exist $v$, $w\in\Tmax$, linearly independent
 modulo $\Tmin$, with 
 \[W_a^b(v,v^\ast)=W_a^b(w,w^\ast)=W_a^b(v,w^\ast)=0,\] such that 
 \[S = \lbrace f\in\Tmax \,|\, W_a^b(f,v^\ast)=W_a^b(f,w^\ast)=0\rbrace.\]
 Let $B_a$, $B_b\in\C^{2\times2}$ be defined by 
 \[B_a=\left(\begin{matrix} \BCa^2(v^\ast) & -\BCa^1(v^\ast) \\ \BCa^2(w^\ast) & -\BCa^1(w^\ast) \end{matrix}\right)\quad\text{and}\quad 
   B_b=\left(\begin{matrix} \BCb^2(v^\ast) & -\BCb^1(v^\ast) \\ \BCb^2(w^\ast) & -\BCb^1(w^\ast) \end{matrix}\right).\] 
 Then a simple computation shows that 
 \[B_aJB_a^*=B_bJB_b^*\quad\Leftrightarrow\quad W_a^b(v,v^\ast)=W_a^b(w,w^\ast)=W_a^b(v,w^\ast)=0.\] 
 In order to prove $\rang{(B_a|B_b)}=2$, let $c_1$, $c_2\in\C$ and 
 \[0=c_1\begin{pmatrix} \BCa^2(v^\ast) \\ -\BCa^1(v^\ast) \\ \BCb^2(v^\ast) \\ -\BCb^1(v^\ast) \end{pmatrix} + c_2 \begin{pmatrix} \BCa^2(w^\ast) \\ -\BCa^1(w^\ast) \\ \BCb^2(w^\ast) \\ -\BCb^1(w^\ast) \end{pmatrix}=\begin{pmatrix} \BCa^2(c_1v^\ast+c_2w^\ast) \\ -\BCa^1(c_1v^\ast+c_2w^\ast) \\ \BCb^2(c_1v^\ast+c_2w^\ast) \\ -\BCb^1(c_1v^\ast+c_2w^\ast) \end{pmatrix}.\] 
 Hence the function $c_1v^\ast+c_2w^\ast$ lies in the kernel of $\BCa^1$, $\BCa^2$, $\BCb^1$ and $\BCb^2$,
 therefore $W(c_1v^\ast+c_2w^\ast,f)(a)=0$ and $W(c_1v^\ast+c_2w^\ast,f)(b)=0$ for each $f\in\Tmax$. 
 This means that $c_1v^\ast+c_2w^\ast\in\Tmin$ and hence $c_1=c_2=0$, since $v^\ast$, $w^\ast$ are 
 linearly independent modulo $\Tmin$. This proves that $(B_a|B_b)$ has rank two. 
 Furthermore, a calculation yields that for each $f\in\Tmax$ 
 \[W_a^b(f,v^\ast)=W_a^b(f,w^\ast)=0\quad\Leftrightarrow\quad B_a\left(\begin{matrix}\BCa^1(f)\\ \BCa^2(f)\end{matrix}\right)
    =B_b\left(\begin{matrix} \BCb^1(f)\\ \BCb^2(f)\end{matrix}\right),\] which proves that $S$ is given as in the claim.

Conversely, let $B_a$, $B_b\in\C^{2\times2}$ with the claimed properties be given.
Then there are $v$, $w\in\Tmax$ such that 
\[B_a=\left(\begin{matrix} \BCa^2(v^\ast) & -\BCa^1(v^\ast) \\ \BCa^2(w^\ast) & -\BCa^1(w^\ast) \end{matrix}\right)
   \quad\text{and}\quad B_b=\left(\begin{matrix} \BCb^2(v^\ast) & -\BCb^1(v^\ast) \\ \BCb^2(w^\ast) & -\BCb^1(w^\ast) \end{matrix}\right).\]
In order to prove that $v$ and $w$ are linearly independent modulo $\Tmin$, assume $c_1v+c_2w\in\Tmin$ for some $c_1$, $c_2\in\C$, then
 \[0=\begin{pmatrix} \BCa^2(c_1^\ast v^\ast+c_2^\ast w^\ast) \\ -\BCa^1(c_1^\ast v^\ast+c_2^\ast w^\ast) \\
       \BCb^2(c_1^\ast v^\ast+c_2^\ast w^\ast) \\ -\BCb^1(c_1^\ast v^\ast+c_2^\ast w^\ast) \end{pmatrix}
    = c_1^\ast \begin{pmatrix} \BCa^2(v^\ast) \\ -\BCa^1(v^\ast) \\ \BCb^2(v^\ast) \\ -\BCb^1(v^\ast) \end{pmatrix} 
  + c_2^\ast \begin{pmatrix} \BCa^2(w^\ast) \\ -\BCa^1(w^\ast) \\ \BCb^2(w^\ast) \\ -\BCb^1(w^\ast) \end{pmatrix}.\] 
Now the rows of $(B_a|B_b)$ are linearly independent, hence $c_1=c_2=0$. 
Since again we have 
\[B_aJB_a^*=B_bJB_b^*\quad\Leftrightarrow\quad W_a^b(v,v^\ast)=W_a^b(w,w^\ast)=W_a^b(v,w^\ast)=0,\] 
the functions $v$, $w$ satisfy the assumptions of Theorem~~\ref{thm:SRLCLCWronsk}. 
As above one sees again that for each $f\in\Tmax$ 
\[B_a\left(\begin{matrix}\BCa^1(f)\\ \BCa^2(f)\end{matrix}\right)=B_b\left(\begin{matrix} \BCb^1(f)\\ \BCb^2(f)\end{matrix}\right)
  \quad\Leftrightarrow\quad W_a^b(f,w^\ast)=W_a^b(f,w^\ast)=0.\] 
 Hence $S$ is a self-adjoint restriction of $\Tmax$ by Theorem~\ref{thm:SRLCLCWronsk}.
\end{proof}

As in the preceding section, if $\tau$ is in the l.c.~case at both endpoints, we may divide the self-adjoint 
 restrictions of $\Tmax$ into two classes.
 
\begin{theorem}\label{thm:SRLCLCsepcoup}
Suppose $\tau$ is in the l.c.~case at both endpoints. 
Then some relation $S$ is a self-adjoint restriction of $\Tmax$ with separated boundary conditions if and only 
 if there are $\varphi_\alpha$, $\varphi_\beta\in[0,\pi)$ such that
 \begin{align}\label{eqn:SRLCLCsep}
 S = \left\lbrace f\in\Tmax \left| \begin{array}{l} \BCa^1(f)\cos\varphi_\alpha-\BCa^2(f)\sin\varphi_\alpha=0\\
         \BCb^1(f)\cos\varphi_\beta-\BCb^2(f)\sin\varphi_\beta=0 \end{array}\right.\right\rbrace.
 \end{align} 
 Furthermore, $S$ is a self-adjoint restriction of $\Tmax$ with coupled boundary conditions if and only if
 there are $\varphi\in[0,\pi)$ and $R\in\R^{2\times2}$ with $\det{R}=1$ such that
 \begin{align}\label{eqn:SRLCLCcoup}
 S = \left\lbrace f\in\Tmax \left| \left(\begin{matrix}\BCb^1(f)\\\BCb^2(f)\end{matrix}\right) 
      = e^{\I\varphi}R\left(\begin{matrix}\BCa^1(f)\\\BCa^2(f)\end{matrix}\right)\right.\right\rbrace.
 \end{align} 
\end{theorem}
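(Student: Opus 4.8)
The separate equivalence is essentially a restatement of the definition made just before this section, so I would dispose of it first. If $S$ is a self-adjoint restriction with separate boundary conditions, then by definition it has the form $\{f\in\Tmax : W(f,v^\ast)(a)=W(f,w^\ast)(b)=0\}$ with $v,w\in\Tmax\setminus\Tmin$ satisfying $W(v,v^\ast)(a)=W(w,w^\ast)(b)=0$; applying the equivalence \eqref{eqnBCrelavtophi} at $a$ and its analogue at $b$ rewrites the two conditions as the two lines of \eqref{eqn:SRLCLCsep} for suitable $\varphi_\alpha,\varphi_\beta\in[0,\pi)$. Conversely, given $\varphi_\alpha,\varphi_\beta$, the relations \eqref{eqnBCrelaphitov} produce admissible $v,w$, and the resulting relation is a self-adjoint restriction by Theorem~\ref{thm:SRLCLCWronsk} and Lemma~\ref{lem:funcdomtmax}, as already noted. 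Hence the separate statement needs no genuinely new argument, and the real content lies in the coupled case.

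The coupled case I would run through the matrix description of Theorem~\ref{thm:LCLCBC}, and its linear-algebraic core is the claim that a matrix $M\in\C^{2\times2}$ satisfies $MJM^\ast=J$ if and only if $M=e^{\I\varphi}R$ for some $\varphi\in[0,\pi)$ and some real $R$ with $\det R=1$. For the nontrivial direction I would take determinants in $MJM^\ast=J$ to get $|\det M|=1$, write $\det M=e^{2\I\varphi}$ with $\varphi\in[0,\pi)$, and set $R=e^{-\I\varphi}M$, so that $\det R=1$ and still $RJR^\ast=J$. Combining this with the elementary $2\times2$ identity $RJR^{\mathsf T}=(\det R)J=J$ and using invertibility of $R$ and $J$ forces $R^{\mathsf T}=R^\ast$, i.e.\ $R$ is real; the converse is the direct computation $e^{\I\varphi}RJ(e^{\I\varphi}R)^\ast=RJR^{\mathsf T}=J$.

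I would then organise everything around a rank dichotomy for the pair $(B_a,B_b)$ from Theorem~\ref{thm:LCLCBC}. Applying $\det(BJB^\ast)=|\det B|^2$ to the self-adjointness relation $B_aJB_a^\ast=B_bJB_b^\ast$ yields $|\det B_a|=|\det B_b|$, so $B_a,B_b$ are either both invertible or both singular. If both are invertible, then $M:=B_b^{-1}B_a$ satisfies $MJM^\ast=J$ (multiply the relation on the left by $B_b^{-1}$ and on the right by $(B_b^{-1})^\ast$), so the claim gives $M=e^{\I\varphi}R$ and $S$ is exactly \eqref{eqn:SRLCLCcoup}; conversely \eqref{eqn:SRLCLCcoup} comes from $B_a=e^{\I\varphi}R$, $B_b=I$, which satisfy \eqref{eqnBCLCLCmatcond}, hence is self-adjoint by Theorem~\ref{thm:LCLCBC}. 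Such an $S$ is coupled rather than separate: since $M$ is invertible, Lemma~\ref{lem:funcdomtmax} lets one realise every prescribed $a$-datum by some $f\in S$, so $S$ imposes no nontrivial condition on the $a$-data alone and cannot be written in the product form \eqref{eqn:SRLCLCsep}.

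It remains to show that a coupled $S$ cannot fall into the singular case, which I expect to be the delicate step. Assume $\det B_a=\det B_b=0$; then $\rang B_a=\rang B_b=1$ (rank $0$ for either would contradict $\rang(B_a|B_b)=2$), so a left multiplication by some $C\in GL(2,\C)$—which leaves $S$ unchanged and conjugates the relation $B_aJB_a^\ast=B_bJB_b^\ast$—brings $CB_a$ to a matrix with nonzero first row $s_a$ and zero second row, with the rows $t_1,t_2$ of $CB_b$ satisfying $t_2\neq0$. Reading off the conjugated relation through the skew-Hermitian form $[r,s]:=rJs^\ast$ gives $[t_2,t_2]=0$, $[t_1,t_2]=0$ and $[s_a,s_a]=[t_1,t_1]$; since $[\cdot,\cdot]$ is nondegenerate on $\C^2$, the isotropic vector $t_2$ spans its own orthogonal complement, so $t_1\in\linspan\{t_2\}$, and subtracting the appropriate multiple of the (pure-$b$) second row decouples the first row into a pure-$a$ condition. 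Finally $[s_a,s_a]=[t_1,t_1]=0$ shows $s_a$ and $t_2$ are complex multiples of real vectors, so the two conditions are genuine real separate conditions of the form \eqref{eqn:SRLCLCsep}; thus the singular case is separate, contradicting coupledness, and $S$ must be of the form \eqref{eqn:SRLCLCcoup}. The main obstacles are getting this isotropy/orthogonality reduction right and checking that the gauge operations preserve both $S$ and the self-adjointness relation.
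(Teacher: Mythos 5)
Your proposal is correct and follows the same overall skeleton as the paper's proof: the separate case is disposed of via \eqref{eqnBCrelavtophi} and \eqref{eqnBCrelaphitov}, and the coupled case is run through the matrices $B_a$, $B_b$ of Theorem~\ref{thm:LCLCBC}, with the dichotomy ``both singular / both invertible'' and the extraction $B_b^{-1}B_a=\E^{\I\varphi}R$ in the invertible case (your route via $MJM^\ast=J$ combined with the adjugate identity $RJR^T=(\det R)J$ is a cleaner packaging of the paper's entrywise computation $B=J(B^{-1})^\ast J^\ast$, but amounts to the same thing). The one step you execute genuinely differently is the singular case: the paper writes the rank-one matrices as outer products $B_a z=c_a^Tz\,w_a$, $B_b z=c_b^Tz\,w_b$, uses linear independence of $w_a,w_b$ to decouple the condition into $B_a(\cdot)=B_b(\cdot)=0$, and then realizes $c_a$, $c_b$ as boundary data of functions $v,w\in\Tmax\setminus\Tmin$ so as to recognize the two conditions as Wronskian conditions $W(f,v^\ast)(a)=W(f,w^\ast)(b)=0$ with $W(v,v^\ast)(a)=W(w,w^\ast)(b)=0$, i.e.\ as separate boundary conditions in the original sense. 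You instead row-reduce and exploit that an isotropic vector of the nondegenerate form $[r,s]=rJs^\ast$ spans its own orthogonal complement, so that $[t_1,t_2]=[t_2,t_2]=0$ forces $t_1\in\linspan\{t_2\}$, and then use that $[s,s]=0$ exactly when $s$ is a complex multiple of a real vector to land directly in the normal form \eqref{eqn:SRLCLCsep}. Both arguments are sound; yours stays entirely inside $2\times2$ linear algebra and produces the angles $\varphi_\alpha,\varphi_\beta$ explicitly (relying on the already-proved first half of the theorem to conclude separateness), while the paper's ties the conclusion back to the Wronskian definition without that detour. Your closing argument that \eqref{eqn:SRLCLCcoup} is never separate (the $a$-data of $S$ exhaust $\C^2$ by Lemma~\ref{lem:funcdomtmax}, so no nontrivial pure-$a$ condition can hold) is a mild variant of the paper's, which exhibits an $f\in S\setminus\Tmin$ vanishing near $a$ and derives $f\in\Tmin$; both are valid.
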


\begin{proof}
Using~\eqref{eqnBCrelavtophi} and~\eqref{eqnBCrelaphitov} one easily sees that the self-adjoint restrictions
 of $\Tmax$ are precisely the ones given in~\eqref{eqn:SRLCLCsep}. Hence we only have to prove the second claim.
Let $S$ be a self-adjoint restriction of $\Tmax$ with coupled boundary conditions and 
 $B_a$, $B_b\in\C^{2\times 2}$ matrices as in Theorem~\ref{thm:LCLCBC}.
Then by~\eqref{eqnBCLCLCmatcond} either both of them have rank one or both have rank two.
In the first case we had
\begin{align*}
 B_a d = c_a^T d\, w_a \quad\text{and}\quad B_b d = c_b^T d\, w_b, \quad d\in\C^2
 \end{align*}
for some nonzero $c_a$, $c_b$, $w_a$, $w_b\in\C^2$.
Since the vectors $w_a$ and $w_b$ are linearly independent by $\rang(B_a|B_b)=2$ we have for each $f\in\Tmax$ 
\begin{align*}
 B_a\begin{pmatrix} \BCa^1(f) \\ \BCa^2(f) \end{pmatrix} = B_b \begin{pmatrix} \BCb^1(f) \\ \BCb^2(f) \end{pmatrix}
 \quad\Leftrightarrow\quad  B_a\begin{pmatrix} \BCa^1(f) \\ \BCa^2(f) \end{pmatrix} = B_b \begin{pmatrix} \BCb^1(f) \\ \BCb^2(f) \end{pmatrix}=0.
\end{align*}
In particular, this shows
\begin{align*}
 B_a J B_a^\ast = B_b J B_b^\ast \quad\Leftrightarrow\quad B_a JB_a^\ast = B_b J B_b^\ast = 0.
\end{align*}
Now let $v\in\Tmax$ with $\BCa^2(v^\ast)=c_1$ and $\BCa^1(v^\ast)=-c_2$. 
A simple calculation yields \begin{align*}0=B_aJB_a^* & = W(w_1,w_2)(a)(\BCa^1(v)\BCa^2(v^\ast)-\BCa^2(v)\BCa^1(v^\ast))w_a w_a^{\ast T} \\ & = W(w_1,w_2)(a)W(v,v^\ast)(a)w_a w_a^{\ast T}.\end{align*} 
Hence $W(v,v^\ast)(a)=0$ and since $(\BCa^1(v),\BCa^2(v))=(c_2,c_1)\not=0$, we also have $v\not\in\Tmin$. 
Furthermore, for each $f\in\Tmax$ we have
\begin{align*}
B_a\begin{pmatrix} \BCa^1(f) \\ \BCa^2(f)\end{pmatrix} & =(\BCa^1(f)\BCa^2(v^\ast) - \BCa^2(f)\BCa^1(v^\ast))w_a \\
                                                       & = W(f,v^\ast)(a)w_a.
\end{align*}
Similarly one gets a function $w\in\Tmax\backslash\Tmin$ with $W(w,w^\ast)(b)=0$ and
\begin{align*}
 B_b\begin{pmatrix} \BCb^1(f) \\ \BCb^2(f)\end{pmatrix}= W(f,w^\ast)(b) w_b, \quad f\in\Tmax.
\end{align*}
But this shows that $S$ is a self-adjoint restriction with separated boundary conditions.

Hence both matrices, $B_a$ and $B_b$ have rank two. If we set $B=B_b^{-1}B_a$, 
then $B=J(B^{-1})^\ast J^\ast$ and therefore $|\det B |=1$, hence $\det B = e^{2i\varphi}$ for some $\varphi\in[0,\pi)$.
If we set $R=e^{-i\varphi}B$, one sees from the equation
\begin{align*}
 B & = \begin{pmatrix} b_{11} & b_{12} \\ b_{21} & b_{22} \end{pmatrix} 
     = J(B^{-1})^\ast J^\ast = e^{2i\varphi} \begin{pmatrix} 0 & -1 \\ 1 & 0 \end{pmatrix}
       \begin{pmatrix} b_{22}^\ast & - b_{21}^\ast \\ -b_{12}^\ast & b_{11}^\ast \end{pmatrix}
       \begin{pmatrix} 0 & 1 \\ -1 & 0 \end{pmatrix} \\
   & = e^{2i\varphi} \begin{pmatrix} b_{11}^\ast & b_{12}^\ast \\ b_{21}^\ast & b_{22}^\ast \end{pmatrix}
\end{align*}
that $R\in\R^{2\times2}$ with $\det R=1$.
Now because we have for each $f\in\Tmax$ 
\begin{align*}
 B_a\begin{pmatrix} \BCa^1(f) \\ \BCa^2(f) \end{pmatrix} =
   B_b\begin{pmatrix} \BCb^1(f) \\ \BCb^2(f) \end{pmatrix} \quad\Leftrightarrow\quad 
   \begin{pmatrix} \BCb^1(f) \\ \BCb^2(f) \end{pmatrix} = e^{\I\varphi} R 
    \begin{pmatrix} \BCa^1(f) \\ \BCa^2(f) \end{pmatrix},
\end{align*}
$S$ has the claimed representation.

Conversely, if $S$ is of the form~\eqref{eqn:SRLCLCcoup}, then Theorem~\ref{thm:LCLCBC} shows that it is a 
  self-adjoint restriction of $\Tmax$. Now if $S$ was a self-adjoint restriction with separated boundary
 conditions, we would have an $f\in S\backslash\Tmin$, vanishing in some vicinity of $a$. But then, because of the boundary condition 
 we would also have $\BCb^1(f)=\BCb^2(f)=0$, i.e., $f\in\Tmin$. Hence $S$ can not be a self-adjoint restriction with
 separated boundary conditions.
\end{proof}

\begin{subequations}
Now we will again determine the self-adjoint restrictions of $\Tmax$ which are multi-valued.
In the case of separated boundary conditions these are determined by whether
 \begin{align}\label{eqnBCLCLCmulvala}
   \cos\varphi_\alpha w_2(\alpha_\varrho-) + \sin\varphi_\alpha w_1(\alpha_\varrho-) & \not= 0, \\
 \label{eqnBCLCLCmulvalb}
   \cos\varphi_\beta w_2(\beta_\varrho+) + \sin\varphi_\beta w_1(\beta_\varrho+) & \not= 0,
 \end{align}\end{subequations}
hold or not.
Note that if one takes the functionals from Proposition~\ref{prop:BCLefthandlim}, then~\eqref{eqnBCLCLCmulvala}
 (respectively~\eqref{eqnBCLCLCmulvalb}) is equivalent to $\varphi_\alpha\not=0$ (respectively $\varphi_\beta\not=0$). 
We start with the case when $\dim\mul{\Tmax}=1$.

\begin{corollary}\label{corBCLCLCmulvala}
 Suppose $\tau$ is in the l.c.~case at both endpoints and $\varrho$ has mass in $\alpha_\varrho$ but not 
  in $\beta_\varrho$, i.e., $\dim\mul{\Tmax}=1$.
 Then for each self-adjoint restriction $S$ of $\Tmax$ with separated boundary conditions as in 
  Theorem~\ref{thm:SRLCLCsepcoup} we have
 \begin{align*}
  \mul{S} = \begin{cases}
            \lbrace 0\rbrace, & \text{if \eqref{eqnBCLCLCmulvala} holds}, \\
            \linspan\lbrace\indik_{\lbrace\alpha_\varrho\rbrace}\rbrace, & \text{if \eqref{eqnBCLCLCmulvala} does not hold}.
           \end{cases}
 \end{align*}
 Furthermore, each self-adjoint restriction of $\Tmax$ with coupled boundary conditions is an operator.
 Similar results hold if $\varrho$ has mass in $\beta_\varrho$ but no mass in $\alpha_\varrho$.
\end{corollary}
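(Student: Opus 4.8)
The plan is to reduce the whole statement to deciding when the single spanning vector of $\mul{\Tmax}$ lies in $\mul{S}$. First I would note that $S\subseteq\Tmax$ forces $\mul{S}\subseteq\mul{\Tmax}$, and since $\varrho$ has mass in $\alpha_\varrho$ but not in $\beta_\varrho$, Proposition~\ref{propTlocmulval} identifies $\mul{\Tmax}=\linspan\lbrace\indik_{\lbrace\alpha_\varrho\rbrace}\rbrace$ as one-dimensional; hence $\mul{S}$ is either $\lbrace 0\rbrace$ or all of it. Under the identification of Proposition~\ref{prop:identDeftauTloc}, the pair $(0,\indik_{\lbrace\alpha_\varrho\rbrace})$ is represented by the function $f_1$ of \eqref{eqnBCLCLPfuncmulS} with $c=1$ (equivalently \eqref{eqn:mulvalfform} with $c_a=1$, $c_b=0$): it equals $u_a$ on $(a,\alpha_\varrho]$, vanishes on $(\alpha_\varrho,b)$, and satisfies $\tau f_1=\indik_{\lbrace\alpha_\varrho\rbrace}$ by \eqref{eqn:mulvaltauf}. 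So the corollary reduces to testing whether $f_1\in S$, and since $f_1$ vanishes near $b$ we immediately get $\BCb^1(f_1)=\BCb^2(f_1)=0$, leaving only the condition at $a$ to check.

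For separate boundary conditions \eqref{eqn:SRLCLCsep} the remaining task is to evaluate the $a$-functionals on $f_1$, which is essentially the computation already performed in the proof of Corollary~\ref{corBCLCLPmulval}. The crucial observation is that $\varrho$ carries no mass on $(a,\alpha_\varrho)$, so by the Lagrange identity (Proposition~\ref{propLagrange}) the Wronskians $W(f_1,w_j^\ast)$ are constant there and the functionals defined at $a$ may be read off at $\alpha_\varrho-$. With $f_1(\alpha_\varrho-)=0$ and $f_1^\qd(\alpha_\varrho-)=1$ I expect to find $\BCa^1(f_1)=-w_2(\alpha_\varrho-)^\ast$ and $\BCa^2(f_1)=w_1(\alpha_\varrho-)^\ast$, so that
\[
 \BCa^1(f_1)\cos\varphi_\alpha-\BCa^2(f_1)\sin\varphi_\alpha
   =-\bigl(\cos\varphi_\alpha\,w_2(\alpha_\varrho-)+\sin\varphi_\alpha\,w_1(\alpha_\varrho-)\bigr)^\ast .
\]
Hence $f_1\in S$ exactly when \eqref{eqnBCLCLCmulvala} fails, which gives $\mul{S}=\linspan\lbrace\indik_{\lbrace\alpha_\varrho\rbrace}\rbrace$ in that case and $\mul{S}=\lbrace 0\rbrace$ when \eqref{eqnBCLCLCmulvala} holds, as claimed.

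For coupled boundary conditions \eqref{eqn:SRLCLCcoup} the same $f_1$ is the only possible source of a multi-valued part. Again $\BCb^1(f_1)=\BCb^2(f_1)=0$, so $f_1\in S$ would force
\[
 e^{\I\varphi}R\begin{pmatrix}\BCa^1(f_1)\\ \BCa^2(f_1)\end{pmatrix}=\begin{pmatrix}0\\ 0\end{pmatrix},
\]
and invertibility of $e^{\I\varphi}R$ then yields $\BCa^1(f_1)=\BCa^2(f_1)=0$, i.e.\ $w_1(\alpha_\varrho-)=w_2(\alpha_\varrho-)=0$. This is impossible, since constancy of the Wronskian on $(a,\alpha_\varrho)$ together with the normalization \eqref{eqn:ufuncBCa} forces $W(w_1,w_2^\ast)(\alpha_\varrho-)=1$, which cannot hold if both left limits vanish. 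Thus $f_1\notin S$, so $\mul{S}=\lbrace 0\rbrace$ and every coupled restriction is an operator; the $\beta_\varrho$ case follows by the symmetric argument at $b$. I expect the only genuinely non-formal step to be the passage from the $a$-functionals to left limits at $\alpha_\varrho$ via the Lagrange identity; once that is secured, everything else is the Wronskian bookkeeping above.
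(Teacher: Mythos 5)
Your proposal is correct and follows essentially the same route as the paper: both reduce the question to whether the single generator $f_1$ of the (at most one-dimensional) multi-valued part satisfies the boundary conditions, evaluate the $a$-functionals at $\alpha_\varrho-$ using constancy of the Wronskian on $(a,\alpha_\varrho)$, and in the coupled case use invertibility of $e^{\I\varphi}R$ to force $\BCa^1(f_1)=\BCa^2(f_1)=0$, which is impossible. The only cosmetic difference is that you derive the final contradiction from the normalization $W(w_1,w_2^\ast)(a)=1$ rather than from $f(\alpha_\varrho-)=f^\qd(\alpha_\varrho-)=0$; the two are equivalent.
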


\begin{proof}
If $S$ is a self-adjoint restriction of $\Tmax$ with separated boundary conditions,
 then the claim follows as in the proof of Corollary~\ref{corBCLCLPmulval}.

 Now let $S$ be a self-adjoint restriction of $\Tmax$ with coupled boundary conditions as in 
  Theorem~\ref{thm:SRLCLCsepcoup} and $f\in S$ with $f=0$ and $\tau f=0$ almost everywhere with respect 
  to $\varrho$. Then again $f$ is of the form~\eqref{eqnBCLCLPfuncmulS}.
  But because of the boundary conditions this shows that $\BCa^1(f)=\BCa^2(f)=0$, hence $f$ vanishes everywhere.
\end{proof}

The remark after Corollary~\ref{corBCLCLPmulval} also holds literally here under the assumptions of Corollary~\ref{corBCLCLCmulvala}.
It remains to determine the self-adjoint restrictions of $\Tmax$ which are multi-valued in the case 
 when $\varrho$ has mass in $\alpha_\varrho$ and in $\beta_\varrho$.

\begin{corollary}
 Suppose $\varrho$ has mass in $\alpha_\varrho$ and in $\beta_\varrho$, i.e., $\dim\mul{\Tmax}=2$.
 If $S$ is a self-adjoint restriction of $\Tmax$ with separated boundary conditions as in 
 Theorem~\ref{thm:SRLCLCsepcoup}, then
 \begin{align*}
  \mul{S} = \begin{cases}
                 \lbrace 0 \rbrace, & 
      \text{if \eqref{eqnBCLCLCmulvala} and \eqref{eqnBCLCLCmulvalb} hold}, \\
                 \linspan\lbrace\indik_{\lbrace\alpha_\varrho\rbrace}\rbrace, &
       \text{if \eqref{eqnBCLCLCmulvalb} holds and \eqref{eqnBCLCLCmulvala} does not}, \\
                 \linspan\lbrace\indik_{\lbrace\beta_\varrho\rbrace}\rbrace, & 
      \text{if \eqref{eqnBCLCLCmulvala} holds and \eqref{eqnBCLCLCmulvalb} does not}, \\
                 \linspan\lbrace\indik_{\lbrace\alpha_\varrho\rbrace},\indik_{\lbrace\beta_\varrho\rbrace}\rbrace, &
    \text{if neither \eqref{eqnBCLCLCmulvala} nor \eqref{eqnBCLCLCmulvalb} holds}.    
               \end{cases}
 \end{align*}
 If $S$ is a self-adjoint restriction of $\Tmax$ with coupled boundary conditions as in 
 Theorem~\ref{thm:SRLCLCsepcoup} and
 \begin{align*}
    \tilde{R} = \begin{pmatrix}
                    w_2^\qd(\beta_\varrho+)^\ast & -w_2(\beta_\varrho+)^\ast \\ -w_1^\qd(\beta_\varrho+)^\ast & w_1(\beta_\varrho+)^\ast
                  \end{pmatrix}^{-1}
     R \begin{pmatrix}
        w_2^\qd(\alpha_\varrho-)^\ast & -w_2(\alpha_\varrho-)^\ast \\ -w_1^\qd(\alpha_\varrho-)^\ast & w_1(\alpha_\varrho-)^\ast
       \end{pmatrix},
 \end{align*}
 then 
 \begin{align*}
  \mul{S} = \begin{cases}
      \lbrace 0\rbrace, & \text{if }\tilde{R}_{12}\not=0, \\
      \linspan\lbrace \indik_{\lbrace\alpha_\varrho\rbrace} + e^{\I\varphi} \tilde{R}_{22} \indik_{\lbrace\beta_\varrho\rbrace} \rbrace, & 
                  \text{if }\tilde{R}_{12}=0.
      \end{cases}
 \end{align*}
\end{corollary}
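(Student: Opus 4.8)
The plan is to reduce everything to the explicit description of the multi-valued elements of $\Tmax$ obtained in the proofs of Proposition~\ref{prop:identDeftauTloc} and Proposition~\ref{propTlocmulval}. Recall that every $f\in\Tmax$ with $f=0$ almost everywhere with respect to $\varrho$ is of the form~\eqref{eqn:mulvalfform}, i.e.\ $f=c_a u_a$ near $a$, $f=c_b u_b$ near $b$ and $f=0$ on $(\alpha_\varrho,\beta_\varrho)$, with $\tau f = c_a\indik_{\lbrace\alpha_\varrho\rbrace}-c_b\indik_{\lbrace\beta_\varrho\rbrace}$ by~\eqref{eqn:mulvaltauf}. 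Hence $\mul{S}$ is precisely the set of such $\tau f$ for which the corresponding $f$ additionally satisfies the boundary conditions defining $S$, so the whole problem is to decide, for each of the two kinds of boundary conditions from Theorem~\ref{thm:SRLCLCsepcoup}, which pairs $(c_a,c_b)$ are admissible.

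The first step is to evaluate the boundary functionals on such an $f$. Since $\varrho$ has no mass on $(a,\alpha_\varrho)$, the Wronskians $W(f,w_2^\ast)$ and $W(w_1^\ast,f)$ are constant there, so $\BCa^1(f)=W(f,w_2^\ast)(\alpha_\varrho-)$ and $\BCa^2(f)=W(w_1^\ast,f)(\alpha_\varrho-)$; inserting the values $f(\alpha_\varrho-)=0$ and $f^\qd(\alpha_\varrho-)=c_a$ coming from the normalization of $u_a$ yields
\begin{align*}
 \begin{pmatrix}\BCa^1(f)\\ \BCa^2(f)\end{pmatrix} = c_a\begin{pmatrix} -w_2(\alpha_\varrho-)^\ast\\ w_1(\alpha_\varrho-)^\ast\end{pmatrix},
\end{align*}
and the analogous computation at $b$ gives the same expression with $c_a$, $\alpha_\varrho-$ replaced by $c_b$, $\beta_\varrho+$.

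For separate boundary conditions this makes the two conditions decouple: the condition at $a$ reads $c_a(\cos\varphi_\alpha w_2(\alpha_\varrho-)^\ast+\sin\varphi_\alpha w_1(\alpha_\varrho-)^\ast)=0$, which forces $c_a=0$ exactly when~\eqref{eqnBCLCLCmulvala} holds and leaves $c_a$ free otherwise, and symmetrically for $c_b$ and~\eqref{eqnBCLCLCmulvalb}. Reading off $\tau f=c_a\indik_{\lbrace\alpha_\varrho\rbrace}-c_b\indik_{\lbrace\beta_\varrho\rbrace}$ then produces the four-case table; this is just Corollary~\ref{corBCLCLPmulval} applied independently at each endpoint.

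The coupled case is where the real work lies, and the key observation is that the two boundary vectors above are exactly the second columns of the matrices appearing in the definition of $\tilde{R}$. Writing $M_\alpha$, $M_\beta$ for those two matrices and $e_2=(0,1)^T$, we have $(\BCa^1(f),\BCa^2(f))^T=c_aM_\alpha e_2$ and $(\BCb^1(f),\BCb^2(f))^T=c_bM_\beta e_2$; both matrices are invertible since their determinants equal $W(w_1,w_2)(\alpha_\varrho-)^\ast$ respectively $W(w_1,w_2)(\beta_\varrho+)^\ast$, which are nonzero by the normalizations~\eqref{eqn:ufuncBCa} and~\eqref{eqn:ufuncBCb} together with Wronskian constancy on the $\varrho$-null side-intervals. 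The coupled condition $(\BCb^1(f),\BCb^2(f))^T=e^{\I\varphi}R(\BCa^1(f),\BCa^2(f))^T$ thus becomes $c_bM_\beta e_2=e^{\I\varphi}c_aRM_\alpha e_2$; using $RM_\alpha=M_\beta\tilde{R}$ and cancelling $M_\beta$ collapses it to $c_be_2=e^{\I\varphi}c_a\tilde{R}e_2$, that is, $\tilde{R}_{12}c_a=0$ and $c_b=e^{\I\varphi}\tilde{R}_{22}c_a$. Therefore $c_a=c_b=0$ and $\mul{S}=\lbrace 0\rbrace$ precisely when $\tilde{R}_{12}\neq0$, while for $\tilde{R}_{12}=0$ the coefficient $c_a$ is free with $c_b$ slaved to it, and substituting this back into~\eqref{eqn:mulvaltauf} gives the stated one-dimensional generator of $\mul{S}$. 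I expect the only genuine obstacle to be this last reduction—setting up $M_\alpha$, $M_\beta$ so that they match the matrices in $\tilde{R}$, checking their invertibility, and carrying the conjugates and signs carefully so the generator comes out exactly as claimed; everything else is a direct specialization of earlier results.
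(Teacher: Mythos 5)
Your proof is correct and follows essentially the same route as the paper: both arguments reduce to the explicit form \eqref{eqn:mulvalfform} of the $\varrho$-null elements of $\Tmax$ together with \eqref{eqn:mulvaltauf} and then test the boundary conditions, and your conjugation of the coupled condition by $M_\alpha$, $M_\beta$ is precisely the paper's rewriting of that condition in terms of $\left(f(\alpha_\varrho-),f^\qd(\alpha_\varrho-)\right)$ and $\left(f(\beta_\varrho+),f^\qd(\beta_\varrho+)\right)$. One caveat on the last step: combining $c_b=e^{\I\varphi}\tilde R_{22}c_a$ with \eqref{eqn:mulvaltauf} actually produces the generator $\indik_{\lbrace\alpha_\varrho\rbrace}-e^{\I\varphi}\tilde R_{22}\indik_{\lbrace\beta_\varrho\rbrace}$ rather than the $+$ sign in the statement, but this sign discrepancy is already present between the paper's own \eqref{eqn:mulvaltauf} and the corollary as printed, so it is not a defect of your argument.
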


\begin{proof}
 If $S$ is a self-adjoint restriction of $\Tmax$ with separated boundary conditions, 
  the claim follows as in the proof of Corollary~\ref{corBCLCLPmulval}.

 In order to prove the second part let $S$ be a self-adjoint restriction of $\Tmax$ with coupled boundary 
  conditions, which can be written as
 \begin{align*}
   \begin{pmatrix} f(\beta_\varrho+) \\ f^\qd(\beta_\varrho+) \end{pmatrix} & = 
     e^{\I\varphi} \tilde{R}
   \begin{pmatrix} f(\alpha_\varrho-) \\ f^\qd(\alpha_\varrho-) \end{pmatrix}, \quad f\in S.
 \end{align*}
 Now assume that $\tilde{R}_{12}\not=0$ and let $f\in S$ with $f=0$ almost everywhere with respect to $\varrho$. 
  Then from this boundary condition we infer that also $f^\qd(\alpha_\varrho-)=f^\qd(\beta_\varrho+)=0$, i.e., $f=0$.
 Otherwise, if we assume that $\tilde{R}_{12}=0$, then the boundary condition becomes 
 \begin{align*}
  f^\qd(\beta_\varrho+) = e^{\I\varphi} \tilde{R}_{22} f^\qd(\alpha_\varrho-), \quad f\in S.
 \end{align*}
 Hence all functions $f\in S$ which vanish almost everywhere with respect to $\varrho$ are of the form
 \begin{align*}
  f(x) = \begin{cases}
          c_a u_a(x), & \text{if }x\in(a,\alpha_\varrho], \\
          0,          & \text{if }x\in(\alpha_\varrho,\beta_\varrho], \\
          e^{\I\varphi} \tilde{R}_{22} c_a u_b(x), & \text{if } x\in(\beta_\varrho,b).
         \end{cases}
 \end{align*}
 Conversely, all functions of this form lie in $S$, which yields the claim.
\end{proof}

Note that if one uses the boundary functionals of Proposition~\ref{prop:BCLefthandlim}, then $\tilde{R}=R$.
In contrast to Corollary~\ref{corBCLCLCmulvala}, in this case there is a multitude of multi-valued, self-adjoint restrictions $S$ of $\Tmax$. However, if we again restrict $S$ to the closure $\D$ of the domain of $S$ by
\begin{align*}
S_\D = S\cap\left(\D\times\D\right),
\end{align*}
we obtain a self-adjoint operator in the Hilbert space $\D$.

\begin{remark}\label{rem:bcev}
Under the same assumptions as in Proposition~\ref{prop:BCLefthandlim} and given the linear functionals $\BCa^1$, $\BCa^2$ as in this proposition plus solutions $u$ of $(\tau-z)u=0$, we have
\begin{align*}
 \BCa^1(u) = u(\alpha_\varrho-) \quad\text{and}\quad \BCa^2(u) = u^\qd(\alpha_\varrho+) + z\varrho(\lbrace\alpha_\varrho\rbrace) u(\alpha_\varrho).
\end{align*}
Hence, upon adding a suitable point mass in $\alpha_\varrho$ to $\varrho$, it is possible to end up with eigenvalue dependent boundary conditions. Moreover, adding point masses to the left of $\alpha_\varrho$ to $\varrho$ and varying the remaining coefficients even yields far more general eigenvalue dependent boundary conditions than that. In particular, it is possible to obtain boundary conditions which depend polynomially on the eigenvalue parameter. These considerations show that our general theory also includes self-adjoint operators associated with Sturm--Liouville problems with certain eigenvalue dependent boundary conditions.
\end{remark}

\section{Spectrum and resolvent}\label{secSR}

In this section we will compute the resolvent of the self-adjoint restrictions $S$ of $\Tmax$.
The resolvent set $\rho(S)$ is the set of all $z\in\C$ such that 
\begin{align*}
 R_z = (S-z)^{-1} = \lbrace (g,f)\in\Lr\times\Lr \,|\, (f,g)\in S-z \rbrace
\end{align*}
is an everywhere defined operator in $\Lr$, i.e., $\dom{R_z}=\Lr$ and $\mul{R_z} = \lbrace0\rbrace$.
According to Theorem~\ref{AppLRthmres}, the resolvent set $\rho(S)$ is a non-empty, open subset of $\C$ and the resolvent $z\mapsto R_z$ is an
 analytic function of $\rho(S)$ into the space of bounded linear operators on $\Lr$.
Note that in general the operators $R_z$, $z\in\rho(S)$ are not injective, indeed we have
\begin{align}\label{eqnSpecRessets}
 \ker(R_z) = \mul{S} = \dom{S}^\bot = \ran(R_z)^\bot, \quad z\in\rho(S).
\end{align}
First we deal with the case, when both endpoints are in the l.c.~case.

\begin{theorem}\label{thmSResolLCLC}
 Suppose $\tau$ is in the l.c.~case at both endpoints and $S$ is a self-adjoint restriction of $\Tmax$.
 Then for each $z\in\rho(S)$ the resolvent $R_z$ is an integral operator
 \begin{align}
  R_z g (x) = \int_a^b G_z(x,y) g(y) d\varrho(y), \quad x\in(a,b),~g\in\Lr,
 \end{align}
 with a  square integrable kernel $G_z$ (in particular, $R_z$ is Hilbert--Schmidt). For any given linearly independent solutions $u_1$, $u_2$ of $(\tau-z)u=0$,
 there are coefficients $m^\pm_{ij}(z)\in\C$, $i$, $j\in\lbrace 1,2\rbrace$ such that the kernel is given by
 \begin{align}
  G_z(x,y) = \begin{cases}
               \sum_{i,j=1}^2 m^+_{ij}(z) u_i(x) u_j(y), & \text{if }y\leq x, \\
               \sum_{i,j=1}^2 m^-_{ij}(z) u_i(x) u_j(y), & \text{if }y\geq x.
             \end{cases}
 \end{align}
\end{theorem}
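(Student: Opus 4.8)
The plan is to build the Green's function explicitly by variation of parameters and to fix the two free constants using the self-adjoint boundary conditions. Since $\tau$ is in the l.c.~case at both endpoints, every solution of $(\tau-z)u=0$ lies in $\Lr$; I fix a fundamental system $u_1$, $u_2$ of $(\tau-z)u=0$ and put $W=W(u_1,u_2)$. Given $g\in\Lr$, the goal is to produce an $f$ with $(\tau-z)f=g$ lying in $\dom{S}$; this $f$ represents $R_zg$. Starting from the variation-of-parameters formula of Proposition~\ref{prop:repsol} and splitting the integrals at the two endpoints, I would write
\[
 f(x)=u_1(x)\left(\tilde c_1+\frac1W\int_a^x u_2\,g\,d\varrho\right)+u_2(x)\left(\tilde c_2+\frac1W\int_x^b u_1\,g\,d\varrho\right),
\]
where $\tilde c_1$, $\tilde c_2\in\C$ are to be determined.

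First I would check that this expression makes sense and lies in $\Lr$. Because $u_1$, $u_2$, $g\in\Lr$, the Cauchy--Schwarz inequality gives $\int_a^b|u_j\,g|\,d\varrho<\infty$, so both integrals converge absolutely and define bounded coefficient functions; hence $f\in\Lr$ and, by construction, $(\tau-z)f=g$. As $x\downarrow a$ the first coefficient tends to $\tilde c_1$ and the second to $\tilde c_2+\frac1W\int_a^b u_1\,g\,d\varrho$, so near $a$ the function $f$ coincides with a fixed solution of $(\tau-z)u=0$; symmetrically near $b$. In particular the Wronskian limits of Lemma~\ref{lem:l2lagrange} exist and the boundary functionals $\BCa^1,\BCa^2,\BCb^1,\BCb^2$ may be evaluated on $f$, each being an affine-linear expression in $\tilde c_1$, $\tilde c_2$ whose inhomogeneous part is a linear combination of $\int_a^b u_1\,g\,d\varrho$ and $\int_a^b u_2\,g\,d\varrho$.

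Next I would impose membership in $S$. By Theorem~\ref{thm:LCLCBC} the self-adjoint restriction $S$ is cut out by two linear conditions on $(\BCa^1(f),\BCa^2(f),\BCb^1(f),\BCb^2(f))$, so substituting the expressions above yields a linear system of two equations for $(\tilde c_1,\tilde c_2)$. Its coefficient matrix is the one obtained by setting $g=0$; a nontrivial kernel would produce a nonzero $\tilde c_1u_1+\tilde c_2u_2$ satisfying the boundary conditions, that is, a nontrivial element of $\ker(S-z)$ --- here one uses that a nontrivial solution of $(\tau-z)u=0$ cannot vanish $\varrho$-almost everywhere, which follows from $|\supp(\varrho)|>1$ together with the gap condition in Hypothesis~\ref{genhyp}. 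Since $z\in\rho(S)$ forces $\ker(S-z)=\{0\}$, the matrix is invertible and $\tilde c_1$, $\tilde c_2$ are uniquely determined; as solutions of the inhomogeneous system they are linear in $\int_a^b u_1\,g\,d\varrho$ and $\int_a^b u_2\,g\,d\varrho$.

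Finally I would read off the kernel. Writing $\tilde c_1$, $\tilde c_2$ as such linear combinations and collecting all terms over a common integral $\int_a^b(\cdots)\,g\,d\varrho$ turns the formula for $f$ into $f(x)=\int_a^b G_z(x,y)g(y)\,d\varrho(y)$, with $G_z$ of the asserted piecewise form $\sum_{i,j}m^\pm_{ij}u_i(x)u_j(y)$; the contributions of the two endpoint integrals land on opposite sides of the diagonal, so that $m^+$ and $m^-$ differ only in the off-diagonal entries by $\pm1/W$ --- the usual Green's jump. Square integrability of $G_z$ is immediate since each $u_i\in\Lr$ and the kernel is a finite sum of products $u_i(x)u_j(y)$; this is the one point where being in the l.c.~case at \emph{both} endpoints is essential. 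Because the constructed $f$ solves $(\tau-z)f=g$ and satisfies the boundary conditions, $(f,g+zf)\in S$, so $f=R_zg$ (single-valued since $\mul{R_z}=\ker(S-z)=\{0\}$), and the integral operator indeed represents $R_z$. I expect the main work to be the boundary bookkeeping in the third step --- tracking how the four functionals depend on $\tilde c_1$, $\tilde c_2$ and tying unique solvability of the resulting system precisely to $z\in\rho(S)$ via $\ker(S-z)=\{0\}$; the convergence and the kernel assembly are routine once the l.c.~hypothesis is in hand.
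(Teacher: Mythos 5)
Your proposal is correct and follows essentially the same route as the paper: variation of parameters from Proposition~\ref{prop:repsol}, evaluation of the boundary functionals of Theorem~\ref{thm:LCLCBC} on the candidate solution to obtain a $2\times2$ linear system for the two free constants, invertibility of that system from $z\in\rho(S)$ via the absence of eigenvectors (using that a nontrivial solution of $(\tau-z)u=0$ cannot vanish $\varrho$-a.e.), and square integrability of the kernel from $u_1,u_2\in\Lr$. The only organizational difference is that the paper first treats $g$ of compact support -- where $R_zg$ is known a priori to have the variation-of-parameters form and coincides with a homogeneous solution near each endpoint, so the boundary functionals are trivially affine in the constants -- and then extends to all of $\Lr$ by boundedness of both $R_z$ and the integral operator on a dense subspace, whereas you construct $f$ directly for arbitrary $g\in\Lr$; this works and avoids the density step, but it does require the extra observation that $f^\qd=u_1^\qd A+u_2^\qd B$ for your coefficient functions $A$, $B$, so that $W(f,w)=AW(u_1,w)+BW(u_2,w)$ and the boundary functionals indeed factor through the limits $A(a)$, $B(a)$, $A(b)$, $B(b)$ as you claim.
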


\begin{proof}
Let $u_1$, $u_2$ be two linearly independent solutions of $(\tau-z)u=0$ with $W(u_1,u_2)=1$. 
 If $g\in L^2_c((a,b);\varrho)$, then $(R_z g,g)\in (S-z)$, hence there is some $f\in\Deftau$ satisfying 
 the boundary conditions with $f=R_z g$ and $(\tau-z) f=g$. 
 From Proposition~\ref{prop:repsol} we get for suitable constants $c_1$, $c_2\in\C$
 \begin{align}\label{eqn::resoleqn}
  f(x)=u_1(x)\left(c_1+\int_a^x{u_2 g\, d\varrho}\right)+u_2(x)\left(c_2-\int_a^x{u_1 g\, d\varrho}\right)
 \end{align} 
 for each $x\in(a,b)$. 
 Furthermore, since $f$ satisfies the boundary conditions 
 \begin{align*}
  B_a\begin{pmatrix} \BCa^1(f) \\ \BCa^2(f) \end{pmatrix} = B_b\begin{pmatrix} \BCb^1(f) \\ \BCb^2(f) \end{pmatrix}
 \end{align*} for some suitable matrices $B_a$, $B_b\in\C^{2\times 2}$ as in Theorem~\ref{thm:LCLCBC}.
 Now because $g$ has compact support, we have
 \begin{align*}
  \begin{pmatrix} \BCa^1(f) \\ \BCa^2(f) \end{pmatrix} & = \begin{pmatrix} c_1\BCa^1(u_1)+c_2\BCa^1(u_2) \\ 
     c_1\BCa^2(u_1)+c_2\BCa^2(u_2) \end{pmatrix} = \begin{pmatrix} \BCa^1(u_1) & \BCa^1(u_2) \\ 
  \BCa^2(u_1) & \BCa^2(u_2) \end{pmatrix}\begin{pmatrix} c_1 \\ c_2 \end{pmatrix} \\
 & = M_\alpha \begin{pmatrix} c_1 \\ c_2 \end{pmatrix}
 \end{align*} 
 as well as 
 \begin{align*}\begin{pmatrix} \BCb^1(f) \\ \BCb^2(f) \end{pmatrix} & = 
   \begin{pmatrix} \left(c_1+\int_a^b u_2 g d\varrho\right)\BCb^1(u_1) \\ 
     \left(c_1+\int_a^b u_2 g d\varrho\right)\BCb^2(u_1) \end{pmatrix} + 
  \begin{pmatrix} \left(c_2-\int_a^b u_1 g d\varrho\right)\BCb^1(u_2) \\ 
        \left(c_2-\int_a^b u_1 g d\varrho\right)\BCb^2(u_2) \end{pmatrix} \\
 & = \begin{pmatrix} \BCb^1(u_1) & \BCb^1(u_2) \\ \BCb^2(u_1) & \BCb^2(u_2) \end{pmatrix}
   \begin{pmatrix} c_1+\int_a^b{u_2 g\, d\varrho} \\ c_2-\int_a^b{u_1 g\, d\varrho} \end{pmatrix} \\
 & = M_\beta \begin{pmatrix} c_1 \\ c_2 \end{pmatrix} + M_\beta \begin{pmatrix} \int_a^b{u_2 g\, d\varrho} \\ 
  -\int_a^b{u_1 g\, d\varrho}\end{pmatrix}.\end{align*}
 Hence we have 
 \begin{align*}
   \left(B_aM_\alpha -B_b M_\beta\right)\begin{pmatrix} c_1 \\ c_2 \end{pmatrix} = B_b M_\beta \begin{pmatrix} \int_a^b{u_2 g\, d\varrho} \\ -\int_a^b{u_1 g\, d\varrho} \end{pmatrix}.
 \end{align*}
 Now if $B_aM_\alpha-B_bM_\beta$ was not invertible, we would have 
 \begin{align*}
  \begin{pmatrix} d_1 \\ d_2 \end{pmatrix}\in\C^2\setminus\left\lbrace\begin{pmatrix}0 \\0 \end{pmatrix}\right\rbrace \quad\text{with}\quad B_aM_\alpha\begin{pmatrix} d_1 \\ d_2 \end{pmatrix}=B_bM_\beta\begin{pmatrix} d_1 \\ d_2 \end{pmatrix}.
 \end{align*} 
 Then the function $d_1u_1+d_2u_2$ would be a solution of $(\tau-z)u=0$ satisfying the boundary conditions
  of $S$, hence an eigenvector with eigenvalue $z$. But since this would contradict $z\in\rho(S)$, 
  $B_aM_\alpha-B_bM_\beta$ has to be invertible. 
  Now because of 
  \begin{align*}
   \begin{pmatrix} c_1 \\ c_2 \end{pmatrix} = \left(B_aM_\alpha -B_b M_\beta\right)^{-1} B_b M_\beta \begin{pmatrix} \int_a^b{u_2 g\, d\varrho} \\ -\int_a^b{u_1 g\, d\varrho} \end{pmatrix},
  \end{align*}
  the constants $c_1$ and $c_2$ may be written as linear combinations of 
 \[
 \int_a^b{u_2 g\, d\varrho}\quad\text{and}\quad \int_a^b{u_1 g\, d\varrho},
 \] 
 where the coefficients are independent of $g$. 
 Now using equation~\eqref{eqn::resoleqn} one sees that $f$ has an integral-representation
 with a function $G_z$ as claimed. Moreover, the function $G_z$ is square-integrable, since the solutions
 $u_1$ and $u_2$ lie in $\Lr$ by assumption. 
 Finally, since the operator $K_z$ 
 \[
 K_z g(x) = \int_a^b G_z(x,y)g(y)d\varrho(y),\quad x\in(a,b),~g\in\Lr
 \]
 on $\Lr$, as well as the resolvent $R_z$ are bounded, the claim follows since they coincide on a dense subspace.
\end{proof}

As in the classical case, the compactness of the resolvent implies discreteness of the spectrum.

\begin{corollary}\label{corSpecRDis}
 Suppose $\tau$ is in the l.c.~case at both endpoints and $S$ is a self-adjoint restriction of $\Tmax$. 
 Then the relation $S$ has purely discrete spectrum, i.e., $\sigma(S)=\sigdis(S)$ with
 \begin{align*}
  \mathop{\sum_{\lambda\in\sigma(S)}}_{\lambda\not=0} \frac{1}{\lambda^2} < \infty \quad\text{and}\quad 
    \dim\ker(S-\lambda)\leq 2, \quad \lambda\in\sigma(S).
 \end{align*}
\end{corollary}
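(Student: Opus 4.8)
The plan is to read off everything from the resolvent description in Theorem~\ref{thmSResolLCLC}. First I would observe that the kernel $G_z$ there is square integrable with respect to $\varrho\times\varrho$, so that for every $z\in\rho(S)$ the operator $R_z$ is Hilbert--Schmidt on $\Lr$, in particular compact. Since $\rho(S)$ is non-empty by Theorem~\ref{AppLRthmres}, we may fix some $z_0\in\rho(S)$ and study $R_{z_0}$ by compact-operator theory.

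Next I would convert compactness of $R_{z_0}$ into discreteness of $\sigma(S)$. By the Riesz--Schauder theory, $\sigma(R_{z_0})\setminus\lbrace0\rbrace$ consists of isolated eigenvalues of finite multiplicity whose only possible accumulation point is $0$. Using the standard correspondence between the spectrum of a self-adjoint relation and that of its resolvent (Appendix~\ref{appLR}), the non-zero eigenvalues $\mu$ of $R_{z_0}$ correspond, with equal multiplicities, to the eigenvalues $\lambda=z_0+\mu^{-1}$ of $S$, while $\ker R_{z_0}=\mul{S}$ accounts for the multi-valued part and contributes no finite eigenvalue. Since the $\mu$ accumulate only at $0$, the $\lambda$ are isolated with no finite accumulation point and have finite multiplicity; hence $\sigma(S)=\sigdis(S)$. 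For the multiplicity bound I would simply use $\ker(S-\lambda)\subseteq\ker(\Tmax-\lambda)\subseteq\ker(\Tloc-\lambda)$ together with~\eqref{eqnSLPdimkerTloc}, giving $\dim\ker(S-\lambda)\leq\dim\ker(\Tloc-\lambda)=2$.

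For the summability I would now exploit the Hilbert--Schmidt property quantitatively. Once discreteness is known, $\R\setminus\sigma(S)$ is non-empty and open, so I may choose $z_0$ to be real, in which case $R_{z_0}$ is a compact \emph{self-adjoint} operator. Enumerating its non-zero eigenvalues as $\mu_n$ (with multiplicity), the spectral theorem gives $\sum_n\mu_n^2=\|R_{z_0}\|_{\mathrm{HS}}^2<\infty$, and since $\mu_n=(\lambda_n-z_0)^{-1}$ this says $\sum_n(\lambda_n-z_0)^{-2}<\infty$. Because only finitely many $\lambda_n$ lie in any bounded subset of $\R$ and $(\lambda-z_0)^{-2}\sim\lambda^{-2}$ as $|\lambda|\to\infty$, the series $\sum_n(\lambda_n-z_0)^{-2}$ and $\sum_{\lambda_n\neq0}\lambda_n^{-2}$ converge or diverge together, which yields $\sum_{\lambda\in\sigma(S),\,\lambda\neq0}\lambda^{-2}<\infty$.

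I expect the only real subtlety to be the bookkeeping for the multi-valued part of $S$: one must check that $\ker R_{z_0}=\mul{S}=\dom{S}^\bot$ (as recorded in~\eqref{eqnSpecRessets}) is precisely the piece carrying the ``eigenvalue at infinity'', so that the correspondence $\mu\mapsto z_0+\mu^{-1}$ captures all of $\sigma(S)$ and the summation runs genuinely over the finite eigenvalues only. Everything else is a routine application of the theory of compact self-adjoint operators and of self-adjoint linear relations.
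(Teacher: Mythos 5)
Your proposal is correct and follows essentially the same route as the paper: compactness of the Hilbert--Schmidt resolvent from Theorem~\ref{thmSResolLCLC}, the spectral mapping theorem for relations (Theorem~\ref{AppLRthmSMT}) to get discreteness, the Hilbert--Schmidt norm for the summability, and \eqref{eqnSLPdimkerTloc} for the multiplicity bound. The paper's proof is just a terser version of yours; your extra care with the multi-valued part (via \eqref{eqnSpecRessets}) and the choice of a real $z_0$ to make the resolvent self-adjoint are exactly the details the paper leaves implicit.
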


\begin{proof}
 Since the resolvent is compact, Theorem~\ref{AppLRthmSMT} shows that the spectrum of $S$ consists of isolated eigenvalues.
  Furthermore, the sum converges since the resolvent is Hilbert--Schmidt.
  Finally, their multiplicity is at most two because of~\eqref{eqnSLPdimkerTloc}.
\end{proof}

If $S$ is a self-adjoint restriction of $\Tmax$ with separated boundary conditions or if not both endpoints 
 are in the l.c.~case, the resolvent has a simpler form.

\begin{theorem}\label{thm:ressep}
 Suppose $S$ is a self-adjoint restriction of $\Tmax$ with separated boundary conditions (if $\tau$ is in the
  l.c.\ case at both endpoints) and $z\in\rho(S)$.
 Furthermore, let $u_a$ and $u_b$ be non-trivial solutions of $(\tau-z)u=0$, such that
 \begin{align*}
  u_a\, \begin{cases}
         \text{satisfies the boundary condition at }a\text{ if }\tau\text{ is in the l.c.~case at }a, \\
         \text{lies in }\Lr\text{ near }a\text{ if }\tau\text{ is in the l.p.~case at }a,
      \end{cases}
 \end{align*}
 and
 \begin{align*}
  u_b\, \begin{cases}
         \text{satisfies the boundary condition at }b\text{ if }\tau\text{ is in the l.c.~case at }b, \\
         \text{lies in }\Lr\text{ near }b\text{ if }\tau\text{ is in the l.p.~case at }b.
      \end{cases}
 \end{align*}
 Then the resolvent $R_z$ is given by
 \begin{align}\label{eq:ressbc}
  R_z g(x) %& = W(u_b,u_a)^{-1} \left(u_b(x) \int_a^x u_a(y) g(y) d\varrho(y) + u_a(x)\int_x^b u_b(y) g(y)d\varrho(y)\right) \\
           & = \int_{a}^b G_z(x,y) g(y) d\varrho(y), \quad x\in(a,b),~g\in\Lr,
 \end{align}
 where
 \begin{align}
  G_z(x,y) = \frac{1}{W(u_b,u_a)}\begin{cases}
               u_a(y) u_b(x), & \text{if }y\leq x, \\
               u_a(x) u_b(y), & \text{if }y \geq x. \\
             \end{cases}
 \end{align}
\end{theorem}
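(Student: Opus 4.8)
The plan is to build, for a compactly supported right-hand side $g$, an explicit solution $f$ of $(\tau-z)f=g$ that lies in $\dom(S)$, to recognize it as the claimed Green's-function integral, and then to extend the identity to all of $\Lr$ by density. First I would record that $u_a$ and $u_b$ are linearly independent, so that $W(u_b,u_a)\neq 0$. Note that $z\in\rho(S)\subseteq\reg(\Tmin)$, since for $z\in\rho(S)$ and $\Tmin\subseteq S$ the bound $\|f\|\le\|(S-z)^{-1}\|\,\|(\Tmin-z)f\|$ makes $z$ a point of regular type. Hence in the l.p.\ case the solutions $u_a$, $u_b$ exist and are unique up to scalars by Corollary~\ref{cor:regtypeuniqsol}, while in the l.c.\ case they are any solutions meeting the respective separate boundary condition. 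If $u_a$ and $u_b$ were linearly dependent, then $u_a$ would satisfy the boundary condition (or lie in $\Lr$) at \emph{both} endpoints, making it an eigenvector of $S$ with eigenvalue $z$, contradicting $z\in\rho(S)$.

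Next, for $g\in L^2_c((a,b);\varrho)$ I would set
\[
 f(x)=\frac{1}{W(u_b,u_a)}\left(u_b(x)\int_a^x u_a g\,d\varrho + u_a(x)\int_x^b u_b g\,d\varrho\right),
\]
which is precisely $\int_a^b G_z(x,y)g(y)\,d\varrho(y)$. Rewriting $\int_x^b=\int_a^b-\int_a^x$ shows that $f$ differs from the variation-of-parameters solution of Proposition~\ref{prop:repsol} (for the fundamental system $u_b$, $u_a$, with base point taken below the support of $g$) only by the homogeneous term $\bigl(W(u_b,u_a)^{-1}\int_a^b u_b g\,d\varrho\bigr)u_a$; hence $f$ solves $(\tau-z)f=g$. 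Because $g$ has compact support in some $[\alpha,\beta]\subset(a,b)$, the integrals collapse near the endpoints: $f$ is a scalar multiple of $u_a$ on $(a,\alpha]$ and a scalar multiple of $u_b$ on $[\beta,b)$. Therefore $f$ lies in $\Lr$ near $a$ and near $b$ (since $u_a$, $u_b$ do) and is square integrable overall, while the boundary functionals, depending only on the behaviour near the endpoints, take the same values on $f$ as on $u_a$ (at $a$) and $u_b$ (at $b$). Thus $f$ satisfies the separate boundary conditions, so $(f,\tau f)\in S$ with $\tau f=g+zf$, which means $f=R_z g$.

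Finally I would pass from $L^2_c((a,b);\varrho)$ to all of $\Lr$. For each fixed $x$ the map $y\mapsto G_z(x,y)$ lies in $\Lr$ — it is proportional to $u_a$ on $(a,x)$ and to $u_b$ on $[x,b)$, both square integrable there — so $K_z g(x)=\int_a^b G_z(x,y)g(y)\,d\varrho(y)$ is a well-defined linear functional of $g\in\Lr$, continuous in $g$ for each $x$. Given $g\in\Lr$, I would choose $g_n\in L^2_c((a,b);\varrho)$ with $g_n\to g$; then $R_z g_n\to R_z g$ in $\Lr$ (so a subsequence converges $\varrho$-a.e.), while $K_z g_n(x)\to K_z g(x)$ pointwise, and since $R_z g_n=K_z g_n$ we obtain $R_z g=K_z g$ $\varrho$-almost everywhere, which is the representation~\eqref{eq:ressbc}.

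The step I expect to be the main obstacle is verifying that the explicitly constructed $f$ genuinely lies in $\dom(S)$, that is, simultaneously that it is square integrable and that it meets the boundary conditions at both ends. The crucial simplification is the compact support of $g$, which forces $f$ to coincide with a multiple of $u_a$ near $a$ and of $u_b$ near $b$, so that both the $\Lr$-integrability and the boundary conditions reduce to the corresponding properties of $u_a$ and $u_b$ that hold by construction.
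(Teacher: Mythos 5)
Your proposal is correct and follows essentially the same route as the paper: construct the candidate $f_g$ via the Green's kernel for compactly supported $g$, use Proposition~\ref{prop:repsol} to see it solves $(\tau-z)f=g$, observe that compact support forces $f_g$ to be a multiple of $u_a$ near $a$ and of $u_b$ near $b$ (hence square integrable and satisfying the boundary conditions, so $f_g=R_zg$), and then extend to all of $\Lr$ by density using boundedness of the resolvent together with pointwise convergence of the integral operator. Your added remarks on the linear independence of $u_a,u_b$ and on passing to an a.e.\ convergent subsequence only make explicit what the paper leaves implicit.
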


\begin{proof}
The functions $u_a$, $u_b$ are linearly independent, since otherwise they were an eigenvector of $S$ 
 corresponding to the eigenvalue $z$. Hence they form a fundamental system of $(\tau-z)u=0$.
 Now for each $g\in\Lr$ we define a function $f_g$ by 
 \[
 f_g(x)=W(u_b,u_a)^{-1}\left(u_b(x)\int_a^xu_a g\, d\varrho+u_a(x)\int_x^bu_b g\, d\varrho\right), \quad x\in(a,b).
 \] 
 If $g\in L^2_c((a,b);\varrho)$, then $f_g$ is a solution of $(\tau-z)f=g$ by Proposition~\ref{prop:repsol}.
 Moreover, $f_g$ is a scalar multiple of $u_a$ near $a$ and a scalar multiple of $u_b$ near $b$. 
 Hence the function $f_g$ satisfies the boundary conditions of $S$
  and therefore $(f_g,\tau f_g - z f_g)=(f_g, g)\in (S-z)$, i.e., $R_z g=f_g$.
 Now if $g\in\Lr$ is arbitrary and $g_n\in L^2_c((a,b);\varrho)$ is a sequence with $g_n\rightarrow g$ as $n\rightarrow\infty$,
  we have, since the resolvent is bounded $R_z g_n\rightarrow R_z g$. 
  Furthermore, $f_{g_n}$ converges pointwise to $f_g$ and hence $R_z g=f_g$. 
\end{proof}

If $\tau$ is in the l.p.~case at some endpoint, then Corollary~\ref{cor:regtypeuniqsol} shows that
 there is always a unique non-trivial solution of $(\tau-z)u=0$ (up to scalar multiples), 
 lying in $\Lr$ near this endpoint. 
 Also if $\tau$ is in the l.c.~case at some endpoint, there exists a unique non-trivial solution of $(\tau-z)u=0$ (up to scalar multiples), satisfying the boundary condition at this endpoint. 
 Hence functions $u_a$ and $u_b$, as in Theorem~\ref{thm:ressep} always exist.

\begin{corollary}\label{corSpecRSimple}
 If $S$ is a self-adjoint restriction of $\Tmax$ with separated boundary conditions (if $\tau$ is in the
  l.c.~at both endpoints), then all eigenvalues of $S$ are simple.
\end{corollary}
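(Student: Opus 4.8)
The plan is to argue directly with solutions rather than through the explicit Green's function. If $z$ is an eigenvalue of $S$ with eigenvector $f$, then $(f,zf)\in S\subseteq\Tmax$ with $f\neq0$, and under the identification of $\Tmax$ with functions in $\Deftau$ this says exactly that $f$ is a non-trivial solution of $(\tau-z)f=0$ lying in $\dom{S}$. Hence $\ker(S-z)$ is precisely the space of solutions of $(\tau-z)u=0$ satisfying the boundary conditions defining $S$, and the word ``simple'' amounts to showing that this space is at most one-dimensional.

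Since $S$ has separate boundary conditions (and in the l.p.\ case at an endpoint there is no coupling to begin with), the requirement at $a$ is independent of the one at $b$. Every eigenvector must therefore satisfy the boundary condition at $a$ when $\tau$ is in the l.c.\ case at $a$, and must lie in $\Lr$ near $a$ when $\tau$ is in the l.p.\ case at $a$. By the remark preceding this corollary there is, up to scalar multiples, a unique non-trivial solution $u_a$ of $(\tau-z)u=0$ with this property (the function $u_a$ of Theorem~\ref{thm:ressep}); its uniqueness comes from Corollary~\ref{cor:regtypeuniqsol} in the l.p.\ case, and in the l.c.\ case from the fact that a single non-trivial boundary functional cuts the two-dimensional solution space of~\eqref{eqnSLPdimkerTloc} down to a line. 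Consequently every eigenvector is a scalar multiple of $u_a$, so $\dim\ker(S-z)\le1$ and $z$ is simple.

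There is essentially no obstacle beyond this identification: the whole argument rests on the observation that a separate (or one-sided) boundary condition together with the fixed spectral parameter $z$ already determines the admissible solution up to a constant. The one place meriting a line of justification is the l.c.\ claim that the endpoint condition genuinely lowers the dimension, i.e.\ that the functional $f\mapsto\BCa^1(f)\cos\varphi_\alpha-\BCa^2(f)\sin\varphi_\alpha$ does not vanish on every solution. This holds because a non-trivial solution killed by both $\BCa^1$ and $\BCa^2$ would yield $W(f,g)(a)=0$ for all $g\in\Tmax$ and hence would lie in $\Tmin$ near $a$, forcing $f\equiv0$ as in the proof of Theorem~\ref{thm:Tmin}; thus the solution-to-boundary-data map is injective and the single condition at $a$ has a one-dimensional kernel on the solution space.
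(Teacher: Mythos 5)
Your overall strategy---identifying $\ker(S-\lambda)$ with the space of solutions of $(\tau-\lambda)u=0$ meeting the requirement at $a$ alone, and showing that this space is at most one-dimensional---is sound, and in the end it rests on the same two pillars as the paper's proof (limit-point uniqueness and the Pl\"ucker/Wronskian machinery). The paper organizes this differently: it takes two eigenfunctions $u_1,u_2$ for the same $\lambda$ and shows that the constant $W(u_1,u_2)$ vanishes, either via Lemma~\ref{lem:lclpwronski} when some endpoint is l.p., or via Lemma~\ref{lem:WronskPropSR}\,\eqref{eqn:WronskLC2} when both satisfy the same separated condition $W(u_i,v^\ast)(a)=0$. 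Two remarks on your l.p.\ step: an eigenvalue $\lambda$ of $S$ need not lie in $\reg(\Tmin)$, so Corollary~\ref{cor:regtypeuniqsol} does not literally apply; but you only need its uniqueness half, which holds for \emph{every} $z\in\C$ directly from the definition of the l.p.\ case (two independent solutions in $\Lr$ near $a$ would force l.c.). This is a citation imprecision, not a real gap.

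The genuine gap is in your justification of the l.c.\ step. You claim that a non-trivial solution $f$ with $\BCa^1(f)=\BCa^2(f)=0$ ``would lie in $\Tmin$ near $a$, forcing $f\equiv0$ as in the proof of Theorem~\ref{thm:Tmin}.'' That proof does not deliver this: it only shows $\mul{\Tmin}=\lbrace 0\rbrace$, i.e.\ that an element of $\Tmin$ vanishing $\varrho$-a.e.\ vanishes identically; a generic element of $\Tmin$ (e.g.\ anything in $\Tpre$) certainly does not vanish near $a$, so membership in $\Tmin$ ``near $a$'' forces nothing. The fact you want is true but needs its own argument, for instance: if $f\not\equiv0$ solves $(\tau-\lambda)u=0$, choose a second solution $u$ with $W(f,u)=1$; in the l.c.\ case $u$ lies in $\Tmax$ near $a$, so by Lemma~\ref{lem:funcdomtmax} there is $g\in\Tmax$ with $g=u$ near $a$, and then $W(f,g)(a)=W(f,u)(a)=1\neq0$, contradicting $W(f,g)(a)=0$ for all $g\in\Tmax$. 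Better still, you do not need injectivity of $(\BCa^1,\BCa^2)$ on the solution space at all: it suffices that the single functional $f\mapsto W(f,v^\ast)(a)$ is not identically zero there, and this follows at once from \eqref{eqn:WronskLC2}, since annihilating both members of a fundamental system would force their (non-zero, constant) Wronskian to vanish at $a$ --- which is precisely the paper's one-line argument.
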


\begin{proof}
 Suppose $\lambda\in\R$ is an eigenvalue and $u_j\in S$ with $\tau u_j=\lambda u_j$ for $j=1,2$, i.e., they are solutions of $(\tau-\lambda)u=0$.
  If $\tau$ is in the l.p.~case at some endpoint, then clearly the Wronskian $W(u_1,u_2)$ vanishes.
  Otherwise, since both functions satisfy the same boundary conditions this follows using the Pl\"{u}cker identity.
\end{proof}

According to Theorem~\ref{AppLRthmSAEessspec} the essential spectrum of self-adjoint restrictions is independent of the boundary conditions, i.e., all self-adjoint restrictions of $\Tmax$ have the same essential spectrum.
We conclude this section by proving that the essential spectrum of the self-adjoint restrictions of $\Tmax$ is determined by the behavior of the
 coefficients in some arbitrarily small neighborhood of the endpoints. 
In order to state this result, which is originally due to H.\ Weyl, we need some notation. Fix some $c\in(a,b)$ and denote by $\tau|_{(a,c)}$ (respectively by $\tau|_{[c,b)}$) the differential 
 expression on $(a,b)$ corresponding to the coefficients $\varsigma$, $\chi$ and $\varrho|_{(a,c)}$ (respectively $\varrho|_{[c,b)}$). Furthermore, let 
 $S_{(a,c)}$ (respectively $S_{[c,b)}$) be some self-adjoint realizations of $\tau|_{(a,c)}$ (respectively of $\tau|_{[c,b)}$).

\begin{theorem}
 For each $c\in(a,b)$ we have
 \begin{align}
  \sigess\left(S\right) = \sigess\left(S_{(a,c)}\right) \cup \sigess\left(S_{[c,b)}\right). 
 \end{align}
\end{theorem}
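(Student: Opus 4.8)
The plan is to exhibit both $S$ and the \emph{decoupled} relation $\tilde S = S_{(a,c)}\oplus S_{[c,b)}$ as self-adjoint extensions of one common symmetric relation with finite deficiency indices, and then to invoke the invariance of the essential spectrum under such a change of self-adjoint extension. Since $\varrho = \varrho|_{(a,c)} + \varrho|_{[c,b)}$ and these two parts are mutually singular, the underlying Hilbert space splits orthogonally as $\Lr = L^2((a,b);\varrho|_{(a,c)}) \oplus L^2((a,b);\varrho|_{[c,b)})$, and with respect to this decomposition I would set $\tilde S = S_{(a,c)}\oplus S_{[c,b)}$. The resolvent of $\tilde S$ is the orthogonal sum of the resolvents of the two summands, so by the spectral mapping theorem for relations (Theorem~\ref{AppLRthmSMT}), together with the fact that the essential spectrum of a bounded orthogonal sum is the union of the essential spectra, one immediately obtains $\sigess(\tilde S) = \sigess(S_{(a,c)}) \cup \sigess(S_{[c,b)})$.

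Next I would introduce the symmetric relation $T$ obtained by decoupling the problem at $c$, namely the orthogonal sum $T = \Tmin^{(a,c)} \oplus \Tmin^{[c,b)}$ of the two minimal relations associated with $\varrho|_{(a,c)}$ and $\varrho|_{[c,b)}$. That $T$ is a symmetric restriction of $\tilde S$ is immediate, since $S_{(a,c)}$ and $S_{[c,b)}$ are self-adjoint extensions of the respective minimal relations. That $T$ is also a restriction of $S$ follows by gluing: every element of $T$ vanishes, together with its quasi-derivative, from the relevant side at $c$, so the glued function lies in $\Deftau$ and has vanishing Wronskian with all of $\Tmax$ at $a$ and $b$; hence it lies in $\Tmin$ on $(a,b)$, and therefore in $S$, as $\Tmin \subseteq S$. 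The crucial point is that $T$ has \emph{finite} deficiency indices: splitting at the interior point $c$ turns $c$ into a regular---hence limit-circle---endpoint for each half, so each of $\Tmin^{(a,c)}$ and $\Tmin^{[c,b)}$ has deficiency index at most two by Theorem~\ref{thm:TminDefIndLCLP}, whence $n(T) \le 4$.

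With $T$ in hand, both $S$ and $\tilde S$ are self-adjoint extensions of the same symmetric relation of finite deficiency, so their resolvents differ by a finite-rank operator and consequently share the same essential spectrum, by the argument underlying Theorem~\ref{AppLRthmSAEessspec}. Combining this with the direct-sum identity from the first step yields $\sigess(S) = \sigess(\tilde S) = \sigess(S_{(a,c)}) \cup \sigess(S_{[c,b)})$, which is the claim.

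I expect the main obstacle to be the careful identification of $T$ as a common symmetric restriction in the multivalued setting and the verification of its finite deficiency. Because $\varsigma$ and $\chi$ are retained on all of $(a,b)$ while only $\varrho$ is split, the point $c$ may fall into a gap of $\supp(\varrho|_{(a,c)})$, and it is precisely the gap condition~\eqref{genhypgap} in Hypothesis~\ref{genhyp} together with the regularity of $c$ that keep the deficiency of $T$ finite and make the gluing well defined. Once this is secured, the remaining steps---the orthogonal-sum identity for $\sigess$ and the invariance under a finite-dimensional change of extension---are routine.
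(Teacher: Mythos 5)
Your argument is correct and follows the paper's proof essentially verbatim: decompose $\Lr$ orthogonally, identify $S$ and the decoupled relation $S_{(a,c)}\oplus S_{[c,b)}$ as finite-dimensional extensions of a common symmetric relation, and conclude via Theorems~\ref{AppLRthmSAEessspec} and~\ref{AppLRthmOSessspec}. The only cosmetic difference is that the paper takes the common symmetric restriction to be $T_c=\left\lbrace f\in\Tmin \,|\, f(c)=f^\qd(c)=0\right\rbrace$ rather than your $\Tmin^{(a,c)}\oplus\Tmin^{[c,b)}$, which sidesteps most of the gluing verification you flag at the end.
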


\begin{proof}
 If one identifies $\Lr$ with the orthogonal sum 
 \begin{align*}
  \Lr = L^2((a,b);\varrho|_{(a,c)}) \oplus L^2((a,b);\varrho|_{[c,b)}),
 \end{align*}
 then the linear relation
 \begin{align*}
  S_c = S_{(a,c)} \oplus S_{[c,b)}
 \end{align*}
 is self-adjoint in $\Lr$.
 Now since $S$ and $S_c$ both are finite dimensional extensions of the symmetric linear relation
 \begin{align*}
  T_c = \lbrace f\in\Tmin \,\big|\, f(c) = f^\qd(c) = 0 \rbrace,
 \end{align*}
 an application of Theorem~\ref{AppLRthmSAEessspec} and Theorem~\ref{AppLRthmOSessspec} yields the claim.
\end{proof}

As an immediate consequence one sees that the essential spectrum only depends on the coefficients in some neighborhood of the endpoints.

\begin{corollary}
 For each $\alpha$, $\beta\in(a,b)$ we have
 \begin{align}
  \sigess\left(S\right) = \sigess\left(S_{(a,\alpha)}\right) \cup \sigess\left(S_{[\beta,b)}\right). 
 \end{align}
\end{corollary}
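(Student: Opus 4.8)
The plan is to obtain the corollary from the preceding theorem by showing that each of the two constituents $\sigess(S_{(a,c)})$ and $\sigess(S_{[c,b)})$ is independent of the cut point $c\in(a,b)$. Once this is established, one simply evaluates the first constituent at $c=\alpha$ and the second at $c=\beta$ in the identity $\sigess(S)=\sigess(S_{(a,c)})\cup\sigess(S_{[c,b)})$ to arrive at the claim. Recall also that, by Theorem~\ref{AppLRthmSAEessspec}, $\sigess(S_{(a,c)})$ does not depend on the choice of self-adjoint extension of $\tau|_{(a,c)}$, so that writing $\sigess(S_{(a,\alpha)})$ is unambiguous.

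To see that $\sigess(S_{(a,c)})$ does not depend on $c$, I would fix $\alpha,c\in(a,b)$ with, say, $\alpha\leq c$ and apply the preceding theorem to the operator $S_{(a,c)}$ itself, whose underlying measure is $\varrho|_{(a,c)}$, splitting at the point $\alpha$. Since $(\varrho|_{(a,c)})|_{(a,\alpha)}=\varrho|_{(a,\alpha)}$ and $(\varrho|_{(a,c)})|_{[\alpha,b)}=\varrho|_{[\alpha,c)}$, this yields
\begin{align*}
 \sigess(S_{(a,c)}) = \sigess(S_{(a,\alpha)}) \cup \sigess(S_{[\alpha,c)}).
\end{align*}
The measure $\varrho|_{[\alpha,c)}$ is supported in $[\alpha,c)$, hence has no weight near either endpoint $a$ or $b$; therefore $\tau|_{[\alpha,c)}$ is in the limit-circle case at both endpoints, and Corollary~\ref{corSpecRDis} shows that $S_{[\alpha,c)}$ has purely discrete spectrum, i.e.\ $\sigess(S_{[\alpha,c)})=\emptyset$. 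Consequently $\sigess(S_{(a,c)})=\sigess(S_{(a,\alpha)})$, and since this equality is symmetric in $\alpha$ and $c$ it holds for all $\alpha,c\in(a,b)$. An entirely analogous argument, now splitting the right piece, shows that $\sigess(S_{[c,b)})$ is likewise independent of $c$.

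Combining these two independence statements with the preceding theorem, applied at any single cut point, then immediately gives
\begin{align*}
 \sigess(S) = \sigess(S_{(a,\alpha)}) \cup \sigess(S_{[\beta,b)}).
\end{align*}
I expect the only point genuinely requiring care to be the verification that the inner piece has empty essential spectrum: one must observe that restricting $\varrho$ to a compact subinterval bounded away from both $a$ and $b$ forces the limit-circle case at each endpoint, so that Corollary~\ref{corSpecRDis} applies and the inner contribution drops out (the degenerate situation in which $\varrho|_{[\alpha,c)}$ is supported on at most one point is harmless, the spectrum being finite and hence discrete there as well). Everything else is routine bookkeeping with restrictions of the measures.
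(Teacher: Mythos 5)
Your argument is correct and is essentially the intended one: the paper leaves this corollary as immediate, and the content you supply is exactly what is needed, namely a second application of the preceding theorem to split off a middle piece whose measure $\varrho|_{[\alpha,c)}$ has no weight near either endpoint, so that $\tau$ restricted there is in the limit-circle case at both ends and Corollary~\ref{corSpecRDis} forces its essential spectrum to be empty. Your packaging of this as cut-point independence of each constituent is only a cosmetic variant of the direct two-cut argument, and your side remarks (independence of the chosen self-adjoint extension via Theorem~\ref{AppLRthmSAEessspec}, and the harmlessness of a degenerate inner piece supported on at most one point) are the right ones.
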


\section{Singular Weyl--Titchmarsh--Kodaira functions}\label{secweyltitchm}

In this section let $S$ be a self-adjoint restriction of $\Tmax$ with separated boundary conditions (if $\tau$ is
 in the l.c.~case at both endpoints). Our aim is to define a singular Weyl--Titchmarsh--Kodaira function as originally
 advocated by Kodaira \cite{kod} and Kac \cite{kac}. We follow the recent approach by Kostenko, Sakhnovich and Teschl  \cite{kst} for Schr\"{o}dinger operators. To this end we need a real entire fundamental system $\theta_z$, $\phi_z$, $z\in\C$
 of $(\tau-z)u=0$ with $W(\theta_z,\phi_z)=1$, such that $\phi_z$ lies in $S$ near $a$, i.e., $\phi_z$ lies in $\Lr$ near $a$ and satisfies 
 the boundary condition at $a$ if $\tau$ is in the l.c.~case there.

\begin{hypothesis}\label{hypREFS}
 There is a real entire fundamental system $\theta_z$, $\phi_z$, $z\in\C$ of $(\tau-z)u=0$ with $W(\theta_z,\phi_z)=1$, 
  such that $\phi_z$ lies in $S$ near $a$.
\end{hypothesis}

Under the assumption of Hypothesis~\ref{hypREFS} we
may define a complex-valued function $m$ on $\rho(S)$ by requiring that the solutions
  \begin{align}
   \psi_z = \theta_z + m(z)\phi_z, \quad z\in\rho(S)
   \end{align} 
 lie in $S$ near $b$, i.e., they lie in $\Lr$ near $b$ and satisfy the boundary condition at $b$, if $\tau$ is 
 in the l.c.~case at $b$.
 This function $m$ is well-defined (use Corollary~\ref{cor:regtypeuniqsol} if $\tau$ is in the l.p.~case at $b$)
  and called the singular Weyl--Titchmarsh--Kodaira function of $S$. 
The solutions $\psi_z$, $z\in\rho(S)$ are referred to as the Weyl solutions of $S$. 

\begin{theorem}\label{thmweyltitchManal}
 If Hypothesis~\ref{hypREFS} holds, then the corresponding singular Weyl--Titchmarsh--Kodaira function $m$ is analytic and furthermore satisfies 
 \begin{align}\label{eqprop::mpsi} 
  m(z)=m(z^\ast)^\ast, \quad z\in\rho(S).
 \end{align}
\end{theorem}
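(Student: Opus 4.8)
The plan is to deduce both claims from the defining property of $M$, namely that the Weyl solution $\psi_z = \theta_z + M(z)\phi_z$ lies in $S$ near $b$, together with the analytic properties of the fundamental system $\theta_z$, $\phi_z$ established in Theorem \ref{thmMSLEanaly} and the real structure of the whole setup. The key observation is that $M(z)$ can be written as a ratio of Wronskians that are entire (or at least analytic) in $z$, so that analyticity of $M$ on $\rho(S)$ follows from analyticity of numerator and denominator together with nonvanishing of the denominator on $\rho(S)$.

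First I would fix an auxiliary element $w_b \in \Tmax$ encoding the condition ``lies in $S$ near $b$'': if $\tau$ is in the l.c.~case at $b$, take $w_b$ as in Section \ref{secBC} so that the boundary condition at $b$ reads $W(f, w_b^\ast)(b)=0$; if $\tau$ is in the l.p.~case at $b$, the condition ``$f$ lies in $\Lr$ near $b$'' can likewise be expressed through the Wronskian with the (up to scalars unique, by Corollary \ref{cor:regtypeuniqsol}) Weyl solution. In either case the requirement that $\psi_z$ lie in $S$ near $b$ becomes $W(\psi_z, w_b^\ast)(b) = 0$, i.e.
\begin{align*}
 W(\theta_z, w_b^\ast)(b) + M(z)\, W(\phi_z, w_b^\ast)(b) = 0,
\end{align*}
whence
\begin{align*}
 M(z) = -\frac{W(\theta_z, w_b^\ast)(b)}{W(\phi_z, w_b^\ast)(b)}.
\end{align*}
Since $\theta_z$, $\phi_z$ and their quasi-derivatives are entire in $z$ by Theorem \ref{thmMSLEanaly}, and the Wronskian at $b$ is a finite linear combination of these entire functions evaluated through $w_b$, both numerator and denominator are analytic in $z$. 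The denominator cannot vanish for $z\in\rho(S)$: if $W(\phi_z, w_b^\ast)(b)=0$, then $\phi_z$ itself would lie in $S$ near $b$, hence $\phi_z\in S$ (as it already lies in $S$ near $a$ by Hypothesis \ref{hypREFS}), making $\phi_z$ an eigenfunction of $S$ with eigenvalue $z$, contradicting $z\in\rho(S)$. Therefore $M$ is analytic on $\rho(S)$.

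For the reflection identity \eqref{eqprop::mpsi} I would use that $\theta_z$, $\phi_z$ are \emph{real} entire, meaning $\theta_{z^\ast} = \theta_z^\ast$ and $\phi_{z^\ast} = \phi_z^\ast$ (and likewise for the quasi-derivatives); this is exactly the real-solution part of Theorems \ref{thm:exisuniq} and \ref{thm:EEreg} applied along a sequence of real $z$ and extended by analyticity. Taking complex conjugates in the definition of $\psi_z$ and using that $S$ is real with respect to the natural conjugation (established after Theorem \ref{thm:SRLCLPWronsk} and in Section \ref{secBC}), one sees that $\psi_z^\ast = \theta_{z^\ast} + M(z)^\ast \phi_{z^\ast}$ lies in $S$ near $b$; but by definition $\psi_{z^\ast} = \theta_{z^\ast} + M(z^\ast)\phi_{z^\ast}$ is the \emph{unique} such solution up to the normalization fixed by $\theta$, so comparing coefficients of $\phi_{z^\ast}$ gives $M(z^\ast) = M(z)^\ast$, which is \eqref{eqprop::mpsi}.

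The main obstacle I anticipate is handling the l.p.~case at $b$ cleanly, since there the ``boundary condition'' is really the integrability requirement and there is no fixed $w_b\in\Tmax$ given a priori; one must argue that the single $\Lr$-solution near $b$ depends analytically on $z$, for which I would invoke the square-integrable solution furnished by the resolvent kernel of Theorem \ref{thm:ressep} (whose $u_b$ is, up to normalization, the Weyl solution) and read off analyticity of $M$ from the already-established analyticity of the resolvent $z\mapsto R_z$. The l.c.~case is comparatively routine once the Wronskian representation above is in place.
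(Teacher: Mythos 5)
Your treatment of the reflection identity \eqref{eqprop::mpsi} coincides with the paper's: both take conjugates in $\psi_z=\theta_z+M(z)\phi_z$, use that $\theta_z$, $\phi_z$ are real entire and that membership in $S$ near $b$ is preserved under conjugation (Lemma \ref{lem:WronskPropSR}), and conclude by uniqueness of $M(z^\ast)$. For analyticity, however, the paper does something different and more uniform: it evaluates the quadratic form $\spr{R_z\indik_{[c,d)}}{\indik_{[c,d)}}$ using the explicit Green function of Theorem \ref{thm:ressep}, obtaining
\begin{align*}
 \spr{R_z\indik_{[c,d)}}{\indik_{[c,d)}} = M(z)\Bigl(\int_c^d \phi_z\, d\varrho\Bigr)^2 + \int_c^d\int_c^d \tilde{G}_z(x,y)\,d\varrho(y)\,d\varrho(x),
\end{align*}
where $\tilde{G}_z$ involves only $\theta_z$ and $\phi_z$; since the left-hand side and the integrals are analytic (resolvent analyticity plus Theorem \ref{thmMSLEanaly}) and $\int_c^d\phi_{z_0}\,d\varrho\neq0$ for suitable $c$, $d$, analyticity of $M$ follows in the l.c.\ and l.p.\ cases at once.

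Your Wronskian-ratio argument is a legitimate alternative in the l.c.\ case at $b$, but as written it has two gaps. First, $W(\theta_z,w_b^\ast)(b)$ and $W(\phi_z,w_b^\ast)(b)$ are limits as $x\uparrow b$, not point evaluations, so their analyticity in $z$ does not follow from their being a ``finite linear combination of entire functions''; you must show the limit is attained locally uniformly in $z$, which requires the kind of estimate the paper proves for the analogous quantities at $a$ in Theorem \ref{thmweyltitchfundsys} (Lagrange identity plus the bound extracted from the proof of Lemma \ref{lemweylaltLC}). Second, and more seriously, in the l.p.\ case at $b$ there is no boundary functional at all, and your proposal reduces to the assertion that analyticity of $M$ can be ``read off'' from analyticity of $R_z$; that is precisely the nontrivial step. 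The solution $u_b$ of Theorem \ref{thm:ressep} is only determined up to a $z$-dependent scalar, so knowing it is proportional to $\psi_z$ does not by itself yield an analytic formula for $M(z)$ --- one must pin down the normalization, which is exactly what the paper's computation of the quadratic form accomplishes by isolating the coefficient of $M(z)$. Until that computation (or an equivalent one) is supplied, the l.p.\ case --- the case for which the singular Weyl function was introduced --- remains unproved.
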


\begin{proof}
 Let $c$, $d\in(a,b)$ with $c<d$. From Theorem~\ref{thm:ressep} and the equation
 \[W(\psi_z,\phi_z)=W(\theta_z,\phi_z)+m(z)W(\phi_z,\phi_z)=1,\quad z\in\rho(S),\] we get for each 
 $z\in\rho(S)$ and $x\in[c,d)$ 
 \begin{align*}
  R_z\indik_{[c,d)}(x) & =\psi_z(x)\int_{c}^x \phi_z\, d\varrho + \phi_z(x)\int_x^d \psi_z\, d\varrho \\
 & = (\theta_z(x)+m(z)\phi_z(x))\int_c^x \phi_z\, d\varrho + \phi_z(x)\int_x^d \theta_z+m(z)\phi_z\, d\varrho \\
 & = m(z)\phi_z(x)\int_c^d \phi_z(y)d\varrho(y) + \int_c^d \tilde{G}_z(x,y)d\varrho(y), \end{align*} 
 where \[\tilde{G}_z(x,y)=\begin{cases} \phi_z(y)\theta_z(x), & \text{if }y\leq x, \\
         \phi_z(x)\theta_z(y), & \text{if }y \geq x,\end{cases}\]
 and hence
 \begin{align*} \spr{R_z\indik_{[c,d)}}{\indik_{[c,d)}} & = m(z)\left(\int_c^d \phi_z(y)d\varrho(y)\right)^2 + \int_c^d\int_c^d \tilde{G}_z(x,y)d\varrho(y)\,d\varrho(x). \end{align*}
 The left-hand side of this equation is analytic on $\rho(S)$ since the resolvent is.
 Furthermore, the integrals are analytic on $\rho(S)$ as well, since the integrands are analytic and locally 
  bounded by Theorem~\ref{thmMSLEanaly}.
 Hence $m$ is analytic provided that for each $z_0\in\rho(S)$, there are some $c$, $d\in(a,b)$ such that 
 \[\int_c^d \phi_{z_0}(y) d\varrho(y)\not=0.\] 
 But this is true since otherwise $\phi_{z_0}$ would vanish almost everywhere with respect to $\varrho$. 
 Moreover, equation~\eqref{eqprop::mpsi} is valid since the functions
 \[\theta_{z^\ast}+m(z)^\ast\phi_{z^\ast}=\left(\theta_z+m(z)\phi_z\right)^\ast, \quad z\in\rho(S) \] 
 lie in $S$ near $b$ %$\Lr$ and satisfies the boundary condition at $b$, if $\tau$ is in the l.c.~case at $b$,
  by Lemma~\ref{lem:WronskPropSR}.
\end{proof}

As an immediate consequence of Theorem~\ref{thmweyltitchManal} we see that the functions $\psi_z(x)$ 
 and $\psi_z^\qd(x)$ are analytic in $z\in\rho(S)$ for each $x\in(a,b)$.

\begin{remark}\label{remWTtilde}
 Note that a fundamental system as in Hypothesis~\ref{hypREFS} is not unique. In fact, any other such system is given by
% Suppose $\tilde{\theta}_z$, $\tilde{\phi}_z$ is some other real entire fundamental system of $(\tau-z)u=0$ with
%  $W(\tilde{\theta}_z,\tilde{\phi}_z)=1$, such that $\tilde{\phi}_z$ lies in $S$ near $a$. Then 
 \begin{align}
  \tilde{\theta}_z = e^{-g(z)} \theta_z - f(z)\phi_z \quad\text{and}\quad \tilde{\phi}_z = e^{g(z)} \phi_z, \quad z\in\C
 \end{align}
 for some real entire functions $f$, $\E^g$. Moreover, the corresponding singular Weyl--Titchmarsh--Kodaira functions are related via
 \begin{align}
  \tilde{m}(z) = e^{-2g(z)} m(z) + e^{-g(z)}f(z), \quad z\in\rho(S).
 \end{align}
 In particular, the maximal domain of holomorphy or the structure of poles and singularities of $m$ do not change under such a transformation.
\end{remark}

We continue with the construction of a real entire fundamental system in the case when $\tau$ is in the l.c.~case at $a$.

\begin{theorem}\label{thmweyltitchfundsys}
 Suppose $\tau$ is in the l.c.~case at $a$. Then there exists a real entire fundamental system 
 $\theta_z$, $\phi_z$, $z\in\C$ of $(\tau-z)u=0$ with $W(\theta_z,\phi_z)=1$,
 such that $\phi_z$ lies in $S$ near $a$ and for each $z_1$, $z_2\in\C$ we have 
 \begin{align}
  W(\theta_{z_1},\phi_{z_2})(a) = 1 \quad\text{and}\quad W(\theta_{z_1},\theta_{z_2})(a) = W(\phi_{z_1},\phi_{z_2})(a)=0.
 \end{align}
\end{theorem}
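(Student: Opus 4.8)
The plan is to anchor a variation-of-constants (Volterra) construction at the singular endpoint $a$, which is legitimate precisely because $\tau$ is in the l.c.~case there. First I would fix a real number $z_0$ and choose a \emph{real} fundamental system $\hat\theta$, $\hat\phi$ of $(\tau-z_0)u=0$ with $W(\hat\theta,\hat\phi)=1$ such that $\hat\phi$ lies in $S$ near $a$; such a real solution exists because the boundary condition at $a$ singles out a one-dimensional real subspace of solutions and, in the l.c.~case, every solution lies in $\Lr$ near $a$. For general $z$ I would then define $\phi_z$ and $\theta_z$ as the solutions of the integral equations
\[
 \phi_z(x) = \hat\phi(x) + (z-z_0)\int_a^x\big(\hat\theta(x)\hat\phi(t)-\hat\phi(x)\hat\theta(t)\big)\phi_z(t)\,d\varrho(t),
\]
and the analogous equation with $\hat\phi$ replaced by $\hat\theta$ in the inhomogeneous term. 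This is exactly the representation of Proposition~\ref{prop:repsol} applied to $(\tau-z_0)f=(z-z_0)f$ (fundamental system $\hat\theta$, $\hat\phi$, Wronskian $1$), but with the base point pushed to $a$. The integral $\int_a^x$ converges since $\hat\theta,\hat\phi\in L^2$ near $a$, so the Volterra kernel is $\varrho$-integrable against any function lying in $\Lr$ near $a$. Differentiating recovers $(\tau-z)\phi_z=0$ and $(\tau-z)\theta_z=0$, and because the constant Wronskian of two solutions equals its boundary value it suffices to control everything at $a$.

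Next I would establish that $\phi_z(x)$, $\phi_z^\qd(x)$ (and likewise $\theta_z$) are entire in $z$. Solving the integral equation by successive approximation, the $n$-th iterate is a polynomial in $z$ whose coefficients are built from $\hat\theta,\hat\phi$, and it is bounded by $v(x)\,(2|z-z_0|\int_a^x v^2\,d\varrho)^n/n!$ with $v=|\hat\theta|+|\hat\phi|$. Since $\int_a^x v^2\,d\varrho<\infty$ by the l.c.~hypothesis, the Neumann series converges locally uniformly in $z$ on compact sets, so the limit is entire. Reality is inherited from the real $z_0$, the real $\hat\theta,\hat\phi$ and the real kernel: for real $z$ the solution is real, hence $\phi_z(x)^\ast=\phi_{z^\ast}(x)$.

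The crux is then to show $\theta_z,\phi_z$ carry the same boundary data at $a$ as $\hat\theta,\hat\phi$; via the identity $W(f,g)(a)=\BCa^1(f)\BCa^2(g)-\BCa^2(f)\BCa^1(g)$ this reduces to computing Wronskians against the reference system. Differentiating the integral equation and forming the Wronskian, the integrand telescopes using $W(\hat\theta,\hat\phi)=1$, leaving
\[
 W(\phi_z,\hat\phi)(x) = (z-z_0)\int_a^x \hat\phi(t)\phi_z(t)\,d\varrho(t),\qquad
 W(\phi_z,\hat\theta)(x) = -1 + (z-z_0)\int_a^x \hat\theta(t)\phi_z(t)\,d\varrho(t);
\]
both integrals vanish as $x\downarrow a$ by Cauchy--Schwarz and the l.c.~integrability. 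Hence $W(\phi_z,\hat\phi)(a)=0=W(\hat\phi,\hat\phi)(a)$ and $W(\phi_z,\hat\theta)(a)=-1=W(\hat\phi,\hat\theta)(a)$, so $\phi_z$ and $\hat\phi$ have identical boundary functionals $\BCa^1,\BCa^2$; in particular $\phi_z$ satisfies the boundary condition at $a$ and lies in $\Lr$ near $a$, i.e.\ lies in $S$ near $a$. The same computation for $\theta_z$ gives $\BCa^j(\theta_z)=\BCa^j(\hat\theta)$. Substituting these $z$-independent boundary values into the Wronskian identity yields $W(\theta_{z_1},\phi_{z_2})(a)=W(\hat\theta,\hat\phi)(a)=1$ and $W(\theta_{z_1},\theta_{z_2})(a)=W(\phi_{z_1},\phi_{z_2})(a)=0$ for all $z_1,z_2\in\C$, and in particular $W(\theta_z,\phi_z)=1$, confirming the fundamental-system property.

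The main obstacle is anchoring and analyzing the construction at the singular endpoint $a$: away from a singular endpoint one would simply prescribe initial data at an interior point, but here the iteration must be based at $a$ itself. It is exactly the l.c.~hypothesis --- guaranteeing $\hat\theta,\hat\phi\in L^2((a,c);\varrho)$ and hence a $\varrho$-integrable Volterra kernel --- that makes the integral equations well-posed, forces the Neumann series to converge locally uniformly in $z$, and makes the boundary integrals vanish in the limit $x\downarrow a$. Everything else (reality, the normalization $W(\theta_z,\phi_z)=1$, and membership of $\phi_z$ in $S$ near $a$) then follows routinely from the boundary-functional identity.
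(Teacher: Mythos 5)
Your proposal is correct, but it takes a genuinely different route from the paper. You construct $\theta_z,\phi_z$ directly by a Volterra iteration anchored at the singular endpoint $a$ (perturbing off a real fundamental system of $(\tau-z_0)u=0$), prove entirety from the factorial bound on the iterates, and read off the Wronskian relations at $a$ from the telescoped identities $W(\phi_z,\hat\phi)(x)=(z-z_0)\int_a^x\hat\phi\,\phi_z\,d\varrho$ and $W(\phi_z,\hat\theta)(x)=-1+(z-z_0)\int_a^x\hat\theta\,\phi_z\,d\varrho$; this is essentially the strategy of \cite[Lemma~2.2]{kst} transplanted to the measure setting. The paper instead anchors at an \emph{interior} point $c$: it takes the fundamental system $u_{z,1},u_{z,2}$ with initial data at $c$ (already known to be entire by Theorem~\ref{thmMSLEanaly}) and sets $\phi_z=W(u_{z,1},\phi)(a)\,u_{z,2}-W(u_{z,2},\phi)(a)\,u_{z,1}$ and similarly for $\theta_z$, so that the only new analytic input is entirety of the four scalar coefficients $W(u_{z,i},\theta)(a)$, $W(u_{z,i},\phi)(a)$, obtained from the Lagrange identity together with the locally uniform $L^2$ bound near $a$ extracted from the proof of Lemma~\ref{lemweylaltLC}; the remaining Wronskian identities then follow from repeated use of the Pl\"ucker identity. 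What your approach buys is a self-contained, explicit construction with quantitative growth bounds and a transparent reason why the boundary data at $a$ is $z$-independent; what the paper's approach buys is that all the delicate measure-theoretic existence/uniqueness issues (point masses, the asymmetry between extending solutions to the left and to the right) stay confined to the interior theory already developed, at the cost of an extra limit argument for the coefficient functions. Two small points you should make explicit if you write this up: the variation-of-constants representation with base point pushed to $a$ is justified because it differs from the interior-based one of Proposition~\ref{prop:repsol} only by a homogeneous solution with finite coefficients (the paper's remark after Theorem~\ref{thm:EEreg} covers only the regular case), and the factorial bound on the iterates relies on the nested-integral inequality for Lebesgue--Stieltjes integrals (the analogue of~\eqref{eqnappLMDEGronwallH} for the measure $v^2\,d\varrho$), which holds in the direction of increasing $x$ and hence is available when iterating rightward from $a$.
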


\begin{proof}
 Let $\theta$, $\phi$ be a real fundamental system of $\tau u=0$ with $W(\theta,\phi)=1$ such that $\phi$ lies in $S$ near $a$.
 Now fix some $c\in(a,b)$ and for each $z\in\C$ let $u_{z,1}$, $u_{z,2}$ be the fundamental system of
 \begin{align*}
  (\tau-z)u=0 \quad\text{with}\quad u_{z,1}(c)=u_{z,2}^\qd(c) = 1 \quad\text{and}\quad u_{z,1}^\qd(c)=u_{z,2}(c)=0.
 \end{align*}
 Then by Theorem~\ref{thm:exisuniq} we have $u_{z^\ast,j}=u_{z,j}^\ast$ for $j=1$, $2$. If we introduce
 \begin{align*}
  \theta_z(x) & = W(u_{z,1},\theta)(a) u_{z,2}(x) - W(u_{z,2},\theta)(a) u_{z,1}(x), & x\in(a,b), \\
  \phi_z(x)   & = W(u_{z,1},\phi)(a) u_{z,2}(x)   - W(u_{z,2},\phi)(a) u_{z,1}(x),   & x\in(a,b),
 \end{align*}
 then the functions $\phi_z$ lie in $S$ near $a$ since
 \begin{align*}
  W(\phi_z,\phi)(a) & = W(u_{z,1},\phi)(a)W(u_{z,2},\phi)(a) - W(u_{z,2},\phi)(a) W(u_{z_1},\phi)(a) = 0.
 \end{align*}
 Furthermore, a direct calculation shows that $\theta_{z^\ast}=\theta_z^\ast$ and $\phi_{z^\ast}=\phi_z^\ast$.
 The remaining equalities follow using the Pl\"{u}cker identity several times.
% \begin{align*}
%  W(\theta_{z_1},\phi_{z_2})(a) & = W(u_{z_2,2},\phi)(a) W(u_{z_1,2},\theta)(a) W(u_{z_1,1},u_{z_2,1})(a) \\
%               & \qquad - W(u_{z_2,2},\phi)(a) W(u_{z_1,1},\theta)(a) W(u_{z_1,2},u_{z_2,1})(a) \\
%     & \qquad + W(u_{z_2,1},\phi)(a) W(u_{z_1,1},\theta)(a) W(u_{z_1,2},u_{z_2,2})(a) \\
%     & \qquad - W(u_{z_2,1},\phi)(a) W(u_{z_1,2},\theta)(a) W(u_{z_1,1},u_{z_2,2})(a) \\
%     & = 
% \end{align*}
 It remains to prove that the functions $W(u_{z,1},\theta)(a)$, $W(u_{z,2},\theta)(a)$,
  $W(u_{z,1},\phi)(a)$ and $W(u_{z,2},\phi)(a)$ are entire in $z$.
 Indeed, we get from the Lagrange identity
 \begin{align*}
  W(u_{z,1},\theta)(a) = W(u_{z,1},\theta)(c) - z \lim_{x\rightarrow a} \int_x^c \theta(t) u_{z,1}(t) d\varrho(t), \quad z\in\C.
 \end{align*}
 Now the integral on the right-hand side is analytic by Theorem~\ref{thmMSLEanaly} and in order to prove that 
  the limit is also analytic we need to show that the integral is bounded as $x\rightarrow a$, locally uniformly
  in $z$. But the proof of Lemma~\ref{lemweylaltLC} shows that for each $z_0\in\C$ we have
 \begin{align*}
  \left| \int_x^c \theta(t) u_{z,1}(t) d\varrho(t) \right|^2 & \leq K \int_a^c \left|\theta\right|^2 d\varrho 
                   \int_a^c \left(\left|u_{z_0,1}\right| +\left|u_{z_0,2}\right|\right)^2 d\varrho
                    \end{align*}
 for some constant $K\in\R$ and all $z$ in some neighborhood of $z_0$. Analyticity of the other functions
  is proved similarly.
\end{proof}

If $\tau$ is even regular at $a$, then one may take $\theta_z$, $\phi_z$, $z\in\C$ to be the solutions of $(\tau-z)u=0$ 
 with the initial values 
\[ \theta_z(a)=\phi_z^\qd(a)=\cos\varphi_\alpha \quad\text{and}\quad -\theta_z^\qd(a)=\phi_z(a)=\sin\varphi_\alpha \] 
for some suitable $\varphi_\alpha\in[0,\pi)$.
Furthermore, in the case when $\varrho$ has no weight near $a$, one may take for $\theta_z$, $\phi_z$, $z\in\C$ the 
 solutions of $(\tau-z)u=0$ with the initial values 
\[ \theta_z(\alpha_\varrho-)=\phi_z^\qd(\alpha_\varrho-)=\cos\varphi_\alpha \quad\text{and}\quad 
      -\theta_z^\qd(\alpha_\varrho-)=\phi_z(\alpha_\varrho-)=\sin\varphi_\alpha\]
 for some $\varphi_\alpha\in[0,\pi)$.

\begin{corollary}\label{spek::manal}
 Suppose $\tau$ is in the l.c.~case at $a$ and $\theta_z$, $\phi_z$, $z\in\C$ is a real entire fundamental system of $(\tau-z)u=0$
  as in Theorem~\ref{thmweyltitchfundsys}.
 Then the singular Weyl--Titchmarsh--Kodaira function $m$ is a Herglotz--Nevanlinna function.
\end{corollary}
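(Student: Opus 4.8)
The plan is to show that $M$ maps the open upper half-plane into itself; together with the analyticity on $\rho(S)$ and the symmetry $M(z)=M(z^\ast)^\ast$ already established in Theorem~\ref{thmweyltitchManal} (and with $\C\setminus\R\subseteq\rho(S)$, since $S$ is self-adjoint), this is precisely the assertion that $M$ is a Herglotz function. Concretely, I would derive the identity
\[
 \im M(z) = \im(z)\, \|\psi_z\|^2, \quad z\in\C\setminus\R,
\]
from which $\im M(z)>0$ whenever $\im(z)>0$ follows at once because $\psi_z$ is a non-trivial element of $\Lr$.

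First I would record that $\psi_z$ really lies in $\Lr$: near $b$ this holds by the defining property of the Weyl solution, and near $a$ it holds because $\tau$ is in the l.c.~case at $a$, so \emph{every} solution --- in particular $\theta_z$, $\phi_z$, and hence $\psi_z$ --- lies in $\Lr$ near $a$. Since $\theta_z$, $\phi_z$ are linearly independent we have $\psi_z\neq0$, so $\|\psi_z\|>0$. Next I would observe that the reality of the fundamental system ($\theta_{z^\ast}=\theta_z^\ast$, $\phi_{z^\ast}=\phi_z^\ast$, as in the proof of Theorem~\ref{thmweyltitchfundsys}) together with $M(z^\ast)=M(z)^\ast$ gives $\psi_z^\ast=\psi_{z^\ast}$, so that $\tau\psi_z^\ast=z^\ast\psi_z^\ast$.

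The heart of the argument is then the Lagrange identity (Lemma~\ref{lem:l2lagrange}) applied to $f=\psi_z$ and $g=\psi_z^\ast$. Using $\tau\psi_z=z\psi_z$ and $\tau\psi_z^\ast=z^\ast\psi_z^\ast$, the left-hand side becomes $(z-z^\ast)\|\psi_z\|^2=2\I\,\im(z)\,\|\psi_z\|^2$, while the right-hand side equals $W(\psi_z,\psi_z^\ast)(b)-W(\psi_z,\psi_z^\ast)(a)$. The boundary term at $b$ vanishes: if $\tau$ is in the l.p.~case at $b$ this is Lemma~\ref{lem:lclpwronski}, and if $\tau$ is in the l.c.~case at $b$ then $\psi_z$ and $\psi_z^\ast=\psi_{z^\ast}$ both satisfy the (real) boundary condition at $b$, so $W(\psi_z,\psi_z^\ast)(b)=0$ by~\eqref{eqn:WronskLC2} in Lemma~\ref{lem:WronskPropSR}. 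For the term at $a$ I would expand $W(\psi_z,\psi_{z^\ast})(a)$ bilinearly and insert the normalization relations $W(\theta_{z_1},\phi_{z_2})(a)=1$ and $W(\theta_{z_1},\theta_{z_2})(a)=W(\phi_{z_1},\phi_{z_2})(a)=0$ from Theorem~\ref{thmweyltitchfundsys}; only the two mixed terms survive and yield $W(\psi_z,\psi_z^\ast)(a)=M(z^\ast)-M(z)=-2\I\,\im M(z)$. Combining the three computations produces the displayed identity.

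I expect the only genuine subtlety to be the verification that the boundary term at $a$ collapses exactly to $M(z^\ast)-M(z)$; this is where the carefully chosen, $z$-independent Wronskian normalization of the fundamental system in Theorem~\ref{thmweyltitchfundsys} is used essentially, and one must track the antisymmetry $W(\phi_z,\theta_{z^\ast})(a)=-W(\theta_{z^\ast},\phi_z)(a)=-1$. Everything else is routine bookkeeping, and the strict positivity $\|\psi_z\|^2>0$ immediately upgrades the identity to the Herglotz property.
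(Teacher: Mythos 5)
Your proposal is correct and follows essentially the same route as the paper: the paper likewise applies the Lagrange identity to $\psi_{z_1}$, $\psi_{z_2}$ (specialized to $z_1=z$, $z_2=z^\ast$ with $\psi_{z^\ast}=\psi_z^\ast$), kills the Wronskian at $b$ via Lemma~\ref{lem:lclpwronski} or the shared boundary condition, and collapses the Wronskian at $a$ to $M(z_2)-M(z_1)$ using the normalization from Theorem~\ref{thmweyltitchfundsys}, arriving at $0<\|\psi_z\|^2=\im(M(z))/\im(z)$. No substantive differences.
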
 

\begin{proof}
In order to prove the claim, we show that
 \begin{align}\label{eqnWTHerglotz}
  0<\|\psi_z\|^2=\frac{\im(m(z))}{\im(z)}, \quad z\in\C\backslash\R.
 \end{align}
Indeed if $z_1$, $z_2\in\rho(S)$, then 
\begin{align*} 
 W(\psi_{z_1},\psi_{z_2})(a) & = W(\theta_{z_1},\theta_{z_2})(a)+m(z_2)W(\theta_{z_1},\phi_{z_2})(a) \\ & \hspace{1.2cm} + m(z_1)W(\phi_{z_1},\theta_{z_2})(a)+m(z_1)m(z_2)W(\phi_{z_1},\phi_{z_2})(a) \\ & = m(z_2)-m(z_1).
\end{align*}
If $\tau$ is in the l.p.~case at $b$, then furthermore we have 
\begin{align*}\label{eqnSingWTPsiB} W(\psi_{z_1},\psi_{z_2})(b)=0,\end{align*} since clearly $\psi_{z_1}$, $\psi_{z_2}\in\Tmax$. 
This also holds if $\tau$ is in the l.c.~case at $b$, since then $\psi_{z_1}$ and $\psi_{z_2}$ satisfy the same boundary condition at $b$.
Now using the Lagrange identity yields 
\begin{align*}
(z_1-z_2)\int_a^b \psi_{z_1}(t)\psi_{z_2}(t)d\varrho(t) & =W(\psi_{z_1},\psi_{z_2})(b)-W(\psi_{z_1},\psi_{z_2})(a) \\ & =m(z_1)-m(z_2).\end{align*} 
In particular, for $z\in\C\backslash\R$, using $m(z^\ast)=m(z)^\ast$ as well as 
\[\psi_{z^\ast}=\theta_{z^\ast}+m(z^\ast)\phi_{z^\ast}=\psi_z^\ast,\] we get 
\begin{align*} 
||\psi_z||^2 & = \int_a^b \psi_z(t) \psi_{z^\ast}(t)d\varrho(t)=\frac{m(z)-m(z^\ast)}{z-z^\ast}=\frac{\im(m(z))}{\im(z)}.\end{align*} 
Since $\psi_z$ is a non-trivial solution, we furthermore have $0<||\psi_z||^2$.
\end{proof}

We conclude this section with a necessary and sufficient condition for Hypothesis~\ref{hypREFS} to hold.
 To this end recall that for each $c\in(a,b)$, $S_{(a,c)}$ is some self-adjoint operator associated with the restricted differential expression $\tau|_{(a,c)}$.
 The proofs are the same as those for Schr\"{o}dinger operators given in~\cite[Lemma~2.2 and Lemma~2.4]{kst}.

\begin{theorem}
 The following properties are equivalent:
\begin{enumerate}
 \item Hypothesis~\ref{hypREFS}.
 \item The spectrum of $S_{(a,c)}$ is purely discrete for some $c\in(a,b)$.
 \item There is a real entire solution $\phi_z$, $z\in\C$ of $(\tau-z)u=0$ which lies in $S$ near $a$.
\end{enumerate}
\end{theorem}

\section{Spectral transformation}\label{secST}

In this section let $S$ again be a self-adjoint restriction of $\Tmax$ with separated boundary conditions as in the
 preceding section. Furthermore, we assume that there is a real entire fundamental system $\theta_z$, $\phi_z$, $z\in\C$ of the differential equation $(\tau-z)u=0$
 with $W(\theta_z,\phi_z)=1$ such that $\phi_z$ lies in $S$ near $a$. With $m$ we denote the 
 corresponding singular Weyl--Titchmarsh--Kodaira function and with $\psi_z$, $z\in\rho(S)$ the Weyl solutions of $S$.

Recall that by Lemma~\ref{lemspectransEfg} for all functions $f$, $g\in\Lr$ there is a unique complex measure $E_{f,g}$ on $\R$ such that
 \begin{align*}
  \spr{R_z f}{g} = \int_\R \frac{1}{\lambda-z} dE_{f,g}(\lambda), \quad z\in\rho(S).
 \end{align*}
 Indeed, these measures are obtained by applying a variant of the spectral theorem to the operator part 
 \begin{align*}
  S_\D = S \cap \left(\D\times\D\right), \qquad \D= \overline{\dom{S}} = \mul{S}^\bot
 \end{align*}
 of $S$ (see Lemma~\ref{lemspectransEfg} in Appendix~\ref{appLR}).
 
In order to obtain a spectral transformation we define for each function $f\in L^2_c((a,b);\varrho)$ the transform of $f$ as
\begin{align}\label{eqnSThatf}
 \hat{f}(z) = \int_a^b \phi_z(x) f(x) d\varrho(x), \quad z\in\C.
\end{align}
Next we can use this to associate a measure with $m$ by virtue of
the Stieltjes--Liv\v{s}i\'{c} inversion formula, following literally the proof of \cite[Lemma~3]{kst}.

\begin{lemma}\label{lemspectransEfgmu}
There is a unique Borel measure $\mu$ on $\R$ defined via
\be\label{defrho}
 \mu((\lambda_1,\lambda_2]) = \lim_{\delta\downarrow 0}\,\lim_{\varepsilon\downarrow 0} \frac{1}{\pi} 
                      \int_{\lambda_1+\delta}^{\lambda_2+\delta} \im(m(\lambda+\I\varepsilon))d\lambda
\ee
 for each $\lambda_1$, $\lambda_2\in\R$ with $\lambda_1<\lambda_2$, such that for every $f$, $g\in L^2_c((a,b);\varrho)$
\be\label{rhomuf}
E_{f,g} = \hat{f}\, \hat{g}^\ast \mu
\ee
and, in particular, 
\begin{align}
 \spr{R_z f}{g} = \int_\R \frac{\hat{f}(\lambda) \hat{g}(\lambda)^\ast}{\lambda-z} d\mu(\lambda), \quad z\in\rho(S).
\end{align}
\end{lemma}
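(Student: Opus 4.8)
The plan is to read off an explicit formula for $\spr{R_z f}{g}$ in terms of the singular Weyl--Titchmarsh function $M$ and the transforms $\hat f$, $\hat g$, and then to recover the measures by applying the Stieltjes--Liv\v{s}i\'c inversion formula to that formula. So first I would compute, for $f,g\in L^2_c((a,b);\varrho)$ and $z\in\rho(S)$, the quantity $\spr{R_z f}{g}$ from the separated--boundary--condition resolvent of Theorem~\ref{thm:ressep}, taking $u_a=\phi_z$ and $u_b=\psi_z=\theta_z+M(z)\phi_z$, which are admissible and satisfy $W(\psi_z,\phi_z)=1$ (as in the proof of Theorem~\ref{thmweyltitchManal}). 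Splitting the kernel and substituting $\psi_z=\theta_z+M(z)\phi_z$ collects the $M(z)$--dependent part into a product, giving
\begin{align*}
 \spr{R_z f}{g} = M(z)\,\hat f(z)\,\overline{\hat g(z^\ast)} + B_{f,g}(z),
\end{align*}
where $B_{f,g}(z)=\int_a^b\int_a^b \tilde G_z(x,y) f(y) g(x)^\ast\,d\varrho(y)\,d\varrho(x)$ is built from the ``regular'' kernel $\tilde G_z$ appearing in the proof of Theorem~\ref{thmweyltitchManal}.

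Since $\theta_z$, $\phi_z$ are entire and $f,g$ have compact support, $B_{f,g}$ is entire in $z$ by Theorem~\ref{thmMSLEanaly}; moreover $\overline{B_{f,g}(\lambda)}=B_{g,f}(\lambda)$ for $\lambda\in\R$ because $\tilde G_\lambda$ is real and symmetric, so $B_{f,f}$ is real on the real axis. I would also record that $A_{f,g}(z):=\hat f(z)\,\overline{\hat g(z^\ast)}$ is entire (a Schwarz reflection of the entire $\hat g$), with $A_{f,g}(\lambda)=\hat f(\lambda)\,\overline{\hat g(\lambda)}$, and that for $f=g$ one has $A_{f,f}(z^\ast)=\overline{A_{f,f}(z)}$ and $A_{f,f}(\lambda)=|\hat f(\lambda)|^2\ge 0$. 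Since both $E_{f,g}$ and $\hat f\,\overline{\hat g}$ are sesquilinear in $(f,g)$, it then suffices by polarization to prove $dE_{f,f}=|\hat f|^2\,d\mu$.

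For $f=g$ the displayed formula reads $\spr{R_z f}{f}=M(z)A_{f,f}(z)+B_{f,f}(z)$ with $A_{f,f}$, $B_{f,f}$ real entire and $A_{f,f}\ge 0$ on $\R$. Applying the Stieltjes--Liv\v{s}i\'c inversion formula to $\spr{R_z f}{f}=\int_\R(\lambda-z)^{-1}dE_{f,f}(\lambda)$ (Lemma~\ref{lemspectransEfg}), the entire part drops out because $\im B_{f,f}(\lambda)=0$, while writing $\im(MA_{f,f})=\re(A_{f,f})\,\im(M)+\im(A_{f,f})\,\re(M)$ and using $\re A_{f,f}(\lambda+\I\eps)\to|\hat f(\lambda)|^2$, $\im A_{f,f}(\lambda+\I\eps)=O(\eps)$ together with $\eps\,\re M(\lambda+\I\eps)\to 0$ makes the cross term vanish in the limit. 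Hence
\begin{align*}
 E_{f,f}((\lambda_1,\lambda_2]) = \lim_{\delta\downarrow 0}\,\lim_{\eps\downarrow 0}\frac1\pi\int_{\lambda_1+\delta}^{\lambda_2+\delta} |\hat f(\lambda)|^2\,\im\big(M(\lambda+\I\eps)\big)\,d\lambda.
\end{align*}

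Finally I would use this to produce $\mu$ and close the argument. In the l.c.~case at $a$, $M$ is Herglotz by Corollary~\ref{spek::manal}, so $\tfrac1\pi\im M(\lambda+\I\eps)\,d\lambda$ converges weakly to its Herglotz measure, which I take for $\mu$; the display then becomes $dE_{f,f}=|\hat f|^2 d\mu$. In the general case I would instead fix, on each compact interval, a real $f_0\in L^2_c((a,b);\varrho)$ with $\hat f_0$ nonvanishing there (possible since $f_0=\phi_{\lambda_0}\indik_{[c,d]}$ gives $\hat f_0(\lambda_0)>0$, extended by continuity and a finite patching), set $d\mu:=|\hat f_0|^{-2}\,dE_{f_0,f_0}$, and check via the display that this is independent of $f_0$ and agrees with the formula in the statement; positivity of $\mu$ is inherited from $E_{f_0,f_0}\ge 0$. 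Polarization then gives $dE_{f,g}=\hat f\,\overline{\hat g}\,d\mu$, uniqueness of $\mu$ is clear since it is pinned down by $E_{f_0,f_0}$, and substituting into $\spr{R_z f}{g}=\int_\R(\lambda-z)^{-1}dE_{f,g}$ yields the stated resolvent representation. The main obstacle is the delicate boundary analysis in the inversion step --- controlling the cross term through $\eps\,\re M(\lambda+\I\eps)\to 0$ and justifying the weighted weak convergence (including point masses of $\mu$) --- together with the clean construction of $\mu$ in the l.p.~case.
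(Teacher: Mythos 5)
Your proposal is correct and follows essentially the same route as the paper, which proves this lemma by ``following literally the proof of \cite[Lem.~3]{kst}'': the decomposition $\spr{R_z f}{g}=M(z)\hat f(z)\hat g(z^\ast)^\ast+B_{f,g}(z)$ with $B_{f,g}$ entire, Stieltjes--Liv\v{s}i\'c inversion killing the entire part and the cross term, polarization, and the local definition of $\mu$ via $|\hat f_0|^{-2}dE_{f_0,f_0}$ with $\hat f_0$ nonvanishing are exactly the ingredients of that argument. The details you flag (the $\varepsilon\,\re M\to0$ estimate and the weighted weak convergence used to divide out $|\hat f_0|^2$) are precisely the standard steps carried out in the cited proof.
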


In particular, the preceding lemma shows that the mapping $f\mapsto \hat{f}$ is an isometry from $L^2_c((a,b);\varrho)\cap\D$ into $\Lrmu$.
In fact, for each function $f\in L^2_c((a,b);\varrho)\cap\D$ we have 
\begin{align*}
 \| \hat{f}\|^2_\mu = \int_\R \hat{f}(\lambda) \hat{f}(\lambda)^\ast d\mu(\lambda) = \int_\R dE_{f,f} =  \| f\|^2.
\end{align*}
Hence we may uniquely extend this mapping to an isometric linear operator $\mathcal{F}$ on the Hilbert space $\D$ into $\Lrmu$ by
\begin{align*}
 \mathcal{F} f(\lambda) = \lim_{\alpha\rightarrow a}\, \lim_{\beta\rightarrow b} \int_\alpha^\beta \phi_\lambda(x) f(x) d\varrho(x), \quad \lambda\in\R,~f\in\D,
\end{align*}
where the limit on the right-hand side is a limit in the Hilbert space $\Lrmu$.
Using this operator $\mathcal{F}$, it is quite easy to extend the result of Lemma~\ref{lemspectransEfgmu} to functions $f$, $g\in\D$.
Indeed, one gets that $E_{f,g} =  \mathcal{F}f\, \mathcal{F}g^\ast \mu$, i.e.,
\begin{align*}
 \spr{R_z f}{g} = \int_\R \frac{\mathcal{F}f(\lambda) \mathcal{F}g(\lambda)^\ast}{\lambda-z} d\mu(\lambda), \quad z\in\rho(S).
\end{align*}
We will see below that $\mathcal{F}$ is not only isometric, but also onto. %, i.e., $\ran(\mathcal{F})=\Lrmu$.
In order to compute the inverse and the adjoint of $\mathcal{F}$, we introduce for each function $g\in L^2_c(\R;\mu)$ the transform
\begin{align*}
 \check{g}(x) = \int_\R \phi_\lambda(x) g(\lambda) d\mu(\lambda), \quad x\in(a,b).
\end{align*}
For arbitrary $\alpha$, $\beta\in(a,b)$ with $\alpha<\beta$ we have
\begin{align*}
 \int_\alpha^\beta \left|\check{g}(x)\right|^2 d\varrho(x) & = \int_\alpha^\beta \check{g}(x) \int_\R \phi_\lambda(x) g(\lambda)^\ast d\mu(\lambda)\, d\varrho(x) \\
          & = \int_\R g(\lambda)^\ast  \int_\alpha^\beta \phi_\lambda(x) \check{g}(x) d\varrho(x)\, d\mu(\lambda)  \\
          & \leq \left\| g\right\|_\mu \left\| \mathcal{F}\left(\indik_{[\alpha,\beta)}\check{g}\right)\right\|_\mu  \\
          & \leq \left\| g\right\|_\mu \sqrt{\int_\alpha^\beta \left|\check{g}(x)\right|^2 d\varrho(x)}.
\end{align*}
Hence $\check{g}$ lies in $\Lr$ with $\|\check{g}\|\leq\|g\|_\mu$ and we may uniquely extend this mapping to a bounded linear operator $\mathcal{G}$ on $\Lrmu$ into $\D$.

If $F$ is a Borel measurable function on $\R$, then we denote with $\M_F$ the maximally defined operator of 
 multiplication with $F$ in $\Lrmu$.

\begin{lemma}\label{lemspectransfourtrans}
 The isometry $\mathcal{F}$ is onto with inverse $\mathcal{F}^{-1}=\mathcal{G}$ and adjoint 
 \begin{align}
  \mathcal{F}^\ast = %\mathcal{G} \oplus \left(\left\lbrace 0\right\rbrace \times \mul{S}\right).
 \lbrace (g,f)\in\Lrmu\times\Lr \,|\, \mathcal{G} g - f\in\mul{S} \rbrace.
 \end{align}
\end{lemma}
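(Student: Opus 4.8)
The plan is to derive everything from a single duality identity obtained by Fubini's theorem, and to treat surjectivity separately via an intertwining relation with the resolvent. First I would show that for $f\in L^2_c((a,b);\varrho)$ and $g\in L^2_c(\R;\mu)$ one has $\spr{\mathcal{F}f}{g}_\mu = \spr{f}{\mathcal{G}g}$: expanding both transforms and interchanging the order of integration (legitimate by the compact support), the reality of $\phi_\lambda$ for $\lambda\in\R$ turns $\int_\R \hat{f}(\lambda) g(\lambda)^\ast d\mu(\lambda)$ into $\int_a^b f(x)\check{g}(x)^\ast d\varrho(x)$. Since $L^2_c((a,b);\varrho)\cap\D$ is dense in $\D$, $L^2_c(\R;\mu)$ is dense in $\Lrmu$, and both $\mathcal{F}$ and $\mathcal{G}$ are bounded, this identity extends to all $f\in\D$ and $g\in\Lrmu$.

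From this identity the adjoint formula is immediate. Viewing $\mathcal{F}$ as the relation $\left\lbrace(u,\mathcal{F}u)\,|\,u\in\D\right\rbrace\subseteq\Lr\times\Lrmu$, a pair $(g,f)$ lies in $\mathcal{F}^\ast$ if and only if $\spr{\mathcal{F}u}{g}_\mu = \spr{u}{f}$ for all $u\in\D$; by the duality this reads $\spr{u}{\mathcal{G}g - f} = 0$ for all $u\in\D$, i.e.~$\mathcal{G}g - f\in\D^\bot = \mul{S}$, which is the claimed description of $\mathcal{F}^\ast$. In particular $\mathcal{G}=\mathcal{F}^\ast$ as a map into $\D$, and the isometry $\|\mathcal{F}u\| = \|u\|$ gives $\mathcal{G}\mathcal{F} = \mathcal{F}^\ast\mathcal{F} = I_\D$, so $\mathcal{G}$ is a left inverse and $\ran(\mathcal{F})$ is closed.

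It then remains to prove $\ran(\mathcal{F}) = \Lrmu$, equivalently $\ker(\mathcal{G}) = (\ran\mathcal{F})^\bot = \lbrace 0\rbrace$; this is the main obstacle. The key tool is the intertwining relation $\widehat{R_z f} = (\lambda - z)^{-1}\hat{f}$ for $f\in L^2_c((a,b);\varrho)\cap\D$ and $z\in\rho(S)$, which I would prove by the Lagrange identity. Since $\tau\phi_\lambda = \lambda\phi_\lambda$ and $\tau(R_z f) = z R_z f + f$, integrating $\phi_\lambda\,\tau(R_zf) - (R_z f)\tau\phi_\lambda$ against $\varrho$ yields $(z-\lambda)\widehat{R_zf}(\lambda) + \hat f(\lambda)$ on one side and the boundary term $W_a^b(R_z f,\phi_\lambda)$ on the other; the latter vanishes because $\phi_\lambda$ lies in $S$ near $a$, $R_zf\in\dom{S}$ satisfies the boundary conditions, and the remaining endpoint contributions vanish in the limit-point case by Lemma~\ref{lem:lclpwronski}.

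Now let $g\in\ker(\mathcal{G})$. Using $\mathcal{G}g=0$ together with the duality applied to $R_{z^\ast}f\in\dom{S}\subseteq\D$ (recall $\ran(R_{z^\ast})^\bot=\mul{S}$ by~\eqref{eqnSpecRessets}) gives $\spr{g}{\widehat{R_{z^\ast}f}}_\mu = 0$, and inserting the intertwining relation turns this into $\int_\R (\lambda - z)^{-1} g(\lambda)\hat f(\lambda)^\ast d\mu(\lambda) = 0$ for every $z\in\rho(S)$ and every $f\in L^2_c((a,b);\varrho)\cap\D$. Since the Borel transform of the complex measure $g\,\hat f^\ast d\mu$ vanishes on the open set $\rho(S)$, this measure is zero, hence $g(\lambda)\hat f(\lambda)^\ast = 0$ for $\mu$-almost every $\lambda$. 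Choosing, for a given $\lambda_0\in\supp(\mu)$, a function $f$ supported where $\phi_{\lambda_0}$ does not vanish makes $\hat f$ continuous and non-zero near $\lambda_0$, forcing $g = 0$ $\mu$-a.e.~in a neighbourhood of $\lambda_0$; as $\lambda_0$ was arbitrary, $g=0$. Thus $\ker(\mathcal{G})=\lbrace 0\rbrace$, so $\mathcal{F}$ is a surjective isometry and, being unitary onto $\Lrmu$, satisfies $\mathcal{F}^{-1}=\mathcal{F}^\ast=\mathcal{G}$.
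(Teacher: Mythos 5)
Your duality identity, the resulting description of $\mathcal{F}^\ast$, and the reduction of surjectivity to $\ker(\mathcal{G})=\lbrace 0\rbrace$ are all sound, and the last step is a genuinely different route from the paper, which instead uses the functional calculus of the operator part $S_\D$ (showing $\mathcal{F}F(S_\D)f=F\,\mathcal{F}f$ and manufacturing characteristic functions in the range). Two caveats on the first part: to get $\mathcal{G}\mathcal{F}=I_\D$ (rather than $P_\D\mathcal{G}\mathcal{F}=I_\D$) and to read off $\mathcal{F}^{-1}=\mathcal{G}$ you also need $\ran(\mathcal{G})\subseteq\D$, which is not automatic and is exactly what the first paragraph of the paper's proof establishes (using that $\phi_z$ vanishes at $\alpha_\varrho$, resp.\ $\beta_\varrho$, whenever the corresponding indicator lies in $\mul{S}$); this should not be skipped.

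The genuine gap is in your proof of the intertwining relation $\widehat{R_zf}=(\lambda-z)^{-1}\hat{f}$, which is the linchpin of your kernel argument. The Lagrange identity gives, for $u=R_zf$,
\begin{align*}
 (z-\lambda)\int_\alpha^\beta u\,\phi_\lambda\,d\varrho+\int_\alpha^\beta f\,\phi_\lambda\,d\varrho
   = W(u,\phi_\lambda)(\beta)-W(u,\phi_\lambda)(\alpha),
\end{align*}
and while the term at $a$ does vanish (both $u$ and $\phi_\lambda$ satisfy the condition at $a$, so Lemma~\ref{lem:WronskPropSR} applies), the term at $b$ does not. If $\tau$ is in the l.c.~case at $b$, then $\phi_\lambda$ satisfies the boundary condition at $b$ only when $\lambda$ is an eigenvalue, so $W(u,\phi_\lambda)(b)\neq 0$ for generic real $\lambda$ (already in the regular case with a Dirichlet condition one gets $W(u,\phi_\lambda)(b)=-u^\qd(b)\phi_\lambda(b)$). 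If $\tau$ is in the l.p.~case at $b$ — the case you actually invoke — Lemma~\ref{lem:lclpwronski} does not apply, because it requires \emph{both} functions to lie in $\Tmax$ near $b$, whereas $\phi_\lambda$ is not square integrable near $b$ for $\mu$-a.e.\ $\lambda$ whenever there is continuous spectrum; then neither the limit $W(u,\phi_\lambda)(b)$ nor even the pointwise integral $\int^b\phi_\lambda R_zf\,d\varrho$ need exist, so the identity cannot be read pointwise at all. The relation you want is true, but only $\mu$-a.e., and the clean way to get it is not the Lagrange identity but the spectral measure: from $E_{R_zf,g}(B)=\int_B(\lambda-z)^{-1}dE_{f,g}$ together with $dE_{f,g}=\mathcal{F}f\,\mathcal{F}g^\ast d\mu$ one obtains $\mathcal{F}(R_zf)\,\mathcal{F}g^\ast=(\lambda-z)^{-1}\mathcal{F}f\,\mathcal{F}g^\ast$ $\mu$-a.e., and then your own trick of choosing $g$ with $\hat{g}\neq0$ near any prescribed $\lambda_0$ removes the factor $\mathcal{F}g^\ast$. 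With that repair your Borel-transform argument for $\ker(\mathcal{G})=\lbrace0\rbrace$ goes through.
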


\begin{proof}
 In order to prove $\ran(\mathcal{G})\subseteq\D$, let $g\in L^2_c(\R;\mu)$.
  If $\indik_{\lbrace\alpha_\varrho\rbrace}\in\mul{S}$, then the solutions $\phi_z$, $z\in\C$ vanish in $\alpha_\varrho$, hence also
 \begin{align*}
  \check{g}(\alpha_\varrho) =  \int_\R \phi_\lambda(\alpha_\varrho) g(\lambda) d\mu(\lambda) = 0.
 \end{align*}
 Furthermore, if $\indik_{\lbrace\beta_\varrho\rbrace}\in\mul{S}$, then the spectrum of $S$ is discrete and
  the solutions $\phi_\lambda$, $\lambda\in\sigma(S)$ vanish in $\beta_\varrho$. Now since $\mu$ is supported
  on $\sigma(S)$, we also have
 \begin{align*}
  \check{g}(\beta_\varrho) = \int_{\sigma(S)} \phi_\lambda(\beta_\varrho) g(\lambda)d\mu(\lambda) = 0.
 \end{align*}
 From this one sees that $\check{g}\in\mul{S}^\bot = \D$, i.e., $\ran(\mathcal{G})\subseteq\D$.

 Next we prove $\mathcal{G}\mathcal{F}f=f$ for each $f\in\D$.
 Indeed, if $f$, $g\in L^2_c((a,b);\varrho)\cap\D$, then we have 
 \begin{align*}
  \spr{f}{g} & = \int_\R dE_{f,g} = \int_\R \hat{f}(\lambda) \hat{g}(\lambda)^\ast d\mu(\lambda) \\
        & = \lim_{n\rightarrow\infty} \int_{(-n,n]} \hat{f}(\lambda) \int_a^b \phi_\lambda(x) g(x)^\ast d\varrho(x)\, d\mu(\lambda) \\
        & = \lim_{n\rightarrow\infty} \int_a^b g(x)^\ast \int_{(-n,n]} \hat{f}(\lambda) \phi_\lambda(x) d\mu(\lambda)\, d\varrho(x) \\
        & = \lim_{n\rightarrow\infty} \spr{\mathcal{G}\M_{\indik_{(-n,n]}}\mathcal{F}f}{g} = \spr{\mathcal{GF}f}{g}.
\end{align*}
 Now since $\ran(\mathcal{G})\subseteq\D$ and $L^2_c((a,b);\varrho)\cap\D$ is
  dense in $\D$ we infer that $\mathcal{GF}f=f$ for all $f\in\D$.
In order to prove that $\mathcal{G}$ is the inverse of $\mathcal{F}$, it remains to show that $\mathcal{F}$ 
 is onto, i.e., $\ran(\mathcal{F})=\Lrmu$.
 Therefore, pick some $f$, $g\in\D$ and let $F$, $G$ be bounded measurable functions on $\R$.
 Since $E_{f,g}$ is the spectral measure of the operator part $S_\D$ of $S$ (see the proof of Lemma~\ref{lemspectransEfg}) we get
 \begin{align*}
  \spr{\M_G \mathcal{F} F(S_{\D})f}{\mathcal{F}g}_\mu = \spr{G(S_{\D})F(S_{\D})f}{g}
     = \spr{\M_G \M_F \mathcal{F}f}{\mathcal{F}g}_\mu.
 \end{align*}
 Now if we set $h=F(S_{\D})f$, we get from this last equation
 \begin{align*}
  \int_\R G(\lambda) \mathcal{F}g(\lambda)^\ast \left( \mathcal{F}h(\lambda) - F(\lambda)\mathcal{F}f(\lambda)\right)d\mu(\lambda) = 0. 
 \end{align*}
 Since this holds for each bounded measurable function $G$, we infer 
 \begin{align*}
 \mathcal{F}g(\lambda)^\ast \left( \mathcal{F}h(\lambda) - F(\lambda)\mathcal{F}f(\lambda)\right)=0
 \end{align*}
  for almost all $\lambda\in\R$ with respect to $\mu$. Furthermore, for each $\lambda_0\in\R$ we can find
  a $g\in L^2_c((a,b);\varrho)\cap\D$ such that $\hat{g}\not=0$ in a vicinity of $\lambda_0$. 
 Hence we even have $\mathcal{F}h = F\mathcal{F}f$ 
  almost everywhere with respect to $\mu$. But this shows that $\ran(\mathcal{F})$ contains all characteristic 
  functions of intervals. Indeed, let $\lambda_0\in\R$ and choose $f\in L^2_c((a,b);\varrho)\cap\D$ such that 
  $\hat{f}\not=0$ in a vicinity of $\lambda_0$. Then for each interval $J$, whose closure is contained in 
  this vicinity one may choose 
  \begin{align*}
   F(\lambda) = \begin{cases} \hat{f}(\lambda)^{-1}, & \text{if }\lambda\in J, \\
                              0, & \text{if }\lambda\in\R\backslash J,
                \end{cases}
  \end{align*}
and gets $\indik_J = \mathcal{F} h\in\ran(\mathcal{F})$.
 Thus we have obtained $\ran(\mathcal{F})=\Lrmu$.
 Finally the fact that the adjoint is given as in the claim follows from the equivalence
 \begin{align*}
  \mathcal{G}g-f\in\mul{S} \quad\Leftrightarrow\quad \forall u\in\D: 0 = \spr{\mathcal{G}g-f}{u} = \spr{g}{\mathcal{F}u}_\mu - \spr{f}{u},
 \end{align*}
 which holds for every $f\in\Lr$ and $g\in\Lrmu$.
\end{proof}

Note that $\mathcal{F}$ is a unitary map from $\Lr$ onto $\Lrmu$ if and only if $S$ is an operator. 

\begin{theorem}\label{thmspectransS}
  The self-adjoint relation $S$ is given by $S = \mathcal{F}^\ast \M_{\mathrm{id}} \mathcal{F}$.
% \begin{align*}
%  S = \mathcal{F}^\ast \M_{\mathrm{id}} \mathcal{F}.
% \end{align*}
%  In particular, $\sigma(S)=\supp(\mu)$.
\end{theorem}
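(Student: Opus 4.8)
The plan is to reduce the claim to the single fact that the transform $\mathcal{F}$ diagonalizes the operator part $S_\D$ of $S$, and then to recover the full relation, including its multivalued part, by a direct computation of the relation product using the description of $\mathcal{F}^\ast$ from Lemma~\ref{lemspectransfourtrans}.

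First I would show that $\mathcal{F}$ intertwines $S_\D$ with $\M_{\mathrm{id}}$, that is $\mathcal{F}S_\D\mathcal{F}^{-1}=\M_{\mathrm{id}}$, or equivalently $S_\D=\mathcal{G}\M_{\mathrm{id}}\mathcal{F}$ as operators on $\D$. Since $\rho(S)=\rho(S_\D)$ and $R_z$ restricts to $(S_\D-z)^{-1}$ on $\D$ for $z\in\rho(S)$ (recall $\ran R_z=\dom{S}\subseteq\D$ and $\ker R_z=\mul{S}=\D^\bot$), the resolvent formula extending Lemma~\ref{lemspectransEfgmu} to $\D$ reads
\begin{align*}
 \spr{R_z f}{g} = \int_\R \frac{\mathcal{F}f(\lambda)\,\mathcal{F}g(\lambda)^\ast}{\lambda-z}\,d\mu(\lambda), \quad f,~g\in\D,~z\in\rho(S),
\end{align*}
which rewrites as $\spr{\mathcal{F}(S_\D-z)^{-1}f}{\mathcal{F}g}_\mu = \spr{(\M_{\mathrm{id}}-z)^{-1}\mathcal{F}f}{\mathcal{F}g}_\mu$. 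As $\mathcal{F}g$ exhausts $\Lrmu$ by surjectivity, this forces $\mathcal{F}(S_\D-z)^{-1}\mathcal{F}^{-1}=(\M_{\mathrm{id}}-z)^{-1}$; taking inverses gives $\mathcal{F}S_\D\mathcal{F}^{-1}=\M_{\mathrm{id}}$ together with the matching of domains $\dom{S_\D}=\{f\in\D \mid \mathcal{F}f\in\dom\M_{\mathrm{id}}\}$. This is just the spectral theorem read through the fact (from the proof of Lemma~\ref{lemspectransEfg}) that $E_{f,g}$ is the spectral measure of $S_\D$.

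Next I would use the canonical decomposition of the self-adjoint relation $S$ into its operator part and its purely multivalued part (cf. Appendix~\ref{appLR}): since $\dom{S}\subseteq\D$ and $\mul{S}=\D^\bot$, a pair $(f,h)$ lies in $S$ if and only if $f\in\dom{S_\D}$ and $h\in S_\D f+\mul{S}$. I then compute the relation product $\mathcal{F}^\ast\M_{\mathrm{id}}\mathcal{F}$ directly: applying $\mathcal{F}$, then $\M_{\mathrm{id}}$, then $\mathcal{F}^\ast=\{(g,f)\mid\mathcal{G}g-f\in\mul{S}\}$, one finds that $(f,h)$ lies in the product exactly when $f\in\D$, $\mathcal{F}f\in\dom\M_{\mathrm{id}}$, and $h\in\mathcal{G}\M_{\mathrm{id}}\mathcal{F}f+\mul{S}$. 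By the first step the domain condition is precisely $f\in\dom{S_\D}$, and on this domain $\mathcal{G}\M_{\mathrm{id}}\mathcal{F}f=S_\D f$, so the product consists of exactly those $(f,h)$ with $f\in\dom{S_\D}$ and $h\in S_\D f+\mul{S}$. This matches the decomposition of $S$, proving $\mathcal{F}^\ast\M_{\mathrm{id}}\mathcal{F}=S$.

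I expect the main obstacle to be the relation-theoretic bookkeeping around the multivalued part rather than any analytic difficulty: one must verify that the intermediate domain constraint $\mathcal{F}f\in\dom\M_{\mathrm{id}}$ imposed by the middle factor is exactly $\dom{S_\D}$, and that the coset ambiguity $+\mul{S}$ introduced by the multivalued adjoint $\mathcal{F}^\ast$ (whose multivalued part is $\mul{\mathcal{F}^\ast}=(\dom\mathcal{F})^\bot=\mul{S}$) reproduces the multivalued part of $S$ and nothing more. All the analytic input is already at hand, so the remaining work is essentially the algebra of linear relations.
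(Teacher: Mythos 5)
Your proposal is correct and follows essentially the same route as the paper: both arguments rest on the fact that $\mathcal{F}$ diagonalizes the operator part $S_\D$ (equivalently, that $E_{f,g}$ is the spectral measure of $S_\D$ read through $\mathcal{F}$) together with the description of $\mathcal{F}^\ast$ from Lemma~\ref{lemspectransfourtrans}, whose multivalued part $\mul{S}$ accounts for the coset ambiguity. The only difference is cosmetic: you establish the unitary equivalence $\mathcal{F}S_\D\mathcal{F}^{-1}=\M_{\mathrm{id}}$ via the resolvent identity and then do the relation-product bookkeeping explicitly, whereas the paper verifies the two inclusions directly by computing $\spr{f_\tau}{g}=\int_\R\lambda\, dE_{f,g}(\lambda)=\spr{\mathcal{G}\M_{\mathrm{id}}\mathcal{F}f}{g}$, which amounts to the same thing.
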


\begin{proof}
  First note that for each $f\in\D$ we have
 \begin{align*}
  f\in\dom{S} & \quad\Leftrightarrow\quad \int_\R |\lambda|^2 dE_{f,f}(\lambda)<\infty &\Leftrightarrow\quad &\int_\R |\lambda|^2 |\mathcal{F}f(\lambda)|^2 d\mu(\lambda) < \infty \\
              & \quad\Leftrightarrow\quad \mathcal{F}f\in\dom{\M_{\mathrm{id}}} &\Leftrightarrow\quad &f\in\dom{\mathcal{F}^\ast\M_{\mathrm{id}}\mathcal{F}}.
 \end{align*}
 Furthermore, if $(f,f_\tau)\in S$, then from Lemma~\ref{lemspectransEfg} and Lemma~\ref{lemspectransEfgmu} we infer
 \begin{align*}
  \spr{f_\tau}{g} &= \int_\R\lambda\, dE_{f,g}(\lambda) = \int_\R \lambda \mathcal{F}f(\lambda) \mathcal{F}g(\lambda)^\ast d\mu(\lambda) \\
              & = \int_\R \M_{\mathrm{id}}\mathcal{F} f(\lambda) \mathcal{F}g(\lambda)^\ast d\mu(\lambda) 
                = \spr{\mathcal{G}\M_{\mathrm{id}}\mathcal{F}f}{g}, \quad g\in\D
 \end{align*}
  and hence $\mathcal{G}\M_{\mathrm{id}}\mathcal{F}f=Pf_\tau$, where $P$ is the orthogonal projection onto $\D$.
 This and Lemma~\ref{lemspectransfourtrans} show that $(\M_{\mathrm{id}} \mathcal{F} f,f_\tau)\in\mathcal{F}^\ast$,
  which is equivalent to $(f,f_\tau)\in\mathcal{F}^\ast \M_{\mathrm{id}} \mathcal{F}$.
  Now if we conversely assume that $(g,g_\tau)\in\mathcal{F}^\ast \M_{\mathrm{id}} \mathcal{F}$, then 
  $(\M_{\mathrm{id}} \mathcal{F} g,g_\tau)\in\mathcal{F}^\ast$ (note that $g\in\dom{S}$).
  Hence $\mathcal{G}\M_{\mathrm{id}} \mathcal{F}g-g_\tau$ lies in $\mul{S}$ and since 
  $(g,\mathcal{G}\M_{\mathrm{id}} \mathcal{F}g)\in S$, we also get $(g,g_\tau)\in S$.
% Finally note that we have
% \begin{align*}
%  \mul{S} = \mul{\mathcal{F}^\ast} = \mul{\mathcal{F}^\ast \M_{\mathrm{id}} \mathcal{F}}.
% \end{align*}
% Now assume $(f,\tau f)\in S$ then $(f,\tau f)\in \mathcal{F}^\ast \M_{\mathrm{id}} \mathcal{F}$ if and only
%  if $(\M_{\mathrm{id}} \mathcal{F} f,\tau f)\in\mathcal{F}^\ast$ 
%  But by Lemma~\ref{lemspectransfourtrans} this is equivalent to $\mathcal{G}\M_{\mathrm{id}} \mathcal{F}f-\tau f\in\mul{S}$ which is guaranteed by...
\end{proof}

Note that the self-adjoint operator $S_\D$ is unitarily equivalent to the operator of multiplication $\M_{\mathrm{id}}$.
 In fact, $\mathcal{F}$ is unitary as an operator from $\D$ onto $\Lrmu$ and maps $S_\D$ onto multiplication with the independent variable.
Now the spectrum can be read off from the boundary behavior of the singular Weyl--Titchmarsh--Kodaira function $m$ in the usual way.

\begin{corollary}\label{cor:splimm}
 The spectrum of $S$ is given by
\begin{align}
 \sigma(S) & = \sigma(S_\D) = \supp(\mu) = \overline{\{ \lambda\in\R \,|\, 0 < \limsup_{\varepsilon\downarrow 0} \im(m(\lambda+\I\varepsilon))\}}.
\end{align}
Moreover,
\begin{align*}
\sigma_p(S_\D) & = \{ \lambda\in\R \,|\, 0 < \lim_{\varepsilon\downarrow0} \varepsilon \im(m(\lambda+\I\varepsilon)) \}, \\
\sigma_{ac}(S_\D) & = \overline{\{ \lambda\in\R \,|\, 0 < \limsup_{\varepsilon\downarrow0} \im(m(\lambda+\I\varepsilon)) < \infty \}}^{ess},
\end{align*}
where $\overline{\Omega}^{ess} = \{ \lambda\in\R \,|\, |(\lambda-\varepsilon,\lambda+\varepsilon)\cap\Omega|>0 \text{ for all }\varepsilon>0 \}$,
is the essential closure of a Borel set $\Omega\subseteq\R$, and
\begin{align*}
\Sigma_s = \{ \lambda\in\R \,|\, \limsup_{\varepsilon\downarrow0} \im(m(\lambda+\I\varepsilon)) = \infty \}
\end{align*}
is a minimal support for the singular spectrum (singular continuous plus pure point spectrum) of $S_\D$.
\end{corollary}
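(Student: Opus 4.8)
The plan is to read off all spectral data by combining the spectral transformation of Theorem~\ref{thmspectransS} with the standard boundary-value theory for the Borel transform of a measure. The first step is to reduce the spectrum of the relation $S$ to that of its operator part $S_\D$. By~\eqref{eqnSpecRessets} the resolvent satisfies $\ker R_z=\mul S=\D^\bot$ and $\ran R_z=\dom S\subseteq\D$, so for $z\in\rho(S)$ the bounded operator $R_z$ annihilates $\D^\bot$ and restricts on $\D$ to $(S_\D-z)^{-1}$; conversely, extending $(S_\D-z)^{-1}$ by zero on $\D^\bot$ produces a bounded, everywhere defined inverse of $S-z$. Hence $\rho(S)=\rho(S_\D)$ and $\sigma(S)=\sigma(S_\D)$.

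Next I would pass to a multiplication operator. By Theorem~\ref{thmspectransS} and the remark following it, $\mathcal F$ is unitary from $\D$ onto $\Lrmu$ and maps $S_\D$ onto the multiplication operator $\M_{\mathrm{id}}$; thus $S_\D$ and $\M_{\mathrm{id}}$ share all spectral data. For a multiplication operator these are read off the measure $\mu$ directly: the spectrum equals $\supp(\mu)$, the point spectrum is the set of atoms of $\mu$, the absolutely continuous spectrum is the essential closure of a support of the absolutely continuous part $\mu_{ac}$, and a minimal support of the singular part $\mu_s$ serves as a minimal support for the singular spectrum. This already gives $\sigma(S)=\sigma(S_\D)=\supp(\mu)$.

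It then remains to translate this measure-theoretic description into boundary behavior of $M$. By Lemma~\ref{lemspectransEfgmu} the measure $\mu$ is recovered from $\im M$ through the Stieltjes--Liv\v{s}i\'{c} inversion formula~\eqref{defrho}, which is precisely the relation between a positive measure and the boundary values of the imaginary part of its Borel transform. The usual results (see \cite{tschroe}, and \cite{kst} for the present singular setting) then yield $\supp(\mu)=\overline{\{\lambda:0<\limsup_{\eps\downarrow0}\im M(\lambda+i\eps)\}}$; the point-mass formula $\mu(\{\lambda\})=\lim_{\eps\downarrow0}\eps\,\im M(\lambda+i\eps)$, so that the atoms form exactly the displayed set for $\sigma_p(S_\D)$; that $\{\lambda:0<\limsup_{\eps\downarrow0}\im M(\lambda+i\eps)<\infty\}$ is a support of $\mu_{ac}$, whence its essential closure equals $\sigma_{ac}(S_\D)$; and that $\Sigma_s=\{\lambda:\limsup_{\eps\downarrow0}\im M(\lambda+i\eps)=\infty\}$ is a minimal support of $\mu_s$. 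Combining these with the multiplication-operator dictionary delivers all the claimed identities.

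The hard part is this last step, because in the limit-point case at $a$ the function $M$ need not be a global Herglotz function, so one cannot simply invoke the representation theorem for Nevanlinna functions. The remedy is to use that for every $f\in L^2_c((a,b);\varrho)\cap\D$ the diagonal resolvent $z\mapsto\spr{R_z f}{f}=\int_\R\frac{|\hat f(\lambda)|^2}{\lambda-z}\,d\mu(\lambda)$ is a genuine Herglotz function with representing measure $|\hat f|^2\,d\mu$, together with the local inversion formula~\eqref{defrho}; the boundary-value analysis (Fatou's theorem and the de la Vall\'ee Poussin decomposition of $\mu$) then proceeds exactly as in the Schr\"odinger case of \cite{kst}. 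In the limit-circle case at $a$ the argument is even more direct, since there $M$ is itself a Herglotz function by Corollary~\ref{spek::manal} and $\mu$ is its representing measure.
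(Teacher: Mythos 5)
Your proposal is correct and follows essentially the same route as the paper: reduce $\sigma(S)$ to $\sigma(S_\D)$ via the operator part (the content of Lemma~\ref{lemSARelOper}, which you re-derive from~\eqref{eqnSpecRessets}), use the unitary equivalence of $S_\D$ with $\M_{\mathrm{id}}$ on $\Lrmu$ from Theorem~\ref{thmspectransS}, and then read off the point, absolutely continuous and singular parts from the boundary behavior of $\im M$ via the inversion formula of Lemma~\ref{lemspectransEfgmu} ``in the usual way.'' The paper's own proof is a single sentence covering only the first identity; your elaboration of the remaining claims, including the observation that in the limit-point case one works with the Herglotz functions $z\mapsto\spr{R_zf}{f}$ rather than $M$ itself, is a faithful expansion of what the paper leaves implicit.
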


\begin{proof}
Since the operator part $S_\D$ of $S$ is unitary equivalent to $\M_{\mathrm{id}}$ we infer from Lemma~\ref{lemSARelOper} that $\sigma(S)=\sigma(\M_{\mathrm{id}})=\supp(\mu)$. Now the remaining part of the claim follows as in~\cite[Corollary~3.5]{kst}.
\end{proof}

\begin{proposition}\label{propspectransEVmu}
 If $\lambda\in\sigma(S)$ is an eigenvalue, then
 \begin{align}
  \mu(\lbrace\lambda\rbrace) = \left\| \phi_\lambda\right\|^{-2}.
 \end{align}
\end{proposition}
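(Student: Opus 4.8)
The plan is to identify the eigenspace explicitly and then read off the point mass of $\mu$ from the spectral projection transported through $\mathcal{F}$. Since $\lambda\in\sigma(S)$ is an eigenvalue, there is a non-trivial $u\in\dom{S}$ with $(u,\lambda u)\in S$; in particular $u$ solves $(\tau-\lambda)u=0$, lies in $\Lr$, and lies in $S$ near $a$ as well as near $b$. The solutions of $(\tau-\lambda)u=0$ that lie in $S$ near $a$ form a one-dimensional space: in the l.c.\ case at $a$ it is cut out of the two-dimensional solution space by the single boundary condition at $a$, while in the l.p.\ case at $a$ not all solutions lie in $\Lr$ near $a$ by the very definition of the l.p.\ case, so again at most one dimension survives. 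As $\phi_\lambda$ belongs to this space, the eigenfunction $u$ must be a scalar multiple of $\phi_\lambda$, so that $\phi_\lambda\in\Lr$ is (up to scalars) the unique eigenfunction. Moreover, arguing as in the proof of Lemma~\ref{lemspectransfourtrans}, $\phi_\lambda$ vanishes at $\alpha_\varrho$ and at $\beta_\varrho$ whenever $\varrho$ has mass there, whence $\phi_\lambda\in\D=\mul{S}^\bot$ and $(\phi_\lambda,\lambda\phi_\lambda)\in S_\D$.

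Next I would compute the orthogonal projection $P$ of $\D$ onto $\ker(S_\D-\lambda)=\linspan\lbrace\phi_\lambda\rbrace$. By Theorem~\ref{thmspectransS} and Lemma~\ref{lemspectransfourtrans} the map $\mathcal{F}$ is unitary from $\D$ onto $\Lrmu$ and intertwines $S_\D$ with $\M_{\mathrm{id}}$, so the spectral projection of $S_\D$ associated with $\lbrace\lambda\rbrace$ is $P=\mathcal{G}\,\M_{\indik_{\lbrace\lambda\rbrace}}\,\mathcal{F}$. Because $\lambda$ is an eigenvalue we have $\mu(\lbrace\lambda\rbrace)>0$, so $\indik_{\lbrace\lambda\rbrace}\in L^2_c(\R;\mu)$ and the defining formula for $\mathcal{G}$ gives $\mathcal{G}\,\indik_{\lbrace\lambda\rbrace}=\mu(\lbrace\lambda\rbrace)\,\phi_\lambda$. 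For $f\in L^2_c((a,b);\varrho)\cap\D$ one has $\mathcal{F}f(\lambda)=\hat{f}(\lambda)=\spr{f}{\phi_\lambda}$ (here $\phi_\lambda$ is real, and since $\hat{f}$ is an everywhere-defined continuous representative of the class $\mathcal{F}f$, its value at the atom $\lambda$ is unambiguous), and therefore $Pf=\mu(\lbrace\lambda\rbrace)\spr{f}{\phi_\lambda}\phi_\lambda$. Since such $f$ are dense in $\D$ and both sides depend boundedly on $f$, this identity extends to all $f\in\D$.

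Finally I would apply the identity to $f=\phi_\lambda\in\D$. On the one hand $P\phi_\lambda=\phi_\lambda$, because $\phi_\lambda$ spans the eigenspace $\ker(S_\D-\lambda)$; on the other hand the formula gives $P\phi_\lambda=\mu(\lbrace\lambda\rbrace)\|\phi_\lambda\|^2\phi_\lambda$. Comparing the two and cancelling the non-zero vector $\phi_\lambda$ yields $\mu(\lbrace\lambda\rbrace)\|\phi_\lambda\|^2=1$, which is the claim. The main obstacle is the first step: pinning down that $\phi_\lambda$ itself (rather than merely some solution) is the eigenfunction and that it lies in the reducing subspace $\D$; once this is in place, together with the well-definedness of $\mathcal{F}f$ at the atom $\lambda$ and the routine density argument, the computation of $P$ is immediate. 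As a cross-check, the same conclusion follows by extracting the residue of $M$ at $\lambda$ from $\mu(\lbrace\lambda\rbrace)=\lim_{\varepsilon\downarrow0}\varepsilon\,\im(M(\lambda+\I\varepsilon))$ together with $\|\psi_z\|^2=\im(M(z))/\im(z)$ and $\psi_z\sim-\mu(\lbrace\lambda\rbrace)(z-\lambda)^{-1}\phi_\lambda$ near the pole, but the projection argument is cleaner and does not require $\tau$ to be in the l.c.\ case at $a$.
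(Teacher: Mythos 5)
Your proof is correct and follows essentially the same route as the paper: both rest on the fact that $\mathcal{F}$ is an isometry on $\D$ carrying $\phi_\lambda$ to $\|\phi_\lambda\|^2\indik_{\lbrace\lambda\rbrace}$ in $\Lrmu$, the paper comparing norms where you apply the spectral projection onto the atom --- in either form the computation reduces to $\mu(\lbrace\lambda\rbrace)\|\phi_\lambda\|^4=\|\phi_\lambda\|^2$. Your first paragraph usefully fills in the justification, asserted without proof in the paper, that $\phi_\lambda$ is itself the (up to scalars unique) eigenvector and lies in $\D$.
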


\begin{proof}
 By assumption, $\phi_\lambda$ is an eigenvector, i.e., $(\phi_\lambda,\lambda \phi_\lambda)\in S$. 
  Hence we get from the proof of Theorem~\ref{thmspectransS} that $\M_{\mathrm{id}}\mathcal{F}\phi_\lambda=\lambda\mathcal{F}\phi_\lambda$.
  But this shows that $\mathcal{F}\phi_\lambda(z)$ vanishes for almost all $z\not=\lambda$ with respect to $\mu$.
  Now from this we get
  \begin{align*}
   \left\| \phi_\lambda\right\|^2 & = \left\| \mathcal{F}\phi_\lambda\right\|_\mu^2 = \int_{\lbrace\lambda\rbrace} \left|\mathcal{F}\phi_\lambda(z)\right|^2 d\mu(z) \\
                                  & = \mu(\lbrace\lambda\rbrace) \left(\int_a^b \phi_\lambda(x)^2 d\varrho(x)\right)^2 = \mu(\lbrace\lambda\rbrace) \left\|\phi_\lambda\right\|^4.
  \end{align*}
\end{proof}

With $P$ we denote the orthogonal projection from $\Lr$ onto $\D$. If $S$ is an operator, $P$ is simply the identity.

\begin{lemma}\label{lemspectransGreentransform}
 For every $z\in\rho(S)$ and all $x\in(a,b)$ the transform of the Green function $G_z(x,\cdot\,)$ and its quasi-derivative $\partial_x^\qd G_z(x,\cdot\,)$ are given by
 \begin{align*}
  \mathcal{F}P G_z(x,\cdot\,) (\lambda) = \frac{\phi_\lambda(x)}{\lambda-z} \quad\text{and}\quad 
      \mathcal{F}P \partial_x^\qd G_z(x,\cdot\,) (\lambda) = \frac{\phi_\lambda^\qd(x)}{\lambda-z}, \quad \lambda\in\R.
 \end{align*}
\end{lemma}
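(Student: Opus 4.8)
The plan is to fix $z\in\rho(S)$ and $x\in(a,b)$, regard $g_x:=G_z(x,\cdot)$ as an element of $\Lr$, and compute its transform directly from the explicit resolvent kernel of Theorem~\ref{thm:ressep}. Write $G_z(x,y)=W(u_b,u_a)^{-1}u_a(y)u_b(x)$ for $y<x$ and $W(u_b,u_a)^{-1}u_a(x)u_b(y)$ for $y\geq x$, where $u_a$, $u_b$ are nontrivial solutions of $(\tau-z)u=0$ lying in $S$ near $a$ resp.\ near $b$. Since $u_a\in\Lr$ near $a$ and $u_b\in\Lr$ near $b$, both $g_x$ and (applying the same reasoning to the $x$-quasi-derivative of the kernel) $\partial_x G_z(x,\cdot)$ lie in $\Lr$, so their transforms $\mathcal{F}Pg_x$, $\mathcal{F}P\partial_x G_z(x,\cdot)$ are well-defined.

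First I would evaluate, for $a<\alpha<x<\beta<b$, the truncated integral $\int_\alpha^\beta \phi_\lambda G_z(x,\cdot)\,d\varrho$, which equals $\mathcal{F}P(\indik_{(\alpha,\beta)}g_x)(\lambda)$ (a genuine integral, since $\indik_{(\alpha,\beta)}g_x$ has compact support) and converges to $\mathcal{F}Pg_x$ in $\Lrmu$ as $\alpha\downarrow a$, $\beta\uparrow b$ by boundedness of $\mathcal{F}P$. Splitting at $x$ and applying the Lagrange identity (Proposition~\ref{propLagrange}) with $\tau\phi_\lambda=\lambda\phi_\lambda$ and $\tau u_a=zu_a$, $\tau u_b=zu_b$, I would rewrite the pieces as $(\lambda-z)^{-1}$ times Wronskian differences, e.g.\ $\int_\alpha^x\phi_\lambda u_a\,d\varrho=(\lambda-z)^{-1}[W(\phi_\lambda,u_a)(x)-W(\phi_\lambda,u_a)(\alpha)]$ and similarly on $[x,\beta]$. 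Collecting the interior terms and using that $W(u_b,u_a)$ is constant, a short Wronskian (Plücker-type) computation gives $u_b(x)W(\phi_\lambda,u_a)(x)-u_a(x)W(\phi_\lambda,u_b)(x)=\phi_\lambda(x)W(u_b,u_a)$, which produces exactly $\phi_\lambda(x)/(\lambda-z)$. The derivative statement then follows verbatim, replacing $u_a(x),u_b(x)$ by $u_a^\qd(x),u_b^\qd(x)$, since the analogous interior identity collapses to $\phi_\lambda^\qd(x)W(u_b,u_a)$.

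It therefore remains to show that the two leftover boundary contributions vanish in $\Lrmu$, namely the limits of $(\lambda-z)^{-1}W(\phi_\lambda,u_a)(\alpha)$ as $\alpha\downarrow a$ and of $(\lambda-z)^{-1}W(\phi_\lambda,u_b)(\beta)$ as $\beta\uparrow b$. This is the main obstacle, and the delicate point is the endpoint $b$: since $\phi_\lambda$ need not lie in $\Lr$ near $b$, one cannot pass to a pointwise Wronskian limit there, in contrast to $a$, where $\phi_\lambda$ and $u_a$ both lie in $S$ near $a$ so that $W(\phi_\lambda,u_a)(a)=0$ (shared boundary condition in the l.c.\ case, Lemma~\ref{lem:lclpwronski} in the l.p.\ case). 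The plan at $b$ is to test against $\hat{f}$ for real $f\in L^2_c((a,b);\varrho)\cap\D$ with $\supp f\subseteq(a,c_0)$, $c_0<b$. Using the inverse-transform formula $R_zf(y)=\int_\R \phi_\lambda(y)\hat{f}(\lambda)(\lambda-z)^{-1}d\mu(\lambda)$ (and the same for the quasi-derivative) from Lemma~\ref{lemspectransfourtrans}, one computes
\begin{align*}
 \int_\R \frac{W(\phi_\lambda,u_b)(\beta)}{\lambda-z}\,\hat{f}(\lambda)\,d\mu(\lambda) = W(R_zf,u_b)(\beta).
\end{align*}
But $R_zf\in\dom{S}$ solves $(\tau-z)u=0$ on $(c_0,b)$ and lies in $S$ near $b$, hence is a scalar multiple of $u_b$ there (Corollary~\ref{cor:regtypeuniqsol} in the l.p.\ case; the one-dimensional space of solutions meeting the separated boundary condition in the l.c.\ case), so this Wronskian is $0$ for every $\beta\in(c_0,b)$. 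Since such $\hat{f}$ are dense in $\Lrmu$, the $\Lrmu$-limit of the $b$-boundary term is $0$; the identical argument at $a$ kills the other term. Combining this with the interior computation of the previous paragraph yields $\mathcal{F}Pg_x(\lambda)=\phi_\lambda(x)/(\lambda-z)$ and $\mathcal{F}P\partial_x G_z(x,\cdot)(\lambda)=\phi_\lambda^\qd(x)/(\lambda-z)$, as claimed.
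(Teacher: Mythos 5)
Your interior computation is fine: splitting the truncated integral at $x$, applying the Lagrange identity, and collapsing $u_b(x)W(\phi_\lambda,u_a)(x)-u_a(x)W(\phi_\lambda,u_b)(x)$ to $\phi_\lambda(x)W(u_b,u_a)$ does produce $\phi_\lambda(x)/(\lambda-z)$ plus boundary terms. The genuine gap is in how you dispose of those boundary terms. The identity
\begin{align*}
\int_\R \frac{W(\phi_\lambda,u_b)(\beta)}{\lambda-z}\,\hat{f}(\lambda)\,d\mu(\lambda) = W(R_zf,u_b)(\beta)
\end{align*}
is justified by a ``pointwise inverse-transform formula'' $R_zf(y)=\int_\R\phi_\lambda(y)\hat f(\lambda)(\lambda-z)^{-1}d\mu(\lambda)$ (and its quasi-derivative analogue) which you attribute to Lemma~\ref{lemspectransfourtrans}. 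That lemma contains no such statement: $\mathcal{G}$ is only the continuous extension of the integral transform from $L^2_c(\R;\mu)$, and $\hat f/(\cdot-z)$ is not compactly supported in $\lambda$. To use it you would first need (i) that $\lambda\mapsto\phi_\lambda(\beta)/(\lambda-z)$ and $\lambda\mapsto\phi^\qd_\lambda(\beta)/(\lambda-z)$ lie in $\Lrmu$, so that the integral converges and the interchange is legitimate, and (ii) that the resulting integral equals $R_zf$ at the specific point $\beta$, not merely for $\varrho$-a.e.\ point. But (i) is exactly (part of) the assertion of the lemma you are proving, applied at the point $\beta$; as written the crucial step is essentially circular. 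The same issue resurfaces at the very end: to pass from ``the boundary terms pair to zero against the dense family $\hat f$'' to the stated $\Lrmu$-identity you again need to know in advance that $\phi_\cdot(x)/(\cdot-z)\in\Lrmu$.

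The paper avoids all of this by testing in the spectral variable instead: for $g\in L^2_c(\R;\mu)$ the compact support in $\lambda$ makes Fubini immediate, giving $R_z\check g(x)=\int_\R\phi_\lambda(x)(\lambda-z)^{-1}g(\lambda)\,d\mu(\lambda)$ for $\varrho$-a.e.\ $x$; on the other hand the isometry and Theorem~\ref{thm:ressep} give $\spr{\mathcal{F}P G_z(x,\cdot)}{g^\ast}_\mu=\spr{G_z(x,\cdot)}{\check g^\ast}=R_z\check g(x)$, and the a.e.\ identity is upgraded to every $x$ by absolute continuity with respect to $\varsigma$. No Lagrange identity and no boundary analysis are needed. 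If you want to rescue your route, you must first establish such a pointwise expansion by an argument of this type --- at which point the direct computation becomes superfluous.
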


\begin{proof}
 First note that $G_z(x,\cdot\,)$ and $\partial_x^\qd G_z(x,\cdot\,)$ both lie in $\Lr$.
 Then, using Lemma~\ref{lemspectransEfgmu} we get for each $f\in L^2_c((a,b);\varrho)$ and $g\in L^2_c(\R;\mu)$%, then by Lemma~\ref{lemspectransEfgmu}
 \begin{align*}
  \spr{R_z \check{g}}{f} & = \int_\R \frac{g(\lambda) \hat{f}(\lambda)^\ast}{\lambda-z}d\mu(\lambda) 
            = \int_a^b \int_\R \frac{\phi_\lambda(x)}{\lambda-z} g(\lambda) d\mu(\lambda)\, f(x)^\ast d\varrho(x).
 \end{align*}
 Hence we have 
 \begin{align*}
  R_z \check{g}(x) = \int_\R \frac{\phi_\lambda(x)}{\lambda-z} g(\lambda) d\mu(\lambda)
 \end{align*}
 for almost all $x\in(a,b)$ with respect to $\varrho$. Using Theorem~\ref{thm:ressep} one gets
 \begin{align*}
  \spr{\mathcal{F}P G_z(x,\cdot\,)}{g^\ast}_\mu & = \spr{G_z(x,\cdot\,)}{\check{g}^\ast} = \int_\R \frac{\phi_\lambda(x)}{\lambda-z} g(\lambda) d\mu(\lambda)
 \end{align*}
 for almost all $x\in(a,b)$ with respect to $\varrho$. Since all three terms are absolutely
  continuous with respect to $\varsigma$, this equality is true for all $x\in(a,b)$, which proves the first 
  part of the claim.
 The second equality follows from
 \begin{align*}
  \spr{\mathcal{F}P \partial_x G_z(x,\cdot\,)}{g^\ast}_\mu = \spr{\partial_x G_z(x,\cdot\,)}{\check{g}^\ast} = R_z \check{g}^\qd(x) = \int_\R \frac{\phi_\lambda^\qd(x)}{\lambda-z} g(\lambda) d\mu(\lambda).
 \end{align*}
\end{proof}

Note that $\mathcal{F}P$ is the unique extension to $\Lr$ of the bounded linear mapping defined in~\eqref{eqnSThatf} on $L^2_c((a,b);\varrho)$.

\begin{lemma}\label{lemspectranweyltrans}
 Suppose $\tau$ is in the l.c.~case at $a$ and $\theta_z$, $\phi_z$, $z\in\C$ is a fundamental system as in Theorem~\ref{thmweyltitchfundsys}.
  Then for each $z\in\rho(S)$ the transform of the Weyl solution $\psi_z$ is given by
 \begin{align}
  \mathcal{F}P \psi_z(\lambda) = \frac{1}{\lambda-z},\quad \lambda\in\R.
 \end{align}
\end{lemma}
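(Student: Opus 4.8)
The plan is to exploit the explicit resolvent kernel together with the Green function transform from Lemma~\ref{lemspectransGreentransform}. Since $\tau$ is in the l.c.\ case at $a$, every solution of $(\tau-z)u=0$ lies in $\Lr$ near $a$; in particular $\theta_z$, $\phi_z$ do, while $\psi_z=\theta_z+M(z)\phi_z$ lies in $\Lr$ near $b$ by construction. Hence $\psi_z\in\Lr$ and $\mathcal{F}P\psi_z$ is defined. Because $\phi_z$ satisfies the boundary condition at $a$ and $\psi_z$ lies in $S$ near $b$, with $W(\psi_z,\phi_z)=W(\theta_z,\phi_z)=1$, Theorem~\ref{thm:ressep} gives $G_z(x_0,y)=\phi_z(y)\psi_z(x_0)$ for $y<x_0$ and $G_z(x_0,y)=\phi_z(x_0)\psi_z(y)$ for $y\ge x_0$. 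Fixing $x_0$, this yields the $\Lr$-identity $G_z(x_0,\cdot)=\phi_z(x_0)\psi_z+h_{x_0}$, where $h_{x_0}=\indik_{(a,x_0)}\bigl[\psi_z(x_0)\phi_z-\phi_z(x_0)\psi_z\bigr]$ is supported in $(a,x_0)$. Applying $\mathcal{F}P$ and using Lemma~\ref{lemspectransGreentransform} to evaluate $\mathcal{F}P\,G_z(x_0,\cdot)(\lambda)=\phi_\lambda(x_0)/(\lambda-z)$, it remains to compute $\mathcal{F}P\,h_{x_0}$ and solve for $\mathcal{F}P\psi_z$.

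For the transform of $h_{x_0}$ I would compute the transforms of its two pieces $\indik_{(a,x_0)}\phi_z$ and $\indik_{(a,x_0)}\psi_z$. First approximating by $\indik_{(\alpha,x_0)}$ with $\alpha>a$ (which lie in $L^2_c((a,b);\varrho)$), the Lagrange identity (Proposition~\ref{propLagrange}) gives
\[
 \int_\alpha^{x_0}\phi_\lambda\phi_z\,d\varrho=\frac{W(\phi_z,\phi_\lambda)(x_0)-W(\phi_z,\phi_\lambda)(\alpha)}{z-\lambda},
\]
and similarly with $\psi_z$ in place of $\phi_z$. Letting $\alpha\downarrow a$ (using that $\mathcal{F}P$ is bounded and $\indik_{(\alpha,x_0)}\to\indik_{(a,x_0)}$ in $\Lr$) and inserting the boundary Wronskian values from Theorem~\ref{thmweyltitchfundsys}, namely $W(\phi_z,\phi_\lambda)(a)=0$ and $W(\psi_z,\phi_\lambda)(a)=W(\theta_z,\phi_\lambda)(a)=1$, I obtain
\[
 \mathcal{F}P(\indik_{(a,x_0)}\phi_z)(\lambda)=\frac{W(\phi_z,\phi_\lambda)(x_0)}{z-\lambda},\qquad
 \mathcal{F}P(\indik_{(a,x_0)}\psi_z)(\lambda)=\frac{W(\psi_z,\phi_\lambda)(x_0)-1}{z-\lambda}.
\]
Combining these with the elementary (purely algebraic) Wronskian identity $\psi_z\,W(\phi_z,\phi_\lambda)-\phi_z\,W(\psi_z,\phi_\lambda)=-\phi_\lambda\,W(\psi_z,\phi_z)=-\phi_\lambda$ gives, after the cancellation of the $W(\cdot)(x_0)$ terms, $\mathcal{F}P\,h_{x_0}(\lambda)=\bigl(\phi_\lambda(x_0)-\phi_z(x_0)\bigr)/(\lambda-z)$.

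Finally, applying $\mathcal{F}P$ to $G_z(x_0,\cdot)=\phi_z(x_0)\psi_z+h_{x_0}$ yields
\[
 \frac{\phi_\lambda(x_0)}{\lambda-z}=\phi_z(x_0)\,\mathcal{F}P\psi_z(\lambda)+\frac{\phi_\lambda(x_0)-\phi_z(x_0)}{\lambda-z},
\]
so that $\phi_z(x_0)\,\mathcal{F}P\psi_z(\lambda)=\phi_z(x_0)/(\lambda-z)$; dividing by $\phi_z(x_0)$ at any point $x_0$ where it is nonzero (such a point exists since $\phi_z$ is a nontrivial solution, and the resulting value is independent of $x_0$) gives $\mathcal{F}P\psi_z(\lambda)=1/(\lambda-z)$ for $\mu$-almost every $\lambda$. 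The main obstacle is that the localized functions $\indik_{(a,x_0)}\phi_z$ are \emph{not} compactly supported—their support reaches the endpoint $a$—so the plain formula $\hat f(\lambda)=\int\phi_\lambda f\,d\varrho$ is unavailable and one must work with the $\Lr$-continuous extension $\mathcal{F}P$ and justify the limit $\alpha\downarrow a$; this is precisely where membership in $\Lr$ near $a$ (the l.c.\ hypothesis) and the vanishing of $W(\phi_z,\phi_\lambda)(a)$ are used. As an alternative route one could first derive the resolvent relation $R_\zeta\psi_z=(\psi_z-\psi_\zeta)/(z-\zeta)$ (via the Lagrange identity and $W(\psi_z,\psi_\zeta)(b)=0$, $W(\psi_z,\phi_\zeta)(a)=1$), which forces $\mathcal{F}P\psi_z(\lambda)=c(\lambda)/(\lambda-z)$ with $c$ independent of $z$, but the computation above is still needed to pin down $c\equiv 1$.
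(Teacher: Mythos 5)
Your argument is correct, but it reaches the conclusion by a different decomposition than the paper. The paper uses \emph{both} formulas of Lemma~\ref{lemspectransGreentransform}: it forms $\tilde{\psi}_z(x,\cdot)=\theta_z(x)\partial_xG_z(x,\cdot)-\theta_z^\qd(x)G_z(x,\cdot)$, which equals $\psi_z$ on $[x,b)$ and $M(z)\phi_z$ on $(a,x)$, reads off $\mathcal{F}P\tilde{\psi}_z(x,\cdot)(\lambda)=W(\theta_z,\phi_\lambda)(x)/(\lambda-z)$ directly, and then sends $x\downarrow a$, using $\tilde{\psi}_z(x,\cdot)\to\psi_z$ in $\Lr$ (since $\theta_z$ lies in $\Lr$ near $a$) together with $W(\theta_z,\phi_\lambda)(a)=1$. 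You instead fix $x_0$, invoke only the first formula of Lemma~\ref{lemspectransGreentransform}, and split off the remainder $h_{x_0}$ supported in $(a,x_0)$, whose transform you compute by hand from the Lagrange identity (Proposition~\ref{propLagrange}) applied to the truncations $\indik_{(\alpha,x_0)}\phi_z$, $\indik_{(\alpha,x_0)}\psi_z$, using the boundary Wronskians $W(\phi_z,\phi_\lambda)(a)=0$ and $W(\psi_z,\phi_\lambda)(a)=W(\theta_z,\phi_\lambda)(a)=1$ from Theorem~\ref{thmweyltitchfundsys}. So both proofs consume the same normalization at $a$, but the paper takes the limit in the Green-function variable $x$ while you take it in the truncation parameter $\alpha$; your route is longer but avoids the $\partial_xG_z$ transform formula and makes the role of the Lagrange identity explicit. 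All the steps check out: the identity $G_z(x_0,\cdot)=\phi_z(x_0)\psi_z+h_{x_0}$, the Pl\"ucker-type cancellation $\psi_zW(\phi_z,\phi_\lambda)-\phi_zW(\psi_z,\phi_\lambda)=-\phi_\lambda$, and the final division by $\phi_z(x_0)\neq0$ (which exists since $\phi_z$ is nontrivial) are all valid. Two cosmetic remarks: the limit $\alpha\downarrow a$ identifies the $\Lrmu$-limit with the pointwise limit only $\mu$-a.e.\ (via a subsequence), which is the sense in which the stated identity holds anyway; and one should take $\alpha$ avoiding the countably many atoms of $\varrho$ so that $\indik_{(\alpha,x_0)}$ versus $\indik_{[\alpha,x_0)}$ causes no discrepancy with the paper's convention $\int_\alpha^{x_0}=\int_{[\alpha,x_0)}$.
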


\begin{proof}
 From Lemma~\ref{lemspectransGreentransform} we obtain for each $x\in(a,b)$
 \begin{align*}
  \mathcal{F}P \tilde{\psi}_z(x,\cdot\,)(\lambda) = \frac{W(\theta_z,\phi_\lambda)(x)}{\lambda-z}, \quad \lambda\in\R,
 \end{align*}
 where
 \begin{align*}
  \tilde{\psi}_z(x,y) = \begin{cases}
                         m(z) \phi_z(y), & \text{if }y< x, \\
                         \psi_z(y), & \text{if }y\geq x.
                        \end{cases}
 \end{align*}
 Now the claim follows by letting $x\rightarrow a$, using Theorem~\ref{thmweyltitchfundsys}.
\end{proof}

 Under the assumptions of Lemma~\ref{lemspectranweyltrans}, $m$ is a Herglotz--Nevanlinna function. Hence we have
 \begin{align}\label{eqnSThergMrep}
  m(z) = c_1 + c_2 z + \int_\R \frac{1}{\lambda-z} - \frac{\lambda}{1+\lambda^2}d\mu(\lambda),\quad z\in\C\backslash\R,
 \end{align}
 where the constants $c_1$, $c_2$ are given by 
\begin{align*}
 c_1 = \re(m(\I)) \quad\text{and}\quad c_2=\lim_{\eta\uparrow\infty} \frac{m(\I\eta)}{\I\eta}\geq 0.
\end{align*}

\begin{corollary}
 Suppose $\tau$ is in the l.c.~case at $a$ and $\theta_z$, $\phi_z$, $z\in\C$ is a fundamental system as in Theorem~\ref{thmweyltitchfundsys}.
  Then the second constant in~\eqref{eqnSThergMrep} is given by
 \begin{align*}
  c_2=\lim_{\eta\uparrow\infty} \frac{m(i\eta)}{i\eta} = 
   \begin{cases} \theta_z(\alpha_\varrho)^2 \varrho(\lbrace\alpha_\varrho\rbrace), & \text{if }\indik_{\lbrace\alpha_\varrho\rbrace}\in\mul{S}, \\
      0, &\text{otherwise.} \end{cases}
\end{align*}
\end{corollary}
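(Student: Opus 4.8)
The plan is to avoid computing the limit $\eta\uparrow\infty$ head-on and instead to extract $c_2$ from an exact identity valid for every $z\in\C\backslash\R$. Taking imaginary parts in the Herglotz representation~\eqref{eqnSThergMrep} (the $c_1$ and $\frac{\lambda}{1+\lambda^2}$ terms being real) gives
\begin{align*}
 \frac{\im M(z)}{\im z} = c_2 + \int_\R \frac{d\mu(\lambda)}{|\lambda - z|^2}, \quad z\in\C\backslash\R.
\end{align*}
By~\eqref{eqnWTHerglotz} the left-hand side equals $\|\psi_z\|^2$, while Lemma~\ref{lemspectranweyltrans} together with the isometry property of $\mathcal F$ from Lemma~\ref{lemspectransfourtrans} identifies the integral as
\begin{align*}
 \int_\R\frac{d\mu(\lambda)}{|\lambda - z|^2} = \left\| \mathcal F P\psi_z\right\|_\mu^2 = \left\| P\psi_z\right\|^2,
\end{align*}
where $P$ is the orthogonal projection onto $\D$. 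Hence $c_2 = \|\psi_z\|^2 - \|P\psi_z\|^2 = \|(I-P)\psi_z\|^2$; that is, $c_2$ is the squared norm of the component of $\psi_z$ in $\mul S = \D^\bot$. As a byproduct this exhibits $\|(I-P)\psi_z\|^2$ as independent of $z$.

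Next I would evaluate this component explicitly. Since $\mul S\subseteq\linspan\{\indik_{\{\alpha_\varrho\}},\indik_{\{\beta_\varrho\}}\}$ and these indicators are orthogonal, the projection onto $\mul S$ contributes at most $|\psi_z(\alpha_\varrho)|^2\varrho(\{\alpha_\varrho\})$ and $|\psi_z(\beta_\varrho)|^2\varrho(\{\beta_\varrho\})$, each present only if the corresponding indicator lies in $\mul S$. The $\beta_\varrho$-term always drops out: if $\indik_{\{\beta_\varrho\}}\in\mul S$ then $\varrho$ has mass at $\beta_\varrho$, the boundary condition at $b$ forces functions in $S$ to vanish at $\beta_\varrho$, and since $\psi_z$ satisfies this boundary condition we get $\psi_z(\beta_\varrho)=0$. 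Therefore $c_2 = |\psi_z(\alpha_\varrho)|^2\varrho(\{\alpha_\varrho\})$ when $\indik_{\{\alpha_\varrho\}}\in\mul S$ and $c_2 = 0$ otherwise, which is already the claimed dichotomy.

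It remains to rewrite the surviving term and to justify the notation. When $\indik_{\{\alpha_\varrho\}}\in\mul S$, the proof of Lemma~\ref{lemspectransfourtrans} shows $\phi_z(\alpha_\varrho)=0$, so $\psi_z(\alpha_\varrho)=\theta_z(\alpha_\varrho)+M(z)\phi_z(\alpha_\varrho)=\theta_z(\alpha_\varrho)$ and thus $c_2 = \theta_z(\alpha_\varrho)^2\varrho(\{\alpha_\varrho\})$. To see that $\theta_z(\alpha_\varrho)$ does not depend on $z$, note that mass at $\alpha_\varrho$ (with $\varrho(\{a\})=0$) forces $\alpha_\varrho>a$ and $\varrho$ to have no weight on $(a,\alpha_\varrho)$; hence $W(\theta_{z_1},\phi_{z_2})$ is constant there and equals its value $1$ at $a$. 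Evaluating this Wronskian at $\alpha_\varrho-$, using $\phi_{z_2}(\alpha_\varrho)=0$ and continuity at $\alpha_\varrho$ (recall $\varsigma(\{\alpha_\varrho\})=0$), yields $\theta_{z_1}(\alpha_\varrho)\phi_{z_2}^\qd(\alpha_\varrho)=1$ for all $z_1,z_2$, so $\theta_z(\alpha_\varrho)$ is a $z$-independent constant and the formula is unambiguous.

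The main obstacle is the middle step. One must keep track of the fact that $\psi_z$ need not belong to $\D$, so that the integral $\int d\mu/|\lambda-z|^2$ computes $\|P\psi_z\|^2$ rather than $\|\psi_z\|^2$; it is precisely the defect $\|(I-P)\psi_z\|^2$ that encodes the point-mass contribution, and correctly identifying the projection onto $\mul S$ — in particular verifying that the $\beta_\varrho$-contribution vanishes because $\psi_z$ obeys the boundary condition at $b$ — is where the argument has to be made rigorous.
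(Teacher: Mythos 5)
Your proposal is correct and follows essentially the same route as the paper: take imaginary parts in the Herglotz representation, identify $\im(M(z))/\im(z)$ with $\|\psi_z\|^2$ via~\eqref{eqnWTHerglotz}, use Lemma~\ref{lemspectranweyltrans} to recognize the integral as $\|\mathcal{F}P\psi_z\|_\mu^2=\|P\psi_z\|^2$, and so obtain $c_2=\|(I-P)\psi_z\|^2$, which is then evaluated using $\psi_z(\beta_\varrho)=0$ and $\phi_z(\alpha_\varrho)=0$. Your extra Wronskian argument showing that $\theta_z(\alpha_\varrho)$ is a real $z$-independent constant is a welcome elaboration of the paper's terser closing remark.
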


\begin{proof}
 Taking imaginary parts in~\eqref{eqnSThergMrep} yields for each $z\in\C\backslash\R$
 \begin{align*}
  \im(m(z)) & = c_2\,\im(z) + \int_\R\im\left(\frac{1}{\lambda-z}\right)d\mu(\lambda) \\
            & = c_2\,\im(z) + \int_\R \frac{\im(z)}{|\lambda-z|^2}d\mu(\lambda).
 \end{align*}
 From this we get, using Lemma~\ref{lemspectranweyltrans} and~\eqref{eqnWTHerglotz}
 \begin{align*}
  c_2 + \int_\R \frac{1}{|\lambda-z|^2}d\mu(\lambda) & = \frac{\im(m(z))}{\im(z)} = \|\psi_z\|^2 = \|(I-P)\psi_z\|^2 + \|\mathcal{F}P\psi_z\|_\mu^2 \\
      & = \|(I-P)\psi_z\|^2 + \int_\R \frac{1}{|\lambda-z|^2}d\mu(\lambda). 
 \end{align*}
 Hence we have (note that $\psi_z(\beta_\varrho)=0$ if $\indik_{\lbrace\beta_\varrho\rbrace}\in\mul{S}\backslash\lbrace0\rbrace$)
 \begin{align*}
  c_2 = \|(I-P)\psi_z\|^2 = \begin{cases}
                             \left|\psi_z(\alpha_\varrho)\right|^2 \varrho(\lbrace\alpha_\varrho\rbrace), & \text{if }\indik_{\lbrace\alpha_\varrho\rbrace}\in\mul{S},\\
                             0, &\text{otherwise.}
                            \end{cases}
 \end{align*}
 Now assume $\indik_{\lbrace\alpha_\varrho\rbrace}\in\mul{S}\backslash\lbrace0\rbrace$, then $\phi_z(\alpha_\varrho)=0$ and hence
 \begin{align*}
  c_2 = \left|\theta_z(\alpha_\varrho) + m(z)\phi_z(\alpha_\varrho)\right|^2 \varrho(\lbrace\alpha_\varrho\rbrace) = \left|\theta_z(\alpha_\varrho)\right|^2 \varrho(\lbrace\alpha_\varrho\rbrace), \quad z\in\C\backslash\R.
 \end{align*}
 Finally, since $\theta_z$ is a real entire function, this proves the claim.
\end{proof}

\begin{remark}
Given another singular Weyl--Titchmarsh--Kodaira function $\tilde{m}$ as in Remark~\ref{remWTtilde}, the corresponding spectral measures are related by
\begin{align}
 \tilde{\mu} = e^{-2g} \mu,
\end{align}
 where $\E^g$ is the real entire function appearing in Remark~\ref{remWTtilde}.
 Hence the measures are mutually absolutely continuous and the associated spectral transformations just differ by a simple rescaling with the positive function $e^{-2g}$.
 Also note that the spectral measure does not depend on the choice of the second solution $\theta_z$.
\end{remark}

\section{Spectral transformation II}

In this section let $S$ be a self-adjoint restriction of $\Tmax$ with separated boundary conditions as in the preceding section.
We now want to consider the case where none of the endpoints satisfies the requirements of the previous section. In such a
situation the spectral multiplicity of $S$ could be two and hence we will need to work with a matrix-valued transformation.

In the following we will fix some $x_0\in(a,b)$ and consider the real entire fundamental system of solutions
$\theta_z$, $\phi_z$, $z\in\C$ with the initial conditions
\begin{align*}
 \phi_z(x_0) = -\theta_z^\qd(x_0) = -\sin(\vphi_\alpha) \quad\text{and}\quad \phi^\qd_z(x_0)=\theta_z(x_0) = \cos(\vphi_\alpha)
\end{align*}
for some fixed $\vphi_\alpha\in[0,\pi)$.
The Weyl solutions are given by
\begin{align}
 \psi_{z,\pm}(x) = \theta_z(x) \pm m_\pm(z)\phi_z(x), \quad x\in(a,b),~z\in\C\backslash\R,
\end{align} 
such that $\psi_-$ lies in $L^2((a,b);\varrho)$ near $a$ and $\psi_+$ lies in $L^2((a,b);\varrho)$ near $b$.
Here $m_\pm$ are the Weyl--Titchmarsh--Kodaira functions of the operators $S_\pm$ obtained by restricting $S$
to $(a,x_0)$ and $(x_0,b)$ with a boundary condition 
\begin{align*}
\cos(\vphi_\alpha) f(x_0) + \sin(\vphi_\alpha) f^\qd(x_0)=0,
\end{align*}
 respectively. 
According to Corollary~\ref{spek::manal} the functions $m_\pm$ are Herglotz--Nevanlinna functions.
Now we introduce the $2\times 2$ Weyl--Titchmarsh--Kodaira matrix
\begin{align}\label{eq:wmmat}
M(z) = \begin{pmatrix}
         -\frac{1}{m_-(z) + m_+(z)} & \frac{1}{2} \frac{m_-(z) - m_+(z)}{m_-(z) + m_+(z)} \\
         \frac{1}{2} \frac{m_-(z) - m_+(z)}{m_-(z) + m_+(z)} & \frac{m_-(z)m_+(z)}{m_-(z)+m_+(z)}
       \end{pmatrix}, \quad z\in\C\backslash\R.
\end{align}
In particular, note that we have $\det(M(z)) = -\frac{1}{4}$.
The function $M$ is a matrix Herglotz--Nevanlinna function with representation
\begin{align*}
 M(z) = C_1 + C_2 z + \int_\R \left(\frac{1}{\lambda-z}-\frac{\lambda}{1+\lambda^2}\right) d\Omega(\lambda), \quad z\in\C\backslash\R,
\end{align*}
where $C_1$ is a self-adjoint matrix, $C_2$ a non-negative matrix and $\Omega$ is a symmetric matrix-valued measure given by the Stieltjes inversion formula
\begin{align*}
 \Omega((\lambda_1,\lambda_2]) = \lim_{\delta\downarrow 0} \lim_{\eps\downarrow 0} \frac{1}{\pi} \int_{\lambda_1+\delta}^{\lambda_2+\delta} \im(M(\lambda+\I\eps)) d\lambda, \quad\lambda_1,\lambda_2\in\R, ~\lambda_1<\lambda_2.
\end{align*}
Moreover, the trace $\Omega^{\mathrm{tr}} = \Omega_{1,1}+\Omega_{2,2}$ of $\Omega$ is a non-negative measure and the components of $\Omega$ are absolutely continuous with respect to $\Omega^{\mathrm{tr}}$. The respective densities are denoted by $R_{i,j}$, $i, j\in\lbrace 1,2\rbrace$, and are given by
\begin{align}\label{eq:Rlim}
R_{i,j}(\lambda) = \lim_{\eps\downarrow 0} \frac{\im(M_{i,j}(\lambda+\I\eps))}{\im(M_{1,1}(\lambda+\I\eps) + M_{2,2}(\lambda+\I\eps))},
\end{align}
where the limit exists almost everywhere with respect to $\Omega^{\mathrm{tr}}$. Note that $R$ is non-negative and has trace equal to one. In particular, all entries
of $R$ are bounded; $0 \le R_{1,1},R_{2,2} \le 1$ and $| R_{1,2} | = |R_{2,1}| \le 1/2$.

Furthermore, the corresponding Hilbert space $L^2(\R;\Omega)$ is associated with the inner product
\begin{align*}
 \spr{\hat{f}}{\hat{g}}_\Omega = \int_\R \hat{f}(\lambda)\hat{g}(\lambda)^\ast d\Omega(\lambda) = 
 \int_\R \sum_{i,j=1}^2 \hat{f}_i(\lambda) R_{i,j}(\lambda) \hat{g}_j(\lambda)^\ast d\Omega^{\mathrm{tr}}(\lambda).
\end{align*}
Now for each $f\in L^2_c((a,b);\varrho)$ we define the transform of $f$ as 
\begin{align}\label{eqnSTIItrans}
 \hat{f}(z) = \begin{pmatrix}
                \int_a^b \theta_z(x)f(x) d\varrho(x) \\ \int_a^b \phi_z(x) f(x) d\varrho(x)
              \end{pmatrix}, \quad z\in\C.
\end{align}
In the following lemma we will relate the $2\times 2$ matrix-valued measure $\Omega$ to the operator-valued spectral measure $E$ of $S$. If $F$ is a measurable function on $\R$, we denote with $\M_F$ the maximally defined operator of multiplication with $F$ in the Hilbert space $L^2(\R;\Omega)$.

\begin{lemma}\label{lemSTIIisom}
 If $f$, $g\in L^2_c((a,b);\varrho)$, then we have
  \begin{align}
    \spr{E((\lambda_1,\lambda_2])f}{g} = \spr{\M_{\indik_{(\lambda_1,\lambda_2]}} \hat{f}}{\hat{g}}_\Omega
  \end{align}
  for all $\lambda_1$, $\lambda_2\in\R$ with $\lambda_1<\lambda_2$.
\end{lemma}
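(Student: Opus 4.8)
The plan is to compute the resolvent $\spr{R_z f}{g}$ explicitly for $z\in\C\backslash\R$ and then to read off the spectral measure by a Stieltjes--Liv\v{s}i\'c inversion, exactly as in the scalar situation of Lemma~\ref{lemspectransEfgmu}; the only genuinely new ingredient is the bookkeeping for the $2\times2$ matrix. Since $S$ has separate boundary conditions, Theorem~\ref{thm:ressep} applies with $u_a=\psi_{z,-}$ (which lies in $\Lr$ near $a$, resp.\ satisfies the boundary condition at $a$) and $u_b=\psi_{z,+}$. Using $W(\theta_z,\phi_z)=1$ together with $\psi_{z,\pm}=\theta_z\pm m_\pm(z)\phi_z$ one finds $W(\psi_{z,+},\psi_{z,-})=-(m_-(z)+m_+(z))$, so that the Green function of Theorem~\ref{thm:ressep} becomes $G_z(x,y)=-(m_-(z)+m_+(z))^{-1}\psi_{z,-}(x\wedge y)\,\psi_{z,+}(x\vee y)$.

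Next I would substitute $\psi_{z,\pm}=\theta_z\pm m_\pm(z)\phi_z$ and collect terms. Writing $u_1=\theta_z$, $u_2=\phi_z$, a direct computation (using $-\tfrac{m_+}{m_-+m_+}=M_{12}-\tfrac12$ and $\tfrac{m_-}{m_-+m_+}=M_{12}+\tfrac12$ to split the mixed terms into symmetric and antisymmetric parts) yields
\begin{align*}
 G_z(x,y)=\sum_{i,j=1}^2 u_i(x)\,M_{ij}(z)\,u_j(y) + \tfrac12\,\sgn(y-x)\big(\phi_z(x)\theta_z(y)-\theta_z(x)\phi_z(y)\big),
\end{align*}
where the $M_{ij}(z)$ are precisely the entries of the Weyl--Titchmarsh matrix~\eqref{eq:wmmat}: the coefficient of $\theta_z\theta_z$ is $-(m_-+m_+)^{-1}=M_{11}$, that of $\phi_z\phi_z$ is $m_-m_+/(m_-+m_+)=M_{22}$, and the symmetric mixed part carries $M_{12}=M_{21}$. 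The last summand is entire in $z$ for fixed $x,y$, being built only from $\theta_z,\phi_z$. Integrating against $g(x)^\ast f(y)\,d\varrho(x)\,d\varrho(y)$, and recalling that $\theta_z,\phi_z$ are real entire so that $\int g^\ast u_{i,z}\,d\varrho=\hat g_i(z^\ast)^\ast$, gives
\begin{align*}
 \spr{R_z f}{g}=\sum_{i,j=1}^2 \hat g_i(z^\ast)^\ast\,M_{ij}(z)\,\hat f_j(z) + H(z), \quad z\in\C\backslash\R,
\end{align*}
with $H$ entire and locally bounded in $z$ by Theorem~\ref{thmMSLEanaly}.

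Then I would invoke the Stieltjes--Liv\v{s}i\'c inversion for the complex measure $E_{f,g}=\spr{E(\cdot)f}{g}$ in the difference form $\tfrac{1}{2\pi\I}\int_{\lambda_1+\delta}^{\lambda_2+\delta}\big(\spr{R_{\lambda+\I\eps}f}{g}-\spr{R_{\lambda-\I\eps}f}{g}\big)\,d\lambda$. The entire remainder drops out, since $H(\lambda+\I\eps)-H(\lambda-\I\eps)\to0$ locally uniformly as $\eps\downarrow0$. For the main term one uses the matrix Herglotz symmetry $M(z^\ast)=M(z)^\ast$ to write $\tfrac{1}{2\I}\big(M(\lambda+\I\eps)-M(\lambda-\I\eps)\big)=\im M(\lambda+\I\eps)$, so that the Stieltjes inversion defining $\Omega$ turns the sum into $\sum_{i,j}\int_{(\lambda_1,\lambda_2]}\hat g_i(\lambda)^\ast\,\hat f_j(\lambda)\,d\Omega_{ij}(\lambda)$, with the now real-parameter transforms $\hat f(\lambda),\hat g(\lambda)$ from~\eqref{eqnSTIItrans}. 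Relabelling $i\leftrightarrow j$ and using the symmetry $\Omega_{ij}=\Omega_{ji}$ rewrites this as $\sum_{i,j}\int_{(\lambda_1,\lambda_2]}\hat f_i\,\hat g_j^\ast\,d\Omega_{ij}=\spr{M_{\indik_{(\lambda_1,\lambda_2]}}\hat f}{\hat g}_\Omega$, which is the claim.

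The coefficient-matching in the second step is routine but is where the precise form~\eqref{eq:wmmat} is used, so I would double-check it carefully. I expect the main obstacle to lie in the inversion step: justifying the interchange of the double limit $\delta\downarrow0$, $\eps\downarrow0$ with the $\lambda$-integration, confirming that the entire part $H$ contributes nothing under the limit, and matching the componentwise scalar Stieltjes--Liv\v{s}i\'c formula against the matrix-valued one defining $\Omega$. All of these are handled precisely as in the scalar proof of Lemma~\ref{lemspectransEfgmu} (and \cite[Lem.~3]{kst}), now applied to each of the four entries of the $2\times2$ matrix, together with the local boundedness and analyticity supplied by Theorem~\ref{thmMSLEanaly}.
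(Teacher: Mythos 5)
Your proposal is correct and follows essentially the same route as the paper, which proves the lemma by evaluating Stone's formula with the resolvent kernel of Theorem~\ref{thm:ressep} and applying the Stieltjes inversion formula as in \cite[Thm.~2.12]{gz}. Your explicit decomposition of the Green function into the $M_{ij}(z)$-part plus the entire antisymmetric remainder, and the subsequent inversion, is precisely the computation the paper delegates to that reference; the coefficient matching and the identification $W(\psi_{z,+},\psi_{z,-})=-(m_-(z)+m_+(z))$ both check out.
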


\begin{proof}
This follows by evaluating Stone's formula
\[
\spr{E((\lambda_1,\lambda_2])f}{g} =
\lim_{\delta\downarrow 0} \lim_{\eps\downarrow 0} \frac{1}{\pi} \int_{\lambda_1+\delta}^{\lambda_2+\delta} \im\left( \spr{R_{\lam+\I\eps} f}{g}\right) d\lam,
\]
using our formula for the resolvent \eqref{eq:ressbc} together with Stieltjes inversion formula, literally following the proof of \cite[Theorem~2.12]{gz}.
\end{proof}

Lemma~\ref{lemSTIIisom} shows that the transformation defined in~\eqref{eqnSTIItrans} uniquely extends to an
 isometry $\mathcal{F}$ from $\Lr$ into $L^2(\R;\Omega)$.

\begin{theorem}
 The operator $\mathcal{F}$ is unitary with inverse given by
 \begin{align}\label{eq:Finv2}
 \mathcal{F}^{-1} g(x) =
 \lim_{N\rightarrow\infty} \int_{[-N,N)} g(\lam) \begin{pmatrix} \theta_\lam(x)\\ \phi_\lam(x)\end{pmatrix} d\Omega(\lam), \quad g\in L^2(\R;\Omega),
 \end{align}
 where the limit exists in $\Lr$.
 Moreover, $\mathcal{F}$ maps $S$ onto $\M_{\mathrm{id}}$.
\end{theorem}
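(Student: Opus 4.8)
The plan is to run the matrix-valued analogue of Lemma~\ref{lemspectransfourtrans} and Theorem~\ref{thmspectransS}. First I would record that $\mathcal{F}$ is isometric: letting $\lambda_1\downarrow-\infty$ and $\lambda_2\uparrow\infty$ in Lemma~\ref{lemSTIIisom} with $g=f$ gives $\|\mathcal{F}f\|_\Omega^2=\spr{E(\R)f}{f}$ for $f\in L^2_c((a,b);\varrho)$. When $S$ is an operator $E(\R)$ is the identity and $\mathcal{F}$ extends to an isometry on all of $\Lr$; in the multi-valued case one works, exactly as in the preceding section, with the operator part $S_\D$ on $\D=\overline{\dom{S}}$, on which $\mathcal{F}$ is isometric while annihilating $\mul{S}$. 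In either case $\ran(\mathcal{F})$ is a closed subspace of $L^2(\R;\Omega)$, so the whole content of the unitarity claim is surjectivity.

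Next I would introduce the candidate inverse $\mathcal{G}$, defined for compactly supported $g\in L^2(\R;\Omega)$ by the right-hand side of~\eqref{eq:Finv2}, and show it is bounded from $L^2(\R;\Omega)$ into $\Lr$ with $\|\mathcal{G}g\|\leq\|g\|_\Omega$. This is the same Fubini-and-Cauchy--Schwarz estimate used just before Lemma~\ref{lemspectransfourtrans}, the only new ingredient being that both components $\theta_\lambda(x)$ and $\phi_\lambda(x)$ are entire and locally bounded in $\lambda$ by Theorem~\ref{thmMSLEanaly}. A Fubini computation pairing $\mathcal{G}\mathcal{F}f$ against test functions $f\in L^2_c((a,b);\varrho)\cap\D$, literally as in the proof of $\mathcal{G}\mathcal{F}f=f$ there, then shows that $\mathcal{G}$ is a left inverse of $\mathcal{F}$ on $\D$.

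The surjectivity is the main obstacle. As in Lemma~\ref{lemspectransfourtrans}, I would use that Lemma~\ref{lemSTIIisom} identifies the operator-valued spectral measure $E$ of $S$ with the multiplication structure of $\Omega$, so that for bounded Borel $F,G$ and $f,g\in\D$ one has $\spr{G(S_\D)F(S_\D)f}{g}=\spr{M_GM_F\mathcal{F}f}{\mathcal{F}g}_\Omega$; writing $h=F(S_\D)f$ and varying $G$ gives $\mathcal{F}g(\lambda)^\ast R(\lambda)\big(\mathcal{F}h(\lambda)-F(\lambda)\mathcal{F}f(\lambda)\big)=0$ for $\Omega^{\mathrm{tr}}$-almost every $\lambda$. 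The delicate point, which distinguishes the matrix case from the scalar argument, is to vary $g$ so that the vectors $\mathcal{F}g(\lambda)$ exhaust the range of the density $R(\lambda)$ near every point; since $\theta_z,\phi_z$ form a fundamental system and the nondegeneracy $\det(M(z))=-\tfrac{1}{4}$ forces the pair of transforms $\int\theta_\lambda f\,d\varrho$ and $\int\phi_\lambda f\,d\varrho$ to be genuinely two-dimensional over the support of $\Omega$, this yields $\mathcal{F}h=F\mathcal{F}f$ in $L^2(\R;\Omega)$. Choosing $f$ with a transform that does not vanish near a given $\lambda_0$ and $F$ a scalar cutoff supported in a small interval $J$ then produces every vector-valued characteristic function $\indik_J\mathcal{F}f\in\ran(\mathcal{F})$, and such functions span $L^2(\R;\Omega)$. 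I expect this generation step --- showing that the two scalar transforms jointly fill each fibre of $\Omega$, including the rank-one points --- to be the only part requiring real care; it is the matrix analogue of the characteristic-function argument and is what the reference to \cite[Thm.~2.12]{gz} supplies.

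Finally, isometry together with surjectivity gives that $\mathcal{F}$ is unitary, whence $\mathcal{F}^{-1}=\mathcal{F}^\ast=\mathcal{G}$ and the truncated integrals in~\eqref{eq:Finv2} converge in $\Lr$ by boundedness of $\mathcal{G}$. The relation $S=\mathcal{F}^{-1}M_{\mathrm{id}}\mathcal{F}$ then follows as in Theorem~\ref{thmspectransS}: Lemma~\ref{lemSTIIisom} gives the domain characterization $f\in\dom{S_\D}\Leftrightarrow\int_\R\lambda^2\,d\spr{E(\lambda)f}{f}<\infty\Leftrightarrow\mathcal{F}f\in\dom{M_{\mathrm{id}}}$, and for $(f,f_\tau)\in S$ the same lemma yields $\spr{f_\tau}{g}=\int_\R\lambda\,d\spr{E(\lambda)f}{g}=\spr{M_{\mathrm{id}}\mathcal{F}f}{\mathcal{F}g}_\Omega$ for all $g\in\D$, so that $\mathcal{F}f_\tau=M_{\mathrm{id}}\mathcal{F}f$ and hence $\mathcal{F}S\mathcal{F}^{-1}=M_{\mathrm{id}}$.
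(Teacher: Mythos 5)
Your overall skeleton (isometry from Lemma~\ref{lemSTIIisom}, boundedness of the candidate inverse, surjectivity, then the intertwining $S=\mathcal{F}^{-1}M_{\mathrm{id}}\mathcal{F}$) matches the paper, and the first and last steps are fine. The problem is the surjectivity argument, which is the heart of the theorem and which you leave open at exactly the point where it is hardest. Transplanting the scalar characteristic-function argument of Lemma~\ref{lemspectransfourtrans} requires you to show that, for $\Omega^{\mathrm{tr}}$-a.e.\ $\lambda$, the vectors $\mathcal{F}g(\lambda)$ (as $g$ varies) exhaust the fibre of $L^2(\R;\Omega)$ at $\lambda$, i.e.\ the range of $R(\lambda)$. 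Your justification --- that $\det(M(z))=-\tfrac14$ ``forces the pair of transforms to be genuinely two-dimensional over the support of $\Omega$'' --- does not establish this: $\det M(z)$ is a statement about the Weyl matrix for non-real $z$, not about the a.e.\ behaviour of the boundary density $R(\lambda)$, and at rank-one points of $R(\lambda)$ one must still show the transforms are not all contained in a proper subspace modulo $\ker R(\lambda)$. That fibrewise density statement is essentially equivalent to the completeness you are trying to prove, so as written the step is circular (or at best deferred to an external reference).

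The paper closes this gap by a different and self-contained device. It verifies that the integral operator in~\eqref{eq:Finv2} is $\mathcal{F}^\ast$ and reduces surjectivity to $\ker(\mathcal{F}^\ast)=\{0\}$. Using Fubini it shows $(S-z)\mathcal{F}^\ast\frac{1}{\cdot-z}\hat f=\mathcal{F}^\ast\hat f$, hence $\mathcal{F}^\ast M_{(\cdot-z)^{-1}}=R_z\mathcal{F}^\ast$; Stone--Weierstra{\ss} and the spectral theorem upgrade this to $\mathcal{F}^\ast M_{\indik_I}=E(I)\mathcal{F}^\ast$ for intervals $I$. For $\hat f\in\ker(\mathcal{F}^\ast)$ this gives
\begin{align*}
\int_I \hat f(\lambda)\begin{pmatrix}\theta_\lambda(x)\\ \phi_\lambda(x)\end{pmatrix}d\Omega(\lambda)=0
\quad\text{and}\quad
\int_I \hat f(\lambda)\begin{pmatrix}\theta^\qd_\lambda(x)\\ \phi^\qd_\lambda(x)\end{pmatrix}d\Omega(\lambda)=0
\end{align*}
for every compact interval $I$ and all $x$. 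Evaluating at $x=x_0$, where the initial conditions produce the two linearly independent constant vectors $(\cos\varphi_\alpha,-\sin\varphi_\alpha)$ and $(\sin\varphi_\alpha,\cos\varphi_\alpha)$, yields $\int_I\hat f\,d\Omega=0$ for all $I$, hence $\hat f=0$. This use of the explicit initial data at the interior anchor point $x_0$ is the idea your proposal is missing; with it, no fibrewise density argument is needed.
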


\begin{proof}
By our previous lemma it remains to show that $\mathcal{F}$ is onto. Since it is straightforward to verify
that the integral operator on the right-hand side of \eqref{eq:Finv2} is the adjoint of $\mathcal{F}$, we can
equivalently show $\ker(\mathcal{F}^*)=\{0\}$. 
To this end let $g\in L^2(\R, \Omega)$, $N\in\N$ and $z\in\rho(S)$, then
\[
(S-z) \int_{-N}^N \frac{1}{\lam-z} g(\lam) \begin{pmatrix} \theta_\lam(x)\\ \phi_\lam(x)\end{pmatrix} d\Omega(\lam)
= \int_{-N}^N g(\lam) \begin{pmatrix} \theta_\lam(x)\\ \phi_\lam(x)\end{pmatrix} d\Omega(\lam),
\]
since interchanging integration with the Radon--Nikod\'{y}m derivatives can be justified using Fubini's theorem. Taking the limit
$N\to\infty$ we conclude
\[
\mathcal{F}^* \frac{1}{\cdot -z} g = R_z \mathcal{F}^* g, \quad g\in L^2(\R, \Omega).
\]
By Stone--Weierstra{\ss} we even conclude $\mathcal{F}^* \M_F g = F(S) \mathcal{F}^* g$ for any
continuous function $F$ vanishing at infinity and by a consequence of the spectral theorem (e.g.\ the last part of
\cite[Theorem~3.1]{tschroe}) we can further extend this to characteristic functions of intervals $I$. Hence, for
$g \in \ker(\mathcal{F}^*)$
we conclude
\[
\int_I g(\lam) \begin{pmatrix} \theta_\lam(x)\\ \phi_\lam(x)\end{pmatrix} d\Omega(\lam) =0
\]
for any compact interval $I$. Moreover, after taking Radon--Nikod\'{y}m derivatives we also have
\[
\int_I g(\lam) \begin{pmatrix} \theta^\qd_\lam(x)\\ \phi^\qd_\lam(x)\end{pmatrix} d\Omega(\lam) =0.
\]
Choosing $x=x_0$ we see
\[
\int_I g(\lam) \begin{pmatrix} \cos(\vphi_\alpha)\\ -\sin(\vphi_\alpha)\end{pmatrix} d\Omega(\lam) =
\int_I g(\lam) \begin{pmatrix} \sin(\vphi_\alpha)\\ \cos(\vphi_\alpha)\end{pmatrix} d\Omega(\lam) =0
\]
for any compact interval $I$ and thus $g=0$ as required.
\end{proof}

Next, there is a measurable unitary matrix $U(\lam)$ which diagonalizes
$R(\lam)$, that is,
\begin{equation} \label{defrulam}
R(\lam) = U(\lam)^* \begin{pmatrix} r_1(\lam) & 0 \\ 0 & r_2(\lam) \end{pmatrix} U(\lam),
\end{equation}
where $0\le r_1(\lam) \le r_2(\lam)\le 1$ are the eigenvalues of $R(\lam)$.
Also note that $r_1(\lam)+r_2(\lam)=1$ by $\tr(R(\lam))=1$. The matrix $U(\lam)$
provides a unitary operator $L^2(\R;\Omega) \to L^2(\R; r_1 d\Omega^{\mathrm{tr}}) \oplus L^2(\R; r_2 d\Omega^{\mathrm{tr}})$
which leaves $\M_{\mathrm{id}}$ invariant. From this observation we immediately obtain
the analog of Corollary~\ref{cor:splimm}.

\begin{corollary}
Introduce the Herglotz--Nevanlinna function
\be
M^\mathrm{tr}(z) = \tr(M(z))= \frac{m_-(z)m_+(z) -1}{m_-(z)+m_+(z)}, \quad z\in\C\backslash\R,
\ee
associated with the measure $\Omega^\mathrm{tr}$.
Then the spectrum of $S$ is given by
\begin{align}
 \sigma(S) & =  \supp(\Omega^\mathrm{tr}) = \overline{\lbrace \lambda\in\R \,|\, 0 < \limsup_{\varepsilon\downarrow 0} \im(M^\mathrm{tr}(\lambda+\I\varepsilon))\rbrace}.
\end{align}
Moreover,
\begin{align*}
\sigma_p(S) & = \{ \lambda\in\R \,|\, 0 < \lim_{\varepsilon\downarrow0} \varepsilon \im(M^\mathrm{tr}(\lambda+\I\varepsilon)) \}, \\
\sigma_{ac}(S) & = \overline{\{ \lambda\in\R \,|\, 0 < \limsup_{\varepsilon\downarrow0} \im(M^\mathrm{tr}(\lambda+\I\varepsilon)) < \infty \}}^{ess},
\end{align*}
and
\begin{align*}
\Sigma_s = \{ \lambda\in\R \,|\, \limsup_{\varepsilon\downarrow0} \im(M^\mathrm{tr}(\lambda+\I\varepsilon)) = \infty \}
\end{align*}
is a minimal support for the singular spectrum (singular continuous plus pure point spectrum) of $S$.
\end{corollary}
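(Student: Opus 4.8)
The plan is to reduce the matrix-valued spectral problem to the scalar one already settled in Corollary~\ref{cor:splimm}. By the preceding theorem, $\mathcal{F}$ is a unitary operator from $\Lr$ onto $L^2(\R;\Omega)$ which maps $S$ onto the multiplication operator $M_{\mathrm{id}}$. Using the measurable unitary matrix $U(\lambda)$ from~\eqref{defrulam} which diagonalizes $R(\lambda)$, the Hilbert space $L^2(\R;\Omega)$ is unitarily equivalent to the orthogonal sum $L^2(\R;r_1\,d\Omega^{\mathrm{tr}}) \oplus L^2(\R;r_2\,d\Omega^{\mathrm{tr}})$ in such a way that $M_{\mathrm{id}}$ is carried onto multiplication by $\lambda$ in each summand. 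Hence $S$ is unitarily equivalent to this direct sum of two scalar multiplication operators.

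First I would record that for a direct sum acting as $M_{\mathrm{id}}$ on $L^2(\R;\nu_1) \oplus L^2(\R;\nu_2)$ the sets $\sigma$, $\sigma_p$, $\sigma_{ac}$ and a minimal support of the singular part are all determined by the single measure $\nu_1 + \nu_2$. Indeed, the Lebesgue decomposition into absolutely continuous, singular continuous and pure point parts commutes with the formation of sums, so at the level of support sets (as opposed to multiplicities) the direct sum behaves exactly like $M_{\mathrm{id}}$ on $L^2(\R;\nu_1+\nu_2)$. In our situation $\nu_1 + \nu_2 = (r_1 + r_2)\,d\Omega^{\mathrm{tr}} = d\Omega^{\mathrm{tr}}$, since $\tr(R(\lambda)) = r_1(\lambda) + r_2(\lambda) = 1$. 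Consequently the four spectral sets attached to $S$ coincide with those of multiplication by $\lambda$ on $L^2(\R;\Omega^{\mathrm{tr}})$.

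It then remains to identify $\Omega^{\mathrm{tr}}$ as the scalar measure belonging to the Herglotz function $M^{\mathrm{tr}}(z) = \tr(M(z))$. This is immediate because the trace is linear, so that $\im(M^{\mathrm{tr}}(\lambda + \I\eps)) = \im(M_{1,1}(\lambda+\I\eps)) + \im(M_{2,2}(\lambda+\I\eps))$, and the Stieltjes inversion formula for $M^{\mathrm{tr}}$ therefore recovers exactly $\Omega^{\mathrm{tr}} = \Omega_{1,1} + \Omega_{2,2}$. Applying the scalar Corollary~\ref{cor:splimm} with $M^{\mathrm{tr}}$ and $\Omega^{\mathrm{tr}}$ in place of $M$ and $\mu$ then yields all the stated descriptions of $\sigma(S)$, $\sigma_p(S)$, $\sigma_{ac}(S)$ and $\Sigma_s$.

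The only genuinely delicate point is the passage from the matrix measure $\Omega$ to its trace $\Omega^{\mathrm{tr}}$, that is, verifying that the diagonal weights $r_1$, $r_2$ neither create nor suppress spectrum. This is what the normalization $r_1 + r_2 = 1$ together with $r_1 \le r_2$ buys us: it forces $r_2 \ge \tfrac12$, so that $r_2\,d\Omega^{\mathrm{tr}}$ is mutually absolutely continuous with $\Omega^{\mathrm{tr}}$, while $r_1\,d\Omega^{\mathrm{tr}} \ll \Omega^{\mathrm{tr}}$; hence the sum-measure lies precisely in the measure class of $\Omega^{\mathrm{tr}}$ and no spectral information is lost in the reduction. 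Everything else is a direct invocation of the scalar theory.
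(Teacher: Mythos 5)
Your proposal is correct and follows essentially the same route as the paper: the paper's own argument is precisely the observation that the diagonalizing matrix $U(\lambda)$ yields a unitary equivalence of $M_{\mathrm{id}}$ on $L^2(\R;\Omega)$ with the direct sum of scalar multiplications on $L^2(\R;r_1\,d\Omega^{\mathrm{tr}})\oplus L^2(\R;r_2\,d\Omega^{\mathrm{tr}})$, from which the analog of Corollary~\ref{cor:splimm} is read off; you merely spell out the (correct) details that $r_1+r_2=1$ makes the sum measure equal to $\Omega^{\mathrm{tr}}$, that the Lebesgue decomposition is additive so the spectral sets of the direct sum are those of $\Omega^{\mathrm{tr}}$, and that Stieltjes inversion identifies $\Omega^{\mathrm{tr}}$ as the measure of $\tr(M(z))$.
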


Furthermore, this allows us to investigate the spectral multiplicity of $S$.

\begin{lemma} \label{lem:spmul}
If we define
\begin{align*}
\Sigma_1 &= \{ \lam\in \supp(\Omega^{tr}) \,| \det R(\lam)= r_1(\lam) r_2(\lam)=0 \},\\
\Sigma_2 &= \{ \lam\in \supp(\Omega^{tr}) \,| \det R(\lam)= r_1(\lam) r_2(\lam)>0 \},
\end{align*}
then $\M_{\mathrm{id}}=  \M_{\mathrm{id}\cdot \indik_{\Sigma_1}} \oplus  \M_{\mathrm{id}\cdot \indik_{\Sigma_2}}$ and
the spectral multiplicity of $\M_{\mathrm{id}\cdot \indik_{\Sigma_1}}$ is one and
the spectral multiplicity of $\M_{\mathrm{id}\cdot \indik_{\Sigma_2}}$ is two.
\end{lemma}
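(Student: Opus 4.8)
The plan is to reduce the whole statement to the scalar spectral multiplicity theory of multiplication operators, exploiting the diagonalization \eqref{defrulam} that is already available. Recall that $U(\lam)$ furnishes a unitary operator from $L^2(\R;\Omega)$ onto $L^2(\R; r_1 d\Omega^{\mathrm{tr}}) \oplus L^2(\R; r_2 d\Omega^{\mathrm{tr}})$ which leaves $M_{\mathrm{id}}$ invariant, simply because $U(\lam)$ acts pointwise in $\lam$ and hence commutes with multiplication by $\lam$. Thus $M_{\mathrm{id}}$ on $L^2(\R;\Omega)$ is unitarily equivalent to multiplication by the independent variable on $L^2(\R; r_1 d\Omega^{\mathrm{tr}}) \oplus L^2(\R; r_2 d\Omega^{\mathrm{tr}})$, and it suffices to analyze the latter. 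Moreover this same unitary commutes with multiplication by the indicators $\indik_{\Sigma_1}$, $\indik_{\Sigma_2}$, so the claimed orthogonal decomposition transports between the two models.

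First I would record the two elementary pointwise facts that drive the argument. Since $r_1(\lam)+r_2(\lam)=1$ and $0\le r_1(\lam)\le r_2(\lam)$, one always has $r_2(\lam)\ge\tfrac12>0$ on $\supp(\Omega^{\mathrm{tr}})$; consequently $\det R(\lam)=r_1(\lam)r_2(\lam)=0$ holds if and only if $r_1(\lam)=0$. Hence, within $\supp(\Omega^{\mathrm{tr}})$, we have $\Sigma_1=\{r_1=0\}$ and $\Sigma_2=\{r_1>0\}$; these sets are $\Omega^{\mathrm{tr}}$-measurable (measurability of $\lam\mapsto\det R(\lam)$) and partition the support. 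It follows that $r_1 d\Omega^{\mathrm{tr}}$ lives on $\Sigma_2$, that $r_2 d\Omega^{\mathrm{tr}}$ is equivalent to $\Omega^{\mathrm{tr}}$, and that on $\Sigma_2$ both $r_1 d\Omega^{\mathrm{tr}}$ and $r_2 d\Omega^{\mathrm{tr}}$ are mutually absolutely continuous.

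Next I would split along $\Sigma_1$ and $\Sigma_2$. Multiplying by the indicators decomposes $L^2(\R;\Omega)$ orthogonally into functions supported on $\Sigma_1$ and functions supported on $\Sigma_2$; this decomposition reduces $M_{\mathrm{id}}$ and produces exactly $M_{\mathrm{id}}=M_{\mathrm{id}\cdot\indik_{\Sigma_1}}\oplus M_{\mathrm{id}\cdot\indik_{\Sigma_2}}$. Over $\Sigma_1$ the summand $r_1 d\Omega^{\mathrm{tr}}$ vanishes, so the transformed operator lives on the single scalar space $L^2(\Sigma_1; r_2 d\Omega^{\mathrm{tr}})$; multiplication by the variable on a scalar $L^2$-space of a Borel measure has simple spectrum (the constant function is cyclic), whence $M_{\mathrm{id}\cdot\indik_{\Sigma_1}}$ has spectral multiplicity one. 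Over $\Sigma_2$ the transformed operator is multiplication by the variable on $L^2(\Sigma_2; r_1 d\Omega^{\mathrm{tr}})\oplus L^2(\Sigma_2; r_2 d\Omega^{\mathrm{tr}})$ with two mutually absolutely continuous scalar measures, which has uniform spectral multiplicity two.

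The one step that requires care, and which I regard as the main obstacle, is the last clause: identifying the spectral multiplicity of a direct sum of two scalar multiplication operators as two precisely where the two underlying measures are mutually absolutely continuous (and as one where one of them degenerates). This is exactly the content of the standard spectral multiplicity (ordered spectral representation) theorem, whose multiplicity function counts, at $\Omega^{\mathrm{tr}}$-a.e.\ $\lam$, the number of summands $L^2(\nu_i)$ for which $\lam$ lies in the essential support of $\nu_i$. I would invoke this theorem directly rather than reprove it and verify its hypotheses using the equivalences established above; the remaining bookkeeping — measurability of $U$, $r_1$, $r_2$ and the fact that the indicator split commutes with the diagonalization — is routine.
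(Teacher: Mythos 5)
Your argument is correct and follows essentially the same route as the paper: diagonalize $R(\lam)$ via $U(\lam)$ to pass to $L^2(\R;r_1\,d\Omega^{\mathrm{tr}})\oplus L^2(\R;r_2\,d\Omega^{\mathrm{tr}})$, observe that $r_2\ge\tfrac12$ forces $\Sigma_1=\{r_1=0\}$ so only one scalar summand survives there (multiplicity one), and use mutual absolute continuity of $r_1\,d\Omega^{\mathrm{tr}}$, $r_2\,d\Omega^{\mathrm{tr}}$ and $d\Omega^{\mathrm{tr}}$ on $\Sigma_2$ to get multiplicity two. The only cosmetic caveat is that the constant function need not lie in $L^2$ when $\Omega^{\mathrm{tr}}$ is merely locally finite, so take e.g.\ $(1+\lam^2)^{-1}$ as the cyclic vector for the scalar part.
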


\begin{proof}
For fixed $\lam\in\Sigma_1$ we have either $r_1(\lam)=1$, $r_2(\lam)=0$ or $r_1(\lam)=0$, $r_2(\lam)=1$. In the latter
case we can modify $U(\lam)$ to also switch components and hence we can assume $r_1(\lam)=1$, $r_2(\lam)=0$
for all $\lam\in\Sigma_1$. Hence $\M_{\mathrm{id}\cdot \indik_{\Sigma_1}}$ is unitarily equivalent to
multiplication with $\lam$ in $L^2(\R;\indik_{\Sigma_1} \Omega^{\mathrm{tr}})$.
Moreover, since $r_j \indik_{\Sigma_2} \Omega^{\mathrm{tr}}$ and $\indik_{\Sigma_2} \Omega^{\mathrm{tr}}$
are mutually absolutely continuous, $\M_{\mathrm{id}\cdot \indik_{\Sigma_2}}$ is unitary equivalent
to $\M_{\mathrm{id}}$ in the Hilbert space $L^2(\R; \indik_{\Sigma_1} \Omega^{\mathrm{tr}} I_2)$.
\end{proof}

Combining \eqref{eq:wmmat} with \eqref{eq:Rlim} we see that 
\begin{align*}
\det R(\lam) = \lim_{\eps\downarrow 0} \frac{\im(m_+(\lam+\I\eps))
\im(m_-(\lam+\I\eps))}{|m_+(\lam+\I\eps) + m_-(\lam+\I\eps)|^2}
\frac{1}{\im(M^\mathrm{tr}(\lam+\I\eps))^2},
\end{align*}
where the first factor is bounded by $\nicefrac{1}{4}$.
Now Lemma~\ref{lem:spmul} immediately gives the following result. 

\begin{lemma}
The singular spectrum of $S$ has spectral multiplicity one. The
absolutely continuous spectrum of $S$ has multiplicity two on the subset 
$\sig_{ac}(S_+) \cap \sig_{ac}(S_-)$ and multiplicity one
on $\sig_{ac}(S) \backslash (\sig_{ac}(S_+) \cap \sig_{ac}(S_-))$.
Here $S_\pm$ are the restrictions of $S$ to $(a,x_0)$ and $(x_0,b)$, respectively.
\end{lemma}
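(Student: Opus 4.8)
The plan is to read off both assertions directly from Lemma~\ref{lem:spmul} by identifying the sets $\Sigma_1$ and $\Sigma_2$ with the singular spectrum and with (the relevant parts of) the absolutely continuous spectrum. By that lemma the spectral multiplicity of $M_{\mathrm{id}}$, and hence of $S$, equals one on $\Sigma_1 = \{\det R(\lam)=0\}$ and equals two on $\Sigma_2 = \{\det R(\lam)>0\}$. Thus everything reduces to controlling the boundary value $\det R(\lam)$, for which I would use the formula displayed just before the lemma together with the spectral characterization from the preceding corollary.

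First I would handle the singular spectrum. Its minimal support is $\Sigma_s = \{\lam : \limsup_{\eps\downarrow0}\im(M^{\mathrm{tr}}(\lam+\I\eps)) = \infty\}$, and the singular part of $\Omega^{\mathrm{tr}}$ is concentrated there. On this set I would invoke
\[
\det R(\lam) = \lim_{\eps\downarrow 0} \frac{\im(m_+(\lam+\I\eps))\,\im(m_-(\lam+\I\eps))}{|m_+(\lam+\I\eps)+m_-(\lam+\I\eps)|^2}\,\frac{1}{\im(M^{\mathrm{tr}}(\lam+\I\eps))^2},
\]
whose first factor is bounded by $1/4$. Since the remaining factor $\im(M^{\mathrm{tr}})^{-2}$ tends to zero wherever $\im(M^{\mathrm{tr}})\to\infty$, I conclude that $\det R=0$ almost everywhere with respect to the singular part of $\Omega^{\mathrm{tr}}$, i.e.\ the singular spectrum is contained in $\Sigma_1$. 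By Lemma~\ref{lem:spmul} it therefore has multiplicity one (consistent with the simplicity of eigenvalues in Corollary~\ref{corSpecRSimple}).

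For the absolutely continuous part I would use that $m_\pm$ are the Herglotz functions of the half-line restrictions $S_\pm$ (Corollary~\ref{spek::manal}), so that the essential support of $\sig_{ac}(S_\pm)$ is $\{\lam : 0 < \im(m_\pm(\lam+\I0)) < \infty\}$. On the absolutely continuous part of $\Omega^{\mathrm{tr}}$ the value $\im(M^{\mathrm{tr}}(\lam+\I0))$ is finite and positive, so the sign of $\det R$ is governed solely by the numerator $\im(m_+)\,\im(m_-)$. On $\sig_{ac}(S_+)\cap\sig_{ac}(S_-)$ both factors are positive and finite, whence $\det R>0$ and the multiplicity is two; on $\sig_{ac}(S)\setminus(\sig_{ac}(S_+)\cap\sig_{ac}(S_-))$ at least one factor vanishes, whence $\det R=0$ and the multiplicity is one. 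This yields the claimed dichotomy via $\Sigma_2$ and $\Sigma_1$ respectively.

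The main obstacle is purely measure-theoretic: making precise the almost-everywhere existence and finiteness of the boundary values $m_\pm(\lam+\I0)$ and $M^{\mathrm{tr}}(\lam+\I0)$, and verifying that the splitting $\supp(\Omega^{\mathrm{tr}})=\Sigma_1\cup\Sigma_2$ of Lemma~\ref{lem:spmul} is compatible with the decomposition of $\Omega^{\mathrm{tr}}$ into its absolutely continuous and singular parts. All of this is standard Aronszajn--Donoghue boundary-value theory for Herglotz functions, so once the above set-identifications are justified $\Omega^{\mathrm{tr}}$-almost everywhere, both multiplicity statements follow immediately from Lemma~\ref{lem:spmul}.
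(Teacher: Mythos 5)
Your argument is correct and follows essentially the same route as the paper: both reduce the claim to Lemma~\ref{lem:spmul} via the displayed formula for $\det R(\lam)$, using that the bounded first factor together with $\im(M^{\mathrm{tr}})\to\infty$ forces $\det R=0$ on the minimal support $\Sigma_s$ of the singular part, and that on the absolutely continuous part the positivity of $\det R$ is equivalent to $\lam$ lying in $\Sigma_{ac,+}\cap\Sigma_{ac,-}$. You have merely made explicit the boundary-value bookkeeping that the paper leaves implicit.
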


\begin{proof}
Using the fact that $\Sigma_s$ is a minimal support for the singular part of $S$ we
obtain $S_s= S_{pp} \oplus S_{sc} = E(\Sigma_s) S$ and $S_{ac}= (1-E(\Sigma_s)) S$.
So we see that the singular part has multiplicity one by
Lemma~\ref{lem:spmul}.

For the absolutely continuous part use that
\[
\Sigma_{ac,\pm} = \{ \lam\in\R \,|\, 0< \lim_{\eps\downarrow 0}
\im(m_\pm(\lam+\I\eps)) <\infty \}
\]
are minimal supports for the absolutely continuous spectra of $S_\pm$.
Again the remaining result follows from Lemma~\ref{lem:spmul}.
\end{proof}

\appendix

\section{Linear measure differential equations}\label{appMODE}

In this appendix we collect some necessary facts from linear differential equations with measure coefficients.
We refer to  Bennewitz~\cite{bennewitz1989}, Persson~\cite{persson1988}, Volkmer~\cite{volkmer2005}, Atkinson~\cite{at} or
Schwabik, Tvrd\'{y} and Vejvoda~\cite{schwabik} for further information. In order to make our presentation
self-contained we have included proofs for all results.

Let $(a,b)$ be a finite or infinite interval and $\omega$ a positive Borel measures on $(a,b)$.
Furthermore, let $M$ be a $\C^{n\times n}$-valued measurable function on $(a,b)$ and $F$ a $\C^n$-valued 
 measurable function on $(a,b)$, such that $\|M(\cdot)\|$ and $\|F(\cdot)\|$ are locally integrable with respect to $\omega$. 
 Here $\|\cdot\|$ denotes some norm on $\C^n$ as well as the corresponding operator norm on $\C^{n\times n}$.

For $c\in(a,b)$ and $Y_c\in\C^n$, some $\C^n$-valued function $Y$ on $(a,b)$ is a solution of 
 the initial value problem
\begin{align}\label{eqnappLMDE}
 \frac{dY}{d\omega} = MY + F, \qquad Y(c) = Y_c,
\end{align}
if the components of $Y$ are locally absolutely continuous with respect to $\omega$, their Radon--Nikod\'{y}m 
 derivatives satisfy~\eqref{eqnappLMDE} almost everywhere with respect to $\omega$.
An integration shows that some function $Y$ is a solution of the initial value problem~\eqref{eqnappLMDE} if and only if it solves
 the vector integral equation
\begin{align}\label{eqn:MIE}
 Y(x) = Y_c + \int_c^x \left( M(t)Y(t) + F(t) \right) d\omega(t), \quad x\in(a,b).
\end{align}

Before we prove existence and uniqueness for solutions of this initial value problem, we need a
Gronwall lemma. The proof follows \cite[Lemma~1.2 and Lemma~1.3]{bennewitz1989} (see also Atkinson \cite[page 455]{at}).

\begin{lemma}\label{lemappGronwall}
 Let $c\in(a,b)$ and $v\in L^1_{\mathrm{loc}}((a,b);\omega)$ be real-valued such that
 \begin{align}
  0 \leq v(x) \leq K + \int_c^x v(t) d\omega(t), \quad x\in[c,b)
 \end{align}
 for some constant $K\geq 0$, then $v$ can be estimated by
 \begin{align}
  v(x) \leq K \E^{ \int_c^x d\omega}, \quad x\in[c,b). 
 \end{align}
\end{lemma}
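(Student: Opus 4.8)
The plan is to avoid any differentiation or chain-rule argument (which would fail at the atoms of $\omega$) and instead to iterate the integral inequality directly. Fix $x\in[c,b)$. Substituting the hypothesis $v(t)\leq K+\int_c^t v\,d\omega$ into itself repeatedly, after $n$ steps one arrives at
\begin{align*}
 v(x)\leq K\sum_{k=0}^{n-1}\int_c^x\int_c^{t_1}\cdots\int_c^{t_{k-1}} d\omega(t_k)\cdots d\omega(t_1)+R_n(x),
\end{align*}
where the remainder $R_n(x)$ is the same $n$-fold iterated integral but with the innermost constant replaced by $v(t_n)$. Because the convention $\int_c^t=\int_{[c,t)}$ forces $c\leq t_k<t_{k-1}<\cdots<t_1<x$, each of these iterated integrals is precisely the $\omega^{\otimes k}$-measure of a strictly ordered simplex.

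The first key step is a simplex bound. Since the $k!$ permutations of the open simplex $\{c\leq t_k<\cdots<t_1<x\}$ are pairwise disjoint, all contained in the cube $[c,x)^k$, and carry equal mass under the symmetric product measure $\omega^{\otimes k}$, one gets
\begin{align*}
 \int_c^x\int_c^{t_1}\cdots\int_c^{t_{k-1}} d\omega(t_k)\cdots d\omega(t_1)\leq\frac{1}{k!}\left(\int_c^x d\omega\right)^k.
\end{align*}
This is exactly where the atoms of $\omega$ must be handled with care: the diagonal parts of the cube, where coordinates coincide, carry positive $\omega^{\otimes k}$-mass, but they only enlarge the right-hand side, so strictness of the ordering is all that is needed to justify the factor $1/k!$.

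The second step is to show $R_n(x)\to0$. Writing $L=\int_c^x v\,d\omega<\infty$ (finite since $v\geq0$ and $v\in L^1_{loc}((a,b);\omega)$) and $\Omega(t)=\int_c^t d\omega$, I would prove by induction, using the same estimate in the form $\int_c^t\Omega(s)^{j-1}\,d\omega(s)\leq\Omega(t)^j/j$, that $R_n(x)\leq L\,\Omega(x)^{n-1}/(n-1)!$, which tends to $0$ as $n\to\infty$. Passing to the limit in the first display and summing the simplex bounds term by term then yields
\begin{align*}
 v(x)\leq K\sum_{k=0}^{\infty}\frac{1}{k!}\left(\int_c^x d\omega\right)^k=K\,\E^{\int_c^x d\omega},
\end{align*}
as claimed. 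The main obstacle is the bookkeeping forced by the point masses of $\omega$: one must keep every iterated integral over a strictly ordered region so that the factorials are legitimate. The reason the argument produces the exponential bound rather than the sharper product-integral solution of $dy=y\,d\omega$ is that the overcounting of the atomic self-interactions corresponds precisely to absorbing each inequality $1+\omega(\{t\})\leq\E^{\omega(\{t\})}$ into the series.
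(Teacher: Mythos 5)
Your proof is correct, and while it is still a Picard-type iteration arriving at the partial sums of $K\E^{\int_c^x d\omega}$ plus a factorially small remainder, the key technical ingredient is obtained differently from the paper. The paper first establishes the pointwise inequality $F(x)^{n+1}\geq(n+1)\int_c^x F(t)^n\,d\omega(t)$ for $F(x)=\int_c^x d\omega$ (as a substitute for the chain rule, justified by citing a substitution rule for Lebesgue--Stieltjes integrals), and then runs a compact induction directly on the closed-form estimate $v(x)\leq K\sum_{k=0}^n F(x)^k/k!+\frac{F(x)^n}{n!}\int_c^x v\,d\omega$, never writing down the iterated integrals. You instead unroll the iteration completely into $k$-fold integrals over the strictly ordered simplex $\{c\leq t_k<\cdots<t_1<x\}$ and bound each by $\frac{1}{k!}\bigl(\int_c^x d\omega\bigr)^k$ through the disjointness of the $k!$ permuted copies of that simplex inside the cube $[c,x)^k$ under the symmetric product measure $\omega^{\otimes k}$; your remainder estimate (bounding the innermost integral by $\int_c^x v\,d\omega<\infty$, which is finite by local integrability, and the outer $(n-1)$-fold integral by the same simplex bound) then closes the argument. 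The two key inequalities carry the same content --- the ordered region holds at most a $1/k!$ fraction of the cube's mass, with the slack coming precisely from the positive $\omega^{\otimes k}$-mass of the diagonals created by atoms --- but your justification is self-contained and purely combinatorial, whereas the paper's delegates the analogous fact to an external lemma. Both arguments correctly exploit the convention $\int_c^x=\int_{[c,x)}$ to keep the ordering strict, which is exactly why the estimate is only valid to the right of $c$.
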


\begin{proof}
First of all note that the function $F(x)=\int_c^x d\omega$, $x\in[c,b)$, satisfies
\begin{align}\label{eqnappLMDEGronwallH}
F(x)^{n+1} \ge (n+1)  \int_c^x F(t)^n d\omega(t), \quad x\in[c,b),
\end{align}
by a variant of the substitution rule for Lebesgue--Stieltjes integrals \cite[eq.\ (8)]{ft}.
%First we prove that for $n\in\N_0$ and if $F(x)=\int_c^x d\omega$, $x\in[c,b)$, the function
%\begin{align}\label{eqnappLMDEGronwallH}
%  \Phi_n(x) = F(x)^{n+1} - (n+1)  \int_c^x F(t)^n d\omega(t), \quad x\in[c,b),
%\end{align}
% is monotone increasing, in particular, it is non-negative.
% Indeed for $n=0$ this is obvious. Now let $n>0$ and $F^{n}$ be a distribution function of some positive 
%  measure $\omega_{n}$. Then, by induction hypothesis, $\omega_{n}-nF^{n-1}\omega$ is a positive 
%  measure. From this and an integration by parts (also note that $F(t+)\geq F(t)$, $t\in[c,b)$) we get for each $x$, $y\in[c,b)$ with $x<y$
% \begin{align*}
% \Phi_n(y) - \Phi_n(x) & = \int_x^y F(t)^n d\omega(t) + \int_x^y F(t+)d\omega_n(t) - (n+1)\int_x^y F(t)^n d\omega(t) \geq 0,
% \end{align*}
% i.e., $\Phi_n$ is monotone increasing.
 Now we will prove that
\begin{align*}
 v(x) \leq K \sum_{k=0}^n \frac{F(x)^k}{k!} + \frac{F(x)^n}{n!} \int_c^x v(t)d\omega(t), \quad x\in[c,b)
\end{align*}
for each $n\in\N_0$.
For $n=0$ this is just the assumption of our lemma. Otherwise we get inductively 
\begin{align*}
 v(x) & \leq K + \int_c^x v(t) d\omega(t) \\
      & \leq K + \int_c^x \left( K \sum_{k=0}^n \frac{F(t)^k}{k!} + \frac{F(t)^n}{n!} \int_c^t v\,d\omega\right) \, d\omega(t) \\
      & \leq K\left(1+\sum_{k=0}^n \int_c^x \frac{F(t)^k}{k!}d\omega(t) \right) + \int_c^x \frac{F(t)^n}{n!}d\omega(t) \int_c^x v\, d\omega \\
      & \leq K \sum_{k=0}^{n+1} \frac{F(x)^k}{k!} + \frac{F(x)^{n+1}}{(n+1)!} \int_c^x v\, d\omega, \quad x\in[c,b),
\end{align*}
where we used~\eqref{eqnappLMDEGronwallH} twice in the last step.
Now taking the limit $n\rightarrow\infty$ yields the claim.
\end{proof}

Because of the definition of our integral the assertion of this lemma is only true to the right of $c$.
However, a simple reflection proves that 
 \begin{align}
  0 \leq v(x) \leq K + \int_{x+}^{c+} v(t) d\omega(t), \quad x\in(a,c]
 \end{align}
 for some constant $K\geq 0$, implies
 \begin{align}\label{eqnappLMDEGronwallLeft}
  v(x) \leq K e^{ \int_{x+}^{c+} d\omega}, \quad x\in(a,c]. 
 \end{align}
We are now ready to prove the basic existence and uniqueness result.

\begin{theorem}\label{thm:appMIEEE}
 The initial value problem~\eqref{eqnappLMDE} has a unique solution for each $c\in(a,b)$ and $Y_c\in\C^n$ 
 if and only if the matrix
 \begin{align}\label{eqn:appImOreg}
  I + \omega(\lbrace x\rbrace) M(x)\quad\text{is invertible}
 \end{align}
 for all $x\in(a,b)$.
 In this case solutions are real if $M$, $F$ and $Y_c$ are real.
\end{theorem}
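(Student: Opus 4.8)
The plan is to work with the equivalent integral equation~\eqref{eqn:MIE} and to solve it by successive approximation. The left-continuity built into the integral $\int_c^x$ breaks the symmetry between the two sides of $c$: to the right of $c$ the recursion is causal, whereas to the left the atom of $\omega$ sitting at the variable point forces the solution to refer to itself, and this is exactly where the regularity condition~\eqref{eqn:appImOreg} will enter.

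I first treat $[c,b)$ without any assumption on the atoms. Putting $Y_0\equiv Y_c$ and $Y_{k+1}(x)=Y_c+\int_c^x(MY_k+F)\,d\omega$, the value $Y_{k+1}(x)$ depends only on $Y_k$ restricted to $[c,x)$, so an induction together with the elementary inequality~\eqref{eqnappLMDEGronwallH} used in the proof of Lemma~\ref{lemappGronwall} yields
\begin{align*}
 \|Y_{k+1}(x)-Y_k(x)\|\le C\,\frac{1}{k!}\left(\int_c^x\|M\|\,d\omega\right)^{k},\quad x\in[c,d],
\end{align*}
on every compact $[c,d]\subset[c,b)$. Hence the iterates converge uniformly to a left-continuous solution, and feeding the difference of two solutions into Lemma~\ref{lemappGronwall} with $K=0$ gives uniqueness on $[c,b)$. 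No assumption on the atoms is used at this stage.

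The interval $(a,c]$ is the heart of the matter. Writing the integral equation as $Y(x)=Y_c-\int_{[x,c)}(MY+F)\,d\omega$ and splitting off the endpoint $x$ gives the jump relation
\begin{align*}
 \bigl(I+\omega(\{x\})M(x)\bigr)Y(x)=Y_c-\int_{(x,c)}(MY+F)\,d\omega-\omega(\{x\})F(x),
\end{align*}
whose right-hand side now involves $Y$ only on $(x,c)$. Assuming~\eqref{eqn:appImOreg}, I would multiply by $(I+\omega(\{x\})M(x))^{-1}$ and read this as a causal Volterra equation on $(a,c]$ with coefficient $(I+\omega(\{x\})M(x))^{-1}M$; since $\int\|M\|\,d\omega$ is locally finite, all but finitely many atoms in a given compact subinterval satisfy $\omega(\{x\})\|M(x)\|<1$, so this coefficient is again locally $\omega$-integrable and the finitely many remaining atoms are crossed one at a time by an invertible linear step. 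The left-hand Gronwall estimate~\eqref{eqnappLMDEGronwallLeft} then drives the same iteration to a unique left-continuous solution on $(a,c]$. All operations stay real when $M$, $F$, $Y_c$ are real, so the assertion on real solutions is immediate. The main obstacle throughout is precisely this bookkeeping of the atoms of $\omega$: isolating the self-referential jump term and recognizing that its removal is possible exactly under~\eqref{eqn:appImOreg}.

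For the converse I argue by contraposition. The same integrability of $\|M\|$ shows that the singular atoms, i.e. the points $x$ where $I+\omega(\{x\})M(x)$ fails to be regular, are locally finite, since $\omega(\{x\})\|M(x)\|<1$ already forces invertibility. So if~\eqref{eqn:appImOreg} fails at some $x_0$, I may pick $c\in(x_0,b)$ with no singular atom in $(x_0,c]$; then left-propagation on $(x_0,c]$ is an invertible affine map $Y_c\mapsto Y(x_0+)$, and I choose $Y_c$ so that $Y(x_0+)-\omega(\{x_0\})F(x_0)$ lies outside the proper subspace $\ran(I+\omega(\{x_0\})M(x_0))$. The jump relation at $x_0$ then has no solution, so the initial value problem with base point $c$ and this $Y_c$ admits no solution on $(a,b)$, contradicting unique solvability and completing the equivalence.
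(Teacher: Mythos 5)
Your proof is correct, and the existence--uniqueness half is essentially the paper's argument: Picard iteration plus Lemma~\ref{lemappGronwall} to the right of $c$, and to the left the same isolation of the atom at the running point, inversion of $I+\omega(\lbrace x\rbrace)M(x)$ (with norm $\leq 2$ off a locally finite exceptional set, since $\|M(\cdot)\|\in L^1_{loc}((a,b);\omega)$), and the left-hand estimate~\eqref{eqnappLMDEGronwallLeft}. The paper organizes the leftward existence step by partitioning $[\alpha,c]$ at the large atoms and iterating on each open piece, whereas you fold the inverse into the iteration as a modified coefficient; this is cosmetic, though note the inverse multiplies the integral from outside rather than sitting inside the integrand, so it is a uniform bound on compacts, not literally a new Volterra kernel, that makes the iteration converge. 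Where you genuinely diverge is the converse. You exhibit \emph{non-existence}: choosing $c$ just to the right of a singular point $x_0$ (possible since singular atoms are locally finite), you invert the leftward propagation $Y_c\mapsto Y(x_0+)$ and pick $Y_c$ so that $Y(x_0+)-\omega(\lbrace x_0\rbrace)F(x_0)$ misses the proper subspace $\ran\left(I+\omega(\lbrace x_0\rbrace)M(x_0)\right)$, so the jump relation at $x_0$ is unsolvable. The paper instead exhibits \emph{non-uniqueness}: it uses non-injectivity of the same singular matrix to find $Y_1(x_0)\neq Y_2(x_0)$ with $Y_1(x_0+)=Y_2(x_0+)$, propagates both rightward by Gronwall to obtain two solutions agreeing on $(x_0,b)$, and takes the base point there. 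Both contradict ``unique solution''; your variant avoids the rightward Gronwall step but needs the auxiliary observation that $c$ can be chosen with no further singular atoms in $(x_0,c]$, which the paper's version does not require.
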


\begin{proof}
 First assume that the initial value problem~\eqref{eqnappLMDE} has a unique solution for each $c\in(a,b)$ 
  and $Y_c\in\C^n$. Now if the matrix \eqref{eqn:appImOreg} was not invertible for some $x_0\in(a,b)$, we would have two distinct solutions
  $Y_1$, $Y_2$ such that $Y_1(x_0)\not=Y_2(x_0)$ but $Y_1(x_0+)=Y_2(x_0+)$. Indeed, one only had to take
  solutions with different initial conditions at $x_0$ such that
 \begin{align*}
  Y_1(x_0+) + \omega(\lbrace x_0\rbrace) F(x_0) & = \left(  I + \omega(\lbrace x_0\rbrace) M(x_0) \right) Y_1(x_0) \\
                 & = \left(  I + \omega(\lbrace x_0\rbrace) M(x_0) \right) Y_2(x_0) \\
                 & = Y_2(x_0+) + \omega(\lbrace x_0\rbrace) F(x_0).
 \end{align*}
 But then one had
 \begin{align*}
  \left\| Y_1(x) - Y_2(x)\right\| & = \left\| \int_{x_0+}^x M(t)\left(Y_1(t)-Y_2(t)\right)d\omega(t) \right\| \\
         & \leq \int_{x_0+}^x \left\|M(t)\right\| \left\| Y_1(t) - Y_2(t)\right\| d\omega(t), \quad x\in(x_0,b)
 \end{align*}
 and hence by Lemma~\ref{lemappGronwall}, $Y_1(x)=Y_2(x)$ for all $x\in(x_0,b)$. But this is a contradiction since now
 $Y_1$ and $Y_2$ are two different solutions of the initial value problem with $Y_c=Y(c)$ for some $c\in(x_0,b)$.

 Now assume~\eqref{eqn:appImOreg} holds for all $x\in(a,b)$ and let $\alpha$, $\beta\in(a,b)$ with $\alpha<c<\beta$.
 It suffices to prove that there is a unique solution of~\eqref{eqn:MIE} on $(\alpha,\beta)$.
 In order to prove uniqueness, take a solution $Y$ of the homogenous system, i.e., $Y_c=0$ and $F=0$. We get
 \begin{align*}
  \left\|Y(x)\right\| & \leq \int_c^x \left\|M(t)\right\| \left\|Y(t)\right\| d\omega(t), \quad x\in[c,\beta)
 \end{align*}
and hence $Y(x)=0$, $x\in[c,\beta)$, by Lemma~\ref{lemappGronwall}. To the left-hand side of the point $c$ we have 
 \begin{align*}
  Y(x) & = -\int_x^c M(t)Y(t)d\omega(t) = -\int_{x+}^c M(t)Y(t)d\omega(t) - \omega(\lbrace x\rbrace)M(x)Y(x) \\
       & = -\left(I+\omega(\lbrace x\rbrace)M(x)\right)^{-1} \int_{x+}^c M(t)Y(t)d\omega(t), \quad x\in(\alpha,c)
\end{align*}
and hence
\begin{align*}
  \left\|Y(x)\right\| & \leq \left\| \left(I+\omega(\lbrace x\rbrace)M(x)\right)^{-1}\right\| \int_{x+}^{c+} \left\|M(t)\right\| \left\|Y(t)\right\| d\omega(t), \quad x\in(\beta,c].
\end{align*}
Now the function in front of the integral is bounded. Indeed, since $M$ is locally integrable, we have $\omega(\lbrace x\rbrace)\|M(x)\|<\frac{1}{2}$
 for all but finitely many $x\in[\beta,c]$. Moreover, for those $x$ we have
\begin{align*}
 \left\| \left(I+\omega(\lbrace x\rbrace)M(x)\right)^{-1}\right\| & = \left\| \sum_{n=0}^\infty \left(-\omega(\lbrace x\rbrace)M(x)\right)^n\right\| 
    \leq 2.
\end{align*}
Therefore, estimate~\eqref{eqnappLMDEGronwallLeft} applies and yields $Y(x)=0$, $x\in(\beta,c]$.

Next we will construct the solution by successive approximation. To this end we define
\begin{align}\label{eqnappLMDEY0}
 Y_0(x) = Y_c + \int_{c}^x F(t) d\omega(t), \quad x\in[c,\beta)
\end{align}
and inductively for each $n\in\N$
\begin{align}\label{eqnappLMDEYn}
  Y_n(x) = \int_{c}^x M(t)Y_{n-1}(t)d\omega(t), \quad x\in[c,\beta).
 \end{align}
 These functions are bounded by
 \begin{align}\label{eqnappLMDEboundYn}
  \left\| Y_n(x)\right\| \leq \sup_{t\in[c,x)}\|Y_0(t)\| \frac{\left(\int_c^x\left\|M(t)\right\|d\omega(t)\right)^n}{n!}, \quad x\in[c,\beta).
 \end{align}
 Indeed, for $n=0$ this is obvious, for $n>0$ we get inductively, using~\eqref{eqnappLMDEGronwallH},
 \begin{align*}
  \left\| Y_n(x)\right\| & \leq \int_c^x \left\|M(t)\right\| \left\|Y_{n-1}(t)\right\| d\omega(t) \\
         & \leq \sup_{t\in[c,x)}\|Y_0(t)\| \int_c^x \left\|M(t)\right\| \frac{\left(\int_c^t\left\|M(s)\right\|d\omega(s)\right)^n}{n!} d\omega(t) \\
         & \leq \sup_{t\in[c,x)}\|Y_0(t)\| \frac{\left(\int_c^x\left\|M(t)\right\|d\omega(t)\right)^{n+1}}{(n+1)!}.
 \end{align*}
 Hence the sum $Y(x)=\sum_{n=0}^\infty Y_n(x)$, $x\in[c,\beta)$ converges absolutely and uniformly. Moreover, we have
 \begin{align*}
  Y(x) & = Y_0(x) + \sum_{n=1}^\infty \int_{c}^x M(t)Y_{n-1}(t)d\omega(t) \\
       & = Y_c + \int_{c}^x M(t)Y(t) +F(t)d\omega(t), \quad x\in[c,\beta).
 \end{align*}
 In order to extend the solution to the left of $c$, pick some points $x_k\in[\alpha,c]$, $k=-N,\ldots,0$ with 
 \begin{align*}
  \alpha = x_{-N} < x_{-N+1} < \cdots < x_0 = c,
 \end{align*}
 such that
 \begin{align}\label{eqn:appomegabound}
  \int_{(x_k,x_{k+1})}  \|M(t)\| d\omega(t) < \frac{1}{2}, \quad -N \leq k < 0,
 \end{align}
 which is possible since $M$ is locally integrable. More precisely, first take all points $x\in(\alpha,c)$ 
  with $\omega(\lbrace x\rbrace) \|M(x)\| \geq \frac{1}{2}$ 
  (these are at most finitely many because $\|M(\cdot)\|$ is locally integrable). 
  Then divide the remaining subintervals such that~\eqref{eqn:appomegabound} is valid.
  Now let $-N<k\leq 0$ and assume $Y$ is a solution on $[x_k,\beta)$. 
  We will show that $Y$ can be extended to a solution on $[x_{k-1},\beta)$.
  To this end we define
 \begin{align}\label{eqnappLMDEZ0}
  Z_0(x) = Y(x_k) + \int_{x_k}^x F(t) d\omega(t), \quad x\in(x_{k-1},x_k]
 \end{align}
 and inductively for each $n\in\N$
 \begin{align}\label{eqnappLMDEZn}
  Z_n(x) = \int_{x_k}^x M(t)Z_{n-1}(t)d\omega(t), \quad x\in(x_{k-1},x_k].
 \end{align}
 Using~\eqref{eqn:appomegabound} it is not hard to prove inductively that for each $n\in\N$ and $x\in(x_{k-1},x_k]$ these functions are bounded by
 \begin{align}\label{eqnappLMDEboundZn}
  \left\| Z_n(x) \right\| \leq \left(\left\| Y(x_k)\right\| + \int_{[x_{k-1},x_k]} \left\| F(t)\right\| d\omega(t)\right) \frac{1}{2^n}.
 \end{align}
% Obviously this is true for $n=0$. For $n>0$ we get inductive
% \begin{align*}
%  \left\| Z_n(x) \right\| & \leq \int_{x_k}^{x} \left\|M(t)\right\| \left\|Z_{n-1}(t)\right\| d\omega(t) \\
%            & \leq \sup_{t\in(x_{k-1},x_k]} \left\|Z_{n-1}(t)\right\| \int_{[x,x_k]} \left\|M(t)\right\|d\omega(t) \\
%            & \leq \left(\left\| Y(x_k)\right\| + \int_{[\alpha,\beta]} \left\| F(t)\right\| d\omega(t)\right) \frac{1}{2^n}, \quad x\in(x_{k-1},x_k].
% \end{align*}
 Hence we may extend $Y$ onto $(x_{k-1},x_k)$ by 
 \begin{align*}
  Y(x) = \sum_{n=0}^\infty Z_n(x), \quad x\in(x_{k-1},x_k),
 \end{align*}
 where the sum converges absolutely and uniformly. 
 As above one shows that $Y$ is a solution of~\eqref{eqn:MIE} on $(x_{k-1},\beta)$.
% Indeed we have
% \begin{align*}
%  Y(x) & = Y_0(x) + \sum_{n=1}^\infty \int_{x_0}^x M(t)Y_{n-1}(t)d\omega(t) \\
%       & = Y(x_k) + \int_{x_k}^x M(t)Y(t) +F(t)d\omega(t), \quad x\in(x_{k-1},x_k).
% \end{align*}
 Now if we set 
 \begin{align}\label{eqnappLMDEYjump}
   Y(x_{k-1}) = (I-\omega(\lbrace x_{k-1}\rbrace)M(x_{k-1}))^{-1}\left(Y(x_{k-1}+) + \omega(\lbrace x_{k-1}\rbrace) F(x_{k-1})\right)
 \end{align}
 (note that the right-hand limit exists because of~\eqref{eqn:MIE}), then it is easy to show that $Y$ satisfies~\eqref{eqn:MIE} for all $x\in[x_{k-1},\beta)$.
 After finitely many steps we arrive at a solution $Y$, 
  satisfying~\eqref{eqn:MIE} for all $x\in(\alpha,\beta)$.
 
 Finally, if the data $M$, $F$ and $Y_c$ are real, one easily sees that all quantities in the construction stay real.
\end{proof}

The proof of Theorem~\ref{thm:appMIEEE} shows that condition~\eqref{eqn:appImOreg} is actually only needed for 
 all points $x$ to the left of the initial point $c$.
 Indeed, it is always possible to extend solutions uniquely to the right of the initial point but not to the left.
For a counterexample take $n=1$, the interval $(-2,2)$, $y_0\in\C$ and $\omega = -\delta_{-1} - \delta_1$, where $\delta_{\pm 1}$ are the Dirac measures in $\pm 1$.
Then one easily checks that the integral equation
\begin{align*}
 y(x) = y_0 + \int_0^x y(t) d\omega(t), \quad x\in(-2,2)
\end{align*}
has the solutions
\begin{align*}
 y_d(x) = \begin{cases}
           d, & \text{if } x\in(-2,-1], \\
           y_0, & \text{if } x\in(-1,1], \\
           0, & \text{if } x\in(1,2),
          \end{cases}
\end{align*}
for each $d\in\C$. Hence we see that the solutions are not unique to the left of the initial point $c=0$.

\begin{corollary}\label{corappIniValProRC}
 Assume~\eqref{eqn:appImOreg} holds for each $x\in(a,b)$. Then for each $c\in(a,b)$ and $Y_c\in\C^n$, the 
  initial value problem
\begin{align}
 \frac{dY}{d\omega} = MY + F \quad\text{with}\quad Y(c+) = Y_c
\end{align}
has a unique solution. If $M$, $F$ and $Y_c$ are real, then the solution is real.
\end{corollary}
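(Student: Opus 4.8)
The plan is to reduce the right-limit initial value problem to the ordinary one already solved in Theorem~\ref{thm:appMIEEE} by converting the condition $Y(c+)=Y_c$ into a condition on the left-continuous value $Y(c)$. The link between the two is the jump of a solution at the base point $c$, which is controlled precisely by the matrix appearing in~\eqref{eqn:appImOreg}.

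First I would record the jump relation. If $Y$ solves~\eqref{eqn:MIE}, then for $x>c$ we have $Y(x)=Y(c)+\int_{[c,x)}(MY+F)\,d\omega$, and letting $x\downarrow c$ the integrals $\int_{[c,x)}(MY+F)\,d\omega$ converge to $\int_{\{c\}}(MY+F)\,d\omega=\omega(\{c\})(M(c)Y(c)+F(c))$ by continuity from above of the locally finite measure $\omega$ (here $MY+F$ is integrable near $c$ since $Y$ is locally bounded). Hence
\[
 Y(c+) = \bigl(I+\omega(\{c\})M(c)\bigr)Y(c) + \omega(\{c\})F(c).
\]
Since~\eqref{eqn:appImOreg} guarantees that $I+\omega(\{c\})M(c)$ is regular, this relation can be solved for $Y(c)$ in terms of $Y(c+)$; in particular, prescribing $Y(c+)=Y_c$ is equivalent to prescribing the single left-continuous value
\[
 Y(c) = \bigl(I+\omega(\{c\})M(c)\bigr)^{-1}\bigl(Y_c-\omega(\{c\})F(c)\bigr).
\]

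With this in hand, existence and uniqueness follow at once. For existence I would feed the value $Y(c)$ just defined into Theorem~\ref{thm:appMIEEE}, obtaining a unique solution $Y$ of~\eqref{eqnappLMDE} with that left-continuous initial value; the jump relation then shows $Y(c+)=Y_c$, as desired. For uniqueness, any solution satisfying $Y(c+)=Y_c$ must have exactly the left-continuous value $Y(c)$ displayed above (again because $I+\omega(\{c\})M(c)$ is invertible), and so by the uniqueness part of Theorem~\ref{thm:appMIEEE} it coincides with the constructed solution. Finally, if $M$, $F$ and $Y_c$ are real, then $Y(c)$ is real, being obtained from real data by inverting a real matrix, and Theorem~\ref{thm:appMIEEE} then returns a real solution.

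The only point requiring genuine care is the right-hand limit computation in the first step, that is, the passage from~\eqref{eqn:MIE} to the jump formula; but this is a routine consequence of continuity from above for $\omega$ and is already implicit in formula~\eqref{eqnappLMDEYjump} of the proof of Theorem~\ref{thm:appMIEEE}, so no new difficulty arises.
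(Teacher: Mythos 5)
Your proof is correct and follows exactly the same route as the paper's: the paper's one-line proof also converts the condition $Y(c+)=Y_c$ into the equivalent left-continuous initial value $Y(c)=\bigl(I+\omega(\lbrace c\rbrace)M(c)\bigr)^{-1}\bigl(Y_c-\omega(\lbrace c\rbrace)F(c)\bigr)$ and then invokes Theorem~\ref{thm:appMIEEE}. You have merely spelled out the jump relation that the paper leaves implicit.
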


\begin{proof}
 Some function $Y$ is a solution of this initial value problem if and only if it is a solution of 
\begin{align*}
 \frac{dY}{d\omega} = MY+F \quad\text{with}\quad Y(c) = \left(I+\omega(\lbrace c\rbrace)M(c)\right)^{-1}\left(Y_c-\omega(\lbrace c\rbrace) F(c)\right).
\end{align*}
\end{proof}

\begin{theorem}\label{thm:appregep}
 Assume $\|M(\cdot)\|$ and $\|F(\cdot)\|$ are integrable with respect to $\omega$ over $(a,c)$ for some $c\in(a,b)$ and $Y$ is a solution of the
  initial value problem~\eqref{eqnappLMDE}. Then the limit
 \begin{align}
  Y(a) := \lim_{x\rightarrow a} Y(x)
 \end{align}
 exists and is finite. A similar result holds for the endpoint $b$.
\end{theorem}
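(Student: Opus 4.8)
The plan is to first show that $Y$ stays bounded in a right neighbourhood of $a$, and then to read off the existence of the limit directly from the integral equation~\eqref{eqn:MIE}. Throughout I work on a fixed interval $(a,c)$ with $c\in(a,b)$. Shrinking $c$ if necessary, I may assume $\omega(\{c\})=0$ and, since $\|M(\cdot)\|\in L^1((a,c);\omega)$ forces at most finitely many points with $\omega(\{x\})\|M(x)\|\ge\tfrac12$, that $\omega(\{x\})\|M(x)\|<\tfrac12$ for every $x\in(a,c)$ (exactly as in the proof of Theorem~\ref{thm:appMIEEE}). Set $K=\|Y_c\|+\int_{(a,c)}\|F\|\,d\omega$, which is finite by the integrability hypothesis.

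For the boundedness step I would rewrite~\eqref{eqn:MIE} for $x<c$ as $Y(x)=Y_c-\int_{[x,c)}(MY+F)\,d\omega$, split off the mass of $\omega$ at the moving endpoint $x$, and use $\omega(\{x\})\|M(x)\|<\tfrac12$ to absorb the term $\omega(\{x\})\|M(x)\|\,\|Y(x)\|$ into the left-hand side. This yields $\|Y(x)\|\le 2K+\int_{(x,c)}2\|M\|\,\|Y\|\,d\omega$ for $x\in(a,c)$, where the integral now runs over the open interval and hence no longer sees the point mass at $x$. This is precisely the hypothesis of the reflected Gronwall estimate~\eqref{eqnappLMDEGronwallLeft} (the mirror image of Lemma~\ref{lemappGronwall}) applied to $v=\|Y\|$ with the positive measure $2\|M\|\,d\omega$, so it gives $\|Y(x)\|\le 2K\,\E^{2\int_{(a,c)}\|M\|\,d\omega}$ for all $x\in(a,c)$. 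Thus $Y$ is bounded near $a$, say by a constant $C$.

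With boundedness in hand, $\|M(t)Y(t)+F(t)\|\le C\|M(t)\|+\|F(t)\|$ is $\omega$-integrable on $(a,c)$, so $MY+F\in L^1((a,c);\omega)$. Since for $x<c$ the integral equation reads $Y(x)=Y_c-\int_{[x,c)}(MY+F)\,d\omega$ and the sets $[x,c)$ increase to $(a,c)$ as $x\downarrow a$, dominated convergence (with dominating function $C\|M\|+\|F\|$) shows that $\int_{[x,c)}(MY+F)\,d\omega$ converges to $\int_{(a,c)}(MY+F)\,d\omega$. Hence $\lim_{x\downarrow a}Y(x)=Y_c-\int_{(a,c)}(MY+F)\,d\omega$ exists and is finite, which is the assertion; the statement at $b$ follows by the analogous argument, now using Lemma~\ref{lemappGronwall} itself rather than its reflection.

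The only genuinely delicate point is the boundedness step: a Gronwall argument over the half-open interval $[x,c)$ would be obstructed by a large point mass of $\omega$ at the left endpoint $x$, which is exactly why I first discard the finitely many atoms with $\omega(\{x\})\|M(x)\|\ge\tfrac12$ and then pass to the open integration interval before invoking~\eqref{eqnappLMDEGronwallLeft}. Once boundedness is secured, the existence of the limit is a routine dominated-convergence consequence of the very integral equation defining the solution.
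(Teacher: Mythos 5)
Your proof is correct, and it follows the same two-step skeleton as the paper's argument --- first boundedness of $\|Y(\cdot)\|$ near $a$, then the limit read off from the integral equation --- but it implements both steps differently. For the boundedness step the paper simply picks $c$ with $\int_a^c\|M(t)\|\,d\omega(t)\le\frac12$ and argues by contradiction: if $\|Y\|$ were unbounded near $a$ one takes a sequence $x_n\downarrow a$ with $\|Y(x_n)\|\ge\|Y(x)\|$ on $[x_n,c]$ and absorbs the resulting term $\frac12\|Y(x_n)\|$ into the left-hand side. Your route instead discards the finitely many atoms with $\omega(\{x\})\|M(x)\|\ge\frac12$, absorbs the moving point mass at $x$ directly, and then invokes the reflected Gronwall estimate~\eqref{eqnappLMDEGronwallLeft} with the measure $2\|M\|\,d\omega$; this is precisely the device the paper itself uses in the uniqueness part of the proof of Theorem~\ref{thm:appMIEEE}, and it has the small advantage of not requiring a maximizing sequence for the merely regulated function $\|Y\|$, at the cost of being slightly longer. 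For the existence of the limit the paper verifies the Cauchy criterion from $\|Y(x_1)-Y(x_2)\|\le K\int_{x_1}^{x_2}\|M\|\,d\omega+\int_{x_1}^{x_2}\|F\|\,d\omega$, whereas you apply dominated convergence to $\int_{[x,c)}(MY+F)\,d\omega$, which in addition hands you the explicit value $Y(a)=Y_c-\int_{(a,c)}(MY+F)\,d\omega$. Both implementations are sound and all the hypotheses you invoke (local boundedness of $Y$ for the Gronwall input, integrability of the dominating function $C\|M\|+\|F\|$) are justified; nothing is missing.
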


\begin{proof}
By assumption there is a $c\in(a,b)$ such that 
\begin{align*}
 \int_a^c \left\|M(t)\right\| d\omega(t) \leq \frac{1}{2}.
\end{align*}
We first prove that $\|Y(\cdot)\|$ is bounded near $a$.
Indeed if it was not, we had a monotone sequence $x_n\in(a,c)$ with $x_n\downarrow a$ such that $\|Y(x_n)\|\geq \|Y(x)\|$, $x\in[x_n,c]$. 
From the integral equation which $Y$ satisfies we would get
\begin{align*}
 \|Y(x_n)\| & \leq \|Y(c)\| + \int_{x_n}^c \|M(t)\| \|Y(t)\| d\omega(t) + \int_{x_n}^c \|F(t)\| d\omega(t) \\
        & \leq \|Y(c)\| +\|Y(x_n)\| \int_{x_n}^c \|M(t)\|d\omega(t) + \int_a^c \|F(t)\| d\omega(t) \\
        & \leq \|Y(c)\| + \int_a^c \|F(t)\| d\omega(t) + \frac{1}{2} \|Y(x_n)\|.
\end{align*}
Hence $\|Y(\cdot)\|$ has to be bounded near $a$ by some constant $K$.
Now the claim follows because we have
\begin{align*}
 \left\| Y(x_1) - Y(x_2) \right\| & = \left\| \int_{x_2}^{x_1} M(t) Y(t) + F(t)d\omega(t) \right\| \\
                 & \leq K \int_{x_1}^{x_2} \left\|M(t)\right\| d\omega(t) + \int_{x_1}^{x_2} \|F(t)\| d\omega(t)
\end{align*}
 for each $x_1$, $x_2\in(a,c)$, $x_1<x_2$ i.e., $Y(x)$ is a Cauchy-sequence as $x\rightarrow a$.
\end{proof}

Under the assumption of Theorem~\ref{thm:appregep}
 one can show that there is always a unique solution of the initial value problem
\begin{align*}
 \frac{dY}{d\omega} = MY + F \quad\text{with}\quad Y(a) = Y_a
\end{align*}
 with essentially the same proof as for Theorem~\ref{thm:appMIEEE}.
If $\|M(\cdot)\|$ and $\|F(\cdot)\|$ are integrable near $b$, then one furthermore has to assume that~\eqref{eqn:appImOreg} holds for 
 all $x\in(a,b)$ in order to get unique solutions of the initial value problem
\begin{align*}
 \frac{dY}{d\omega} = MY + F \quad\text{with}\quad Y(b) = Y_b.
\end{align*}

In the following let $M_1$, $M_2$ be $\C^{n\times n}$-valued measurable functions on $(a,b)$ such 
 that $\|M_1(\cdot)\|$, $\|M_2(\cdot)\|\in L^1_{\mathrm{loc}}((a,b);\omega)$. We are interested in the analytic dependence on $z\in\C$ 
 of solutions to the initial value problems
 \begin{align}\label{eqnappLMDEwithz}
  \frac{dY}{d\omega} = \left(M_1 + zM_2\right) Y + F \quad\text{with}\quad Y(c)=Y_c.
 \end{align}
%\begin{align}\label{eqn:appinteqnwithz}
% Y(x) = Y_c + \int_c^x \left(M(t) + z M(t)\right) Y(t)d\omega(t) + \int_c^x F(t)d\nu(t), \quad x\in(a,b),
%\end{align}

\begin{theorem}\label{thmappLMDEAnaly}
 Assume~\eqref{eqn:appImOreg} holds for each $x\in(a,b)$. If for each $z\in\C$, $Y_z$ is the 
  unique solution of~\eqref{eqnappLMDEwithz}, then $Y_z(x)$ is analytic for each $x\in(a,b)$.
\end{theorem}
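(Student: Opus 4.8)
The plan is to revisit the successive-approximation construction from the proof of Theorem~\ref{thm:appMIEEE}, now with $M=M_1+zM_2$, and to observe that it produces $Y_z(x)$ as a limit of functions that are \emph{entire} in $z$, the convergence being locally uniform in $z$. Since a locally uniform limit of holomorphic functions is again holomorphic (Weierstrass), this yields analyticity of $z\mapsto Y_z(x)$ for each fixed $x$. Throughout I fix a radius $R>0$ and points $\alpha<c<\beta$ in $(a,b)$, and prove analyticity on $\{|z|\le R\}$ for $x\in(\alpha,\beta)$; as $R$, $\alpha$, $\beta$ are arbitrary this gives the claim. The recurring device is the $z$-uniform domination $\|M(t)\|=\|M_1(t)+zM_2(t)\|\le \|M_1(t)\|+R\|M_2(t)\|=:\widetilde M(t)$ for $|z|\le R$, where $\widetilde M\in L^1_{loc}((a,b);\omega)$.

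To the right of $c$ I would use the iterates in~\eqref{eqnappLMDEY0}--\eqref{eqnappLMDEYn}. Since $Y_0$ is independent of $z$ and each step multiplies by $M=M_1+zM_2$ under the integral, an immediate induction shows that $Y_n(x)$ is a polynomial in $z$ of degree $n$, hence entire. Replacing $\|M\|$ by $\widetilde M$ in the estimate~\eqref{eqnappLMDEboundYn} gives $\|Y_n(x)\|\le \sup_{t\in[c,\beta)}\|Y_0(t)\|\,\bigl(\int_c^\beta \widetilde M\,d\omega\bigr)^n/n!$ uniformly for $x\in[c,\beta)$ and $|z|\le R$. The right-hand side is summable and independent of $z$, so $Y_z(x)=\sum_n Y_n(x)$ converges uniformly on $[c,\beta)\times\{|z|\le R\}$, and Weierstrass' theorem yields analyticity on $[c,\beta)$.

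To the left of $c$ the construction proceeds by inductive extension over a partition $\alpha=x_{-N}<\dots<x_0=c$, and the only new issue is that the partition together with its finitely many exceptional atoms must be chosen once for all $|z|\le R$. This is achieved by imposing $\int_{(x_k,x_{k+1})}\widetilde M\,d\omega<1/2$ in place of~\eqref{eqn:appomegabound} and by singling out the finitely many atoms with $\omega(\{x\})\widetilde M(x)\ge 1/2$, both possible because $\widetilde M$ is locally integrable. With this fixed partition the iterates~\eqref{eqnappLMDEZ0}--\eqref{eqnappLMDEZn} on each subinterval are analytic in $z$ by induction (starting from $Y(x_k)$, analytic by the preceding step), the bound~\eqref{eqnappLMDEboundZn} with $\|Y(x_k)\|$ replaced by its supremum over $|z|\le R$ gives geometric, $z$-uniform convergence, and so the extension to each $(x_{k-1},x_k)$ is analytic. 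Crossing the atom at $x_{k-1}$ via~\eqref{eqnappLMDEYjump} requires the inverse appearing there to depend analytically on $z$: by hypothesis the initial value problem is well-posed for every $z\in\C$, so this inverse exists for every $z$, whence its determinant is a polynomial in $z$ with no zeros, hence a nonzero constant; the entries of the inverse are therefore polynomials in $z$ and the jump map is entire. After finitely many steps one obtains analyticity on all of $(\alpha,\beta)$.

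The main obstacle is the left-hand extension, and specifically the two bookkeeping points it raises: making the partition and the exceptional-atom set independent of $z$ over a compact disc (handled by the dominating function $\widetilde M$), and verifying that the jump operator in~\eqref{eqnappLMDEYjump} is analytic in $z$ (handled by the observation that a nowhere-vanishing polynomial determinant must be constant). Once these are in place, every stage of the construction is a $z$-uniform limit of entire functions, and the theorem follows.
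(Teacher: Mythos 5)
Your proposal is correct and follows essentially the same route as the paper's proof: rerun the successive-approximation construction from Theorem~\ref{thm:appMIEEE} with a $z$-uniform dominating function on a fixed compact disc, note that the iterates are polynomials (hence entire) in $z$ with $z$-uniform bounds, and conclude by locally uniform convergence. Your explicit justification that the inverse in~\eqref{eqnappLMDEYjump} depends analytically on $z$ (a nowhere-vanishing polynomial determinant must be a nonzero constant) is a detail the paper leaves implicit, but it does not alter the argument.
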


\begin{proof}
We show that the construction in the proof of Theorem~\ref{thm:appMIEEE} yields analytic solutions.
 Indeed, let $\alpha$, $\beta\in(a,b)$ with $\alpha<c<\beta$ as in the proof of Theorem~\ref{thm:appMIEEE}.
 Then the now $z$ dependent functions $Y_{z,n}(x)$, $n\in\N_0$ 
 (defined as in~\eqref{eqnappLMDEY0} and~\eqref{eqnappLMDEYn}) are polynomials in $z$ for each fixed point $x\in(c,\beta)$. 
 Furthermore, the sum $\sum_{n=0}^\infty Y_{z,n}(x)$ converges locally uniformly in $z$ 
 by~\eqref{eqnappLMDEboundYn} which proves that $Y_z(x)$ is analytic. 
Now in order to prove analyticity to the left of $c$ fix some $R\in\R^+$. 
 Then there are some points $x_k\in[\alpha,c]$, $k=-N,\ldots,0$ as in the proof of 
  Theorem~\ref{thm:appMIEEE} such that~\eqref{eqn:appomegabound} holds for all $M=M_1+zM_2$, $|z|<R$.
 It suffices to prove that if $Y_z(x_k)$ is analytic for some $-N<k\leq0$ then $Y_z(x)$ is analytic for all 
 $x\in[x_{k-1},x_k)$. Indeed, for each $n\in\N_0$ and $x\in(x_{k-1},x_k)$ the functions $Z_{z,n}(x)$ 
 (defined as in~\eqref{eqnappLMDEZ0} and~\eqref{eqnappLMDEZn}) are 
 analytic and locally bounded in $|z|<R$. From the bound~\eqref{eqnappLMDEboundZn} one sees that 
 $\sum_{n=0}^\infty Z_{z,n}(x)$ converges locally uniformly in $|z|<R$. Hence $Y_{z}(x)$ is analytic in $\C$. 
Furthermore, $Y_z(x_{k-1})$ is analytic by~\eqref{eqnappLMDEYjump} (note that $Y_z(x_{k-1}+)$ is also analytic 
 since $Y_z(x)$ is bounded locally uniformly in $z$ to the right of $x_{k-1}$).
\end{proof}

Under the assumptions of the last theorem we even see that the right-hand limit $Y_z(x+)$ is analytic for each 
 $x\in(a,b)$. In fact, this follows since
 \begin{align*}
  Y_z(x+) = \lim_{\xi \downarrow x} Y_z(\xi), \quad z\in\C
 \end{align*}
 and $Y_z(x)$ is bounded locally uniformly in $x$ and $z$.
Furthermore, one can show (see the proof of Corollary~\ref{corappIniValProRC}) that if for each $z\in\C$, 
 $Y_z$ is the solution of the initial value problem
\begin{align*}
 \frac{dY}{d\omega} = \left(M_1 + zM_2\right) Y + F \quad\text{with}\quad Y(c+)=Y_c,
\end{align*}
 then $Y_z(x)$ as well as $Y_z(x+)$ are analytic in $z\in\C$ for each $x\in(a,b)$.

\section{Linear relations in Hilbert spaces}\label{appLR}

Let $X$ and $Y$ be linear spaces over $\C$. A linear relation of $X$ into $Y$ is a linear subspace of $X\times Y$. 
 The space of all linear relations of $X$ into $Y$ is denoted by $\LRel(X,Y)$.
 Linear relations generalize the notion of linear operators. 
 Indeed, if $D$ is a linear subspace of $X$ and $T:D\rightarrow Y$ is a linear operator, then we may identify 
 $T$ with its graph, which is a linear relation of $X$ into $Y$. In this way any operator can be regarded as a linear relation. 
 Motivated by this point of view, we define the domain, range, kernel and multi-valued part of some linear relation $T\in \LRel(X,Y)$ as
 \begin{align*}
  \dom{T} & = \lbrace x\in X \,|\, \exists y\in Y: (x,y)\in T\rbrace, \\
  \ran(T) & = \lbrace y\in Y \,|\, \exists x\in X: (x,y)\in T\rbrace, \\
  \ker(T) & = \lbrace x\in X \,|\, (x,0)\in T\rbrace, \\
  \mul{T} & = \lbrace y\in Y \,|\, (0,y)\in T\rbrace. 
 \end{align*}
 Note that some relation $T$ is (the graph of) an operator if and only if $\mul{T}=\lbrace 0\rbrace$.
 In this case these definitions are consistent with the usual definitions for operators.
 
 Again motivated by an operator theoretic viewpoint, we define the following algebraic operations.
 For $T$, $S\in\LRel(X,Y)$ and $\lambda\in\C$ we set
 \begin{align*}
  T + S = \lbrace (x,y)\in X\times Y \,|\, \exists y_1, y_2\in Y: (x,y_1)\in T, (x,y_2)\in S, y=y_1+y_2\rbrace
 \end{align*}
and
 \begin{align*}
  \lambda T = \lbrace (x,y)\in X\times Y \,|\, \exists y_0\in Y: (x,y_0)\in T, y=\lambda y_0 \rbrace.
 \end{align*}
 It is simple to check that both, $T+S$ and $\lambda T$ are linear relations of $X$ into $Y$.
 Moreover, we can define the composition of two linear relations. If $T\in\LRel(X,Y)$ and 
  $S\in\LRel(Y,Z)$ for some linear space $Z$, then
 \begin{align*}
  S T = \lbrace (x,z)\in X\times Z \,|\, \exists y\in Y: (x,y)\in T, (y,z)\in S\rbrace
 \end{align*}
 is a linear relation of $X$ into $Z$.
 One may even define an inverse of a linear relation $T\in\LRel(X,Y)$ by
 \begin{align*}
  T^{-1} = \lbrace (y,x)\in Y\times X \,|\, (x,y) \in T\rbrace,
 \end{align*}
 as a linear relation of $Y$ into $X$. For further properties of these algebraic operations of linear relations er refer to~\cite[2.1~Theorem]{arens}, \cite[Chapter~1]{cross} or \cite[Appendix~A]{haase}.

 From now on assume $X$ and $Y$ are Hilbert spaces with inner products $\spr{\,\cdot\,}{\cdot\,}_X$ and $\spr{\,\cdot\,}{\cdot\,}_Y$.    
  The adjoint of a linear relation $T\in\LRel(X,Y)$ is given by
 \begin{align*}
  T^\ast = \lbrace (y,x)\in Y\times X \,|\, \forall (u,v)\in T: \spr{u}{x}_X = \spr{v}{y}_Y \rbrace;
 \end{align*}
  it is a linear relation of $Y$ into $X$. 
  The adjoint of a linear relation is always closed, i.e., $T^\ast$ is closed with respect to the product topology on $Y\times X$. 
  Moreover, one has  
  \begin{align}\label{AppLReqnAdj}
   T^{\ast\ast}=\overline{T}, \quad \ker(T^\ast) = \ran(T)^\bot \quad\text{and}\quad  \mul{T^\ast} = \dom{T}^\bot.
  \end{align}
  If $S\in\LRel(X,Y)$ is another linear relation we also have
  \begin{align}
   T\subseteq S \quad\Rightarrow\quad T^\ast \supseteq S^\ast.
  \end{align} 
 All these properties of adjoints may be found for example in \cite[Section~3]{arens} or in \cite[Proposition~C.2.1]{haase}.

 Now let $T$ be a closed linear relation of $X$ into $X$.
  The resolvent set $\rho(T)$ of $T$ consists of all numbers $z\in\C$ such that $R_z=(T-z)^{-1}$ is an everywhere defined operator.
  Here $T-z$ is short-hand for the relation $T-zI$, where $I$ is the identity operator on $X$.
  The mapping $z\mapsto R_z$ on $\rho(T)$, called the resolvent of $T$, has the following properties (see e.g.\ \cite[Section~VI.1]{cross} or \cite[Proposition~A.2.3]{haase}).

 \begin{theorem}\label{AppLRthmres}
  The resolvent set $\rho(T)$ is open and the resolvent identity
 \begin{align}
  R_z - R_w = (z-w)R_z R_w, \quad z,\,w\in\rho(T)
 \end{align}
 holds.
 Moreover, the resolvent is analytic as a mapping into the space of everywhere defined, bounded linear 
  operators on $X$, equipped with the operator norm.
 \end{theorem}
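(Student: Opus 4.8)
The plan is to reduce everything to three ingredients: that $R_z=(T-z)^{-1}$ is a bounded everywhere defined operator for $z\in\rho(T)$, a Neumann series expansion of $R_z$ around a given point of $\rho(T)$, and a direct algebraic verification of the resolvent identity. First I would record boundedness. Since $T$ is closed, so is $T-z$ (translation by $z$ is a homeomorphism of $X\times X$ carrying the graph of $T$ to that of $T-z$), and hence so is the inverse relation $R_z=(T-z)^{-1}$ (the flip $(x,y)\mapsto(y,x)$ is also a homeomorphism). As $z\in\rho(T)$ means $R_z$ is an everywhere defined operator, the closed graph theorem yields that $R_z$ is bounded. Throughout I will use the characterization $R_{z_0}a=b\iff(b,a+z_0b)\in T$, which is immediate from the definitions.

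For openness and analyticity I would fix $z_0\in\rho(T)$ and take $z\in\C$ with $|z-z_0|\,\|R_{z_0}\|<1$, and set the norm-convergent series $\tilde R_z=\sum_{n=0}^\infty (z-z_0)^n R_{z_0}^{n+1}$, which is visibly an analytic function of $z$ into the bounded operators on $X$. The claim to establish is that $z\in\rho(T)$ with $R_z=\tilde R_z$. For $g\in X$ and each $n$ the characterization above gives $\big(R_{z_0}^{n+1}g,\ R_{z_0}^{n}g+z_0R_{z_0}^{n+1}g\big)\in T$. Forming the combination $\sum_{n=0}^N(z-z_0)^n$ of these pairs, which lies in $T$ because $T$ is a subspace, and letting $N\to\infty$, the first components converge to $\tilde R_z g$ while, using $\sum_{n}(z-z_0)^n R_{z_0}^n g=g+(z-z_0)\tilde R_z g$, the second components converge to $g+z\tilde R_z g$. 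Closedness of $T$ then gives $\big(\tilde R_z g,\ g+z\tilde R_z g\big)\in T$, i.e.\ $(g,\tilde R_z g)\in (T-z)^{-1}=R_z$, so $\tilde R_z\subseteq R_z$. It remains to upgrade this to equality: $R_z$ is everywhere defined because $\tilde R_z$ is, and it is single valued because any $x\in\ker(T-z)$ satisfies $x=(z-z_0)R_{z_0}x$, whence $\|x\|\le|z-z_0|\,\|R_{z_0}\|\,\|x\|<\|x\|$ unless $x=0$. Thus $\ker(T-z)=\{0\}$, $z\in\rho(T)$, and $R_z=\tilde R_z$, which proves both openness of $\rho(T)$ and analyticity of the resolvent.

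The resolvent identity is then a short direct computation. For $z,w\in\rho(T)$ and $g\in X$ put $f=R_wg$, so that $(f,g+wf)\in T$. Subtracting $zf$ from the second entry gives $(f,g+(w-z)f)\in T-z$, hence $R_z\big(g+(w-z)f\big)=f$. Since $R_z$ is linear this reads $f=R_zg+(w-z)R_zf=R_zg+(w-z)R_zR_wg$, and rearranging yields $R_zg-R_wg=(z-w)R_zR_wg$, as claimed.

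The step I expect to require the most care is the Neumann series verification, precisely because $T$ is merely a relation: one cannot simply invoke $R_z(T-z)=I$ or cancel a multivalued composition. Instead the argument has to remain at the level of graph inclusions, exploiting the \emph{linearity and closedness} of $T$ to pass to the limit of the partial sums inside $T$, and then separately establishing the triviality of $\ker(T-z)$ in order to promote the inclusion $\tilde R_z\subseteq R_z$ to an equality of everywhere defined operators. Once this bookkeeping is handled, every other step is formally identical to the classical operator case.
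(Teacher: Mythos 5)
Your proof is correct. Note that the paper itself gives no proof of this theorem but simply cites the literature (Cross, Section VI.1; Haase, Proposition A.2.3), so there is nothing internal to compare against; your argument is the standard Neumann-series proof, and you correctly handle the two points where the multivalued setting requires care, namely working with graph inclusions and closedness of $T$ to get $\tilde R_z\subseteq R_z$, and separately checking $\ker(T-z)=\mul{(T-z)^{-1}}=\{0\}$ to promote this to equality. The closed-graph argument for boundedness of $R_z$ and the graph-level derivation of the resolvent identity are likewise sound.
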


 The spectrum $\sigma(T)$ of a closed linear relation $T$ is the complement of the resolvent set.
 One may divide the spectrum into three disjoint parts.
 \begin{align*}
  \sigma_p(T) & =\lbrace \lambda\in\sigma(T) \,|\, \ker(T-\lambda) \not=\lbrace0\rbrace\rbrace, \\
  \sigma_c(T) & =\lbrace \lambda\in\sigma(T) \,|\, \ker(T-\lambda) =\lbrace0\rbrace,\, 
             \ran(T-\lambda)\not=X,\, \overline{\ran(T-\lambda)}=X \rbrace, \\
  \sigma_r(T) & = \lbrace \lambda\in\sigma(T) \,|\, \ker(T-\lambda)=\lbrace 0\rbrace,\, 
                \overline{\ran(T-\lambda)}\not=X \rbrace.
 \end{align*}
 The set $\sigma_p(T)$ is called the point spectrum, $\sigma_c(T)$ is the continuous spectrum and $\sigma_r(T)$ is the 
  residual spectrum of $T$.
 Elements of the point spectrum are called eigenvalues. The spaces $\ker(T-\lambda)$ corresponding to some eigenvalue $\lambda$ are called eigenspaces,
  the non zero elements of the eigenspaces are called eigenvectors.
% Some $\lambda\in\C$ is an eigenvalue of $T$, if $\ker(T-\lambda)\not=\lbrace 0\rbrace$. The non zero elements of $\ker(T-\lambda)$ are the eigenvectors corresponding to $\lambda$.
 
 We need a variant of the spectral mapping theorem for the resolvent (see e.g.\ \cite[Section~VI.4]{cross} or \cite[Proposition~A.3.1]{haase}).
 
 \begin{theorem}\label{AppLRthmSMT}
   For each $z\in\rho(T)$ we have
   \begin{align}
    \sigma\left(R_z\right)\backslash\lbrace 0\rbrace = \left\lbrace \left. \frac{1}{\lambda - z} \,\right|\, \lambda\in\sigma\left(T\right) \right\rbrace.
   \end{align}
 \end{theorem}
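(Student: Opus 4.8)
The plan is to show that the Möbius transformation $\mu\mapsto z+\tfrac1\mu$ maps the resolvent set of $R_z$ (punctured at $0$) bijectively onto the resolvent set of $T$, and then obtain the spectra by taking complements. Concretely, I would fix $z\in\rho(T)$ and, for a nonzero $\mu\in\C$, set $\lambda=z+\tfrac1\mu$, so that $\mu=\tfrac1{\lambda-z}$ and $\lambda-z=\tfrac1\mu$. Since $R_z=(T-z)^{-1}$ is an everywhere defined bounded operator, $R_z-\mu$ is again an everywhere defined bounded operator, and $\mu\in\rho(R_z)$ holds precisely when $R_z-\mu$ has an everywhere defined (hence bounded) inverse. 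The goal is then the single equivalence $\mu\in\rho(R_z)\Leftrightarrow\lambda\in\rho(T)$.

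First I would record the basic algebraic correspondence. Writing $T-\lambda=(T-z)-\tfrac1\mu$ and using that $R_z$ is single-valued and everywhere defined, one checks the chain $(x,y)\in T-\lambda \Leftrightarrow (y+\tfrac1\mu x,x)\in R_z \Leftrightarrow x=R_zy+\tfrac1\mu R_zx$, which rearranges to the genuine equivalence
\[
 (x,y)\in T-\lambda \quad\Longleftrightarrow\quad (R_z-\mu)\,x=-\mu\,R_z\,y.
\]
In particular, setting $y=0$ gives $\ker(T-\lambda)=\ker(R_z-\mu)$. If now $\mu\in\rho(R_z)$, then for every $y$ this equation has the unique solution $x=-\mu(R_z-\mu)^{-1}R_zy$; hence $\ran(T-\lambda)=X$, $\ker(T-\lambda)=\{0\}$, and $(T-\lambda)^{-1}=-\mu(R_z-\mu)^{-1}R_z$ is an everywhere defined bounded operator, i.e. $\lambda\in\rho(T)$.

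For the converse I would verify an explicit inverse rather than invert the above correspondence. If $\lambda\in\rho(T)$, put $R_\lambda=(T-\lambda)^{-1}$ and consider the everywhere defined bounded operator $G=-\tfrac1\mu I-\tfrac1{\mu^2}R_\lambda$. Using the resolvent identity from Theorem~\ref{AppLRthmres} in the form $R_\lambda R_z=R_z R_\lambda=-\mu(R_z-R_\lambda)$ (valid since $z-\lambda=-\tfrac1\mu$), a direct computation collapses both $G(R_z-\mu)$ and $(R_z-\mu)G$ to $I$, so $R_z-\mu$ is boundedly invertible and $\mu\in\rho(R_z)$. Together with the first step this yields $\mu\in\rho(R_z)\Leftrightarrow\lambda\in\rho(T)$ for all $\mu\neq0$.

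Finally I would pass to complements. Since $\mu\mapsto z+\tfrac1\mu$ is a bijection of $\C\setminus\{0\}$ onto $\C\setminus\{z\}$ with inverse $\lambda\mapsto\tfrac1{\lambda-z}$, and since $z\in\rho(T)$ forces every $\lambda\in\sigma(T)$ to satisfy $\lambda\neq z$, the equivalence above gives $\sigma(R_z)\setminus\{0\}=\{\,\tfrac1{\lambda-z}\mid\lambda\in\sigma(T)\,\}$, as claimed (note $\tfrac1{\lambda-z}\neq0$, so excluding $0$ on the left is harmless). The main obstacle I anticipate is not the closing algebra but the careful bookkeeping in the relation setting: one must establish the correspondence as a true equivalence of set memberships and then separately extract the kernel, range, single-valuedness, and boundedness of $(T-\lambda)^{-1}$, rather than manipulating a formal identity such as $R_z-\mu=-\mu R_z(T-\lambda)$ as one would for ordinary operators. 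What makes the argument go through is precisely that $R_z$ is a genuine everywhere defined bounded operator, which licenses the composition arguments despite the possible multivaluedness of $T$; indeed $\mul{T}\neq\{0\}$ is exactly why $0$ may belong to $\sigma(R_z)$ and must be excluded from the statement.
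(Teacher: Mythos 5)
Your proof is correct. Note that the paper itself gives no argument for this theorem; it simply refers to Cross~\cite[Section~VI.4]{cross} and Haase~\cite[Proposition~A.3.1]{haase}, so there is nothing in the text to compare against. Your argument is the standard spectral mapping proof adapted to relations, and you handle the two points that actually require care: the membership chain $(x,y)\in T-\lambda \Leftrightarrow (x,y+(\lambda-z)x)\in T-z \Leftrightarrow x=R_z\bigl(y+\tfrac1\mu x\bigr)$ is a genuine equivalence because $R_z$ is single-valued and everywhere defined, and from it you correctly read off $\ker(T-\lambda)=\ker(R_z-\mu)$, $\ran(T-\lambda)=X$, and the explicit formula $(T-\lambda)^{-1}=-\mu(R_z-\mu)^{-1}R_z$ in the forward direction; the converse via $G=-\tfrac1\mu I-\tfrac1{\mu^2}R_\lambda$ and the resolvent identity of Theorem~\ref{AppLRthmres} checks out (one verifies $G(R_z-\mu)=(R_z-\mu)G=I$ using $R_zR_\lambda=R_\lambda R_z=-\mu(R_z-R_\lambda)$). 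The only implicit ingredients are that an everywhere defined inverse of a closed relation is automatically bounded (closed graph), which you flag, and that $\sigma(T)\subseteq\C\setminus\{z\}$ so the M\"obius map is a bijection between the relevant sets. No gaps.
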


 A linear relation $T$ is said to be symmetric provided that $T\subseteq T^\ast$.
 If $T$ is a closed symmetric linear relation, we have $\rho(T)\subseteq\reg\left(T\right)$ and $\C\backslash\R\subseteq\reg(T)$, where
 \begin{align*}
  \reg(T) = \lbrace z\in\C \,|\, (T-z)^{-1} \text{ is a bounded operator}\rbrace
 \end{align*}
 denotes the points of regular type of $T$.
 Moreover, a linear relation $S$ is said to be self-adjoint, if $S=S^\ast$.
  In this case $S$ is closed, the spectrum of $S$ is real and from~\eqref{AppLReqnAdj} one sees that 
  \begin{align}
     \mul{S} = \dom{S}^\bot \quad\text{and}\quad \ker(S) = \ran(S)^\bot.
  \end{align}
  In particular, $S$ is an operator if and only if its domain is dense.
   Furthermore, 
   \begin{align}\label{eqnLRSDef}
     S_\D = S \cap \left(\D\times\D\right)
   \end{align}
  is a self-adjoint operator in the Hilbert space $\D$, where $\D$ is the closure of the domain of $S$.
 These properties of symmetric and self-adjoint linear relations may be found in \cite[Theorem~3.13, Theorem~3.20 and Theorem~3.23]{dijksnoo}.
 Moreover, the following result shows that $S$ and $S_\D$ (as an operator in the Hilbert space $\D$) have many spectral properties in common.
 
 \begin{lemma}\label{lemSARelOper}
  $S$ and $S_\D$ have the same spectrum and 
  \begin{align}
   R_z f = (S_\D-z)^{-1} P f, \quad f\in X,~z\in\rho(S),
  \end{align}
  where $P$ is the orthogonal projection onto $\D$. 
  Moreover, the eigenvalues as well as the corresponding eigenspaces coincide.
 \end{lemma}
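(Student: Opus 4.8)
The plan is to exploit the canonical orthogonal decomposition $X = \D \oplus \mul{S}$, which is available here because $\mul{S} = \dom{S}^\bot$ and hence $\D = \overline{\dom{S}} = \mul{S}^\bot$, so that $I-P$ is precisely the orthogonal projection onto $\mul{S}$. The central first step is to show that, relative to this decomposition, the self-adjoint relation $S$ splits as the orthogonal sum of its operator part $S_\D$ (acting in $\D$) and the purely multi-valued relation $\lbrace 0\rbrace\times\mul{S}$ (supported on $\mul{S}$). Concretely, I would verify the identity
\begin{align*}
 S = \left\lbrace (f,g)\in\D\times X \,\middle|\, (f,Pg)\in S_\D \right\rbrace.
\end{align*}
For the inclusion $\subseteq$, any $(f,g)\in S$ has $f\in\dom{S}\subseteq\D$, and subtracting the element $(0,(I-P)g)\in S$ (legitimate since $(I-P)g\in\mul{S}$) leaves $(f,Pg)\in S\cap(\D\times\D)=S_\D$. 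The reverse inclusion is the same computation run backwards, adding back an arbitrary element of $\lbrace 0\rbrace\times\mul{S}$.

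With this splitting in hand, the resolvent formula is essentially bookkeeping. For $z\in\rho(S)$ and $f\in X$, set $h=R_z f$, so that $(h,f+zh)\in S$; the decomposition gives $h\in\D$ and $(h,P(f+zh))\in S_\D$, and since $Ph=h$ one has $P(f+zh)=Pf+zh$, i.e.\ $(h,Pf)\in S_\D-z$. This yields $h=(S_\D-z)^{-1}Pf$ once single-valuedness of $(S_\D-z)^{-1}$ is known, which follows from $\ker(S_\D-z)\subseteq\ker(S-z)=\mul{R_z}=\lbrace 0\rbrace$.

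The equality of spectra I would then obtain by showing $\rho(S)=\rho(S_\D)$ in both directions. For $\rho(S)\subseteq\rho(S_\D)$, the computation above applied to all $f\in\D$ shows $S_\D-z$ is onto $\D$ with trivial kernel, and the bound $\|(S_\D-z)^{-1}w\|=\|R_z w\|\leq\|R_z\|\,\|w\|$ for $w\in\D$ gives boundedness. For the converse, given $z\in\rho(S_\D)$ I would define the everywhere-defined bounded operator $T=(S_\D-z)^{-1}P$ on $X$ and check directly from the splitting that $(Tf,f)\in S-z$ for every $f$ (again adding back $(0,(I-P)f)\in S$), while $\ker(S-z)=\ker(S_\D-z)=\lbrace0\rbrace$; hence $R_z=(S-z)^{-1}$ is everywhere defined, single-valued, bounded, and equal to $T$, so $z\in\rho(S)$. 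Finally the eigenvalue statement drops out of the same decomposition: $\ker(S-\lambda)=\lbrace f\,|\,(f,\lambda f)\in S\rbrace=\lbrace f\in\D\,|\,(f,\lambda f)\in S_\D\rbrace=\ker(S_\D-\lambda)$, so both the eigenvalues and the corresponding eigenspaces (as sets of vectors) coincide.

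The genuinely delicate point, and the one I would be most careful about, is the relation arithmetic in the splitting lemma: keeping straight which conditions are automatic (notably $(I-P)g\in\mul{S}$, which holds for \emph{every} $g$ since $I-P$ projects onto $\D^\bot=\mul{S}$) and, above all, remembering that single-valuedness of both $(S_\D-z)^{-1}$ and $R_z$ must be argued rather than assumed. Everything else reduces to the observation that on the multi-valued block the shifted inverse collapses to the zero operator, so that $R_z$ sees only the operator part $S_\D$ after composing with $P$.
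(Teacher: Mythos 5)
Your proof is correct and rests on exactly the same idea as the paper's: the orthogonal decomposition $X=\D\oplus\mul{S}$, under which $S$ splits into its operator part $S_\D$ plus the purely multi-valued piece $\lbrace 0\rbrace\times\mul{S}$ (the paper encodes this in the identities $\ran(S_\D-z)=\ran(S-z)\cap\D$ and $\ker(S_\D-z)=\ker(S-z)$, and checks the resolvent formula on $\D$ and $\D^\bot$ separately). You simply make the splitting lemma and both inclusions $\rho(S)\subseteq\rho(S_\D)$ and $\rho(S_\D)\subseteq\rho(S)$ explicit where the paper leaves them implicit.
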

 
 \begin{proof}
  From the equalities
  \begin{align*}
   \ran(S_\D-z) = \ran(S-z) \cap \D \quad\text{and}\quad \ker(S_\D-z) = \ker(S-z), \quad z\in\C
  \end{align*}
  one sees that $S$ and $S_\D$ have the same spectrum as well as the same point spectrum and corresponding eigenspaces. 
 Now let $z\in\rho(S)$, $f\in X$ and set $g=(S-z)^{-1}f$, i.e., $(g,f)\in S-z$.
   If $f\in\D$, then since $g\in\D$, we have $(g,f)\in S_\D-z$, i.e., $(S_\D-z)^{-1}f=g$.
  Finally note that if $f\in\D^\bot$, then $g=0$ since we have $\mul{S-z}=\mul{S}=\dom{S}^\bot$.
 \end{proof}
 
 Applying a variant of the spectral theorem to $S_\D$, we get the following result for the self-adjoint relation $S$.

\begin{lemma}\label{lemspectransEfg}
 For each $f$, $g\in X$ there is a unique complex Borel measure $E_{f,g}$ on $\R$ such that
 \begin{align}\label{eqnspectransresE}
  \spr{R_z f}{g}_X = \int_\R \frac{1}{\lambda-z} d E_{f,g}(\lambda), \quad z\in\rho(S). 
 \end{align}
 Moreover,
 \begin{align}
  \spr{Pf}{g}_X = \int_\R dE_{f,g}, \quad f,\,g\in X
 \end{align}
 and for each $f\in X$, $E_{f,f}$ is a positive measure with
 \begin{align}
 Pf\in\dom{S} \quad\Leftrightarrow\quad \int_\R |\lambda|^2dE_{f,f}(\lambda)<\infty. 
 \end{align}
 In this case 
 \begin{align} 
 \spr{f_S}{Pg}_X = \int_\R \lambda dE_{f,g}(\lambda), 
 \end{align}
 whenever $(Pf,f_S)\in S$.
\end{lemma}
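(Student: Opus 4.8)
The plan is to reduce everything to the operator part $S_\D$ and to read off the assertions from the classical spectral theorem for the self-adjoint operator $S_\D$ in the Hilbert space $\D$. First I would apply the spectral theorem to $S_\D$ to obtain a projection-valued measure $E^{S_\D}$ on $\R$, supported on $\sigma(S_\D)=\sigma(S)$, satisfying
\[
 (S_\D-z)^{-1}h = \int_\R \frac{1}{\lambda-z}\, dE^{S_\D}(\lambda)\, h, \quad h\in\D,~z\in\rho(S).
\]
By Lemma~\ref{lemSARelOper} we have $R_z f = (S_\D-z)^{-1}Pf$ for all $f\in X$, so the natural candidate for the scalar measure is $E_{f,g}(\Omega)=\spr{E^{S_\D}(\Omega)Pf}{Pg}_\D$. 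Since $Pf,Pg\in\D$ and $g-Pg\perp\D$, this equals $\spr{E^{S_\D}(\Omega)Pf}{g}_X$.

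The resolvent representation then follows immediately: for $z\in\rho(S)$,
\[
 \spr{R_z f}{g}_X = \spr{(S_\D-z)^{-1}Pf}{Pg}_\D = \int_\R \frac{1}{\lambda-z}\, dE_{f,g}(\lambda),
\]
where the last step is the scalar form of the spectral resolvent formula. For uniqueness I would use that the Borel transform $z\mapsto\int_\R(\lambda-z)^{-1}d\mu(\lambda)$ of a complex Borel measure $\mu$ determines $\mu$; since $\rho(S)$ is open and contains $\C\backslash\R$ (the spectrum of a self-adjoint relation being real), the Stieltjes inversion formula recovers $E_{f,g}$ from the left-hand side, so no two distinct measures can satisfy the representation.

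The remaining claims are transcriptions of the spectral calculus for $S_\D$. The total mass is $E_{f,g}(\R)=\spr{E^{S_\D}(\R)Pf}{Pg}_\D=\spr{Pf}{Pg}_\D=\spr{Pf}{g}_X$, using $E^{S_\D}(\R)=I_\D$ and $g-Pg\perp\D$. Positivity of $E_{f,f}$ holds since $E_{f,f}(\Omega)=\|E^{S_\D}(\Omega)Pf\|_\D^2\ge 0$. For the domain criterion the one delicate point is the identification $\dom{S_\D}=\dom{S}\cap\D$: if $(Pf,k)\in S$ then, because $\mul{S}=\dom{S}^\bot=\D^\bot$, the component $(I-P)k$ lies in $\mul{S}$, whence $(Pf,Pk)\in S\cap(\D\times\D)=S_\D$, and the converse inclusion is trivial. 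Given this, $Pf\in\dom{S}$ iff $Pf\in\dom{S_\D}$ iff $\int_\R|\lambda|^2\,dE_{f,f}(\lambda)<\infty$. Finally, when $(Pf,f_S)\in S$ the same splitting gives $(Pf,Pf_S)\in S_\D$, so $S_\D Pf=Pf_S$, and the spectral formula yields $\spr{f_S}{Pg}_X=\spr{Pf_S}{Pg}_\D=\spr{S_\D Pf}{Pg}_\D=\int_\R\lambda\,dE_{f,g}(\lambda)$; note the value is independent of the choice of $f_S$ since the ambiguity lies in $\mul{S}=\D^\bot\perp Pg$.

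The main obstacle is bookkeeping rather than analysis: one must keep the projection $P$ in the correct positions and exploit $\mul{S}=\dom{S}^\bot$ to pass between $S$ and $S_\D$. Once the identification $\dom{S_\D}=\dom{S}\cap\D$ is established, all four assertions reduce directly to the classical spectral theorem applied to $S_\D$.
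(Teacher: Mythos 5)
Your argument is correct and follows essentially the same route as the paper: apply the spectral theorem to the operator part $S_\D$, set $E_{f,g}=E_{Pf,Pg}$, pass the resolvent formula through Lemma~\ref{lemSARelOper}, and obtain uniqueness from the Stieltjes inversion formula. The only difference is that you spell out the identification $\dom{S_\D}=\dom{S}\cap\D$ and the splitting of $(Pf,k)\in S$ via $\mul{S}=\D^\bot$, which the paper leaves implicit in the phrase ``the remaining claims follow from the fact that $E$ is the spectral measure of $S_\D$.''
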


\begin{proof}
 Since $S_\D$ is a self-adjoint operator in $\D$, there is an operator-valued spectral measure $E$ such that for all $f$, $g\in\D$
 \begin{align*}
  \spr{(S_{\D}-z)^{-1} f}{g}_X = \int_\R \frac{1}{\lambda-z} d E_{f,g}(\lambda), \quad z\in\rho(S_{\D}),
 \end{align*}
 where $E_{f,g}$ is the complex measure given by $E_{f,g}(B) = \spr{E(B) f}{g}_X$, for each Borel set $B$.
 For arbitrary $f$, $g\in X$ we set $E_{f,g}=E_{Pf,Pg}$.
 Because of Lemma~\ref{lemSARelOper} we get for each $z\in\rho(S)$ the claimed equality
 \begin{align*}
  \spr{R_zf}{g}_X & = \spr{R_z Pf}{Pg}_X = \spr{(S_\D-z)^{-1} Pf}{Pg}_X = \int_\R \frac{1}{\lambda-z} dE_{Pf,Pg}(\lambda) \\
                & = \int_\R \frac{1}{\lambda-z} dE_{f,g}(\lambda),
 \end{align*}
 where we used $R_z = PR_zP$ (see~\eqref{eqnSpecRessets}). 
%we need to show that $\rho(S)\subseteq \rho(S_{\D})$ and
% \begin{align*}
%  P(S_{\D}-z)^{-1}P = R_z, \quad z\in\rho(S).
% \end{align*}
% Now the first part follows from the equalities
% \begin{align*}
%  \ran(S_{\D}-z) = \ran(S-z) \cap \D \quad\text{and}\quad \ker(S_{\D}-z)=\ker(S-z).
% \end{align*}
% For the second part, because of $R_z = PR_z=R_zP$ we have to prove that
% \begin{align*}
%  (S_{\D}-z)^{-1}f = R_zf, \quad f\in\D,~z\in\rho(S).
% \end{align*}
% But this is true since
% \begin{align*}
%  \left( (S_{\D}-z)^{-1}f,f\right) \in S_{\D}-z \quad\Leftrightarrow\quad 
%       \left( R_z f,f\right) \in S-z.
% \end{align*}
 Uniqueness of these measures follows from the Stieltjes inversion formula.
 The remaining claims follow from the fact that $E$ is the spectral measure of $S_{\D}$.
\end{proof}

 We are interested in self-adjoint extensions of symmetric relations in $X$.
 Therefore, let $T$ be a closed symmetric linear relation in $X\times X$. The linear relations
 \begin{align*}
  N_\pm(T) & = \lbrace (x,y)\in T^\ast \,|\, y=\pm \I x \rbrace
 \end{align*}
 are called deficiency spaces of $T$. Note that $N_\pm(T)$ are operators with 
 \begin{align*}
  \dom{N_\pm(T)} = \ran(T \mp  \I)^\bot = \ker(T^\ast \pm  \I).
 \end{align*}
 Furthermore, one has an analog of the first von Neumann formula (see e.g.\ \cite[Theorem~6.1]{dijksnoo})
 \begin{align}\label{AppLReqnNF}
  T^\ast = T \oplus N_+(T) \oplus N_-(T),
 \end{align}
 where the sums are orthogonal with respect to the usual inner product 
 \begin{align*}
  \spr{(f_1,f_2)}{(g_1,g_2)}_{X\times X} = \spr{f_1}{g_1}_X + \spr{f_2}{g_2}_X, \quad (f_1, f_2),\, (g_1,g_2)\in X\times X
 \end{align*}
 on $X\times X$.
 Now the existence of self-adjoint extensions of $T$ is determined by these subspaces (see e.g.\ \cite[Theorem~15]{coddsymext} or \cite[Corollary~6.4]{dijksnoo}).

 \begin{theorem}\label{AppLRthmSAext}
  The closed symmetric linear relation $T$ has a self-adjoint extension if and only if the dimensions of the deficiency subspaces are equal.
  In this case all self-adjoint extensions $S$ of $T$ are of the form
   \begin{align}\label{AppLReqnSAext}
    S = T \oplus (I-V)N_+(T),
   \end{align}
  where $V$ is an isometry from $N_+(T)$ onto $N_-(T)$. 
  Conversely, for each such isometry $V$ the linear relation $S$ given by~\eqref{AppLReqnSAext} is self-adjoint.
 \end{theorem}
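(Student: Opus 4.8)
The plan is to prove both implications by combining the first von Neumann formula~\eqref{AppLReqnNF}, $T^\ast = T \oplus M_+(T) \oplus M_-(T)$, with the boundary form on $T^\ast$. First I would record the elementary reduction: if $S$ is any self-adjoint extension of $T$, then applying the adjoint to $T \subseteq S$ and using $S = S^\ast$ gives $S = S^\ast \subseteq T^\ast$, so every self-adjoint extension is sandwiched between $T$ and $T^\ast$. Since the von Neumann sum is orthogonal and $T$ is closed, $S = T \oplus \mathcal D$ with $\mathcal D = S \ominus T \subseteq M_+(T) \oplus M_-(T)$, and the problem reduces to identifying which subspaces $\mathcal D$ yield self-adjoint $S$. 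The key tool is the boundary form $\Gamma((u,u'),(v,v')) = \spr{u'}{v} - \spr{u}{v'}$ on $T^\ast$: by the symmetry of $T$ and the definition of the adjoint, $\Gamma$ vanishes whenever one of its arguments lies in $T$, so on $T^\ast = T \oplus M_+(T) \oplus M_-(T)$ it depends only on the deficiency components.

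Next I would compute $\Gamma$ on the deficiency spaces. Writing elements of $M_\pm(T)$ as $(f_\pm, \pm\I f_\pm)$, a direct calculation shows that the mixed terms between $M_+(T)$ and $M_-(T)$ cancel and that $\Gamma$ reduces, up to a fixed nonzero constant, to $\spr{f_+}{g_+} - \spr{f_-}{g_-}$. Consequently, $S = T \oplus \mathcal D$ is symmetric if and only if $\spr{f_+}{g_+} = \spr{f_-}{g_-}$ for all pairs in $\mathcal D$ with deficiency components $(f_+,f_-)$, $(g_+,g_-)$. In particular the positive component then determines the negative one, so $\mathcal D$ is the graph of a linear map $V$ from a subspace of $M_+(T)$ into $M_-(T)$, and the displayed identity says exactly that $V$ is isometric; writing $\mathcal D = (I-V)\,\dom{V}$ recovers the form $S = T \oplus (I-V)M_+(T)$ once we know $\dom{V} = M_+(T)$.

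The decisive step is to show that self-adjointness, as opposed to mere symmetry, forces $V$ to be an isometry of $M_+(T)$ \emph{onto} $M_-(T)$. Here I would compute $S^\ast$ inside $T^\ast$ by the same decomposition: an element of $T^\ast$ lies in $S^\ast$ precisely when $\Gamma$ annihilates it against every element of $S$, which translates into an orthogonality condition on deficiency components with respect to the indefinite pairing $\spr{f_+}{g_+} - \spr{f_-}{g_-}$. Imposing $S = S^\ast$ then means the graph of $V$ is maximal for this pairing, and a short argument shows this is equivalent to $\dom{V} = M_+(T)$ and $\ran V = M_-(T)$; since an isometry between the deficiency spaces can only be total and onto when they have equal dimension, this simultaneously proves the necessity of $\dim M_+(T) = \dim M_-(T)$ and the asserted parametrization. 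For the converse I would run the argument backwards: given a surjective isometry $V : M_+(T) \to M_-(T)$, the subspace $S = T \oplus (I-V)M_+(T)$ is symmetric by the isometry property, and the maximality computation gives $S^\ast = S$, hence $S$ is self-adjoint.

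I expect the main obstacle to be the self-adjointness half of the decisive step: verifying that the $\Gamma$-orthogonal complement of the graph of $V$, taken inside $M_+(T) \oplus M_-(T)$ with respect to the indefinite pairing above, coincides with that graph exactly when $V$ is a bijective isometry. This is where the interplay between the indefinite boundary form and the positive inner products on the two deficiency spaces must be handled carefully, and where the equality of dimensions genuinely enters.
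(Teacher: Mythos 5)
Your proposal is essentially correct, but note that the paper itself does not prove this theorem at all: it is quoted from the literature, with references to Coddington and to Dijksma--de Snoo, so there is no in-paper argument to compare against. What you have written is the standard von Neumann-type proof adapted to relations, carried out via the boundary form $\Gamma$ on $T^\ast$ rather than via the Cayley transform that the cited sources lean on; both routes are legitimate, and yours has the advantage of making the role of the indefinite pairing $\spr{f_+}{g_+}-\spr{f_-}{g_-}$ completely explicit. Your computations check out: the mixed $M_+$/$M_-$ terms in $\Gamma$ do cancel, symmetry of $T\oplus\mathcal{D}$ is equivalent to $\mathcal{D}$ being $\Gamma$-neutral, the diagonal case $\|f_+\|=\|f_-\|$ forces $\mathcal{D}$ to be the graph of an isometry $V$, and $S^\ast=T\oplus\mathcal{D}^{[\perp]}$ reduces self-adjointness to $\mathcal{D}=\mathcal{D}^{[\perp]}$. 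The only place I would ask you to be more explicit is the ``short argument'' in the decisive step: computing $\mathcal{D}^{[\perp]}$ for the graph of an isometry with domain $D_0\subseteq M_+(T)$ and range $R_0\subseteq M_-(T)$ yields, besides the graph of the closure of $V$, the extra summands $\left(D_0^\perp\cap M_+(T)\right)\times\lbrace 0\rbrace$ and $\lbrace 0\rbrace\times\left(R_0^\perp\cap M_-(T)\right)$; killing these gives only \emph{density} of $D_0$ and $R_0$, and you must invoke closedness of $S$ (automatic for a self-adjoint relation, and inherited by $\mathcal{D}=S\ominus T$ through the orthogonal decomposition) together with the isometry of $V$ to upgrade density to equality $D_0=M_+(T)$, $R_0=M_-(T)$. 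With that point spelled out, the argument is complete, including both directions of the dimension criterion.
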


% If $T$ has self-adjoint extensions, these correspond to the isometries of $N_-$ onto $N_+$ (see \cite[Corollary~6.4]{dijksnoo}).
 In particular, we are interested in the case when the deficiency subspaces are finite-dimensional, i.e.,
 \begin{align*}
  n_\pm(T) = \dim N_\pm(T) < \infty.
 \end{align*}
 The numbers $n_\pm(T)$ are called deficiency indices of $T$. 
 
 \begin{corollary}
  If $T$ has equal and finite deficiency indices, i.e.,
   \begin{align*}
    n_-(T)=n_+(T)=n\in\N, 
   \end{align*}
  then the self-adjoint extensions of $T$ are precisely the $n$-dimensional symmetric extensions of $T$.
 \end{corollary}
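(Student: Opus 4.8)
The plan is to reduce the entire statement to a single dimension count in the finite-dimensional quotient $T^\ast/T$. By the von Neumann formula~\eqref{AppLReqnNF} and the orthogonality of its three summands, $\dim(T^\ast/T) = n_+(T) + n_-(T) = 2n$. Every symmetric extension $S$ of $T$ satisfies $T\subseteq S\subseteq S^\ast\subseteq T^\ast$ (using $T\subseteq S\Rightarrow S^\ast\subseteq T^\ast$ together with $T=T^{\ast\ast}\subseteq S^\ast$), and an $n$-dimensional extension of the closed relation $T$ is again closed (sum of a closed and a finite-dimensional subspace), so all quotients below are finite-dimensional and bounded by $2n$. The key identity I aim to establish is
\[ \dim(S/T) + \dim(S^\ast/T) = \dim(T^\ast/T) = 2n \]
for every closed symmetric extension $S$ of $T$ inside $T^\ast$; granting this, both implications are immediate.

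To prove the identity I would introduce the boundary form $B(\hat f,\hat g) = \langle f_1,g_0\rangle - \langle f_0,g_1\rangle$ for $\hat f=(f_0,f_1)$, $\hat g=(g_0,g_1)\in T^\ast$. The definition of the adjoint shows that $B$ vanishes as soon as either argument lies in $T$, so it descends to a sesquilinear form on $T^\ast/T$; it is skew-Hermitian, and it is non-degenerate because $B(\hat f,\cdot)\equiv 0$ on $T^\ast$ is precisely the statement $\hat f\in(T^\ast)^\ast = T^{\ast\ast} = \overline T = T$ by~\eqref{AppLReqnAdj}. The same unravelling of the adjoint shows that $\hat f\in S^\ast$ iff $B(\hat f,\hat g)=0$ for all $\hat g\in S$, that is, $S^\ast/T = (S/T)^{\perp_B}$ is exactly the $B$-orthogonal complement of $S/T$ in $T^\ast/T$. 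Non-degeneracy of $B$ on the $2n$-dimensional space $T^\ast/T$ then yields $\dim(S/T)+\dim(S^\ast/T)=2n$. One could alternatively check that, under the identification $T^\ast/T\cong M_+(T)\oplus M_-(T)$, the Hermitian form $-\I B$ has signature $(n,n)$, which makes the underlying symplectic picture explicit but is not needed for the count.

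With the identity in hand the two directions follow at once. If $S$ is a self-adjoint extension then $S=S^\ast$, so the two summands are equal and each equals $n$; thus $\dim(S/T)=n$, and $S$, being self-adjoint, is in particular a symmetric extension of dimension $n$. Conversely, if $S$ is a symmetric extension with $\dim(S/T)=n$, the identity forces $\dim(S^\ast/T)=n$ as well; since $S\subseteq S^\ast$ and $\dim(S/T)=\dim(S^\ast/T)=n<\infty$, we conclude $S=S^\ast$, i.e.\ $S$ is self-adjoint. As a consistency check with Theorem~\ref{AppLRthmSAext}, the self-adjoint extensions $S=T\oplus(I-V)M_+(T)$ indeed have $\dim(S/T)=n$: the map $I-V$ is injective on $M_+(T)$ because $M_+(T)\cap M_-(T)=\{0\}$ by~\eqref{AppLReqnNF}, so $(I-V)M_+(T)$ is $n$-dimensional.

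The main obstacle I anticipate is the bookkeeping around the boundary form: verifying that $B$ is well-defined and non-degenerate on $T^\ast/T$ and that $S^\ast/T$ is genuinely its $B$-orthogonal complement. Both reduce to reading off the definition of the adjoint and invoking $T^{\ast\ast}=\overline T=T$, but the conjugation conventions in $B$ must be tracked precisely so that the skew-Hermitian symmetry and the non-degeneracy come out correctly; everything after that is linear algebra in a $2n$-dimensional space.
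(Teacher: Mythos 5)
Your proof is correct, but it follows a genuinely different route from the paper's. The paper gets the forward direction directly from the von Neumann parametrization in Theorem~\ref{AppLRthmSAext}, observing that $\dim (I-V)M_+(T)=n$; for the converse it writes an $n$-dimensional symmetric extension as $S=T\dot{+}M$ and computes ranges, showing $\ran(S\mp\I)=\ran(T\mp\I)\dot{+}\ran(M\mp\I)=X$ because $\dim\ran(M\mp\I)=n$ equals the codimension of $\ran(T\mp\I)$, whence the deficiency spaces of $S$ vanish and $S=S^\ast$ by~\eqref{AppLReqnNF}. You instead work entirely inside the $2n$-dimensional quotient $T^\ast/T$ with the boundary form $B$, proving the single identity $\dim(S/T)+\dim(S^\ast/T)=2n$ from non-degeneracy of $B$ (which is exactly $T^{\ast\ast}=\overline{T}=T$ from~\eqref{AppLReqnAdj}) and the fact that $S^\ast/T=(S/T)^{\perp_B}$; both implications then reduce to linear algebra. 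Your argument uses the von Neumann formula only for the count $\dim(T^\ast/T)=2n$ and bypasses Theorem~\ref{AppLRthmSAext} and the range computation entirely; it is the standard boundary-form (symplectic) viewpoint and is arguably cleaner, since it packages both directions into one identity. The paper's route is shorter given that Theorem~\ref{AppLRthmSAext} has already been established, and its Cayley-transform-flavoured range argument is what generalizes to the stability statements about resolvents used later. All the individual steps you flag as delicate (well-definedness and non-degeneracy of $B$ on the quotient, the identification of $S^\ast/T$ with the $B$-orthogonal complement, and the closedness of finite-dimensional extensions of the closed relation $T$) check out against the paper's conventions for the adjoint.
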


 \begin{proof}
  By Theorem~\ref{AppLRthmSAext} each self-adjoint extension is an $n$-dimensional symmetric extension of $T$,
   since $\dim (I-V)N_+(T)=n$.
  Conversely, assume that $S$ is an $n$-dimensional symmetric extension of $T$, i.e., $S = T \dot{+} M$ for some
   $n$-dimensional symmetric subspace $N$. Then since 
   \begin{align*}
   \dim\ran(N\mp  \I) = n = \dim X/\ran(T\mp \I),
   \end{align*}
   (note that $(N\mp \I)^{-1}$ is an operator) we get
   \begin{align*}
    \ran(S\mp  \I) = \ran(T\mp \I) \dot{+} \ran(N\mp  \I) = X.
   \end{align*}
  Hence we have $\dim N_\pm(S)=0$ and therefore $S^\ast=S$ in view of~\eqref{AppLReqnNF}.
 \end{proof}

 The essential spectrum $\sigess(S)$ of a self-adjoint linear relation $S$ %is defined as the usual essential spectrum of the self-adjoint linear operator $S_\D$.
 consists of all eigenvalues of infinite multiplicity and all accumulation points of the spectrum. 
 Moreover, the discrete spectrum $\sigdis(S)$ of $S$ consists of all eigenvalues of $S$ with finite
  multiplicity which are isolated points of the spectrum of $S$.
 From Lemma~\ref{lemSARelOper} one immediately sees that
 \begin{align*}
  \sigess(S) = \sigess(S_\D) \quad\text{and}\quad \sigdis(S) = \sigdis(S_\D).
 \end{align*}
 Using the former equality, we are able to deduce the following two theorems on stability of the essential spectrum from the corresponding results for operators.

 \begin{theorem}\label{AppLRthmSAEessspec}
  Let $T$ be a symmetric relation with equal and finite deficiency indices $n$ and $S_1$, $S_2$ be self-adjoint
   extensions of $T$. Then the operators
  \begin{align}
    \left(S_1\pm\I\right)^{-1} - \left(S_2\pm\I\right)^{-1}
  \end{align}
  are at most $n$-dimensional. In particular, we have
  \begin{align}
    \sigess\left(S_1\right) = \sigess\left(S_2\right).
  \end{align}
 \end{theorem}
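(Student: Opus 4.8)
The plan is to prove the finite-rank (hence compact) nature of the resolvent difference first, and then to read off the equality of essential spectra from the standard stability result, passing through the operator parts. First I would record that, since $S_1$ and $S_2$ are self-adjoint, their spectra are real, so $\pm\I\in\rho(S_1)\cap\rho(S_2)$ and the resolvents $R^{(j)}_z=(S_j-z)^{-1}$ are everywhere defined bounded operators on $X$ for $z\in\{\I,-\I\}$ (Theorem~\ref{AppLRthmres}). Fix such a $z$ and an arbitrary $f\in X$, and set $g_j=R^{(j)}_z f$, so that $(g_j,f+zg_j)\in S_j$ for $j=1,2$.

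The heart of the argument is the observation that both extensions lie inside $T^\ast$. Since $S_1,S_2\subseteq T^\ast$ and $T^\ast$ is a linear relation, subtracting the two pairs gives $(g_1-g_2,\,z(g_1-g_2))\in T^\ast$, that is, $g_1-g_2\in\ker(T^\ast-z)$. By the identities recorded just before the first von Neumann formula~\eqref{AppLReqnNF}, the space $\ker(T^\ast-z)$ is (the domain of) one of the deficiency operators $M_\pm(T)$ and therefore has dimension $n_\pm(T)=n$. Consequently $\ran\big(R^{(1)}_z-R^{(2)}_z\big)\subseteq\ker(T^\ast-z)$, a subspace of dimension $n$, so that $R^{(1)}_z-R^{(2)}_z$ has rank at most $n$; this is exactly the first assertion, and in particular the difference is compact.

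For the equality of the essential spectra I would transfer everything to the bounded operators $R^{(j)}_z$ on the single Hilbert space $X$. By the resolvent identity (Theorem~\ref{AppLRthmres}) each $R^{(j)}_z$ commutes with its adjoint $R^{(j)}_{z^\ast}$, hence is normal, and the two normal operators differ by a finite-rank, hence compact, operator; Weyl's theorem for bounded normal operators then yields $\sigess(R^{(1)}_z)=\sigess(R^{(2)}_z)$. It remains to translate this back to $S_1,S_2$. Here I would invoke the operator part: by Lemma~\ref{lemSARelOper}, with respect to $X=\D_j\oplus\mul{S_j}$ one has $R^{(j)}_z=((S_j)_\D-z)^{-1}\oplus 0$, and combining the spectral mapping theorem (Theorem~\ref{AppLRthmSMT}) on $\D_j$ with the identity $\sigess(S_j)=\sigess((S_j)_\D)$ shows that the nonzero part of $\sigess(R^{(j)}_z)$ equals $\{(\lambda-z)^{-1}\mid\lambda\in\sigess(S_j)\}$. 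Since $z\notin\R$, the map $\lambda\mapsto(\lambda-z)^{-1}$ is injective on $\R$ and never vanishes there, so $\sigess(R^{(1)}_z)=\sigess(R^{(2)}_z)$ forces $\sigess(S_1)=\sigess(S_2)$.

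The main obstacle I anticipate is precisely this last transfer. Because $R_z$ fails to be injective when the relation is multi-valued, the value $0$ may belong to $\sigess(R^{(j)}_z)$, while the spectral mapping theorem only controls the nonzero part of the spectrum. One therefore has to argue carefully, through the operator parts $(S_j)_\D$ and the fact that $\lambda\mapsto(\lambda-z)^{-1}$ never hits $0$ for real $\lambda$, that this spurious point at $0$ does not interfere with the comparison of the real essential spectra. Everything else is routine.
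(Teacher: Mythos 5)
Your argument is correct, and its overall architecture is the same as the paper's: first bound the rank of the resolvent difference by $n$, then deduce $\sigess(S_1)=\sigess(S_2)$ from the stability of the essential spectrum under finite-rank resolvent perturbations, passing through the operator parts via Lemma~\ref{lemSARelOper}. Within that architecture the two proofs execute the rank bound in dual ways: the paper notes that $(S_1\pm\I)^{-1}$ and $(S_2\pm\I)^{-1}$ both coincide with $(T\pm\I)^{-1}$ on $\ran(T\pm\I)$, whose orthogonal complement has dimension $n$, so the difference \emph{vanishes} on a subspace of codimension $n$; you instead use $S_1,S_2\subseteq T^\ast$ to show that the \emph{range} of the difference is contained in the deficiency space $\ker(T^\ast\pm\I)$, which is $n$-dimensional. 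Both are equally efficient and each buys the same conclusion. For the second step the paper simply cites \cite[Thm.~6.19]{tschroe}, while you unpack that citation: the resolvents are bounded normal operators (either by the resolvent identity together with $R_z^\ast=R_{z^\ast}$, or directly from the block form $R_z=((S_j)_\D-z)^{-1}\oplus 0$), Weyl's theorem for compact perturbations of normal operators gives equality of their essential spectra, and the spectral mapping transports this back to the real line. Your explicit attention to the point $0\in\sigma(R_z^{(j)})$ is worthwhile here, since in this setting the relations may genuinely be multi-valued and $R_z^{(j)}$ then has a nontrivial kernel; the paper leaves this entirely to the cited reference. The only place where you lean on a fact not literally stated in the paper is the essential-spectrum version of the spectral mapping theorem for the self-adjoint operator part (Theorem~\ref{AppLRthmSMT} is stated for the full spectrum), but this is standard for self-adjoint operators and no worse than the paper's own reliance on an external theorem at the same point.
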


 \begin{proof}
  Because of $\dim\ran(T\pm\I)^\bot=n$ and
  \begin{align*}
   \left(S_1\pm\I\right)^{-1} f = \left(T\pm\I\right)^{-1} f = \left(S_2\pm\I\right)^{-1} f, \quad f\in\overline{\ran(T\pm\I)},
  \end{align*}
  the difference of the resolvents is at most $n$-dimensional. Now the remaining claim follows from Lemma~\ref{lemSARelOper} and~\cite[Theorem~6.19]{tschroe}.
 \end{proof}

 \begin{theorem}\label{AppLRthmOSessspec}
  Let $X_1$, $X_2$ be closed subspaces of $X$ such that $X=X_1\oplus X_2$. 
  If $S_1$ is a self-adjoint linear relation in $X_1$ and $S_2$ is a self-adjoint linear relation in $X_2$, then $S_1\oplus S_2$ is a self-adjoint linear relation in $X$ with
   \begin{align}
    \sigess\left(S_1\oplus S_2\right) = \sigess\left(S_1\right)\cup\sigess\left( S_2\right).
   \end{align}
 \end{theorem}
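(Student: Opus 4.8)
The plan is to establish the two assertions separately: first that $S_1\oplus S_2$ is self-adjoint, and then the decomposition of the essential spectrum.

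For self-adjointness I would work directly from the definition of the adjoint of a linear relation. Writing a generic element of $(S_1\oplus S_2)^\ast$ as $((h_1,h_2),(k_1,k_2))\in X\times X$ relative to $X=X_1\oplus X_2$, the defining condition
\[
 \spr{f_1}{k_1}_{X_1}+\spr{f_2}{k_2}_{X_2}=\spr{g_1}{h_1}_{X_1}+\spr{g_2}{h_2}_{X_2}
\]
has to hold for all $(f_1,g_1)\in S_1$ and $(f_2,g_2)\in S_2$. Testing against $(f_2,g_2)=(0,0)\in S_2$ isolates the $X_1$-identity for all $(f_1,g_1)\in S_1$, i.e.\ $(h_1,k_1)\in S_1^\ast$, and symmetrically $(h_2,k_2)\in S_2^\ast$; conversely these two memberships add up to the full condition. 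Hence $(S_1\oplus S_2)^\ast=S_1^\ast\oplus S_2^\ast=S_1\oplus S_2$, so $S_1\oplus S_2$ is self-adjoint in $X$.

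For the spectrum I would use that $S_1\oplus S_2-z=(S_1-z)\oplus(S_2-z)$ and that the inverse of a direct sum of relations is the direct sum of the inverses, so that $R_z=(S_1-z)^{-1}\oplus(S_2-z)^{-1}$ is everywhere defined and bounded precisely when both summands are. Thus $\rho(S_1\oplus S_2)=\rho(S_1)\cap\rho(S_2)$ and $\sigma(S_1\oplus S_2)=\sigma(S_1)\cup\sigma(S_2)$. In the same way $\ker((S_1\oplus S_2)-\lambda)=\ker(S_1-\lambda)\oplus\ker(S_2-\lambda)$, so the multiplicity of an eigenvalue $\lambda$ of $S_1\oplus S_2$ is the sum of its multiplicities as an eigenvalue of $S_1$ and of $S_2$.

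It then remains to feed these two facts into the definition of the essential spectrum (accumulation points of the spectrum together with eigenvalues of infinite multiplicity). If $\lambda\in\sigess(S_i)$ for some $i$, then $\lambda$ is either an accumulation point of $\sigma(S_i)\subseteq\sigma(S_1\oplus S_2)$ or an infinite-multiplicity eigenvalue of $S_i$, hence of $S_1\oplus S_2$ by the eigenspace decomposition; either way $\lambda\in\sigess(S_1\oplus S_2)$. For the reverse inclusion the step requiring the most care is a pigeonhole argument: if $\lambda$ is an accumulation point of $\sigma(S_1)\cup\sigma(S_2)$ but of neither $\sigma(S_1)$ nor $\sigma(S_2)$ separately, then neighbourhoods $U_i$ of $\lambda$ with $U_i\cap\sigma(S_i)\subseteq\{\lambda\}$ would make $U_1\cap U_2$ meet the union only in $\{\lambda\}$, a contradiction; and if $\lambda$ is an infinite-multiplicity eigenvalue of $S_1\oplus S_2$, then one of the summands $\ker(S_i-\lambda)$ is infinite-dimensional. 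Thus $\lambda\in\sigess(S_i)$ for some $i$, giving $\sigess(S_1\oplus S_2)=\sigess(S_1)\cup\sigess(S_2)$. Alternatively, one may pass to the operator parts via Lemma~\ref{lemSARelOper} and the identity $\sigess(S)=\sigess(S_\D)$ noted above, reducing everything to the classical direct-sum statement for self-adjoint operators; I would present this as the more economical route once the decomposition $(S_1\oplus S_2)_\D=S_{1,\D}\oplus S_{2,\D}$ of operator parts is checked.
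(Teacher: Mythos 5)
Your proposal is correct, and its primary route differs from the paper's. The paper disposes of self-adjointness with the remark that ``a simple calculation shows $(S_1\oplus S_2)^\ast=S_1^\ast\oplus S_2^\ast$'' (your testing against $(0,0)\in S_2$ is exactly that calculation, spelled out), and then handles the essential spectrum entirely by passing to operator parts: it verifies $\D=\D_1\oplus\D_2$ and $(S_1\oplus S_2)_\D=S_{1\D_1}\oplus S_{2\D_2}$ and invokes $\sigess(S)=\sigess(S_\D)$ from Lemma~\ref{lemSARelOper} together with the classical direct-sum result for self-adjoint operators. Your main argument instead stays at the level of relations: you first establish $\sigma(S_1\oplus S_2)=\sigma(S_1)\cup\sigma(S_2)$ and $\ker\left((S_1\oplus S_2)-\lambda\right)=\ker\left(S_1-\lambda\right)\oplus\ker\left(S_2-\lambda\right)$, and then check both inclusions directly against the paper's definition of $\sigess$ (infinite-multiplicity eigenvalues plus accumulation points of the spectrum), with the pigeonhole step for accumulation points being the only point needing care --- and you handle it correctly, using that the spectra are closed so an accumulation point of $\sigma(S_i)$ lies in $\sigess(S_i)$. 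The trade-off is clear: your direct route is self-contained and does not presuppose the operator-level theorem, at the cost of re-proving it; the paper's route is shorter but leans on Lemma~\ref{lemSARelOper} and the known operator statement. You also flag the operator-part reduction as the more economical alternative, which is precisely the paper's proof, so nothing is missing.
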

 
 \begin{proof}
  A simple calculation shows that $(S_1\oplus S_2)^\ast = S_1^\ast\oplus S_2^\ast = S_1\oplus S_2$. Since 
  \begin{align*}
   \D = \overline{\dom{S_1\oplus S_2}} = \overline{\dom{S_1}} \oplus \overline{\dom{S_2}} = \D_1 \oplus \D_2
  \end{align*}
  and
  \begin{align*}
   \left(S_1\oplus S_2\right)_\D = S_{1\D_1} \oplus S_{2\D_2},
  \end{align*}
  the claim follows from the corresponding result for operators.
 \end{proof}

\section{One dimensional Sturm--Liouville problems}\label{app:onedim}

For the sake of completeness in this section we consider the case when $\varrho$ is not necessarily supported on more than one point, 
 i.e., we only assume that~\eqref{genhypfirst} to~\eqref{genhypgap} of Hypothesis~\ref{genhyp} hold.
Because of the lack of the identification of Proposition~\ref{prop:identDeftauTloc} in this case, 
 we make the following definition.
Some linear subspace $S\subseteq\Deftau$ is said to give rise to a self-adjoint relation if the map
\begin{align}\label{eqnOneDimMap}
 \begin{matrix}
  S & \rightarrow & \Lr \times\Lr \\
  f & \mapsto     & (f,\tau f)
 \end{matrix}
\end{align}
is well-defined, injective and its range is a self-adjoint relation of $\Lr$ into $\Lr$.
By the identification of Proposition~\ref{prop:identDeftauTloc} one sees that we already determined all 
 linear subspaces of $\Deftau$ which give rise to a self-adjoint relation if $\varrho$ is supported on more than one point.
 Hence we need only consider the case when $\varrho$ is supported on only one point. 
 Indeed, we will do this by proving a version of Theorem~\ref{thm:SRLCLCsepcoup} (note that $\tau$ is in the l.c.~case at both endpoints).
 Therefore, assume in the following $\varrho=\varrho_0\delta_{x_0}$ for some $\varrho_0\in\R^+$ and $x_0\in(a,b)$.
 In this case each function $f\in\Deftau$ is of the form
 \begin{align*}
  f(x) = \begin{cases}
          u_a(x), & \text{if } x\in(a,x_0], \\
          u_b(x), & \text{if } x\in(x_0,b),
         \end{cases}
 \end{align*}
 where $u_a$ and $u_b$ are solutions of $\tau u=0$ with $u_a(x_0-)=u_b(x_0+)$, i.e., $f$ is continuous in $x_0$ 
  but in general the quasi-derivative $f^\qd$ is not.
 In this case $\tau f$ is given by
\begin{align}\label{eqnOneDimTauF}
 \tau f(x_0) = \frac{1}{\varrho_0} \left( -f^\qd(x_0+) + f^\qd(x_0-) + f(x_0)\chi(\lbrace x_0\rbrace)\right).
\end{align}
Furthermore, for two functions $f$, $g\in\Deftau$, the limits
\begin{align*}
 W(f,g)(a) := \lim_{x\rightarrow a} W(f,g)(x) \quad\text{and}\quad W(f,g)(b) := \lim_{x\rightarrow b}W(f,g)(x)
\end{align*}
exist and are finite. In fact, the Wronskian is constant away from $x_0$.
Now as in Section~\ref{secBC} let $w_1$, $w_2\in\Deftau$ with 
\begin{align*}
 W(w_1,w_2^\ast)(a) &= 1 & \text{and} & & W(w_1,w_1^\ast)(a) = W(w_2,w_2^\ast)(a) & = 0, \\
 W(w_1,w_2^\ast)(b) &= 1 & \text{and} & & W(w_1,w_1^\ast)(b) = W(w_2,w_2^\ast)(b) & = 0,
\end{align*}
and define the linear functionals $\BCa^1$, $\BCa^2$, $\BCb^1$ and $\BCb^2$ on $\Deftau$ by
 \begin{align*}
  \BCa^1(f) &= W(f,w_2^\ast)(a) &\text{and} && \BCa^2(f) & =W(w_1^\ast,f)(a) &\text{for }f\in\Deftau, \\
  \BCb^1(f) &= W(f,w_2^\ast)(b) &\text{and} && \BCb^2(f) & =W(w_1^\ast,f)(b) &\text{for }f\in\Deftau.
 \end{align*}
Again one may choose special functions $w_1$, $w_2$ as in Proposition~\ref{prop:BCLefthandlim}.

\begin{theorem}\label{thmOneDimSRSep}
 Let $S\subseteq\Deftau$ be a linear subspace of the form
 \begin{align}\label{eqn:apponedimsep}
 S = \left\lbrace f\in\Deftau \left| \begin{array}{l} \BCa^1(f)\cos\varphi_\alpha-\BCa^2(f)\sin\varphi_\alpha=0 \\
         \BCb^1(f)\cos\varphi_\beta-\BCb^2(f)\sin\varphi_\beta=0 \end{array}\right.\right\rbrace
 \end{align} 
 for some $\varphi_\alpha$, $\varphi_\beta\in[0,\pi)$. 
 Then $S$ gives rise to a self-adjoint relation if and only if one of the following conditions
 \begin{subequations}
 \begin{align}\label{eqn:appsepnotinja}
   w_2(x_0-)\cos\varphi_\alpha + w_1(x_0-)\sin\varphi_\alpha & \not=0,  \\
 \label{eqn:appsepnotinjb}
   w_2(x_0+)\cos\varphi_\beta + w_1(x_0+)\sin\varphi_\beta & \not=0,
 \end{align}
 \end{subequations}
 holds. This relation is an operator if and only if~\eqref{eqn:appsepnotinja} and~\eqref{eqn:appsepnotinjb} hold.
\end{theorem}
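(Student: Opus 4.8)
The plan is to exploit that, since $\varrho=\varrho_0\delta_{x_0}$, the Hilbert space $\Lr$ is one-dimensional: its elements are determined by their value at $x_0$, with $\spr{[f]}{[g]}=\varrho_0\, f(x_0)g(x_0)^\ast$. In such a space the self-adjoint relations are easy to enumerate. Viewing a relation as a subspace of $\Lr\times\Lr\cong\C^2$, a one-dimensional subspace $\linspan\{(\gamma,\delta)\}$ is self-adjoint exactly when $\gamma\delta^\ast\in\R$, so the only self-adjoint relations are the graph $\{(c,\lambda c)\}$ of multiplication by a real number $\lambda$ (an operator) and the purely multi-valued relation $\{0\}\times\Lr$; a direct adjoint computation confirms both and rules out the zero- and two-dimensional subspaces. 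Hence the whole statement reduces to deciding, for given $\varphi_\alpha$, $\varphi_\beta$, which of these two shapes the image of $S$ under \eqref{eqnOneDimMap} has, and whether that image is reached injectively.

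First I would parametrize $\Deftau$ by the triple $(p,q_-,q_+)=(f(x_0),f^\qd(x_0-),f^\qd(x_0+))$, a linear isomorphism onto $\C^3$ since $f$ is the continuous gluing of the solution of $\tau u=0$ determined by $(p,q_-)$ on $(a,x_0]$ and by $(p,q_+)$ on $(x_0,b)$. Because $\varrho$ carries no mass on $(a,x_0)$, the Lagrange identity makes every Wronskian constant there, so $\BCa^1(f)=W(f,w_2^\ast)(x_0-)$ and $\BCa^2(f)=W(w_1^\ast,f)(x_0-)$ are explicit linear combinations of $(p,q_-)$. Substituting them into the boundary condition at $a$ rewrites it as $pA_\qd=q_-A_0$, where $A_0=\cos\varphi_\alpha\, w_2(x_0-)^\ast+\sin\varphi_\alpha\, w_1(x_0-)^\ast$ and $A_\qd$ is the analogous combination of the quasi-derivatives; thus \eqref{eqn:appsepnotinja} is precisely $A_0\neq0$. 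Likewise the condition at $b$ becomes $pB_\qd=q_+B_0$ with \eqref{eqn:appsepnotinjb} $\Leftrightarrow B_0\neq0$. Since $\BCa^1,\BCa^2$ are linearly independent as functionals of $(p,q_-)$ — otherwise $W(\cdot,\cdot)(a)$ would degenerate, contradicting the l.c.\ case — the pair $(A_0,A_\qd)$ never vanishes, and similarly $(B_0,B_\qd)\neq(0,0)$.

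The crux, and the step I expect to be most delicate, is that whenever $A_0\neq0$ the quotient $A_\qd/A_0$ is real (and $B_\qd/B_0$ real when $B_0\neq0$). I would prove $A_\qd A_0^\ast=A_0A_\qd^\ast$ by expanding $A_\qd A_0^\ast-A_0A_\qd^\ast$ at $x_0-$: the $\cos^2$, $\sin^2$ and cross terms collapse to $W(w_2,w_2^\ast)$, $W(w_1,w_1^\ast)$ and $W(w_1,w_2^\ast)+W(w_2,w_1^\ast)$ respectively, all evaluated at $x_0-=a$ by constancy of the Wronskian. The first two vanish by \eqref{eqn:ufuncBCa}, and the third vanishes since $W(w_2,w_1^\ast)=-W(w_1,w_2^\ast)^\ast$ together with $W(w_1,w_2^\ast)(a)=1$. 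Hence $A_\qd/A_0=A_\qd A_0^\ast/|A_0|^2\in\R$; combined with $\varrho_0>0$ and $\chi(\{x_0\})\in\R$ from Hypothesis~\ref{genhyp}, this forces the multiplier $\lambda$ below to be real.

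Finally I would run the case analysis using \eqref{eqnOneDimTauF}, $\tau f(x_0)=\varrho_0^{-1}(-q_++q_-+p\,\chi(\{x_0\}))$. If $A_0\neq0$ and $B_0\neq0$, the two conditions give $q_-=pA_\qd/A_0$ and $q_+=pB_\qd/B_0$, so $S$ is one-dimensional and its image is the graph $\{(p,\lambda p)\}$ with $\lambda=\varrho_0^{-1}(A_\qd/A_0-B_\qd/B_0+\chi(\{x_0\}))\in\R$; the map is injective, so $S$ gives rise to a self-adjoint operator. If exactly one of $A_0,B_0$ vanishes, say $A_0=0$ (hence $A_\qd\neq0$), the condition at $a$ forces $p=0$ and that at $b$ forces $q_+=0$ while $q_-$ stays free, so the image is $\{(0,\varrho_0^{-1}q_-)\}=\{0\}\times\Lr$: self-adjoint, multi-valued, and still injectively reached, giving a self-adjoint relation that is not an operator. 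If $A_0=B_0=0$, then $p=0$ is forced but $q_-,q_+$ are both free, so $S$ is two-dimensional and every $f$ with $q_-=q_+$ lies in the kernel of \eqref{eqnOneDimMap}; the map is not injective and $S$ does not give rise to a self-adjoint relation. Reading off the three cases yields exactly the claimed equivalences.
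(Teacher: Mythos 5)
Your proposal is correct and follows essentially the same route as the paper's proof: rewriting the two boundary conditions, via constancy of the Wronskian on $(a,x_0)$ and $(x_0,b)$, as $pA_\qd=q_-A_0$ and $pB_\qd=q_+B_0$, observing that injectivity of the map $f\mapsto(f,\tau f)$ holds iff at least one of $A_0$, $B_0$ is nonzero, and then identifying the image as multiplication by the real scalar $\varrho_0^{-1}(A_\qd/A_0-B_\qd/B_0+\chi(\lbrace x_0\rbrace))$ when both are nonzero and as $\lbrace0\rbrace\times\Lr$ when exactly one vanishes. The only additions are that you spell out the "simple calculation" showing $A_\qd A_0^\ast\in\R$ (via $W(w_2,w_1^\ast)(a)=-W(w_1,w_2^\ast)(a)^\ast=-1$) and the a priori classification of self-adjoint relations in a one-dimensional Hilbert space, both of which the paper leaves implicit.
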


\begin{proof}
 The boundary conditions can by written as
 \begin{align*}
  W(f,w_2^\ast \cos\varphi_\alpha + w_1^\ast \sin\varphi_\alpha)(x_0-) & =0, \\
  W(f,w_2^\ast \cos\varphi_\beta + w_1^\ast \sin\varphi_\beta)(x_0+) & =0.
 \end{align*}
 From this one sees that the mapping~\eqref{eqnOneDimMap} is injective if and only if one of the 
 inequalities~\eqref{eqn:appsepnotinja} or~\eqref{eqn:appsepnotinjb} holds.
 Hence for the first part it remains to show that in this case the range of the mapping~\eqref{eqnOneDimMap} 
  is a self-adjoint relation. First consider the case when both inequalities hold.
 Then we get from the boundary conditions
 \begin{align*}
  f^\qd(x_0-) = f(x_0) \frac{\cos\varphi_\alpha w_2^\qd(x_0-)^\ast + \sin\varphi_\alpha w_1^\qd(x_0-)^\ast}
                                {\cos\varphi_\alpha w_2(x_0-)^\ast + \sin\varphi_\alpha w_1(x_0-)^\ast}, \quad f\in S
 \end{align*}
 and similarly for the right-hand limit. A simple calculation shows that the imaginary part of this 
  fraction as well as the imaginary part of the corresponding fraction for the right-hand limit vanish.
% \begin{align*}
%  \frac{ \cos^2\varphi_\alpha W(w_2,w_2^\ast)(x_0-) + \cos\varphi_\alpha \sin\varphi_\alpha \im W(w_1,w_2^\ast)(x_0-) + \sin^2\varphi_\alpha W(w_1,w_1^\ast)(x_0-)  }
%   { 2 \left| \cos\varphi_\alpha w_2(x_0) + \sin\varphi_\alpha w_1(x_0)\right|^2},
% \end{align*}
 Hence from~\eqref{eqnOneDimTauF} we infer that the range of the mapping~\eqref{eqnOneDimMap} is a 
  self-adjoint operator (multiplication with a real scalar).
 Now in the case when one inequality, say~\eqref{eqn:appsepnotinja} does not hold, we get $f(x_0)=0$ for each
  $f\in S$ from the boundary condition at $a$. Hence it suffices to prove that $\tau f(x_0)$ takes each value
  in $\C$ if $f$ runs through $S$, i.e., $S$ corresponds to the self-adjoint, multi-valued relation 
  $\lbrace 0\rbrace\times\Lr$. But this follows since all functions of the form 
 \begin{align*}
  f(x) = \begin{cases}
          u_a(x), & \text{if }x\in(a,x_0], \\
          0,      & \text{if }x\in(x_0,b),
         \end{cases}
 \end{align*}
 where $u_a$ is a solution of $\tau u=0$ with $u_a(x_0)=0$, lie in $S$.
\end{proof}

The preceding theorem corresponds to separated boundary conditions. Next we discuss the case of coupled boundary conditions.

\begin{theorem}\label{thmOneDimSRCoup}
 Let $S\subseteq\Deftau$ be a linear subspace of the form
 \begin{align}
 S = \left\lbrace f\in\Deftau \left|\, \left(\begin{matrix}\BCb^1(f)\\\BCb^2(f)\end{matrix}\right) 
      = e^{\I\varphi}R\left(\begin{matrix}\BCa^1(f)\\\BCa^2(f)\end{matrix}\right)\right.\right\rbrace
 \end{align} 
 for some $\varphi\in[0,\pi)$ and $R\in\R^{2\times2}$ with $\det{R}=1$, and set
 \begin{align*}
  \tilde{R} = \begin{pmatrix}
                    w_2^\qd(x_0+)^\ast & -w_2(x_0+)^\ast \\ -w_1^\qd(x_0+)^\ast & w_1(x_0+)^\ast
                  \end{pmatrix}^{-1}
     R \begin{pmatrix}
        w_2^\qd(x_0-)^\ast & -w_2(x_0-)^\ast \\ -w_1^\qd(x_0-)^\ast & w_1(x_0-)^\ast
       \end{pmatrix}.
 \end{align*}
 Then $S$ gives rise to a self-adjoint relation if and only if
 \begin{align*}
  \tilde{R}_{12} \not=0
  \qquad\text{or}\qquad
  e^{\I\varphi} \tilde{R}_{11} \not= 1 \not= e^{\I\varphi} \tilde{R}_{22}.
 \end{align*}
 This relation is an operator if and only if $\tilde{R}_{12}\not=0$.
\end{theorem}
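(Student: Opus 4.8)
The plan is to reduce the whole statement to linear algebra at the single mass point $x_0$. First I would rewrite the coupled boundary condition in transfer--matrix form, exactly as in the corollary preceding this appendix: writing $(\BCa^1(f),\BCa^2(f))^T = A_-(f(x_0-),f^\qd(x_0-))^T$ and $(\BCb^1(f),\BCb^2(f))^T = A_+(f(x_0+),f^\qd(x_0+))^T$ with the invertible matrices $A_\mp$ assembled from $w_1,w_2$ at $x_0\mp$ (these are the factors appearing in $\tilde R=A_+^{-1}RA_-$), the condition defining $S$ becomes
\begin{align*}
 \begin{pmatrix} f(x_0+)\\ f^\qd(x_0+)\end{pmatrix} = G\begin{pmatrix} f(x_0-)\\ f^\qd(x_0-)\end{pmatrix}, \qquad G = e^{\I\vphi}\tilde{R}.
\end{align*}
Since $\varsigma(\lbrace x_0\rbrace)=0$, every $f\in\Deftau$ is continuous at $x_0$, so $f(x_0-)=f(x_0+)=:p$ and $f$ is determined by $(p,q_-,q_+):=(f(x_0),f^\qd(x_0-),f^\qd(x_0+))$; thus $\Deftau\cong\C^3$. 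In these coordinates the map~\eqref{eqnOneDimMap} is $f\mapsto(p,\tau f(x_0))$ with $\tau f(x_0)=\varrho_0^{-1}(q_--q_++\chi(\lbrace x_0\rbrace)p)$ by~\eqref{eqnOneDimTauF}, and $S$ is carved out by the two scalar equations $(G_{11}-1)p+G_{12}q_-=0$ and $q_+=G_{21}p+G_{22}q_-$.

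The organizing fact is that the gluing matrix is $J$--unitary, $G^\ast J G=J$ with $J$ as in~\eqref{eqnBCLCLCmatcond}; equivalently, since the jump of the Wronskian across $x_0$ equals $\varrho_0(\tau f(x_0)\overline{g(x_0)}-f(x_0)\overline{\tau g(x_0)})$, this says $W_a^b(f,g^\ast)=0$ for all $f,g\in S$, i.e.\ that the image relation is symmetric. This is the one-point version of the self-adjointness relation $B_aJB_a^\ast=B_bJB_b^\ast$ characterizing coupled conditions in Theorem~\ref{thm:LCLCBC}, and it is established by the same Pl\"ucker/Wronskian computation, now using $R\in\R^{2\times2}$ with $\det R=1$ and $R^\ast JR=\det(R)J=J$; this computation is insensitive to whether $\varrho$ is supported at one or more points. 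I expect verifying this identity to be the main obstacle, since it is precisely what forces the reality of the multiplication constant in the operator case below and, in the degenerate case $\tilde R_{12}=0$, the coupling $e^{\I\vphi}\tilde R_{11}=1\Leftrightarrow e^{\I\vphi}\tilde R_{22}=1$.

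Next I would record that in the one-dimensional Hilbert space $\Lr\cong\C$ the self-adjoint linear relations are precisely the graphs $\lbrace(p,\lambda p):p\in\C\rbrace$ with $\lambda\in\R$ (the self-adjoint operators) and the purely multi-valued relation $\lbrace0\rbrace\times\Lr$. Hence $S$ gives rise to a self-adjoint relation exactly when $f\mapsto(f,\tau f)$ is injective and its image is one of these two types.

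Everything then comes down to a case distinction on $\tilde R_{12}$ (equivalently $G_{12}$). If $\tilde R_{12}\neq0$, the first equation fixes $q_-$ in terms of $p$, so $S$ is one--dimensional, the map is injective, and the image is a graph $\lbrace(p,\lambda p)\rbrace$; the $J$--unitarity of $G$ gives $W_a^b(f,f^\ast)=0$, hence $\lambda\in\R$, so $S$ gives rise to a self-adjoint operator. If $\tilde R_{12}=0$ and $e^{\I\vphi}\tilde R_{11}\neq1$, the first equation forces $p=0$, while $J$--unitarity also yields $e^{\I\vphi}\tilde R_{22}\neq1$, so $\tau f(x_0)$ exhausts $\C$ as $f$ ranges over $S$; the map is injective with image $\lbrace0\rbrace\times\Lr$, a self-adjoint multi-valued relation. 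Finally, if $\tilde R_{12}=0$ and $e^{\I\vphi}\tilde R_{11}=1$, then $J$--unitarity gives $e^{\I\vphi}\tilde R_{22}=1$, the first equation is vacuous, $\dim S=2$, and the one--parameter family with $p=0$, $q_-=q_+$ is mapped to $0$, so the map fails to be injective. Collecting the three cases yields self-adjointness precisely when $\tilde R_{12}\neq0$ or $e^{\I\vphi}\tilde R_{11}\neq1\neq e^{\I\vphi}\tilde R_{22}$, and the operator alternative precisely when $\tilde R_{12}\neq0$.
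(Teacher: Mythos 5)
Your proposal is correct, and its skeleton — rewriting the coupled condition as a transfer matrix $\bigl(f(x_0+),f^\qd(x_0+)\bigr)^T=\E^{\I\varphi}\tilde{R}\bigl(f(x_0-),f^\qd(x_0-)\bigr)^T$, classifying the self-adjoint relations in the one-dimensional space $\Lr\cong\C$ as real-scalar graphs or $\lbrace0\rbrace\times\Lr$, and splitting on $\tilde{R}_{12}$ — is the same as the paper's. The genuine difference lies in how symmetry is verified. The paper first proves that $\tilde{R}$ is real (via linear dependence of $w_i$ and $w_i^\ast$ on each side of $x_0$), then in the case $\tilde{R}_{12}\neq0$ computes the multiplication constant explicitly as $\varrho_0^{-1}\bigl(1-\E^{\I\varphi}(\tilde{R}_{11}+\tilde{R}_{22})+\E^{2\I\varphi}\bigr)/\bigl(\E^{\I\varphi}\tilde{R}_{12}\bigr)+\varrho_0^{-1}\chi(\lbrace x_0\rbrace)$ and checks by hand that it is real. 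You instead package the whole reality/symmetry issue into the single statement that $G=\E^{\I\varphi}\tilde{R}$ is $J$-unitary, deduced from $R^TJR=\det(R)J=J$ together with the identity $W(f,g)(a)=\BCa^1(f)\BCa^2(g)-\BCa^2(f)\BCa^1(g)$ (and its analogue at $b$), which shows the conjugating matrices satisfy $A_\pm^\ast JA_\pm=J$. This buys you two things: the reality of the multiplication constant falls out of $W_a^b(f,f^\ast)=0$ without computing it, and in the degenerate case $\tilde{R}_{12}=0$ the equivalence $\E^{\I\varphi}\tilde{R}_{11}=1\Leftrightarrow\E^{\I\varphi}\tilde{R}_{22}=1$ is automatic, so the paper's final subcase (exactly one of the two equal to $1$) is seen to be vacuous and the case analysis collapses to three cases instead of four. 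The price is that you must actually carry out the $A_\pm^\ast JA_\pm=J$ computation, which you correctly flag as the main remaining work; it is routine given the normalizations $W(w_1,w_2^\ast)=1$, $W(w_i,w_i^\ast)=0$ at both endpoints.
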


\begin{proof}
  The boundary conditions can be written as
 \begin{align*}
  \begin{pmatrix} f(x_0+) \\ f^\qd(x_0+) \end{pmatrix} = \E^{\I\varphi} \tilde{R} \begin{pmatrix} f(x_0-) \\ f^\qd(x_0-) \end{pmatrix}.
 \end{align*}
 First of all note that $\tilde{R}$ is a real matrix. Indeed, since for each $j=1,2$, $w_j$ and $w_j^\ast$ are solutions of $\tau u=0$ 
  on $(a,x_0)$ we see that they must be linearly dependent, hence we get $w_j(x)=w_j(x)^\ast$, $x\in(a,x_0)$.
  Of course the same holds to the right of $x_0$ and since $R$ is real also $\tilde{R}$ is real.
 If $\tilde{R}_{12}\not=0$, then the boundary conditions show that the mapping~\eqref{eqnOneDimMap} is
  injective. Furthermore, using~\eqref{eqnOneDimTauF} one gets
 \begin{align*}
  \tau f(x_0) \varrho_0 & = f(x_0) \frac{1-\E^{\I\varphi}\left(\tilde{R}_{11} + \tilde{R}_{22}\right) + 
      \E^{2i\varphi} \det\tilde{R}}{\E^{\I\varphi}\tilde{R}_{12}} + f(x_0)\chi(\lbrace x_0\rbrace),\quad f\in S.
 \end{align*}
 A simple calculation shows that $\det\tilde{R}=\det R=1$ and that the fraction is real. 
 Hence we see that $S$ gives rise to a self-adjoint, single-valued relation.
 
Now assume $\tilde{R}_{12}=0$ and $\E^{\I\varphi} \tilde{R}_{11} \not= 1 \not= \E^{\I\varphi} \tilde{R}_{22}$, 
  then again the boundary conditions show that the mapping~\eqref{eqnOneDimMap} is injective. Furthermore,
 they show that each function $f\in S$ satisfies $f(x_0)=0$. Hence it suffices to show that $\tau f(x_0)$ takes on 
  every value as $f$ runs through $S$. But this is true since all functions 
 \begin{align}\tag{$*$}\label{eqnOneDimCoupBCfunc}
  f_c(x) = \begin{cases}
            c u_a(x), & \text{if }x\in(a,x_0], \\
            c \E^{\I\varphi} \tilde{R}_{22} u_b(x), & \text{if }x\in(x_0,b),
           \end{cases}
 \end{align}
  where $c\in\C$ and $u_a$, $u_b$ are solutions of $\tau u=0$ with $u_a(x_0-)=u_b(x_0+)=0$ 
   and $u_a^\qd(x_0-)=u_b^\qd(x_0+)=1$, lie in $S$.
 If $\tilde{R}_{12}=0$ but $\E^{\I\varphi} \tilde{R}_{22} = 1$, then the mapping~\eqref{eqnOneDimMap} is not injective.
  Indeed, all functions of the form~\eqref{eqnOneDimCoupBCfunc} are mapped onto zero.
 Finally, if $\tilde{R}_{12}=0$ and $\E^{\I\varphi} \tilde{R}_{11} = 1\not= \E^{\I\varphi}\tilde{R}_{22}$, then 
  since $S$ is two-dimensional it does not give rise to a self-adjoint relation.
\end{proof}

Note that if we choose for $\BCa^1$, $\BCa^2$, $\BCb^1$ and $\BCb^2$ the functionals from Proposition~\ref{prop:BCLefthandlim},
 then we get $\tilde{R}=R$.

The resolvent of the self-adjoint relations given in Theorem~\ref{thmOneDimSRSep} and Theorem~\ref{thmOneDimSRCoup} can be written as in Section~\ref{secSR}.
 In fact, Theorem~\ref{thmSResolLCLC} and Corollary~\ref{corSpecRDis} are obviously valid since the resolvents are simply multiplication by some scalar.
 Moreover, Theorem~\ref{thm:ressep} and Corollary~\ref{corSpecRSimple} for self-adjoint relations as in Theorem~\ref{thmOneDimSRSep} may be proven along the same lines as in the general case.
 The remaining theorems of Section~\ref{secSR} are void of meaning here, since all self-adjoint relations have purely discrete spectrum.
Finally, the results of Sections~\ref{secweyltitchm} and~\ref{secST} are also valid for self-adjoint relations as in 
 Theorem~\ref{thmOneDimSRSep} since all proofs in these sections also apply in this simple case.

\bigskip
\noindent
{\bf Acknowledgments.}
We thank Fritz Gesztesy and Andrei Shkalikov for help with respect to the literature and the anonymous referee for valuable suggestions improving the presentation of the material.

\end{document}